\title[Exact model categories, approximations, and cohomology of sheaves]%
{Exact model categories, approximation theory, and cohomology of quasi-coherent sheaves}
\author{Jan \v{S}\v{t}ov\'\i\v{c}ek}
\i\v{c}ek, %
\renewcommand{\iff}{if and only if }
\newcommand{\st}{such that }
\newcommand{\wrt}{with respect to }
\newcommand{\la}{\longrightarrow}
\newcommand{\dd}{\colon}
\newcommand{\ep}{\varepsilon}
\newcommand{\ph}{\varphi}
\newcommand{\tht}{\vartheta}
\newcommand{\dif}{\partial}
\newcommand{\wfs}{weak factorization system }
\newcommand{\wfss}{weak factorization systems }
\newcommand{\inv}{^{-1}}
\newcommand{\lifts}{\,\square\,}
\newcommand{\unit}{\mathds{1}}
\newcommand{\Z}{\mathbb{Z}}
\newcommand{\Spec}{\operatorname{Spec}}    
\newcommand{\res}{\operatorname{res}}      
\newcommand{\OO}{\mathcal{O}}              
\newcommand{\PP}[2]{\mathbb{P}^{#1}_{#2}}  
\newcommand{\Hom}{\operatorname{Hom}}
\newcommand{\HOM}{\mathcal{H}\mathnormal{om}}
\newcommand{\Ext}{\operatorname{Ext}}
\newcommand{\Ker}{\operatorname{Ker}}
\newcommand{\Img}{\operatorname{Im}}
\newcommand{\Coker}{\operatorname{Coker}}
\newcommand{\Cosyz}[2]{\mho_{#1}(#2)}
\newcommand{\li}{\varinjlim}
\newcommand{\card}[1]{\lvert #1 \rvert} 
\newcommand{\sgn}{\operatorname{sgn}}
\newcommand{\A}{\mathcal{A}}
\newcommand{\B}{\mathcal{B}}
\newcommand{\C}{\mathcal{C}}
\newcommand{\D}{\mathcal{D}}
\newcommand{\E}{\mathcal{E}}
\newcommand{\F}{\mathcal{F}}
\newcommand{\G}{\mathcal{G}}
\newcommand{\I}{\mathcal{I}}
\newcommand{\clL}{\mathcal{L}}
\newcommand{\M}{\mathcal{M}}
\newcommand{\clP}{\mathcal{P}}
\newcommand{\R}{\mathcal{R}}
\newcommand{\clS}{\mathcal{S}}
\newcommand{\U}{\mathcal{U}} 
\newcommand{\X}{\mathcal{X}}
\newcommand{\Subobj}[1]{\operatorname{Subobj}({#1})}
\newcommand{\Pow}[1]{\mathcal{P}({#1})}
\newcommand{\Infl}{\operatorname{Infl}}
\newcommand{\Defl}{\operatorname{Defl}}
\newcommand{\Modr}[1]{\mathrm{Mod}\textrm{-}{#1}}
\newcommand{\ModR}{\mathrm{Mod}\textrm{-}R}
\newcommand{\modR}{\mathrm{mod}\textrm{-}R}
\newcommand{\ProjR}{\mathrm{Proj}\textrm{-}R}
\newcommand{\VectR}{\mathrm{Vect}\textrm{-}R}
\newcommand{\InjR}{\mathrm{Inj}\textrm{-}R}
\newcommand{\FlatR}{\mathrm{Flat}\textrm{-}R}
\newcommand{\CotR}{\mathrm{Cot}\textrm{-}R}
\newcommand{\Qco}[1]{\operatorname{Qcoh}({#1})} 
\newcommand{\QcoR}{\operatorname{Qcoh}(R)} 
\newcommand{\Ab}{\mathrm{Ab}}
\newcommand{\Inj}{\operatorname{Inj}}
\newcommand{\Icell}{\I\textrm{-}\mathrm{cell}} 
\newcommand{\Filt}[1]{\operatorname{Filt}{#1}}
\newcommand{\op}{^\textrm{op}}
\newcommand{\Der}[1]{\mathbf{D}({#1})}
\newcommand{\Db}[1]{\mathbf{D}^\mathrm{b}({#1})}
\newcommand{\Htp}[1]{\mathbf{K}({#1})}
\newcommand{\Cpx}[1]{\mathbf{C}({#1})}
\newcommand{\Cac}[1]{\mathbf{C}_\mathrm{ac}({#1})}
\newcommand{\cof}{\mathrm{Cof}}
\newcommand{\Cof}{\mathcal{C}}
\newcommand{\tcof}{\mathrm{TCof}}
\newcommand{\we}{\mathrm{W}}
\newcommand{\We}{\mathcal{W}}
\newcommand{\tfib}{\mathrm{TFib}}
\newcommand{\fib}{\mathrm{Fib}}
\newcommand{\Fib}{\mathcal{F}}
\newcommand{\lhtp}{\sim_\ell}
\newcommand{\rhtp}{\sim_r}
\newcommand{\Ho}{\operatorname{Ho}}
\newcommand{\Lder}[1]{\mathbf{L}{#1}}
\newcommand{\Rder}[1]{\mathbf{R}{#1}}
\newcommand{\Lotimes}{\otimes^{\mathbf{L}}}
\newcommand{\RHOM}{\Rder\HOM}
\theoremstyle{plain}
\newtheorem{thm}{Theorem}[section]
\newtheorem{lem}[thm]{Lemma}
\newtheorem{prop}[thm]{Proposition}
\newtheorem{cor}[thm]{Corollary}
\theoremstyle{definition}
\newtheorem{defn}[thm]{Definition}
\newtheorem{constr}[thm]{Construction}
\newtheorem{nota}[thm]{Notation}
\theoremstyle{remark}
\newtheorem{rem}[thm]{Remark}
\newtheorem{expl}[thm]{Example}
\begin{document}

\begin{abstract}
Our aim is to give a fairly complete account on the construction of compatible model structures on exact categories and symmetric monoidal exact categories, in some cases generalizing previously known results. We describe the close connection of this theory to approximation theory and cotorsion pairs. We also discuss the motivating applications with the emphasis on constructing monoidal model structures for the derived category of quasi-coherent sheaves of modules over a scheme.
\end{abstract}

\begin{classification}
Primary: 55U35, 18G15; Secondary: 18E30, 18D10.
\end{classification}

\begin{keywords}
Monoidal model category, exact category, derived category, cotorsion pair, weak factorization system, quasi-coherent sheaf.
\end{keywords}

\maketitle

\setcounter{tocdepth}{1}
\tableofcontents

\section{Introduction}

The aim of the present notes, partly based on the doctoral course given by the author at the University of Padova in March 2012, is twofold:
\begin{enumerate}
\item to give a rather complete account on the construction of exact model structures and describe the link to cotorsion pairs and approximation theory;
\item to generalize the theory so that it applies to interesting recently studied classes of examples.
\end{enumerate}
The parts of the paper related to (1) are mostly not new, except for the presentation and various improvements. However, there does not seem to be a suitable reference containing all the story and, as the adaptation to the algebraic setting sometimes requires small changes in the available definitions related to model categories, it seemed desirable to write up the construction at a reasonable level of details. Some results related to (2), on the other hand, are to our best knowledge original.

\smallskip

The concept of a model category~\cite{QHtp,H2,Hir} has existed for half a century. Despite being intensively studied by topologists, it has not attracted much attention in the theory of algebraic triangulated categories. There are probably two reasons for this development: The foundation of the theory of abelian and exact model categories has been only given a decade ago by Hovey~\cite{H1} (see also~\cite{H3} for a nice overview), and the ``implementation details'' for their construction are rather recent, see~\cite{EGPT,G3,G2,G1,G4,SaSt,YL11}. In the meantime, a successful theory for algebraic triangulated categories has been developed, based on dg algebras and dg categories.

The exact model categories give in many respects a complementary approach to that of dg algebras, with different advantages and weaknesses, and it has a good potential for instance in Gorenstein homological algebra~\cite{Holm}, homological algebra in the category of quasi-coherent sheaves~\cite{EGT,EGPT,G2}, or in connection with recent developments regarding the Grothendieck duality~\cite{Jo,Mur,Nee3,Nee4,Nee}. Interestingly, although model structures in connection with singularity categories are first explicitly mentioned in~\cite{Beck12} by Becker, Murfet~\cite{Mur} and Neeman~\cite{Nee3,Nee4,Nee} still implicitly use parts of the theory which we are going to present.

While dg algebras provide perfect tools for constructing functors from single objects (say tilting or Koszul equivalences), the approach via models gives advantage in several theoretical questions. It may for example happen as in~\cite{Jo,Nee3} that a dg model for a given triangulated category is too complicated to understand, but the category itself has a rather easy description. There is, however, another important aspect---the model theoretic approach links the theory of triangulated categories to approximation theory~\cite{GT}, allowing deep insights on both sides. This is by no means to say that the dg and model techniques exclude each other---Keller in his seminal paper~\cite{K2} in fact constructed two model structures for the derived category of a small dg category, and this point of view has been for example used to prove non-trivial results about triangulated torsion pairs in~\cite[\S3.2]{StPo}.

Approximation theory is roughly speaking concerned with approximating general objects (modules, sheaves, complexes) by objects from special classes. Cofibrant and fibrant replacements in model categories are often exactly this kind of approximations. The central notion in that context is that of a cotorsion pair~\cite{Sal}, whose significance has been recognized both in abstract module theory~\cite{GT} and representation theory of finite dimensional algebras~\cite[Chapter 8]{AHK}. The approach to construct approximations which we follow here started in~\cite{ET01}, and the connection to model categories and Quillen's small object argument have been noticed in~\cite{H1,Ro} and in some form also in~\cite{BR}. It was soon realized that similar results hold also for sheaf categories, for instance in~\cite{EE,EEGO,EnoOy}.

It is fair to remark that there is an alternative approach to approximation theory, namely Bican's and El Bashir's proof of the Flat Cover Conjecture in~\cite{BEE} and its follow ups~\cite{ElB,Eno2,HoJo,Rump}, which does not seem to fit in our framework.

\smallskip

The first aim of ours, partly inspired by~\cite{EJv2}, but at a more advanced level, is to collect the essentials of the theory in one place together with a motivating and guiding example from~\cite{G2,EGPT}: to construct for an arbitrary scheme $X$ a model structure for $\Der{\Qco{X}}$, the unbounded derived category of quasi-coherent sheaves, which is compatible with the tensor product. Of course, $\Der{\Qco{X}}$ may not be the category we wish to work with as we also have the subcategory of the derived category of all sheaves consisting of objects with quasi-coherent cohomology, but for many schemes the two categories are equivalent by~\cite[Corollary 5.5]{BN93}.

In order to achieve the goal, we also discuss in detail an equivalent description of $\Qco{X}$ as the category of certain modules over a representation of a poset in the category of rings. The description is due to Enochs and Estrada~\cite{EE} and, although not very well suited for direct computations with coherent sheaves, it is excellent for theoretical questions regarding big sheaves. For example, it is a relatively straightforward task to prove that $\Qco{X}$ always is a Grothendieck category---compare to~\cite[B.2, p. 409]{TT}! This presentation is also quite accessible to the readers not acquainted with algebraic geometry.

\smallskip

As mentioned above, the other goal of the paper is to generalize the theory so that it is strong enough to apply to model structures in exact categories ``appearing in the nature.'' Our motivation involves in particular an interpretation of recent results about singularity categories~\cite{Jo,Mur,Nee3,Nee4,Nee} and using models in conjunction with dg categories~\cite{StPo}.

This program has been started by Saor\'\i{}n and the author in~\cite{SaSt,St2} and it follows the spirit of~\cite{G5}. It is also, in a way, not a compulsory part for the reader, as it should be manageable to read the paper as if it were written only for, say, module categories instead of more general exact categories. Even in this restriction the presented results are relevant.

The main problem which we address here is a suitable axiomatics for exact categories which allows to use Quillen's small object argument and deconstructibility techniques to construct cotorsion pairs and model structures. The best suited concept so far seems to be an exact category of Grothendieck type defined in this text, although the theory is not optimal yet. The main problem is that we do not know whether the important Hill Lemma (Proposition~\ref{prop:hill-lemma}) holds for these exact categories or in which way we should adjust the axioms to make it hold. As a consequence, some of our results including Proposition~\ref{prop:deconstr-advanced}, Corollary~\ref{cor:lhs-cot-converse} or~Theorem~\ref{thm:inj-model-for-D(E)} cannot be stated in as theoretically clean way as we would have wished. This is left as a possible direction for future research, where the promising directions include Enochs' filtration shortening techniques~\cite{Eno2}, or Lurie's colimit rearrangements from~\cite[\S A.1.5]{Lur09} or~\cite{Mak08}.

\subsection*{Acknowledgments}

This research was supported by grant GA~\v{C}R P201/12/G028 from Czech Science Foundation.

\section{Quasi-coherent modules}
\label{sec:Qco}

In order to have classes of examples at hand, we start with describing the categories of quasi-coherent modules over schemes and diagrams of rings.

\subsection{Grothendieck categories}
\label{subsec:groth}

Although the construction of model structures described later in this text has been motivated from the beginning by homological algebra in module and sheaf categories, several constructions work easily more abstractly for Grothendieck categories and, as we will discuss in Section~\ref{sec:groth-exact}, even for nice enough exact categories. Thus we start with the definition and basic properties of Groth\-end\-ieck categories.

\begin{defn} \label{def:groth} \cite{Gro}
An abelian category $\G$ is called a \emph{Grothendieck category} if
\begin{enumerate}
\item[(Gr1)] $\G$ has all small coproducts (equivalently: $\G$ is a cocomplete category).

\item[(Gr2)] $\G$ has exact direct limits. That is, given a direct system
\[ (0 \la X_j \overset{i_j}\la Y_j \overset{p_j}\la Z_j \la 0)_{j \in I} \]
of short exact sequences, then the colimit diagram
\[ 0 \la \li_{j \in I} X_j \la \li_{j \in I} Y_j \la \li_{j \in I} Z_j \la 0 \]
is again a short exact sequence in $\G$. This is sometimes called the AB5 condition following an equivalent requirement in~\cite[p. 129]{Gro}.

\item[(Gr3)] $\G$ has a generator. That is, there is an object $G \in \G$ \st every $X \in \G$ admits an epimorphism $G^{(I)} \to X \to 0$. Here, $G^{(I)}$ stands for the coproduct $\coprod_{j \in I} G_j$ of copies $G_j$ of $G$.
\end{enumerate}
\end{defn}

An important property of a Grothendieck category is that it always has enough injective objects, which is very good from the point of view of homological algebra. This is in fact a good reason to consider infinitely generated modules or sheaves of infinitely generated modules: injective objects are often infinitely generated in any reasonable sense. We summarize the comment in a theorem:

\begin{thm} \label{thm:enough-inj}
Let $\G$ be a Grothendieck category. Then each $X \in \G$ admits an injective envelope $X \to E(X)$. Moreover, $\G$ admits all small products (equivalently: it is complete) and has an injective cogenerator $C$. That is, $C$ is injective in $\G$ and each $X \in \G$ admits a monomorphism of the form $0 \to X \to C^I$.
\end{thm}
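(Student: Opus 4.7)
The plan is to prove the three assertions in order: well-poweredness and completeness, then the existence of injective envelopes, and finally the existence of an injective cogenerator.

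\textbf{Well-poweredness and completeness.} First I would check that $\G$ is well-powered. For any object $X \in \G$, the assignment sending a subobject $Y \hookrightarrow X$ to $\{f \in \Hom(G,X) \mid f \text{ factors through } Y\}$ is injective: since $G$ is a generator, a subobject is determined by which morphisms out of $G$ factor through it. Hence $\Subobj(X)$ embeds into $\Pow(\Hom(G,X))$ and is a set. Combined with (Gr1), this places $\G$ under the hypotheses of the Special Adjoint Functor Theorem, which yields all small limits; equivalently, $\G$ admits all small products.

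\textbf{Enough injectives and envelopes.} The main input is the Baer-style criterion valid in Grothendieck categories: an object $E$ is injective \iff every morphism $U \to E$ from a subobject $U \hookrightarrow G$ extends to $G$. To embed a given $X$ into an injective, I would build a continuous chain of monomorphisms $X = X_0 \hookrightarrow X_1 \hookrightarrow \cdots \hookrightarrow X_\alpha \hookrightarrow \cdots$ indexed by a sufficiently large regular cardinal $\lambda$. The step $X_\alpha \hookrightarrow X_{\alpha+1}$ simultaneously solves all extension problems: it is the pushout of $X_\alpha \la \coprod_{(U,f)} U \la \coprod_{(U,f)} G$, where $(U,f)$ ranges over pairs of a subobject $U \hookrightarrow G$ and a morphism $f \dd U \to X_\alpha$. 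At limit ordinals one takes the direct limit. The AB5 condition guarantees each connecting morphism is still monic, and choosing $\lambda$ of cofinality exceeding $\card{\Subobj(G)}$ (together with an accessibility bound) ensures that every morphism from a subobject of $G$ into $X_\lambda$ factors through some $X_\alpha$ with $\alpha < \lambda$, hence extends to $G$ at stage $\alpha+1$. Thus $X_\lambda$ is injective. For the envelope, I would fix an injective extension $X \hookrightarrow I$ and apply Zorn's lemma to the (set-sized) poset of essential subobjects $X \subseteq Y \subseteq I$, obtaining a maximal such $Y =: E(X)$; a standard argument shows it is then both essential over $X$ and injective.

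\textbf{Cogenerator.} Set
\[
C := E\Bigl( \coprod_{U \in \Subobj(G)} G/U \Bigr),
\]
which makes sense because $\Subobj(G)$ is a set by Step 1. For $0 \neq X \in \G$, pick a nonzero morphism $g \dd G \to X$ (available by (Gr3)) and put $Y := \Img g \cong G/\Ker g$. The inclusion of $Y$ as a summand of the coproduct under $C$ is a nonzero morphism $Y \to C$, and by injectivity of $C$ it extends along $Y \hookrightarrow X$ to a nonzero morphism $X \to C$. Hence $\Hom(X, C) \neq 0$, and the canonical map $X \to C^{\Hom(X,C)}$ is monic, so $C$ is an injective cogenerator.

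\textbf{Main obstacle.} The delicate step is the injective resolution in Step 2: one must choose $\lambda$ so that every extension problem from a subobject of $G$ into $X_\lambda$ is reflected to an earlier stage. This combines well-poweredness of $G$ (bounding the number of problems to solve at each step) with a presentability-type argument using AB5 to control how a morphism $U \to \li X_\alpha$ factors. Making the bookkeeping work uniformly for all $X$ is the technical heart of the proof.
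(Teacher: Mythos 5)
The paper proves this result purely by citation: injective embeddings go back to Grothendieck's T\^ohoku paper, envelopes and the injective cogenerator to Mitchell, and products to the Popescu--Gabriel theorem. You instead reconstruct a direct argument, which is essentially the classical strategy lying behind those citations: well-poweredness plus the special adjoint functor theorem for products, a transfinite Baer-type construction for injective embeddings, Zorn's lemma for envelopes, and an explicit cogenerator built from quotients of the generator. Your route is correct in outline and has the advantage of being self-contained; the paper's route is much shorter and leans on the Popescu--Gabriel machinery, which gives completeness and many further properties of $\G$ for free.

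Two small repairs. In your second step, the condition on $\lambda$ should be that it is regular and exceeds the presentability rank of every subobject of $G$, so that any map $U \to \li_{\alpha<\lambda} X_\alpha$ with $U \subseteq G$ factors through some $X_\alpha$ with $\alpha<\lambda$; the cardinal $\card{\Subobj G}$ is not itself the relevant bound (though a regular $\lambda$ dominating both it and the presentability ranks certainly works). In your third step, knowing that $\Hom(X,C) \neq 0$ for every nonzero $X$ does not on its own make the canonical map $\phi\dd X \to C^{\Hom(X,C)}$ monic; one more turn of the crank is needed: apply the nonvanishing to $\Ker\phi$, extend the resulting nonzero map $\Ker\phi \to C$ along $\Ker\phi \hookrightarrow X$ using injectivity of $C$ to some $h\dd X \to C$, and observe that $h$ is a component of $\phi$ that is nonzero on $\Ker\phi$, a contradiction. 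These are bookkeeping corrections; the argument is otherwise sound.
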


\begin{proof}
The fact that every object $X \in \G$ admits a monomorphism $0 \to X \to E$ with $E$ injective was shown already in~\cite[Th\'eor\`eme 1.10.1]{Gro}. The existence of injective envelopes and an injective cogenerator is proved in~\cite[Theorem 2.9]{Mitch} and~\cite[Corollary 2.11]{Mitch}, respectively. The fact that $\G$ has products and many other properties of $\G$ are clear from the Popescu-Gabriel theorem, see e.g.\ \cite[Theorem X.4.1]{S}.
\end{proof}

\subsection{Quasi-coherent modules over diagrams of rings}
\label{subsec:Qco-mod}

The simplest examples of Grothendieck categories are module categories $\G = \ModR$. In this section we construct more complicated examples, involving diagrams of rings and diagrams of modules over these rings. In fact, for suitable choices we obtain a category equivalent to the category of quasi-coherent sheaves over any given scheme. The presentation here is an adjusted version of~\cite[\S2]{EE}. Since the discussion in~\cite[\S2]{EE} is rather brief and many details are omitted, we will also discuss in~\S\ref{subsec:Qco-sheaves} the translation between quasi-coherent sheaves and the Grothendieck categories which we describe here.

\begin{defn} \label{def:poset-ring}
Let $(I,\le)$ be a partially ordered set. Then a \emph{representation} $R$ of the poset $I$ in the category of rings is given by the following data:
\begin{enumerate}
\item for every $i \in I$, we have a ring $R(i)$,
\item for every $i \le j$, we have a ring homomorphism $R^i_j: R(i) \to R(j)$, and
\item we require that for every triple $i \le j \le k$ the morphism $R^i_k\dd R(i) \to R(k)$ coincides with the composition $R^j_k \circ R^i_j$, and also that $R^i_i = 1_{R(i)}$.
\end{enumerate}
\end{defn}

\begin{rem} \label{rem:poset-rings}
If we view $I$ as a thin category in the usual way, then $R$ is none other than a covariant functor
\[ R\dd I \la \mathrm{Rings}. \]
\end{rem}

\begin{rem} \label{rem:poset-rings-nc}
Although all of our examples and the geometrically minded motivation will involve only representations of posets in the category of commutative rings, non-commutative rings can be potentially useful too. For instance, one can consider sheaves of algebras of differential operators and ring representations coming from them. In any case, the commutativity is not necessary for the basic properties which we discuss in this section, so we do not include it in our definition.
\end{rem}

Having defined representations of $I$ in the category of rings, we can define modules over such representations in a straightforward manner.

\begin{defn} \label{def:poset-module}
Let $R$ be a representation of a poset $I$ in the category of rings. A \emph{right $R$-module} is
\begin{enumerate}
\item a collection $(M(i))_{i \in I}$, where $M(i) \in \Modr{R(i)}$ for each $i \in I$
\item together with morphisms of the additive groups $M^i_j\dd M(i) \to M(j)$ for each $i \le j$
\item satisfying the compatibility conditions $M^i_k = M^j_k \circ M^i_j$ and $M^i_i = 1_{M(i)}$ for every triple $i \le j \le k$, and \st
\item the ring actions are respected in the following way: Given $x \in R(i)$ and $m \in M(i)$ for $i \in I$, then for any $j \ge i$ we have the equality
\[ M^i_j(m \cdot x) = M^i_j(m) \cdot R^i_j(x). \]
\end{enumerate}
\end{defn}


All our modules in the rest of the text are going to be right modules unless explicitly stated otherwise, so we will omit usually the adjective ``right''. In order to obtain a category, it remains to define morphisms of $R$-modules. The definition is the obvious one.

\begin{defn} \label{def:poset-morph}
Let $R$ be a representation of a poset $I$ in the category of rings and $M,N$ be $R$-modules. A morphism $f\dd M \to N$ is a collection of $(f(i)\dd M(i) \to N(i))_{i \in I}$, where $f(i)$ is a morphism of $R(i)$-modules for every $i \in I$, and the square
\[
\begin{CD}
M(i) @>{f(i)}>> N(i)     \\
@V{M^i_j}VV @VV{N^i_j}V  \\
M(j) @>{f(j)}>> N(j)     \\
\end{CD}
\]
commutes for every $i < j$.
\end{defn}

Let us denote the category of all $R$-modules by $\ModR$. As we quickly observe:

\begin{prop} \label{prop:poset-modules}
Let $(I,\le)$ be a poset and $R$ a representation of $I$ in the category of rings. Then $\ModR$ is a Grothendieck category. Moreover limits and colimits of diagrams of modules are computed component wise---we compute the corresponding (co)limit in $\Modr{R(i)}$ for each $i \in I$ and connect these by the (co)limit morphisms.
\end{prop}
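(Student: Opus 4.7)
The plan is to verify the claimed description of (co)limits first, and then deduce each of (Gr1)--(Gr3) together with the abelian structure from the corresponding fact for the component categories $\Modr{R(i)}$.

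Given a diagram $D\dd K \to \ModR$, $k \mapsto M_k$, define a candidate $L \in \ModR$ by $L(i) = \li_{k \in K} M_k(i)$ computed in $\Modr{R(i)}$. For $i \le j$, the transition maps $(M_k)^i_j\dd M_k(i) \to M_k(j)$ assemble via the composition $M_k(i) \to M_k(j) \to L(j)$ into a cocone over $(M_k(i))_k$, yielding a unique morphism of abelian groups $L^i_j\dd L(i) \to L(j)$. The functoriality $L^i_k = L^j_k \circ L^i_j$ and $L^i_i = 1$ follow from uniqueness in the universal property. The ring-action compatibility (4) of Definition~\ref{def:poset-module} is checked on generators coming from each $M_k(i)$, where it is the hypothesis on $M_k$. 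The universal property of $L$ in $\ModR$ now reduces to the corresponding component-wise universal properties. Dually for limits. In particular, this gives (Gr1).

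Next I would verify that $\ModR$ is abelian. Define $(\Ker f)(i) = \Ker f(i)$, $(\Coker f)(i) = \Coker f(i)$ in $\Modr{R(i)}$. The naturality square in Definition~\ref{def:poset-morph} shows that $M^i_j$ restricts to $\Ker f(i) \to \Ker f(j)$ and descends to $\Coker f(i) \to \Coker f(j)$; the compatibility with ring actions is inherited from $M$ and $N$. These constructions represent the categorical kernel and cokernel by the same component-wise argument as above, and since the canonical map from coimage to image is an isomorphism in each $\Modr{R(i)}$, it is an isomorphism in $\ModR$. Hence $\ModR$ is abelian with short exact sequences being exactly those which are short exact in each component.

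For (Gr2), given a directed system of short exact sequences in $\ModR$, passing to direct limits yields exactness in each $\Modr{R(i)}$ because each module category is Grothendieck; since the limit is computed component-wise and short exactness is component-wise, exactness lifts to $\ModR$.

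Finally, for (Gr3), for each $i \in I$ introduce the ``free'' module $R_i$ defined by $R_i(j) = R(j)$ if $j \ge i$ and $R_i(j) = 0$ otherwise, with transition maps $R^j_k$ where defined; an elementary check shows $\Hom_{\ModR}(R_i, M) \cong M(i)$ naturally. Then $G = \bigoplus_{i \in I} R_i$ (which exists by (Gr1)) is a generator, because any $M \in \ModR$ admits an epimorphism $\bigoplus_{i \in I} G^{(M(i))} \twoheadrightarrow M$ obtained by assembling, for each $i$ and each $m \in M(i)$, the corresponding morphism $R_i \to M$.

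The only mildly subtle step is the one in the first paragraph: one must correctly induce the transition maps $L^i_j$ and check they satisfy condition (4); after that, every other verification reduces mechanically to the corresponding property of $\Modr{R(i)}$.
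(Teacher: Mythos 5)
Your proposal is correct and follows essentially the same route as the paper: verify component-wise (co)limits and the abelian/AB5 structure, then construct the generator $G = \bigoplus_{i \in I} R_i$ with $R_i(j) = R(j)$ for $j \ge i$ and $0$ otherwise, using the adjunction $\Hom_R(R_i, M) \cong M(i)$. The paper dismisses the first steps as "very easy to check" and records only the generator construction (writing $P_i$ where you write $R_i$), so you are simply filling in details the paper elides.
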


\begin{proof}
Everything is very easy to check except for the existence of a generator in $\ModR$. In fact, there is a generating set $\{P_i \mid i \in I\}$ of projective modules described as follows:
\[
P_i(j) =
\begin{cases}
R(j) & \textrm{if } j \ge i, \\
0    & \textrm{otherwise}
\end{cases}
\]
and the homomorphism $P_i(j) \to P_i(k)$ for $j \le k$ either coincides with $R(j) \to R(k)$ if $i \le j \le k$ or vanishes otherwise.

One directly checks that there is a isomorphism
\[
\Hom_R(P_i,M) \cong M(i) \qquad \textrm{for each } i \in I \textrm{ and } M \in \ModR
\]
which assigns to every $f\dd P_i \to M$ the element $f(i)(1_{R(i)}) \in M(i)$. Moreover, the canonical homomorphism
\[
\coprod_{i \in I} P_i^{(M(i))} \la M
\]
is surjective for every $M \in \ModR$, so $G = \coprod_{i \in I} P_i$ is a projective generator.
\end{proof}

Although being valid Grothendieck categories, the categories $\ModR$ as above are not the categories of our interest yet. In order to get a description of categories of quasi-coherent sheaves as promised, we must consider certain full subcategories instead. In order for this to work, we need an extra condition on $R$:

\begin{defn} \label{def:flat} \cite{EE}
Let $R$ be a representation of a poset $I$ in rings. We call $R$ a \emph{flat} representation if for each pair $i < j$ in $I$, the ring homomorphism $R^i_j\dd R(i) \to R(j)$ gives $R(j)$ the structure of a flat left $R(i)$-module. That is,
\[ - \otimes_{R(i)} R(j)\dd \Modr{R(i)} \la \Modr{R(j)} \]
is an exact functor.
\end{defn}

As discussed later in \S\ref{subsec:Qco-sheaves}, the representations coming from structure sheaves of schemes always satisfy this condition. For such an $R$, we can single out the modules we are interested in:

\begin{defn} \label{def:Qco}
Let $R$ be a flat representation of $I$ in rings. A module $M \in \ModR$ is called \emph{quasi-coherent} if, for every $i < j$, the $R(j)$-module homomorphism
\begin{align*}
M(i) \otimes_{R(i)} R(j) &\la         M(j)              \\
   m \otimes x           &\longmapsto M^i_j(m) \cdot x
\end{align*}
is an isomorphism.

Denote the full subcategory of $\ModR$ formed by quasi-coherent $R$-modules by $\QcoR$.
\end{defn}

Again, we obtain a Grothendieck category.

\begin{thm} \label{thm:Qco-Grothendieck}
Let $(I,\le)$ be a poset and $R$ be a flat representation of $I$ in the category of rings. Then $\QcoR$ is a Grothendieck category. Moreover colimits of diagrams and limits of finite diagrams are computed component wise---that is, for each $i \in I$ separately.
\end{thm}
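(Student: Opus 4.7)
The plan is to verify the three axioms of a Grothendieck category for $\QcoR$ by reducing each one, via the componentwise computation of limits and colimits in $\ModR$ (Proposition~\ref{prop:poset-modules}), to properties of the individual categories $\Modr{R(i)}$. The flatness hypothesis on $R$ is precisely what ensures $\QcoR$ is closed under the relevant constructions inside $\ModR$.

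First I would verify that $\QcoR$ is closed in $\ModR$ under all small colimits and under finite limits, so that both are computed componentwise. For a diagram $(M_\alpha)$ of quasi-coherent modules and $i \le j$ in $I$, the natural map
\[
(\li_\alpha M_\alpha)(i) \otimes_{R(i)} R(j) \la (\li_\alpha M_\alpha)(j)
\]
is an isomorphism because tensor product commutes with colimits and evaluation at each vertex does too (Proposition~\ref{prop:poset-modules}), reducing the claim to quasi-coherence of each individual $M_\alpha$. The same argument works for finite limits, but here the flatness hypothesis enters essentially: $- \otimes_{R(i)} R(j)$ preserves kernels because $R(j)$ is flat over $R(i)$, and it trivially preserves finite products. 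As the special case of kernels and cokernels, this shows $\QcoR$ inherits from $\ModR$ the structure of an abelian category, with the inclusion $\QcoR \hookrightarrow \ModR$ exact and cocontinuous. Axioms (Gr1) and (Gr2) follow: cocompleteness from closure under colimits, and exactness of direct limits from its validity in each $\Modr{R(i)}$.

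The main obstacle is the existence of a generator, i.e.\ axiom (Gr3). The projective generators $P_i$ of $\ModR$ from Proposition~\ref{prop:poset-modules} are typically not quasi-coherent (already for a two-element chain and a non-minimal $i$, one has $P_i(i') = 0$ for $i' < i$ but $P_i(j) = R(j) \neq 0$ for $j \ge i$), and since $\QcoR$ is not closed under $\ModR$-submodules, one cannot simply restrict a generator from $\ModR$. Instead, I would use a L\"owenheim--Skolem style argument to exhibit an infinite cardinal $\kappa$, depending only on $|I|$ and $\sup_{i \in I}|R(i)|$, such that every $M \in \QcoR$ is the directed union of its quasi-coherent subobjects of cardinality $\le \kappa$. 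Concretely, given a subset $S \subseteq \coprod_i M(i)$ with $|S| \le \kappa$, one constructs a quasi-coherent submodule $\langle S\rangle \le M$ containing $S$ and of cardinality $\le \kappa$ by an $\omega$-indexed saturation procedure: at each stage one closes under the transition maps $M^i_j$ and the $R(i)$-actions (\emph{upward saturation}), and for each $i \le j$ and each element $y \in \langle S \rangle(j)$ one adds to $\langle S \rangle(i)$ a preimage of $y$ under the isomorphism $M(i) \otimes_{R(i)} R(j) \cong M(j)$ provided by quasi-coherence of the ambient $M$ (\emph{downward saturation}). Keeping track of cardinalities, the size stays $\le \kappa$, and a set of representatives of isomorphism classes of quasi-coherent modules of cardinality $\le \kappa$ yields a generating set, whose coproduct is a single generator.

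The remaining assertions of the theorem then come for free: completeness of $\QcoR$ and existence of an injective cogenerator follow from Theorem~\ref{thm:enough-inj} applied to the Grothendieck category $\QcoR$. It should be noted that the componentwise description necessarily fails for infinite products, since $- \otimes_{R(i)} R(j)$ does not commute with infinite products, which is why the statement restricts to \emph{finite} limits. I expect the bookkeeping in the downward saturation step, and the precise determination of $\kappa$, to be the most delicate part of the proof, as it is the only point where the combinatorics of the poset $I$ interacts non-trivially with the quasi-coherence condition.
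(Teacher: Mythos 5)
Your route is the same as the paper's: componentwise computation of colimits and finite limits reduces to exactness of $-\otimes_{R(i)}R(j)$ (flatness entering for kernels), and the real content is the generator, which the paper defers to \cite[Corollary 3.5]{EE}; your L\"owenheim--Skolem saturation is essentially the argument carried out there. One detail worth making explicit in the downward saturation step: closure under preimages gives \emph{surjectivity} of $\langle S\rangle(i)\otimes_{R(i)}R(j)\to\langle S\rangle(j)$, but a submodule of a quasi-coherent module is not automatically quasi-coherent, and the needed \emph{injectivity} follows only because flatness of $R(j)$ over $R(i)$ makes $\langle S\rangle(i)\otimes_{R(i)}R(j)\hookrightarrow M(i)\otimes_{R(i)}R(j)\cong M(j)$ injective --- so flatness is used twice, once for finite limits and once again inside the generator construction.
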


\begin{proof}
Again, the main task is to prove that $\QcoR$ has a generator and the rest is rather easy, since taking colimits and kernels (hence also finite limits) commutes with the tensor products $- \otimes_{R(i)} R(j)$, where $i,j \in I$ and $i \le j$. We omit the proof of the existence of a generator as it is rather technical, and refer to~\cite[Corollary 3.5]{EE} instead.
\end{proof}

\begin{rem} \label{rem:products}
Every Grothendieck category has small products by Theorem~\ref{thm:enough-inj}, and so must have them $\QcoR$. However, these are typically \emph{not} computed component wise and do not seem to be well understood. Since $\QcoR$ is a cocomplete category with a generator and the inclusion functor $\QcoR \to \ModR$ preserves small colimits, the inclusion $\QcoR \to \ModR$ has a right adjoint
\[ Q\dd \ModR \la \QcoR \]
by the special adjoint functor theorem~\cite[\S5.8]{McL2} (compare to~\cite[Lemma B.12]{TT}!) Following~\cite{TT}, we call such a $Q$ the \emph{coherator}. Clearly, if $(M_k)_{k \in K}$ is a collection of quasi-coherent $R$-modules, the product in $\QcoR$ is computed as $Q(\prod M_k)$, where $\prod M_k$ stands for the (component wise) product in $\ModR$. However, the abstract way of constructing $Q$ gives very little information on what $Q(\prod M_k)$ actually looks like. Some more information on this account is given in~\cite[B.14 and B.15]{TT}.
\end{rem}

Before discussing a general construction in the next section, we exhibit particular examples of flat representations of posets of geometric origin and quasi-coherent modules over them.

\begin{expl} \label{expl:P1k}
Consider the three element poset be given by the Hasse diagram
\[
\begin{CD}
\bullet @>>> \bullet @<<< \bullet
\end{CD}
\]
and a representation in the category of rings of the form
\[
\begin{CD}
R\dd \quad k[x] @>{\subseteq}>> k[x,x\inv] @<{\supseteq}<< k[x\inv],
\end{CD}
\]
where $k$ is an arbitrary commutative ring. Clearly $R$ is a flat representation since the inclusions are localization morphisms.

For each $n \in \Z$, we have a quasi-coherent $R$-module
\[
\begin{CD}
\OO(n) \dd \quad k[x] @>{\subseteq}>> k[x,x\inv] @<{x^n \cdot -}<< k[x\inv].
\end{CD}
\]
One can easily check that $\OO(m) \not\cong \OO(n)$ whenever $m \ne n$, since by direct computation $\Hom_R(\OO(m), \OO(n)) = 0$ for $m>n$.

In fact, the category $\QcoR$ is equivalent to the category of quasi-coherent sheaves over $\PP1k$, the projective line over $k$.
\end{expl}

\begin{expl} \label{expl:P2k}
Given a commutative ring $k$, let us now show a flat representation of a poset corresponding to the scheme $\PP2k$, the projective plane over $k$. The Hasse diagram of the poset has the following shape:
\[
\xymatrix{
\bullet \ar[drr] \ar[d] && \bullet \ar[dll]|\hole \ar[drr]|\hole && \bullet \ar[dll] \ar[d]   \\
\bullet \ar[drr]        && \bullet \ar[d]                        && \bullet \ar[dll]          \\
&& \bullet
}
\]

To describe the representation $R$ corresponding to $\PP2k$, it is enough to define the ring homomorphisms corresponding to arrows in the Hasse diagram. Such a description is given in the following diagram, where all the rings are subrings of $k[x_0^{\pm1},x_1^{\pm1},x_2^{\pm1}]$, the ring of Laurent polynomials in three indeterminates over $k$, and all the ring homomorphisms are inclusions:
\[
\xymatrix{
k[\frac{x_1}{x_0}, \frac{x_2}{x_0}] \ar[dr] \ar[d] &
k[\frac{x_0}{x_1}, \frac{x_2}{x_1}] \ar[dl]|\hole \ar[dr]|\hole  &
k[\frac{x_0}{x_2}, \frac{x_1}{x_2}] \ar[dl] \ar[d]
\\
k[\frac{x_2}{x_0}, (\frac{x_1}{x_0})^{\pm1}] \ar[dr] &
k[\frac{x_1}{x_2}, (\frac{x_0}{x_2})^{\pm1}] \ar[d]  &
k[\frac{x_0}{x_1}, (\frac{x_2}{x_1})^{\pm1}] \ar[dl]
\\
& k[(\frac{x_1}{x_0})^{\pm1}, (\frac{x_2}{x_0})^{\pm1}]
}
\]
\end{expl}

\subsection{Realizing modules over schemes as modules over diagrams}
\label{subsec:Qco-sheaves}

The aim of this section is to make precise how the category of quasi-coherent sheaves of modules over a given scheme can be described in terms of quasi-coherent modules over a flat ring representation of a poset. Here we assume some familiarity with the basic notions in place: those of a scheme and of a quasi-coherent sheaf of modules over a scheme~\cite{Hart2,GW}. On the other hand, understanding this part is not necessary for understanding most of the text which follows, so the reader can skip it and continue with Section~\ref{sec:groth-exact}.

Suppose that $(X,\OO_X)$ is a scheme, that is a ringed space which is locally isomorphic to $(\Spec R,\OO_{\Spec R})$ for a commutative ring $R$. Given this data, we first construct a representation of a poset in the category of commutative rings.

\begin{constr} \label{constr:scheme-to-rep}
Let $\U$ be a collection of open affine sets of $X$ satisfying the following two conditions:
\begin{enumerate}
\item $\U$ covers $X$; that is $X = \bigcup \U$.
\item Given $U,V \in \U$, then $U \cap V = \bigcup \{W \in \U \mid W \subseteq U \cap V\}$.
\end{enumerate}

It is always a safe choice to take the collection of all affine open sets, but often much smaller sets $\U$ will do. For projective schemes for example, we can always choose $\U$ to be finite.

Now $\U$ is a poset with respect to inclusion and we put $I = \U\op$, the opposite poset. Since $\OO_X$ is a sheaf of commutative rings, we in particular have a functor
\[ \U\op \la \mathrm{CommRings}  \]
which sends a pair $U \supseteq V$ of sets in $\U$ to the restriction $\res^U_V\dd \OO_X(U) \to \OO_X(V)$. By the very definition of $I$, this is the same as saying that we have a covariant functor $R\dd I \to \mathrm{CommRings}$ such that, in the notation of Definition~\ref{def:poset-ring}, we have $R(U) = \OO(U)$ and $R^U_V = \res^U_V$.
\end{constr}

A standard fact is that the representation of $I$ we get in this way is flat:

\begin{lem} \label{lem:flat-rep}
Let $R$ be the representation of $I$ in the category of rings as in Construction~\ref{constr:scheme-to-rep}. Then $R$ is flat.
\end{lem}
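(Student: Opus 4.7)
The plan is to unwind what needs to be checked and then reduce to a well-known local fact about open immersions of affine schemes. Fix $i \le j$ in $I = \U\op$, which by Construction~\ref{constr:scheme-to-rep} corresponds to a pair $U \supseteq V$ of affine opens in $\U$. Setting $A = \OO_X(U)$ and $B = \OO_X(V)$, the ring homomorphism $R^i_j$ is the restriction map $\res^U_V\dd A \to B$. Thus the statement to prove is that for any inclusion $V \subseteq U$ of affine open subsets of $X$, the map $\res^U_V$ makes $B$ into a flat $A$-module.

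Geometrically, $V \hookrightarrow U$ is an open immersion of affine schemes $\Spec B \hookrightarrow \Spec A$, and I would argue that such an immersion is always flat on the level of rings. The cleanest route is through the local criterion of flatness: $B$ is flat over $A$ if and only if for every prime $\mathfrak{q}$ of $B$ with contraction $\mathfrak{p} = (\res^U_V)^{-1}(\mathfrak{q})$ in $A$, the induced map $A_{\mathfrak{p}} \to B_{\mathfrak{q}}$ is flat. But if $x \in V$ corresponds to $\mathfrak{q} \in \Spec B$ and to $\mathfrak{p} \in \Spec A$ under the open immersion $V \hookrightarrow U$, then the stalks of $\OO_X$ at $x$ computed from either side agree, so the induced map on local rings is an isomorphism. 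In particular it is flat, which gives the desired conclusion.

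A parallel, more elementary argument that avoids invoking the local criterion for flatness is to cover $V$ by basic open sets of $U$: pick $f_s \in A$ ($s \in S$) so that $V = \bigcup_{s \in S} D(f_s)$ inside $\Spec A$. Each localization $A \to A_{f_s}$ is flat, and since $D(f_s)$ is the same affine open computed in $\Spec A$ or in $\Spec B$, one has $A_{f_s} = B_{f_s}$. Hence each $B_{f_s}$ is a flat $A$-module, and flatness of $B$ over $A$ follows since flatness is detected locally on $\Spec B$.

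The main obstacle is not really logical but notational/conceptual: one must be careful to match up $I = \U\op$ with $\U$ and to remember that the rings in the representation are the sections $\OO_X(U)$, so that the crucial input is precisely the flatness of restriction along open immersions of affines. Once this is set up, either of the two approaches above concludes the argument without further difficulty.
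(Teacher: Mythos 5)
Your first argument is exactly the paper's: the proof there also reduces to the local criterion of flatness (cited as~\cite[Theorem 7.1]{Mat} via~\cite[Proposition 14.3]{GW}), and the point that stalks of $\OO_X$ agree along an open immersion so that $A_{\mathfrak{p}} \to B_{\mathfrak{q}}$ is an isomorphism is precisely what is being ``unraveled.'' Your second, more hands-on argument via a cover of $V$ by basic opens $D(f_s)$ of $U$ is a legitimate and slightly more elementary alternative, though under the hood it still uses the same local detection of flatness; both routes are correct.
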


\begin{proof}
This is proved for instance in~\cite[Proposition 14.3(1) and (4)]{GW}. Upon unraveling the definitions, the statements relies on the following fact from commutative algebra, \cite[Theorem 7.1]{Mat}: Given a homomorphism $\ph\dd R \to S$ of commutative rings, then $S$ is flat over $R$ \iff $S_\mathfrak{q}$ is flat over $R_{\ph\inv(\mathfrak{q})}$ for every $\mathfrak{q} \in \Spec S$.
\end{proof}

It is now easy to construct a functor from the category $\Qco X$ of quasi-coherent sheaves of $\OO_X$-modules to the category $\QcoR$ of quasi-coherent modules over $R$.

\begin{constr} \label{constr:modules-vs-sheaves}
Let us adopt the notation from Construction~\ref{constr:scheme-to-rep}. Given $M \in \Qco X$ and two affine open sets $U \supseteq V$, then canonically $M(U) \otimes_{\OO_X(U)} \OO(V) \cong M(V)$---here we just apply~\cite[Remark 7.23 and Proposition 7.24(2)]{GW} to the open immersion
\[ (\Spec R(V),\OO_{\Spec R(V)}) \cong (V,\OO_{X|V}) \la (U,\OO_{X|U}) \cong (\Spec R(U),\OO_{\Spec R(U)}) \]
and the corresponding ring homomorphism $R^U_V\dd R(U) \to R(V)$.

Thus, viewing the sheaf $M$ as a contravariant functor from the poset of open sets of $X$ to $\Ab$, we may restrict the functor to $I = \U\op$. This way we assign to $M \in \Qco X$ an $R$-module $F(M)$ and $F(M)$ is quasi-coherent by the above discussion. This assignment is obviously functorial, so that we get an additive functor
\[ F\dd \Qco X \la \QcoR. \]
\end{constr}

Seemingly, there is much more structure in $M \in \Qco X$ than in $F(M)$. While the former is a sheaf of modules over a possibly complicated topological space $X$, the latter is only a collection of modules satisfying a certain coherence condition. However, the fact that $M$ is quasi-coherent is itself very restrictive and we have the following crucial result; see~\cite[\S2]{EE}.

\begin{thm} \label{thm:modules-vs-sheaves}
The functor $F$ from Construction~\ref{constr:modules-vs-sheaves} (which depends on the choice of $\U$ in Construction~\ref{constr:scheme-to-rep}) is an equivalence of categories.
\end{thm}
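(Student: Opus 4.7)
The plan is to construct an explicit quasi-inverse $G\dd \QcoR \to \Qco X$ and exhibit the natural isomorphisms $FG \simeq \operatorname{id}_{\QcoR}$ and $GF \simeq \operatorname{id}_{\Qco X}$. The key enabling observation is that condition (ii) in Construction~\ref{constr:scheme-to-rep} is exactly what is required for $\U$ to form a basis for the topology of $X$; consequently every sheaf on $X$ is determined up to canonical isomorphism by its restriction to $\U$, and any compatible system of sections on $\U$ (with the appropriate cocycle condition on overlaps) glues to a unique sheaf on $X$.

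To define $G$ on objects: given $N \in \QcoR$ and $U \in \U$, the $R(U) = \OO_X(U)$-module $N(U)$ corresponds under the classical affine equivalence $\widetilde{(-)}$ to a quasi-coherent sheaf $\widetilde{N(U)}$ on $U \cong \Spec R(U)$. Whenever $V \subseteq U$ with $V \in \U$, Definition~\ref{def:Qco} supplies a canonical base-change isomorphism $N(U) \otimes_{R(U)} R(V) \cong N(V)$, which under $\widetilde{(-)}$ becomes a sheaf isomorphism $\widetilde{N(U)}|_V \cong \widetilde{N(V)}$. The transitivity axiom $M^i_k = M^j_k \circ M^i_j$ of Definition~\ref{def:poset-module} then supplies the cocycle condition on triple chains $W \subseteq V \subseteq U$ needed to glue the local sheaves $\widetilde{N(U)}$ into a sheaf $G(N)$ on $X$; quasi-coherence of $G(N)$ is a local property and holds tautologically on the basis $\U$. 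Morphisms are handled analogously, and functoriality of $G$ is immediate from that of $\widetilde{(-)}$.

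Verifying the two natural isomorphisms is straightforward once $G$ is in place. For $N \in \QcoR$ one has $FG(N)(U) = G(N)(U) = N(U)$ for every $U \in \U$, with matching transition maps by construction, yielding $FG \simeq \operatorname{id}_{\QcoR}$. For $M \in \Qco X$, both $GF(M)$ and $M$ restrict on $\U$ to the same data $(M(U), \res^U_V)_{V \subseteq U}$; since a sheaf on $X$ is uniquely determined up to canonical isomorphism by its restriction to a basis, this produces a natural isomorphism $GF(M) \simeq M$.

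The main technical obstacle is the gluing step, namely verifying that the sheaf isomorphisms $\widetilde{N(U)}|_V \cong \widetilde{N(V)}$ truly satisfy the cocycle condition on triples $W \subseteq V \subseteq U$. This reduces to careful bookkeeping combining the base-change behavior of $\widetilde{(-)}$, specifically the compatibility $\widetilde{M \otimes_{R(U)} R(V)} \cong \widetilde{M}|_V$ for the localization-type maps $R(U) \to R(V)$ furnished by Lemma~\ref{lem:flat-rep}, with the transitivity of the structure maps $M^i_j$ given by Definition~\ref{def:poset-module}(3). Once this compatibility is in place, the gluing produces $G(N)$ as a genuine sheaf and the rest of the proof is formal.
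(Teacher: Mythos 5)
Your strategy---construct an explicit quasi-inverse by gluing the affine pieces $\widetilde{N(U)}$---is reasonable in spirit, but two of its load-bearing claims are false as stated, and the gap they leave is exactly the part the paper works hard to handle.

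First, condition (2) in Construction~\ref{constr:scheme-to-rep} does \emph{not} make $\U$ a basis for the topology of $X$. It only says that the intersection of two sets \emph{from $\U$} is covered by smaller sets from $\U$; it says nothing about arbitrary open subsets. The degenerate choice $X$ affine, $\U = \{X\}$, already satisfies (1) and (2) while obviously failing to be a basis. So the clean ``sheaves on $X$ = sheaves on the basis $\U$'' dictionary you invoke is not available.

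Second, and more seriously, the cocycle condition needed to glue the local sheaves $\widetilde{N(U)}$, $U \in \U$, into a single sheaf on $X$ lives on \emph{triple overlaps} $U \cap V \cap W$, not on \emph{chains} $W \subseteq V \subseteq U$. The transitivity axiom $M^i_k = M^j_k \circ M^i_j$ from Definition~\ref{def:poset-module} is a chain statement, not an overlap statement, and it does not directly produce the required compatibility. Worse, since $U \cap V$ need not belong to $\U$, even defining the gluing isomorphism $\alpha_{UV}$ on $U \cap V$ already requires an auxiliary gluing over the sets $Z \in \U$ with $Z \subseteq U \cap V$, together with its own compatibility checks on $Z \cap Z'$ (which again need not be in $\U$). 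None of this is impossible, but it is genuinely delicate, and your write-up treats it as routine bookkeeping after mislocating the cocycle condition.

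The paper avoids this nested-cocycle problem altogether. Instead of gluing the whole family $\{\widetilde{N(U)}\}_{U \in \U}$ at once, it runs a transfinite/Zorn argument: starting from nothing, it enlarges the upper set $L \subseteq I$ one new $U$ at a time, at each step gluing the already-constructed sheaf $M_L$ on $\bigcup L$ with the single affine sheaf $\widetilde{A(U)}$ over the one open set $\bigcup L \cap U$, using condition (2), \cite[Theorem 7.16(1)]{GW}, and the sheaf axiom to build the overlap isomorphism. Gluing two sheaves has no cocycle condition, which is precisely why the inductive route is cleaner. If you want to keep your ``explicit quasi-inverse'' formulation, you either need to correctly set up and verify the full cocycle data on the cover $\U$ (including the sub-gluings on the non-$\U$ overlaps), or switch to the stepwise gluing argument the paper uses.
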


\begin{proof}
Note that a quasi-coherent sheaf of modules $M \in \Qco X$ is determined up to a canonical isomorphism by its image under $F$. Indeed, this follows from conditions~(1) and~(2) in Construction~\ref{constr:scheme-to-rep}, \cite[Theorem 7.16(1)]{GW} and the sheaf axiom. Similarly a morphism $f\dd M \to N$ in $\Qco X$ is fully determined by $F(f)$. In particular $F$ is a faithful functor.

In order to prove that $F$ is dense, fix a module $A \in \QcoR$ and let us introduce some notation. Given an upper subset $L \subseteq I = \U\op$ \wrt the partial order $\le$ on $I$, then $\bigcup L \subseteq X$ is an open subset of $X$, so that we can consider quasi-coherent sheaves over $\bigcup L$. We can also restrict the representation $R\dd I \to \mathrm{CommRings}$ to the representation $L \to \mathrm{CommRings}$, which we denote by $R_L$. Clearly $R_L$ is a flat representation and Construction~\ref{constr:modules-vs-sheaves} provides us with a functor
\[ F_L\dd \Qco{\bigcup L} \to \Qco{R_L}. \]
Finally, we have the restriction functor $\QcoR \to \Qco{R_L}$ and we will denote the image of $A$ under this functor by $A|_L$. 

Now we shall consider the collection $\Lambda$ of all upper subsets $L \subseteq I = \U\op$ \st there is a quasi-coherent sheaf of modules $M_L \in \Qco{\bigcup L}$ with $F_L(M_L) = A|_L$. As such $M_L$ is unique up to a canonical isomorphism, the collection $\Lambda$ is closed under unions of chains. Hence by Zorn's lemma there is an upper subset $L \subseteq I$ which belongs to $\Lambda$ and is maximal such \wrt inclusion. We claim that $L = I$. Suppose by way of contradiction that $L \subsetneqq I$. Then there is $U \in \U\op = I$ \st $U \not\subseteq \bigcup L$ and we consider the unique quasi-coherent sheaf $\widetilde{A(U)} \in \Qco U$ whose global section module is $A(U)$. Now we invoke condition~(2) of Construction~\ref{constr:scheme-to-rep} which, together with~\cite[Theorem 7.16(1)]{GW} and the sheaf axiom, allows us to construct a canonical isomorphism $(M_L)|_V \cong \widetilde{A(U)}|_V$ of sheaves over the open set $V = \bigcup L \cap U$. Thus we can glue $M_L$ and $\widetilde{A(U)}$ to a quasi-coherent sheaf over $\bigcup L \cup U$, showing that $L \cup \{W \in I \mid W \ge U \textrm{ in } I \}$ belongs to $\Lambda$, in contradiction to the choice of $L$. This proves the claim and the density of $F$.

The fact that $F$ is full is proved in a similar way. Given a morphism $g\dd A \to B$ in $\QcoR$, we denote by $\Lambda'$ the collection of all upper subsets $L \subseteq I$ \st $g|_L$ lifts to a morphism of sheaves of modules over the open subscheme $\bigcup L \subseteq X$. We ought to prove that $L = I$ and we again do so using Zorn's lemma.
\end{proof}

\section{Exact categories of Grothendieck type}
\label{sec:groth-exact}

In various contexts (see~\cite{SaSt,StPo} for example), it is useful to consider more general categories than Grothendieck categories. The rest of the text, however, is perfectly relevant when read as if it were written for Grothendieck categories or even for module categories. Thus, the reader who wishes to avoid the related technicalities may skip the section and read further from Section~\ref{sec:wfs}.

\subsection{Efficient exact categories}
\label{subsec:efficient}

In order describe our object of interest, we recall some terminology. The central concept is that of an exact category, which is originally due to Quillen, but the common reference for a simple axiomatic description is~\cite[Appendix A]{Kst} and an extensive treatment is given in~\cite{Bu}.

An \emph{exact category} is an additive category $\E$ together with a distinguished class of diagrams of the form
\[ 0 \la X \overset{i}\la Y \overset{d}\la Z \la 0, \]
called \emph{conflations}, satisfying certain axioms which make conflations behave similar to short exact sequences in an abelian category and allow to define Yoneda Ext groups with usual properties (see Section~\ref{sec:cotorsion}). Adopting the terminology from~\cite{Kst}, the second map in a conflation (denoted by $i$) is called \emph{inflation}, while the third map (denoted by $d$) is referred to as \emph{deflation}.

Morally, an exact category is an extension closed subcategory of an abelian category, which is made precise in the following statement.

\begin{prop} \label{prop:exact-categories} \cite{Kst,Bu} ~
\begin{enumerate}
\item Let $\A$ be an abelian category. Then $\A$ considered together with all short exact sequences as conflations is an exact category.
\item Let $\E$ be an exact category and $\E' \subseteq \E$ be an extension closed subcategory (i.e.\ if $0 \to X \to Y \to Z \to 0$ is a conflation and $X,Z \in \E'$, then $Y \in \E'$). Then $\E'$, considered together with all conflations in $\E$ whose all terms belong to $\E'$, is again an exact category.
\item Every small exact category arises up to equivalence as an extension closed subcategory of an abelian category in the sense of~(1) and~(2).
\end{enumerate}
\end{prop}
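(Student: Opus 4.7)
The plan is to dispatch (1) and (2) by verifying the standard exact-category axioms directly, and then to prove (3) via a Gabriel--Quillen style functorial embedding into a Grothendieck category---the only genuinely nontrivial ingredient.

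For (1), I would verify the axioms of~\cite{Kst,Bu} for the class of all short exact sequences in an abelian category $\A$: closure under isomorphism is immediate; identities are both inflations and deflations via the split sequences $0 \to 0 \to X \to X \to 0$ and $0 \to X \to X \to 0 \to 0$; closure of inflations (resp.\ deflations) under composition holds because compositions of monomorphisms (resp.\ epimorphisms) in $\A$ remain monic (resp.\ epic) and the resulting cokernel (resp.\ kernel) extends to a short exact sequence; and the pushout/pullback axiom is the standard fact that in an abelian category, pushouts along a monomorphism preserve both the monomorphism and its cokernel, with the dual statement for pullbacks along epimorphisms. For (2), the same checklist propagates from $\E$ to $\E'$ once we check that ambient pushouts, pullbacks, and composites do not escape $\E'$. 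Given a conflation $0 \to X \to Y \to Z \to 0$ with all terms in $\E'$ and a morphism $X \to X'$ with $X' \in \E'$, the pushout in $\E$ yields a conflation $0 \to X' \to Y' \to Z \to 0$ whose outer terms lie in $\E'$, so $Y' \in \E'$ by extension-closedness; pullbacks along deflations are dual, and the composition axiom reduces via the standard $3 \times 3$ diagram to the same argument.

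For (3) I would use the Gabriel--Quillen embedding. Let $\mathcal{L}(\E)$ be the full subcategory of the additive functor category $[\E\op, \Ab]$ consisting of those $F$ for which $0 \to F(Z) \to F(Y) \to F(X)$ is exact for every conflation $0 \to X \to Y \to Z \to 0$ in $\E$. The central technical step is to show that $\mathcal{L}(\E)$ is a Grothendieck category; I would realize it as the category of sheaves on $\E\op$ for the topology generated by single-deflation covers, using the pullback stability of deflations built into the exact-category axioms to ensure that the covering sieves are closed under pullback and that the associated sheafification is exact. Granting this, the Yoneda embedding $h\dd \E \to \mathcal{L}(\E)$, $X \mapsto \Hom_\E(-,X)$, lands in $\mathcal{L}(\E)$ (representable functors are left exact in the required sense because inflations are monic and deflations are epic in $\E$), is fully faithful by the Yoneda lemma, and sends conflations to short exact sequences by construction. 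Extension-closedness of $h(\E)$ then follows by analysing a short exact sequence $0 \to h(X) \to F \to h(Z) \to 0$ in $\mathcal{L}(\E)$: one locally lifts $1_Z$ through a deflation witnessing $F \to h(Z)$ as an epimorphism, and patches the data into a single $Y \in \E$ with $F \cong h(Y)$, fitting into a conflation $0 \to X \to Y \to Z \to 0$.

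The main obstacle is precisely the construction of the exact sheafification for the deflation topology on $\E\op$, where the axioms of $\E$ must be invoked in an essential way to establish the required pullback compatibilities of covers. Once that step is in place, everything else---fullness, faithfulness, preservation of conflations, and extension-closedness of the image---follows from formal diagram chases and the Yoneda lemma.
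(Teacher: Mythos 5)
The paper does not prove this proposition itself; it is cited to Keller and B\"uhler, where precisely the argument you outline is carried out. Your sketch---direct axiom checks for (1) and (2), followed by the Gabriel--Quillen embedding into the Grothendieck category of sheaves for the deflation topology for (3)---is the standard and intended approach.
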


For the results presented later to work, we need to impose extra conditions on our exact categories. Long story short---we need to impose requirements on the exact category which make it sufficiently resemble a Grothendieck category. As the requirement that $\E$ be a cocomplete category seems too restrictive in practice, we need to specify first which direct limits we are interested in in connection with the analogue of the left exactness property (Definition~\ref{def:groth}(Gr2)). In order to do so, the following definition is handy.

\begin{defn} \label{def:transf}
Let $\C$ be an arbitrary category, let $\lambda$ be an ordinal number, and let $(X_\alpha, f_{\alpha\beta})_{\alpha<\beta<\lambda}$ be a direct system indexed by $\lambda$ in a category $\C$:
\[
\xymatrix{
X_0 \ar[r]^{f_{01}}
\ar@/_1pc/[rr]_(.85){f_{02}} \ar@/_1.8pc/[rrr]_(.8){f_{03}}
\ar@/_2.8pc/[rrrrr]_(.85){f_{0\omega}} \ar@/_3.5pc/[rrrrrr]_(.85){f_{0,\omega+1}} &
X_1 \ar[r]^{f_{12}} &
X_2 \ar[r]^{f_{23}} &
X_3 \ar[r] &
\cdots \ar[r] &
X_\omega \ar[r]^{f_{\omega,\omega+1}} &
X_{\omega+1} \ar[r] &
\cdots \\
~
}
\]
Such a system is called a \emph{$\lambda$-sequence} if for each limit ordinal $\mu<\lambda$, the object $X_\mu$ together with the morphisms $f_{\alpha\mu}\dd X_\alpha \to X_\mu$, $\alpha<\mu$, is a colimit of the direct subsystem $(X_\alpha, f_{\alpha\beta})_{\alpha<\beta<\mu}$. From now on, whenever we are going to depict a $\lambda$-sequence, we are going to draw only the morphism of the form $f_{\alpha,\alpha+1}$.

The \emph{composition} of the $\lambda$-sequence is the colimit morphism
\[ X_0 \la \li_{\alpha<\lambda} X_\alpha, \]
if it exists in $\C$.

Finally, if $\I$ is a class of morphisms of $\C$, then a \emph{transfinite compositions} of morphisms of $\I$ are defined as the compositions of $\lambda$-sequences $(X_\alpha, f_{\alpha\beta})_{\alpha<\beta<\lambda}$ with $f_{\alpha,\alpha+1} \in \I$ for every $\alpha+1<\lambda$.
\end{defn}

For technical reasons for Quillen's small object argument in Section~\ref{sec:wfs}, we need one more definition. The necessary set theoretical concepts can be found in~\cite{Jech}.

\begin{defn} \label{def:small}
If $\C$ is a category, $\kappa$ is a cardinal number and $\D$ is a class of morphisms of $\C$, then an object $X \in \C$ is called \emph{$\kappa$-small relative to $\D$} if, for every infinite regular cardinal $\lambda\ge\kappa$ and every $\lambda$-sequence
\[ E_0 \la E_1 \la E_2 \la \cdots \la E_\alpha \la E_{\alpha+1} \la \cdots \]
in $\C$ \st $f_{\alpha,\alpha+1}\dd E_\alpha \to E_{\alpha+1}$ is in $\D$ for all $\alpha+1<\lambda$, the canonical map of sets
\[ \li_{\alpha<\lambda} \C(X,E_\alpha) \la \C(X,\li_{\alpha<\lambda} E_\alpha) \]
is an isomorphism.

The object $X$ is called \emph{small relative to $\D$} if it is $\kappa$-small relative to $\D$ for some cardinal $\kappa$.
\end{defn}

Then, modifying slightly the corresponding concept from~\cite{SaSt}, we state a first version of our specification which exact categories we are interested in.

\begin{defn} \label{def:efficient-exact}
An exact category $\E$ is called \emph{efficient} if

\begin{enumerate}
\item[(Ef0)] $\E$ is weakly idempotent complete. That is, every section $s\dd X \to Y$ in $\E$ has a cokernel or, equivalently by~\cite[Lemma 7.1]{Bu}, every retraction $r\dd Y \to Z$ in $\E$ has a kernel.
\item[(Ef1)] Arbitrary transfinite compositions of inflations exist and are themselves inflations.
\item[(Ef2)] Every object of $\E$ is small relative to the class of all inflations.
\item[(Ef3)] $\E$ admits a generator. That is, there is an object $G \in \E$ \st every $X \in \E$ admits a conflation $G^{(I)} \to X \to 0$.
\end{enumerate}
\end{defn}

Note that~(Ef1) is a weak analogue of cocompleteness of $\E$ and left exactness of direct limits, while (Ef2) is a technical condition necessary for the small object argument in Corollary~\ref{cor:small-obj-exact}. Typical examples of efficient exact categories which we have in mind are Grothendieck categories (see Proposition~\ref{prop:exact-Grothendieck} below) and various Frobenius exact categories used for the construction of algebraic triangulated categories (see for example~\cite[Theorem 4.2]{SaSt} or~\cite[Remark 2.15]{StPo}). A systematic way of constructing more examples is given in~\S\ref{subsec:groth-type} using the notion of deconstructible classes.

Let us look closer at what we can say about efficient exact categories in general. An important fact whose proof is postponed to Corollary~\ref{cor:small-ext} is that the Yoneda Ext groups are always \emph{sets} rather than proper classes. This is not a priori clear since efficient exact categories are practically never small. We also have infinite coproducts and these are exact.

\begin{lem} \cite[Lemma 1.4]{SaSt} \label{lem:efficient-coprod}
Let $\E$ be an exact category satisfying (Ef1) of Definition~\ref{def:efficient-exact}. Then the following hold:
\begin{enumerate}
\item The category $\E$ has small coproducts.
\item Small coproducts of conflations are conflations.
\end{enumerate}
\end{lem}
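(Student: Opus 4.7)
The plan is to construct small coproducts as transfinite compositions of split inflations, building up one summand at a time and invoking (Ef1) at limit ordinals. Both (1) and (2) will fall out of a single transfinite induction, run in parallel on the three terms of a given family of conflations.

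For (1), well-order the index set of $(X_\alpha)_{\alpha<\lambda}$ by an ordinal $\lambda$ and define partial sums by transfinite recursion: put $W_0 = 0$, $W_{\alpha+1} = W_\alpha \oplus X_\alpha$ (the biproduct, which exists since $\E$ is additive), and $W_\mu = \li_{\alpha<\mu} W_\alpha$ at limit ordinals, which exists by (Ef1) because each transition $W_\alpha \to W_{\alpha+1}$ is a split inflation. The object $W_\lambda$ together with the structural maps $X_\alpha \to W_{\alpha+1} \to W_\lambda$ satisfies the universal property of $\coprod_{\alpha<\lambda} X_\alpha$: given a cocone $(f_\alpha\dd X_\alpha \to T)$, build the mediating morphism $g_\alpha\dd W_\alpha \to T$ by the parallel recursion $g_0 = 0$, $g_{\alpha+1} = (g_\alpha,f_\alpha)$ out of the biproduct, using the colimit universal property at limit stages. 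Uniqueness is an identical induction.

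For (2), given conflations $(0 \to X_\alpha \to Y_\alpha \to Z_\alpha \to 0)_{\alpha<\lambda}$, apply the construction of~(1) in parallel to the three families, producing three $\lambda$-sequences $W^X_\bullet, W^Y_\bullet, W^Z_\bullet$ of split inflations together with morphisms of sequences between them. I would then prove by transfinite induction on $\alpha$ that at every stage $0 \to W^X_\alpha \to W^Y_\alpha \to W^Z_\alpha \to 0$ is a conflation. The base case is trivial, and the successor step uses the standard fact for exact categories (see~\cite{Bu}) that a direct sum of two conflations is a conflation: $W^\ast_{\alpha+1}$ is the biproduct of the inductive conflation at stage $\alpha$ with the given conflation at index $\alpha$. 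Taking $\alpha = \lambda$ yields the desired conflation of coproducts.

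The delicate point, which I expect to be the main obstacle, is the limit-ordinal step of the induction for~(2). At a limit $\mu$ we have three $\mu$-sequences of split inflations with termwise conflations, and by~(Ef1) the three colimits $W^X_\mu, W^Y_\mu, W^Z_\mu$ exist; we must verify that the induced sequence at stage $\mu$ is again a conflation. The key technical input is that in an exact category satisfying~(Ef1), the pointwise colimit of a $\mu$-sequence of conflations whose transition maps are all inflations is itself a conflation. This compatibility of conflations with transfinite colimits along inflations is the crux, and it is proved by a direct verification—applying $\Hom_\E(-,T)$, interchanging with the colimit, and reducing the exactness check to the finite-biproduct case already handled in the successor step. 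Once this limit compatibility is in hand, the induction closes and the lemma follows.
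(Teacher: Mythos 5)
Your construction of coproducts in (1) via transfinite compositions of split inflations is correct; this is exactly the right use of (Ef1) and there is nothing to fault there. The issue lies entirely in the limit-ordinal step of (2), which you rightly flag as the crux, but the resolution you sketch does not go through. Applying the contravariant functor $\Hom_\E(-,T)$ to a $\mu$-sequence turns its colimit into an \emph{inverse} limit of abelian groups. Inverse limits of short exact sequences are only left exact, so at best you learn that $\Hom_\E(W^Z_\mu,T)\to\Hom_\E(W^Y_\mu,T)$ is injective and the sequence is half-exact in the middle; you do not get the three-term exactness you would need. More fundamentally, conflations in an exact category form a \emph{chosen} class of kernel--cokernel pairs, not one characterized by a representability condition: even if you succeeded in showing $0\to\Hom_\E(W^Z_\mu,T)\to\Hom_\E(W^Y_\mu,T)\to\Hom_\E(W^X_\mu,T)$ is exact for all $T$, that would not by itself certify that $0\to W^X_\mu\to W^Y_\mu\to W^Z_\mu\to 0$ is a conflation. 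Axiom (Ef1) only manufactures inflations out of $\lambda$-sequences \emph{whose transition maps are already known to be inflations}; it gives you no handle on a morphism such as $W^X_\mu\to W^Y_\mu$ that compares two parallel $\lambda$-sequences. Indeed, the claim ``a pointwise colimit of conflations along inflations is a conflation'' is precisely the kind of statement that the paper warns (Remark~\ref{rem:filtr}) requires a genuinely non-trivial argument in this generality.

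The standard way to close the gap is to avoid the parallel-induction-on-conflations shape altogether and realize the map $\coprod X_i \to \coprod Y_i$ \emph{directly} as a transfinite composition of inflations, so that (Ef1) applies in one shot. Concretely: set $D_0 = \coprod X_i$ and at stage $\alpha$ form $D_{\alpha+1}$ as the pushout of $D_\alpha \leftarrow X_\alpha \to Y_\alpha$, using that $X_\alpha$ splits off $D_\alpha$ and that $X_\alpha \to Y_\alpha$ is an inflation; the pushout map $D_\alpha \to D_{\alpha+1}$ is then an inflation with cokernel $Z_\alpha$. By (Ef1) the composite $D_0 \to D_\lambda$ is an inflation, and one identifies $D_\lambda \cong \coprod Y_i$ by checking the two canonical comparison maps are mutually inverse. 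The cokernel of $D_0 \to D_\lambda$ is then identified with $\coprod Z_i$ by chasing the cokernel universal properties in both directions. This is essentially the route taken in the cited source \cite[Lemma 1.4]{SaSt}, and it is the one I would recommend you adopt; your successor step is fine, but the limit step needs this re-organization to make (Ef1) bite.
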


Condition (Ef0) is important because of the consequence stated in the next lemma.

\begin{lem} \cite[Proposition 7.6]{Bu} \label{lem:id-compl}
Let $\E$ be a weakly idempotent complete exact category and $f\dd X \to Y$ and $g\dd Y \to Z$ be a pair of composable morphisms. Then:
\begin{enumerate}
\item If $gf$ is a inflation, then $f$ is a inflation.
\item If $gf$ is a deflation, then $g$ is a deflation.
\end{enumerate}
\end{lem}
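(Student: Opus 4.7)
The plan is to reduce the statement to a single case via duality and then prove it by a pullback/splitting argument. Assertions (1) and (2) are formally dual under passage to the opposite exact category $\E\op$: inflations in $\E$ are exactly the deflations in $\E\op$, and the axiom (Ef0), as stated, is self-dual (it asserts both that sections have cokernels and, equivalently, that retractions have kernels). So it suffices to establish (2). I therefore assume that $gf\dd X \to Z$ is a deflation and aim to show that $g\dd Y \to Z$ is a deflation.

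The first step is to form the pullback of $g$ along $gf$:
\[
\begin{CD}
P @>{q}>> X \\
@V{p}VV @VV{gf}V \\
Y @>{g}>> Z
\end{CD}
\]
Pullbacks of deflations exist and are themselves deflations in any exact category, so $q$ is a deflation. Applying the universal property of the pullback to the pair $(1_X, f)$, for which $(gf) \circ 1_X = g \circ f$, yields a unique morphism $s\dd X \to P$ with $qs = 1_X$ and $ps = f$. Thus $s$ is a section of the deflation $q$.

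Now I would invoke weak idempotent completeness (Ef0): since the section $s$ has $q$ as a retraction, (Ef0) gives a kernel $k\dd K \to P$ of $q$, and together with $qs = 1_X$ this produces a split conflation
\[ 0 \la K \overset{k}\la P \overset{q}\la X \la 0 \]
and an identification $P \cong X \oplus K$ under which $s = \binom{1_X}{0}$ and $q = (1_X, 0)$. The pullback identity $gp = (gf) q$ then reads $gp = (gf, 0)\dd X \oplus K \to Z$, so $p \circ s = f$ recovers the original $f$ on the $X$-summand while $g \circ (pk) = 0$ on the $K$-summand. A direct verification of the universal property of the kernel, using the pullback property of the original square, shows that $pk\dd K \to Y$ is a kernel of $g$ and that $(pk, g)$ forms a conflation; hence $g$ is a deflation, as required.

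The part I expect to be the main obstacle is the last identification: extracting a conflation $K \to Y \to Z$ from the configuration of the pullback square, without appealing to abelian-category diagram chasing, needs careful use of the exact-category axioms together with the splitting obtained from (Ef0). Without weak idempotent completeness one cannot split $K$ off $P$ in a controlled way, and the argument breaks down — which is precisely why the hypothesis is needed. This is essentially Bühler's proof of \cite[Proposition 7.6]{Bu}, which is the result being quoted; the dual case (1) follows automatically from the argument above applied in $\E\op$.
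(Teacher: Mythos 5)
The paper gives no proof of its own for this lemma; it is quoted verbatim from B\"uhler's Proposition~7.6, so there is no in-paper argument to compare against. Your strategy (pullback, section, splitting off via~(Ef0), reading off the kernel of $g$) is a legitimate route to the result, but it contains one small misstatement and one genuine gap.

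The small misstatement: you write that ``pullbacks of deflations exist and are themselves deflations, so $q$ is a deflation.'' In the square you wrote down, the deflation being pulled back is $gf\dd X \to Z$ along $g$, and the pullback of a deflation is the map \emph{parallel} to it, namely $p\dd P \to Y$ --- not $q\dd P \to X$. That $q$ is a deflation is nonetheless true, but for a different reason: you exhibit a section $s$ of $q$, and in a weakly idempotent complete exact category every retraction is a deflation (B\"uhler, Remark~7.4). This is exactly where (Ef0) enters, so the conclusion is fine even though the stated justification is not.

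The genuine gap is the final claim that ``$(pk, g)$ forms a conflation.'' The direct check via the pullback's universal property does establish that $pk\dd K \to Y$ is a kernel of $g$ (and in fact, using the bicartesian property of the square, that $g$ is a cokernel of $pk$); but in an exact category a kernel--cokernel pair need not be a conflation, and nothing in your computation forces this sequence into the distinguished class. You flag this as the main obstacle but do not resolve it. The standard way to close the gap, once you know $g$ admits a kernel, is to invoke the \emph{obscure axiom} (B\"uhler, Proposition~2.16, valid in every exact category, no idempotent completeness needed): if a morphism $g$ has a kernel and some composite $gf$ is a deflation, then $g$ itself is a deflation. Adding that one citation turns your argument into a complete proof. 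Without it, the last step is an assertion, not a derivation --- and since making this step precise is exactly what the obscure axiom encapsulates, glossing over it is glossing over the heart of the matter.
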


Beware, however, that unlike for Grothendieck categories an analog of Theorem~\ref{thm:enough-inj} (enough injective objects) does not hold for efficient exact categories---see coming Example~\ref{expl:bad-efficient}. We are going to give a remedy for that in the next subsection, at the cost of imposing another condition on~$\E$.

\subsection{Exact categories of Grothendieck type}
\label{subsec:groth-type}

Here we are going to define exact categories of Grothendieck type which are even closer analogs of Grothendieck exact categories in that they always have enough injectives. Moreover, we shall describe a systematic procedure of producing these categories which seems to encompass all existing examples so far. The central concepts here are those of a filtration and deconstructibility.

\begin{defn} \label{def:filtr}
Let $\E$ be an exact category and $\clS$ be a class of objects in $\E$. By an \emph{$\clS$-filtration} we mean a $\lambda$-sequence 
\[
\xymatrix@1{
0 = X_0 \ar[r]^{f_{01}} &
X_1 \ar[r]^{f_{12}} &
X_2 \ar[r]^{f_{23}} &
X_3 \ar[r] &
\cdots \ar[r] &
X_\omega \ar[r]^{f_{\omega,\omega+1}} &
X_{\omega+1} \ar[r] &
\cdots
}
\]
\st $X_0 = 0$ and for each $\alpha+1<\lambda$, the morphism $f_{\alpha,\alpha+1}$ is an inflation whose cokernel belongs to $\clS$. That is, we have conflations
\[
0 \la X_\alpha \overset{f_{\alpha,\alpha+1}}\la X_{\alpha+1} \la S_\alpha \la 0
\]
with $S_\alpha \in \clS$.

An object $X \in \E$ is called \emph{$\clS$-filtered} if $0 \to X$ is the composition (in the sense of Definition~\ref{def:transf}) of some $\clS$-filtration. The class of all $\clS$-filtered objects will be denoted by $\Filt\clS$.
\end{defn}

\begin{rem} \label{rem:filtr}
Informally, an $\clS$-filtration is just a transfinite extension of objects from $\clS$. If, say, $\E = \ModR$ is the category of right $R$-modules for a ring $R$ and we consider $\ModR$ with the abelian exact structure (i.e.\ we take precisely all short exact sequences as conflations), then $X$ is $\clS$-filtered \iff there is a well ordered continuous chain $(X_\alpha \mid \alpha\le \lambda)$ of submodules of $X$ \st $X_0 = 0$, $X_\lambda = X$ and $X_{\alpha+1}/X_\alpha$ is isomorphic to an object of $\clS$ for each $\alpha+1\le\lambda$. This rather intuitive notion has been already successfully used for some time; see~\cite{GT} and references there. As observed in~\cite{SaSt}, Definition~\ref{def:filtr} is a fairly well-behaved generalization for efficient exact categories, although some care is due. For example, some properties of the generalized filtrations which would be clear for filtrations in module categories require a non-trivial proof for efficient exact categories---\cite[Lemma 2.10]{SaSt} serves as an example.
\end{rem}

A closely related concept is a deconstructible class in an exact category.

\begin{defn} \label{def:deconstructible}
Let $\E$ be an exact category and $\F \subseteq \E$ be a class of objects. Then $\F$ is called \emph{deconstructible} if there exists a set (not a proper class!) $\clS \subseteq \E$ of objects \st $\F = \Filt\clS$.
\end{defn}

Let us summarize some basic properties of deconstructible classes, which we shall use freely in the sequel.

\begin{lem} \label{lem:deconstr-basic}
Let $\E$ be an exact category satisfying (Ef1) of Definition~\ref{def:efficient-exact} and let $\F \subseteq \E$ be a deconstructible class. Then any $\F$-filtered object of $\E$ belongs to $\F$. In particular, $\F$ is closed in $\E$ under taking coproducts and extensions.
\end{lem}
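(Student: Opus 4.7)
The plan is to establish the main statement (every $\F$-filtered object lies in $\F$) by a transitivity-of-filtrations argument, and then deduce the two closure properties as simple corollaries. Since $\F$ is deconstructible, fix a set $\clS \subseteq \E$ with $\F = \Filt\clS$. It suffices to prove: if $X$ admits a filtration whose successive cokernels are $\clS$-filtered, then $X$ is itself $\clS$-filtered. Closure of $\F$ under extensions will then follow by splicing an $\F$-filtration of the subobject $A$ onto the single-step filtration $A \rightarrowtail B$ with cokernel $C \in \F$; closure under coproducts follows by wellordering the index set and forming partial coproducts (whose successive cokernels are the individual summands, hence in $\F$).

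First I would fix an $\F$-filtration $(X_\alpha)_{\alpha \le \lambda}$ of $X$, with conflations $X_\alpha \rightarrowtail X_{\alpha+1} \twoheadrightarrow F_\alpha$ and each $F_\alpha \in \F = \Filt\clS$, and choose for every $\alpha$ an $\clS$-filtration $(Y^\alpha_\beta)_{\beta \le \mu_\alpha}$ of $F_\alpha$. The central step is to refine each segment $X_\alpha \rightarrowtail X_{\alpha+1}$ into an $\clS$-filtration $(Z^\alpha_\beta)_{\beta \le \mu_\alpha}$ with $Z^\alpha_0 = X_\alpha$ and $Z^\alpha_{\mu_\alpha} = X_{\alpha+1}$. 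I would build this by pulling the deflation $d_\alpha \dd X_{\alpha+1} \twoheadrightarrow F_\alpha$ back along the inflation $Y^\alpha_\beta \rightarrowtail F_\alpha$ (which is itself an inflation by (Ef1) applied to the tail $Y^\alpha_\beta \to \cdots \to F_\alpha$), obtaining a conflation $X_\alpha \rightarrowtail Z^\alpha_\beta \twoheadrightarrow Y^\alpha_\beta$. A standard $3\times 3$-argument in exact categories then shows that the induced map $Z^\alpha_\beta \to Z^\alpha_{\beta+1}$ is an inflation whose cokernel is isomorphic to $Y^\alpha_{\beta+1}/Y^\alpha_\beta \in \clS$, while continuity at limit ordinals $\beta$ is inherited from the $\mu_\alpha$-sequence $(Y^\alpha_\bullet)$ via (Ef1).

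Next I would concatenate the refined segments along the lexicographic order on $\{(\alpha,\beta) \mid \alpha<\lambda,\ \beta<\mu_\alpha\}$, obtaining a $\nu$-sequence (where $\nu$ is the corresponding ordinal sum) all of whose successive cokernels belong to $\clS$. Continuity at the ``junction'' limits (those arising from limit ordinals $\alpha$, or from reaching the end of a segment) is inherited from continuity of the original $\F$-filtration $(X_\alpha)$ together with (Ef1); interior continuity has already been verified. This exhibits $X$ as $\clS$-filtered and therefore as an element of $\F$.

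The main obstacle is the lifting step: in a general exact category one cannot literally ``take preimages'' of a subobject chain, so the refinement must be built entirely through iterated pullbacks along $d_\alpha$, and one must check that these pullbacks interact correctly with the transfinite colimits defining $(Y^\alpha_\bullet)$---specifically that the colimit of the $Z^\alpha_\bullet$ at a limit $\beta$ is again the pullback of the corresponding colimit in the $Y^\alpha_\bullet$. This compatibility rests on (Ef1) and on arguments in the spirit of \cite[Lemma 2.10]{SaSt}; once it is in place, the concatenation and the continuity verification at the junctions are routine.
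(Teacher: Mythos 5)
Your proposal is correct and follows essentially the same route as the paper: the paper's proof simply cites \cite[Corollary~2.11]{SaSt} for the fact that $\F = \Filt\F$ (which is precisely the refine-and-concatenate argument you spell out, including the pullback subtlety handled in \cite[Lemma~2.10]{SaSt}) and derives the two closure properties exactly as you do, the coproduct case via the filtration construction from \cite[Lemma~1.4]{SaSt}. You have in effect supplied the substance of the cited reference rather than deviating from it.
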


\begin{proof}
We refer to~\cite[Corollary 2.11]{SaSt} for the fact that $\F = \Filt\F$. While at the level of generality which we assume some work is necessary, the equality is an easy exercise in the special case where $\E = \ModR$ with the abelian exact structure. The fact that $\F$ is closed under extensions is then clear, and the closure under coproducts follows from the construction of coproducts via filtrations in the proof of~\cite[Lemma 1.4]{SaSt}.
\end{proof}

Now we can define exact categories of Grothendieck type.

\begin{defn} \label{def:Grothendieck-exact}
An exact category $\E$ is said to be of \emph{Grothendieck type} if

\begin{enumerate}
\item[(GT0)] $\E$ is weakly idempotent complete.
\item[(GT1)] Arbitrary transfinite compositions of inflations exist and are inflations.
\item[(GT2)] Every object of $\E$ is small relative to the class of all inflations.
\item[(GT3)] $\E$ admits a generator.
\item[(GT4)] $\E$ is deconstructible in itself. That is, there exists a set of objects $\clS \subseteq \E$ \st $\E = \Filt\clS$.
\end{enumerate}
\end{defn}

In other words, $\E$ is of Grothendieck type if $\E$ is efficient and satisfies (GT4). We recalled the former axioms for the reader's convenience. As already mentioned, the key consequence of (GT4) is that $\E$ has enough injective objects. We postpone the proof of the fact to Corollary~\ref{cor:enough-inj} when we will have developed the necessary theory. Here we rather focus on how exact categories of Grothendieck type occur, but first we show a non-example demonstrating that (GT4) is indeed necessary.

\begin{expl} \label{expl:bad-efficient}
Let $\D \subseteq \Ab$ be the category of all flat Mittag-Leffler abelian groups in the sense of~\cite{RG}. These groups are characterized by the property that every countable subgroup is free; see~\cite[Proposition 7]{AzFa}. It is not difficult to prove that $\D$ is closed under retracts, any $\D$-filtered object belongs to $\D$, and that $\Z$ is a generator for $\D$. Hence $\D$ is an efficient exact category. On the other hand, $\D$ cannot be deconstructible in itself by~\cite[Corollary 7.3]{EGPT}. Even worse, the only injective object of $\D$ is the zero object, so $\D$ does not have enough injectives. To see that, observe that by~\cite[Theorem 5.5]{EGPT} $X \in \D$ is injective in $\D$ \iff $X$ is a so-called cotorsion group. However, the only flat Mittag-Leffler cotorsion group is by~\cite[Corollary V.2.10(ii)]{EM} the zero group.
\end{expl}

Turning back to examples, all Grothendieck categories are exact categories of Grothendieck type. In order to show that, we recall a few standard facts. One can define a subobject of an object $X$ in an abelian category $\G$ as an equivalence class of monomorphisms $Y \to X$, \cite[\S IV.2]{S}. We shall as usual write $Y \subseteq X$ in such a case. If $\G$ is a Grothendieck category, then the subobjects of a given object form a modular upper continuous complete lattice $(\Subobj X, +, \cap)$, \cite[Propositions IV.5.3 and V.1.1 (c)]{S}. Recall a complete lattice $(\clL, \vee, \wedge)$ is \emph{upper continuous} (cf.~\cite[\S III.5]{S}) if $(\bigvee_{d \in D} d) \wedge a = \bigvee_{d \in D} (d \wedge a)$ whenever $a \in \clL$ and $D \subseteq \clL$ is a directed subset.

\begin{prop} \label{prop:exact-Grothendieck}
Let $\G$ be a Grothendieck category considered with the abelian exact structure. Then $\G$ is of Grothendieck type as an exact category.
\end{prop}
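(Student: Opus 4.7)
The plan is to verify conditions (GT0)--(GT4) of Definition~\ref{def:Grothendieck-exact} one by one. Condition (GT0) is immediate: every abelian category is idempotent complete, hence weakly idempotent complete. Condition (GT3) is precisely (Gr3). For (GT1), note that in the abelian exact structure the inflations are the monomorphisms; since $\G$ is cocomplete by (Gr1), arbitrary $\lambda$-sequences admit colimits, and AB5 ((Gr2)) applied to the short exact sequences $0 \to X_0 \to X_\alpha \to X_\alpha/X_0 \to 0$ shows that the canonical map $X_0 \to \li_{\alpha<\lambda} X_\alpha$ is a monomorphism whenever all transition morphisms are monomorphisms. Equivalently, this is the well-known fact that in a Grothendieck category the union of an ascending chain of subobjects is computed as their directed colimit.

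For (GT4), I would construct a set $\clS$ of ``test subquotients'' from the generator $G$. Concretely, let $\clS$ be a set of representatives of the isomorphism classes of quotients of $G$; this is indeed a set because $\Subobj G$ is a set (the subobject lattice of any object in a Grothendieck category is a set) and quotients correspond bijectively to subobjects via kernels. Given $X\in\G$ and an epimorphism $p\dd G^{(I)}\to X$ supplied by (Gr3), well-order $I$ with order type $\lambda$ and define $X_\alpha = p(G^{(\{i\,:\,i<\alpha\})})\subseteq X$ for $\alpha\le\lambda$. Then $X_0=0$, $X_\lambda=X$, each successive quotient $X_{\alpha+1}/X_\alpha$ is a quotient of $G$ (hence isomorphic to an object of $\clS$), and upper continuity of the subobject lattice $\Subobj X$, which is a consequence of AB5, guarantees that $X_\mu=\sum_{\alpha<\mu}X_\alpha$ for every limit $\mu\le\lambda$. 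This yields an $\clS$-filtration of $X$.

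The main obstacle is (GT2): every object of $\G$ must be small relative to the class of monomorphisms. The cleanest way is to invoke the fact that every Grothendieck category is locally presentable (a standard consequence of the Gabriel--Popescu theorem, cited as~\cite[Theorem X.4.1]{S} in the paper), so that every object is even $\kappa$-presentable for some regular cardinal $\kappa$ and therefore $\kappa$-small with respect to \emph{all} morphisms, in particular with respect to inflations. Alternatively, a direct proof proceeds as follows: for $X\in\G$ choose a regular cardinal $\kappa$ strictly greater than $\card{\Subobj G} + \card{\Hom_\G(G,X)}$. Given a $\lambda$-sequence $(E_\alpha)_{\alpha<\lambda}$ of monomorphisms with $\lambda\ge\kappa$ regular, AB5 identifies $E_\alpha$ with a subobject of $E=\li E_\alpha$, and $E=\bigcup_\alpha E_\alpha$. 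For each $g\dd G\to X$, the composite $G\to E$ factors through some $E_{\alpha(g)}$ because $G$ itself is $\kappa$-small (the preimages of the $E_\alpha$ along $G\to E$ form an ascending chain in $\Subobj G$, a set of cardinality $<\kappa$, which therefore stabilises before $\lambda$). Setting $\alpha=\sup_g\alpha(g)<\lambda$ by regularity of $\kappa$ and using that $G$ is a generator, any morphism $X\to E$ is forced to factor uniquely through $E_\alpha$, yielding the required bijection $\li_\alpha\Hom(X,E_\alpha)\cong\Hom(X,E)$.

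Putting the four points together verifies that $\G$ satisfies all axioms of Definition~\ref{def:Grothendieck-exact}. The delicate technical core is (GT2); (GT4) is the other substantive step, but the filtration by images of partial coproducts of the generator renders it straightforward once AB5 is exploited to handle limit ordinals.
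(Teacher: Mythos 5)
Your proof is correct and follows essentially the same route as the paper: (GT0), (GT1), (GT3) via the defining properties of a Grothendieck category, (GT2) via the Gabriel--Popescu theorem (local presentability), and (GT4) via the image filtration along a well-ordering of a presentation $G^{(I)}\twoheadrightarrow X$ with $\clS$ a representative set of quotients of the generator $G$. The additional direct argument you sketch for (GT2), using that $\Subobj G$ is a set to stabilise the chain of preimages, is a reasonable elementary alternative, but the paper contents itself with the Gabriel--Popescu route you also give.
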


\begin{proof}
Condition (GT0) is clearly true for every abelian category. Since $\G$ is cocomplete and has exact direct limits, (GT1) holds. The existence of a generator (GT3) is a part of the definition of a Grothendieck category. Regarding the smallness assumption (GT2), it is a well-known fact that any object $A \in \G$ is small relative to the class \emph{all} morphisms. Indeed, the Popescu-Gabriel theorem~\cite[Theorem X.4.1]{S} guarantees that for any fixed $A \in \G$, there is a cardinal number $\kappa$ \st
\begin{enumerate}
\item $\G$ identifies with a full subcategory of $\ModR$, the category of right modules for some ring $R$, \st $\G$ is closed in $\ModR$ under taking colimits of $\lambda$-sequences for every infinite regular cardinal $\lambda\ge\kappa$, and
\item under this identification $A$ can be presented, as an $R$-module, by fewer than $\kappa$ generators and relations, so that the functor
\[ \Hom_R(A,-)\dd \ModR \la \Ab \]
commutes with colimits of $\lambda$-sequences for every infinite regular $\lambda\ge\kappa$.
\end{enumerate}

Finally, let $\clS$ be a representative set of quotients $\{G/Y \mid Y \subseteq G\}$, where $G$ is a generator of $\G$. Given any object $X \in \G$ and a fixed epimorphism $p\dd G^{(I)} \to X$, we shall construct an $\clS$-filtration of $X$. This will prove (GT4). To this end, we can without loss of generality assume that $I = \lambda$ is an ordinal number and define $X_\alpha \subseteq X$ for each $\alpha\le\lambda$ as the image of restriction of $p$ to $G^{(\alpha)}$. It is not difficult to convince oneself that $(X_\alpha)_{\alpha\le\lambda}$ is (by slightly abusing the notation) an $\clS$-filtration of $X$.
\end{proof}

Our next goal is to prove that every deconstructible class of a Grothendieck category, viewed as a full subcategory, is naturally an exact category of Grothendieck type. In order to do so, we need a technical tool: the generalized Hill Lemma. Here we take a slightly restricted version of the result from~\cite{St2}, where also references to other versions and evolution stages of the Hill Lemma can be found.

\begin{prop} \label{prop:hill-lemma}
Let $\G$ be a Grothendieck category and $\clS$ be a set of objects. Then there exists an infinite regular cardinal $\kappa$ with the following property: For every $X \in \G$, which is the union of an $\clS$-filtration
\[
0 = X_0 \subseteq X_1 \subseteq X_2 \subseteq \dots
\subseteq X_\alpha \subseteq X_{\alpha+1} \subseteq \dots
\subseteq X_\sigma = X
\]
for some ordinal $\sigma$, there is a complete sublattice $\clL$ of the power set $\big(\Pow\sigma,\cup,\cap \big)$ of $\sigma$ and a map
\[ \ell: \clL \la \Subobj X \]
which assigns to each $S \in \clL$ a subobject $\ell(S)$ of $X$, \st the following hold:
\begin{enumerate}
\item[(H1)] For each $\alpha \leq \sigma$ we have $\alpha = \{\gamma \mid \gamma<\alpha \} \in \clL$ and $\ell(\alpha) = X_\alpha$.

\item[(H2)] If $(S_i)_{i \in I}$ is any family of elements of $\clL$, then $\ell(\bigcup S_i) = \sum \ell(S_i)$ and $\ell(\bigcap S_i) = \bigcap \ell(S_i)$. In particular, $\ell$ is a complete lattice homomorphism from $(\clL,\cup,\cap)$ to the lattice $(\Subobj X, +, \cap)$ of subobjects of $X$.

\item[(H3)] If $S,T \in \clL$ are \st $S \subseteq T$, then the object $N = \ell(T)/\ell(S)$ is $\clS$-filtered. More precisely, there is an $\clS$-filtration $(N_\beta \mid \beta\le\tau)$ and a bijection  $b: T \setminus S \to \tau$ $(= \{\beta \mid \beta<\tau\})$ \st $X_{\alpha+1}/X_\alpha \cong N_{b(\alpha)+1}/N_{b(\alpha)}$ for each $\alpha \in T \setminus S$.

\item[(H4)] For each $<\kappa$-generated subobject $Y \subseteq X$ there is $S \in \clL$ of cardinality $<\kappa$ (so that $\ell(S)$ admits an $\clS$-filtration of length $<\kappa$ by \emph{(H3)}) \st $Y \subseteq \ell(S) \subseteq X$.
\end{enumerate}
\end{prop}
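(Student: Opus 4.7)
The plan is to follow the standard proof of the generalized Hill Lemma for modules (cf.\ \cite{GT} and the referenced \cite{St2}), adapted to an arbitrary Grothendieck category $\G$ by means of the Popescu-Gabriel embedding used already in the proof of Proposition~\ref{prop:exact-Grothendieck}. Fix a full embedding $\G \hookrightarrow \ModR$ for some ring $R$ which preserves $\lambda$-directed colimits for all regular $\lambda$ beyond some threshold, and choose an infinite regular cardinal $\kappa$ so large that $\kappa > |\clS|$ and each $S \in \clS$ is $<\kappa$-presentable in $\G$ (equivalently, the underlying $R$-module has fewer than $\kappa$ generators and relations). Two more bits of standing data that will be used freely: the lattice $(\Subobj X, +, \cap)$ is upper continuous by AB5, and every $<\kappa$-generated subobject of $X$ is contained in $\sum_{\beta \in F} H_\beta$ for some $F \subseteq \sigma$ with $|F|<\kappa$ (for the $H_\beta$ chosen in the next step).

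For each $\alpha<\sigma$ pick a $<\kappa$-generated subobject $H_\alpha \subseteq X_{\alpha+1}$ whose image in $X_{\alpha+1}/X_\alpha \in \clS$ is the full quotient, so that $X_{\alpha+1}=X_\alpha+H_\alpha$. Because $X_{\alpha+1}/X_\alpha$ is $<\kappa$-presentable, the kernel $H_\alpha \cap X_\alpha$ of the surjection $H_\alpha \twoheadrightarrow X_{\alpha+1}/X_\alpha$ is itself $<\kappa$-generated. An easy transfinite induction (using AB5 at limits) shows $X_\alpha = \sum_{\beta<\alpha} H_\beta$, so upper continuity of $\Subobj X$ together with the $<\kappa$-generation of $H_\alpha \cap X_\alpha$ lets us choose a \emph{support set} $g(\alpha) \subseteq \alpha$ with $|g(\alpha)|<\kappa$ such that $H_\alpha \cap X_\alpha \subseteq \sum_{\beta \in g(\alpha)} H_\beta$.

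Now declare $\clL$ to be the family of subsets of $\sigma$ that are closed under $g$, i.e.\ $S \in \clL$ iff $g(\alpha) \subseteq S$ whenever $\alpha \in S$. This is evidently closed under arbitrary unions and intersections, so $\clL$ is a complete sublattice of $\Pow\sigma$, and for $S \in \clL$ we set $\ell(S) = \sum_{\alpha \in S} H_\alpha$. The verifications are then routine chases. For (H1) every initial segment is $g$-closed and $\ell(\alpha)=X_\alpha$ by the identity above. For (H2) the union clause is immediate, while the intersection clause follows a posteriori from the $\clS$-filtration constructed for (H3). For (H3) one well-orders $T \setminus S$ as $\{\alpha_\gamma : \gamma < \tau\}$ extending the order of $\sigma$ and sets $N_\beta = \ell\bigl(S \cup \{\alpha_\gamma : \gamma<\beta\}\bigr)/\ell(S)$; the support condition forces $N_{\beta+1}/N_\beta \cong X_{\alpha_\beta+1}/X_{\alpha_\beta}$, which lies in $\clS$ and supplies the required bijection $b$. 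For (H4) one starts from $Y \subseteq \sum_{\beta \in F_0} H_\beta$ with $|F_0|<\kappa$, then iteratively sets $F_{n+1} = F_n \cup \bigcup_{\alpha \in F_n} g(\alpha)$; regularity of $\kappa$ keeps $|F_n|<\kappa$, and $S = \bigcup_n F_n$ is the desired $g$-closed set of size $<\kappa$ with $Y \subseteq \ell(S)$.

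The main obstacle is the $<\kappa$-generation of $H_\alpha \cap X_\alpha$ in Step~2: this is what distinguishes the argument from the naive one and forces us to require true $<\kappa$-\emph{presentability} of the pieces in $\clS$, not just $<\kappa$-generation. Passing from a module category to a general Grothendieck category is what makes this subtle, and the Popescu-Gabriel embedding plus AB5 (upper continuity of $\Subobj X$ and its distribution over directed sums) is exactly what is needed to recover the module-theoretic estimates. Once those ingredients are in place, assembling $\clL$ and $\ell$ and checking (H1)--(H4) is purely combinatorial.
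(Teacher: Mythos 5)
Your proposal is essentially sound and lands where the paper's cited argument in \cite[Theorem 2.1]{St2} lands, but you build the lattice $\clL$ by a genuinely different (and strictly more restrictive) recipe, and the trade-offs are worth spelling out. You fix, once and for all, a support function $g$ with $H_\alpha \cap X_\alpha \subseteq \sum_{\beta \in g(\alpha)} H_\beta$ and declare $S$ \emph{closed} when $g(\alpha) \subseteq S$ for all $\alpha \in S$. The paper instead declares $S$ closed when the \emph{relative} condition $X_\alpha \cap A_\alpha \subseteq \sum_{\gamma \in S,\, \gamma<\alpha} A_\gamma$ holds for all $\alpha \in S$, which does not depend on a prior choice of $g$. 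Every $g$-closed set is closed in the paper's sense, but not conversely, so your $\clL$ is in general a proper complete sublattice of the paper's $\clL$. Your version buys you one thing for free: closure of $\clL$ under arbitrary intersections is immediate, whereas with the relative definition this is itself part of the technical content (essentially equivalent to the intersection half of (H2)). What it costs you is flexibility in (H4): your $\omega$-step iteration $F_{n+1} = F_n \cup \bigcup_{\alpha \in F_n} g(\alpha)$ only produces a $<\kappa$-size closed set when $\kappa$ is \emph{uncountable} and regular. The relative definition allows tailoring $S$ to the given $Y$ and can succeed even at $\kappa = \aleph_0$, as Example~\ref{expl:hill-lemma} illustrates — there the finite closed set $\{0,\dots,n_0-1,\omega\}$ is chosen using information about $Y$, not about a fixed $g$. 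Since the proposition only asserts the existence of \emph{some} infinite regular $\kappa$, your restriction to uncountable $\kappa$ is harmless here, but you should state it.

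Two further points. First, you dismiss (H2)-intersection and (H3) as ``routine chases'' that ``follow a posteriori,'' but these are precisely the steps the paper calls ``rather technical'' and outsources to \cite{St2}. In particular the equality $H_{\alpha_\beta} \cap \ell(T_\beta) = H_{\alpha_\beta} \cap X_{\alpha_\beta}$ needed for (H3) is not a formal consequence of the support condition; it requires an honest argument (in the module case it uses a careful element chase or an induction on $\sigma$, and in a general Grothendieck category one needs upper continuity of the subobject lattice again). Your write-up should not present this as automatic. Second, the Popescu-Gabriel embedding is a convenient way to transport $<\kappa$-generation and $<\kappa$-presentability to a module category, but it is not needed — the Gabriel-Ulmer notions used in \cite{St2} work directly in $\G$. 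If you do use the embedding, note that the section functor $\G \hookrightarrow \ModR$ preserves only sufficiently directed colimits, so one must check that the subobjects $\ell(S) = \sum_{\alpha \in S} H_\alpha$ formed in $\ModR$ actually live in (the image of) $\G$; this is fine because the sum is a directed union of the finite partial sums, but it is another point that deserves a sentence rather than silence.
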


\begin{proof}
The proof, which is rather technical, can be found in~\cite[Theorem 2.1]{St2}. While omitting the argument here, we shall at least indicate the construction of the class $\clL$ and explain the main idea behind the proof in the simplest non-trivial case in Example~\ref{expl:hill-lemma}. First we need to choose $\kappa$ given $\G$ and $\clS$. The only condition for this is that $\G$ is locally $<\kappa$-presentable and each $Y \in \clS$ is $<\kappa$-presented in the sense of~\cite{AR,GU71}. Given the $\clS$-filtration $(X_\alpha \mid \alpha \le \sigma)$ of $X$, we can fix a family $(A_\alpha)_{\alpha<\sigma}$ of $<\kappa$-generated subobjects of $X$ (again in the sense of~\cite{GU71}) \st $X_{\alpha+1} = X_\alpha + A_\alpha$ for each $\alpha<\sigma$. Then we call a subset $S \subseteq \sigma$ \emph{closed} if every $\alpha \in S$ satisfies
\[ X_\alpha \cap A_\alpha \subseteq \sum_{^{\gamma \in S,}_{\gamma < \alpha}} A_\gamma. \]
The set $\clL \subseteq \Pow\sigma$ can be chosen as $\clL = \{ S \subseteq \sigma \mid S \textrm{ is closed} \}$ and the map $\ell$ assigns to $S \in \clL$ the subobject $\sum_{\alpha \in S} A_\alpha$.
\end{proof}

\begin{expl} \label{expl:hill-lemma}
Suppose that $R$ is a ring, $\clS$ is a collection of finitely presented modules, and that $X \in \ModR$ has an $\clS$-filtration of the form
\[
0 = X_0 \subseteq X_1 \subseteq X_2 \subseteq \dots \subseteq X_\omega \subseteq X_{\omega+1} = X
\]
We will choose $\kappa = \aleph_0$ and fix a sequence of finitely generated submodules $(A_\alpha)_{\alpha\le\omega}$, and $\clL$ and $\ell$ as above. In order to prove (H4), suppose that $Y \subseteq X$ is finitely generated. If $Y \subseteq X_\omega$, then $Y \subseteq X_n$ for some $n<\omega$ and can put $S = \{0,1,\dots,n-1\}$. If $Y \not\subseteq X_\omega$, we can without loss of generality assume that $A_\omega \subseteq Y$ since otherwise we could replace $Y$ by $Y+A_\omega$. Then we have a short exact sequence $0 \to X_\omega \cap Y \to Y \to X/X_\omega \to 0$ and so $X_\omega \cap Y$ is finitely generated by~\cite[Ch. 5, \S25]{Wis}. In particular there must exist $n_0 < \omega$ \st $X_\omega \cap Y \subseteq X_{n_0}$ and the set $S = \{0,1,\dots,n_0-1,\omega\}$ will do.
\end{expl}

The significance of Proposition~\ref{prop:hill-lemma} is that, starting with one filtration of $X$, it allows us to construct many more filtrations of $X$. The crucial point is property~(H4).

As a first application, we shall prove a theorem which provides us with a major source of examples of exact categories of Grothendieck type.

\begin{thm} \label{thm:deconstr-Groth}
Let $\G$ be a Grothendieck category and $\E \subseteq \G$ be a deconstructible class which is closed under retracts. Then $\E$ together with the collection of all short exact sequences in $\G$ whose all terms belong to $\E$ (cf.\ Proposition~\ref{prop:exact-categories}) is an exact category of Grothendieck type.
\end{thm}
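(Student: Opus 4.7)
My plan is to install on $\E$ the exact structure given by Proposition~\ref{prop:exact-categories}(2)---available once $\E \subseteq \G$ is extension-closed, which is part of Lemma~\ref{lem:deconstr-basic} applied to the deconstructible class $\E$---and then verify axioms (GT0)--(GT4) of Definition~\ref{def:Grothendieck-exact}. I expect (GT0)--(GT2) and (GT4) to follow quickly from the Grothendieck structure of $\G$ combined with Lemma~\ref{lem:deconstr-basic}, while (GT3) will be the main obstacle and will demand the Hill Lemma.

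For (GT0), a section $s\dd X \to Y$ in $\E$ splits in the abelian category $\G$, giving $Y \cong X \oplus Z$ with $Z$ a retract of $Y$, hence $Z \in \E$ by the retract-closure hypothesis, and $Z$ is the cokernel of $s$. For (GT1), a $\lambda$-sequence of inflations in $\E$ has, via AB5 on $\G$, monomorphic colimit map $X_0 \hookrightarrow \li X_\alpha$ whose cokernel acquires a continuous $\clS$-filtration from the successive cokernels $X_{\alpha+1}/X_\alpha \in \E$ and so lies in $\E$ by Lemma~\ref{lem:deconstr-basic}; then $\li X_\alpha$ lies in $\E$ as an extension of two objects of $\E$. (GT2) is inherited from $\G$ via Popescu--Gabriel as in the proof of Proposition~\ref{prop:exact-Grothendieck}, since the $\lambda$-colimits of inflations in $\E$ and in $\G$ agree by (GT1). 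Axiom (GT4) is the hypothesis.

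For (GT3), I would fix a set $\clS$ with $\E = \Filt\clS$, apply Proposition~\ref{prop:hill-lemma} to produce a regular cardinal $\kappa$ (large enough that $\G$ is locally $\kappa$-presentable), and let $\T$ be a set of representatives of isomorphism classes of $\clS$-filtered objects of length $<\kappa$ in $\G$. Each $T \in \T$ lies in $\E$, so $G := \bigoplus_{T \in \T} T \in \E$ by Lemma~\ref{lem:deconstr-basic}. For $X \in \E$ with a chosen $\clS$-filtration, Hill's (H4) exhibits $X$ as the directed union of its subobjects $W_S := \ell(S)$ with $|S|<\kappa$, each belonging to $\T$, the family being directed by (H2). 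Well-ordering this cover as $(W_i)_{i<\tau}$ and setting $V_\gamma := \sum_{i<\gamma} W_i$, I consider $\phi\dd \bigoplus_{i<\tau} W_i \to X$ together with the nested short exact sequences $0 \to K_\gamma \to \bigoplus_{i<\gamma} W_i \to V_\gamma \to 0$. A snake-lemma computation in $\G$ identifies $K_{\gamma+1}/K_\gamma$ with $W_\gamma \cap V_\gamma$, which by (H2) equals $\ell(S_\gamma \cap \bigcup_{i<\gamma} S_i)$ and so by (H3) is $\clS$-filtered of length $<\kappa$, hence lies in $\T$. AB5 forces continuity of this filtration at limit ordinals, so $\ker\phi$ is $\T$-filtered and lies in $\E$ by Lemma~\ref{lem:deconstr-basic}, making $\phi$ a deflation in $\E$. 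Finally, $\bigoplus_i W_i$ is a direct sum of $\T$-objects with each multiplicity $\le\tau$, hence a split summand of some $G^{(J)}$ whose complementary summand---again a direct sum of $\T$-objects---lies in $\E$; the composition $G^{(J)} \twoheadrightarrow \bigoplus_i W_i \to X$ is then the desired deflation.

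The genuinely hard part will be the interplay of Hill's properties (H2)--(H4): (H4) supplies the directed $\T$-cover of $X$, whereas the lattice-theoretic (H2) together with (H3) is what guarantees that both $V_\gamma$ and the intersection $W_\gamma \cap V_\gamma$ remain $\clS$-filtered of length $<\kappa$, ensuring that the snake-lemma quotients land in $\T \subseteq \E$. The remaining AB5 continuity verification and the final ``complementary summand'' trick are then bookkeeping.
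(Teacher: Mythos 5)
Your proof is correct, and for (GT0)--(GT2) and (GT4) it matches the paper's (which for (GT1)--(GT2) simply cites \cite[Lemma 1.11(i)]{StPo}, spelling out what you sketch). The interesting divergence is in (GT3). Both proofs use the same candidate generator $G$, the coproduct of representatives of objects admitting $\clS$-filtrations of length $<\kappa$, and both rely on Hill's Lemma (Proposition~\ref{prop:hill-lemma}). However, the paper builds the deflation $G^{(J)}\twoheadrightarrow X$ by transfinite induction on the length $\sigma$ of an $\clS$-filtration of $X$: at each successor step $\alpha+1$ it selects a single $S\in\clL$ with $\alpha\in S$, $|S|<\kappa$, $S\subseteq\alpha+1$, and shows via a pullback that the kernel grows by $\ell(S\setminus\{\alpha\})\in\E_\kappa$, hence stays in $\E$. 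You instead discard the filtration indexing altogether, form the directed cover $\{\ell(S):S\in\clL,\ |S|<\kappa\}$ supplied by (H4), well-order it arbitrarily, and compute the kernel of the canonical map from the coproduct by a snake-lemma argument: the successive kernel quotients are $W_\gamma\cap V_\gamma=\ell\bigl(S_\gamma\cap\bigcup_{i<\gamma}S_i\bigr)$, which lie in $\E_\kappa$ by (H2)--(H3). Your version is more ``presentation-free'' (the well-ordering is arbitrary and the $X_\alpha$ are never tracked), at the price of a potentially much larger indexing ordinal $\tau$ and a final bookkeeping step to pass from $\bigoplus_i W_i$ to $G^{(J)}$; the paper's version keeps everything tied to the original $\clS$-filtration and lands directly on a coproduct of the form $\coprod_\beta G_\beta$. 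Either route exploits precisely the lattice-homomorphism property (H2) and the filtration-restriction property (H3) of Hill's Lemma, so the two proofs are genuinely different in mechanism but equivalent in what they extract from that lemma.
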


\begin{proof}
(GT0) and (GT4) are clear from the definition, and (GT1) and (GT2) have been proved in~\cite[Lemma 1.11(i)]{StPo}. The main difficulty remains in proving (GT3), the existence of a generator for $\E$.

Denote by $\clS$ a set of objects of $\E$ \st $\E = \Filt\clS$ and let $\kappa$ be a regular cardinal which is in accordance with Proposition~\ref{prop:hill-lemma}. Let $\E_\kappa$ be the class of all objects in $\G$ which have an $\clS$-filtration of length $<\kappa$. One observes that $\E_\kappa$ has only a set of representatives up to isomorphism, and we denote by $G$ the sum of all such representatives.

Clearly $G \in \E$ by Lemma~\ref{lem:deconstr-basic}, and we claim that $G$ is a generator for $\E$. We must prove that each $X \in \E$ admits a short exact sequence in $\G$ of the form
\[ 0 \la K \la G^{(I)} \la X \la 0 \]
with $K \in \E$, and we shall do so by induction on the the least ordinal $\sigma = \sigma(X)$ \st $X$ has an $\clS$-filtration of length $\sigma$ in $\G$. More precisely, given $X \in \E$ with an $\clS$-filtration
\[
0 = X_0 \subseteq X_1 \subseteq X_2 \subseteq \dots
\subseteq X_\alpha \subseteq X_{\alpha+1} \subseteq \dots
\subseteq X_\sigma = X
\]
of length $\sigma$, we shall inductively construct in $\G$ a well-ordered direct system of short exact sequences
\[ \Big( \ep_\alpha \dd \quad 0 \la K_\alpha \la \coprod_{\beta<\alpha} G_\beta \overset{p_\alpha}\la X_\alpha \la 0 \Big)_{\alpha\le\sigma} \]
\st
\begin{enumerate}
\item $K_\alpha \in \E$ for each $\alpha\le\sigma$,
\item $G_\beta \in \E_\kappa$ for each $\beta<\sigma$, and
\item the vertical morphisms in the diagram
\[
\begin{CD}
\phantom{_{+1}}\ep_\alpha\dd \quad 0 @>>> K_\alpha     @>>>   \coprod_{\beta<\alpha} G_\beta   @>{p_\alpha}>>     X_\alpha     @>>> 0   \\
                                   @.     @VVV                       @VVV                                         @VVV                  \\
           \ep_{\alpha+1}\dd \quad 0 @>>> K_{\alpha+1} @>>>   \coprod_{\beta\le\alpha} G_\beta @>{p_{\alpha+1}}>> X_{\alpha+1} @>>> 0   \\
\end{CD}
\]
are monomorphisms for each $\alpha+1 \le \sigma$. Moreover, the leftmost one has a cokernel in $\E$, the middle one is the canonical split monomorphism, and the rightmost one is the inclusion from the filtration of $X$.
\end{enumerate}

Before starting the construction, we shall fix a lattice map $\ell\dd \clL \to \Subobj X$ as in Proposition~\ref{prop:hill-lemma}.
Now for $\sigma = 0$ we start with the exact sequence of zeros and for the limit steps we just take the colimit sequences. Suppose that $\ep_\alpha$ has been constructed and we are to construct $\ep_{\alpha+1}$. By (H4) of Proposition~\ref{prop:hill-lemma}, there is an element $S \in \clL \subseteq \Pow\sigma$ \st $\alpha \in S$ and $\card{S} < \kappa$. Without loss of generality $S \subseteq (\alpha+1)$, since we can possibly replace $S$ by $S \cap (\alpha+1)$ thanks to~(H1) and~(H2). We shall put $G_\alpha = \ell(S)$---this is up to isomorphism an element of $\E_\kappa$.  Invoking~(H1) and~(H2) again, we have a bicartesian square of inclusions in $\G$ of the form
\[
\begin{CD}
\ell(S\setminus\{\alpha\}) @>>> G_\alpha       \\
@VVV                            @VV{i}V        \\
X_\alpha                   @>>> X_{\alpha+1}   \\
\end{CD}
\eqno{(\dag)}
\]
Let us define $K_{\alpha+1}$ by the pullback of $i\dd G_\alpha \to X_{\alpha+1}$ and the composition $t\dd \coprod_{\beta<\alpha} G_\beta \overset{p_\alpha}\to X_\alpha \overset{\subseteq}\to X_{\alpha+1}$:
\[
\begin{CD}
  @.   K_\alpha @>{j}>> K_{\alpha+1}                     @>>>    G_\alpha     @>{q}>> X_{\alpha+1}/X_\alpha @.       \\
@.     @|               @VVV                                     @V{i}VV              @|                             \\
0 @>>> K_\alpha @>>>    \coprod_{\beta<\alpha} G_\beta   @>{t}>> X_{\alpha+1} @>>>    X_{\alpha+1}/X_\alpha @>>> 0.  \\
\end{CD}
\]
As $\Img t = X_\alpha$ and $\Img i \supseteq A_\alpha$ (using the notation from the proof of Proposition~\ref{prop:hill-lemma}), we have $\Img t + \Img i = X_{\alpha+1}$ and we obtain a short exact sequence
\[
\begin{CD}
\ep_{\alpha+1} \dd \quad
0 @>>> K_{\alpha+1} @>>> \big(\coprod_{\beta<\alpha} G_\beta\big) \oplus G_\alpha @>{(t,i)}>> X_{\alpha+1} @>>> 0.
\end{CD}
\]
Further, $j\dd K_\alpha \to K_{\alpha+1}$ is clearly a monomorphism and it follows from the computations above that $q\dd G_\alpha \to X_{\alpha+1}/X_\alpha$ is an epimorphism. Now it only remains to prove that $\Coker j \in \E$. Appealing to $(\dag)$, one sees that $\Ker q \cong \ell(S\setminus\{\alpha\}) \in \E$, so that we have a short exact sequence
\[ 0 \la K_\alpha \overset{j}\la K_{\alpha+1} \la \ell(S\setminus\{\alpha\}) \la 0, \]
as required.
\end{proof}

\begin{expl} \label{expl:flat}
Let $R$ be a ring, $\G = \ModR$ and $\F = \FlatR$, the category of flat right $R$-modules. Then $\F$ satisfies the assumptions of Theorem~\ref{thm:deconstr-Groth} (see for instance~\cite[Lemma 3.2.7 and Theorem 4.1.1]{GT}) and so it is, with the exact structure induced from $\G$, an exact category of Grothendieck type. It came as a byproduct of Enoch's proof of the Flat Cover Conjecture~\cite{BEE} that $\F$ has enough injectives---these are usually called flat cotorsion modules.
\end{expl}

We conclude the section with a few more properties of deconstructible classes, which among others show that the assumption on $\E$ being closed under retracts in Theorem~\ref{thm:deconstr-Groth} is not very restrictive.

\begin{lem} \label{lem:deconstr-transitive} \cite[Lemma 1.11]{StPo}
Deconstructibility is transitive. Namely, let $\E$ be an exact category satisfying (Ef1) of Definition~\ref{def:efficient-exact}. If $\F$ is deconstructible in $\E$, then $\F$ also satisfies (Ef1) and, moreover, a class $\F' \subseteq \F$ is deconstructible in $\F$ \iff it is deconstructible in $\E$.
\end{lem}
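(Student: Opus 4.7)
\medskip

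My plan is to fix a set $\clS\subseteq\E$ with $\F=\Filt\clS$ (which exists by deconstructibility of $\F$ in $\E$) and repeatedly use the idea that an inflation in $\F$ with cokernel in $\F$ may be refined into an $\clS$-filtration. All colimits of the relevant sequences will be computed in $\E$ and I will show they lie in~$\F$.

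\textbf{Step 1: $\F$ satisfies (Ef1).} Consider a $\lambda$-sequence $(X_\alpha,f_{\alpha\beta})_{\alpha<\beta<\lambda}$ in $\F$ where every $f_{\alpha,\alpha+1}$ is an $\F$-inflation, i.e.\ an inflation in $\E$ with cokernel $F_\alpha\in\F$. Using that each $F_\alpha$ is $\clS$-filtered, insert an $\clS$-filtration of $F_\alpha$ between $X_\alpha$ and $X_{\alpha+1}$ for every $\alpha$. Concatenating these along $\lambda$ produces an $\clS$-filtration indexed by some ordinal $\lambda'$ whose partial compositions $X_\alpha$ appear as a cofinal subsystem. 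Since $\E$ satisfies (Ef1), this $\clS$-filtration has a composition $0\to X$ in $\E$ which is an inflation; the same object is the colimit of the original $\lambda$-sequence in $\E$, and $X\in\Filt\clS=\F$ by construction. Hence the transfinite composition exists in $\F$ (computed in $\E$) and the cokernel, being $\clS$-filtered, lies in $\F$. This is exactly (Ef1) for $\F$.

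\textbf{Step 2: deconstructibility in $\F$ agrees with deconstructibility in $\E$.} Let $\F'\subseteq\F$ and pick any set $\clS'$ with $\F'=\Filt_\E\clS'$ or $\F'=\Filt_\F\clS'$; note that in either case every element of $\clS'$ is itself $\clS'$-filtered in one step, hence in $\F'\subseteq\F$, so without loss of generality $\clS'\subseteq\F$. I claim that an object $Y$ is $\clS'$-filtered in $\E$ iff it is $\clS'$-filtered in $\F$. For the forward direction, if $(Y_\alpha)_{\alpha\le\sigma}$ is an $\clS'$-filtration in $\E$, then each $Y_\alpha$ is $\clS'$-filtered and $\clS'\subseteq\F$, so Lemma~\ref{lem:deconstr-basic} gives $Y_\alpha\in\F$; the inflations $Y_\alpha\to Y_{\alpha+1}$ have cokernels in $\clS'\subseteq\F$, and at limit ordinals the colimit is computed in $\E$ but lies in $\F$ by the same reasoning as in Step~1, so it is also the colimit in~$\F$. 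Thus $(Y_\alpha)$ is an $\clS'$-filtration in $\F$. Conversely, any $\clS'$-filtration in $\F$ is by Step~1 an $\clS'$-filtration in $\E$, since the colimits in $\F$ that appear at limit ordinals are already the colimits in~$\E$.

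Combining these two claims gives $\Filt_\F\clS'=\Filt_\E\clS'$, so $\F'=\Filt_\F\clS'$ holds precisely when $\F'=\Filt_\E\clS'$, which proves the equivalence.

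\textbf{Anticipated obstacle.} The only real bookkeeping is verifying that a concatenation of $\clS$-filtrations (inserted between consecutive terms of a given $\lambda$-sequence) reassembles into a genuine $\lambda'$-sequence in the sense of Definition~\ref{def:transf}, in particular that continuity at the limit ordinals of the new indexing is preserved. This is a routine but careful ordinal-arithmetic argument, and it is the one place where (Ef1) for $\E$ is used in an essential way, since one must know that every partial colimit along the refined sequence exists in $\E$ and is an inflation.
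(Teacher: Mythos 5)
Your two-step plan is the right one and essentially matches the cited proof. But the real obstacle in Step~1 is not the ordinal arithmetic of concatenating filtrations; it is that, in an exact category satisfying only (Ef1), neither the insertion of $\clS$-filtrations nor the computation of $\Coker(X_0\to X)$ is automatic. The insertion step means lifting a filtration of $F_\alpha$ along the deflation $X_{\alpha+1}\to F_\alpha$, whose stages are pullbacks, and one must check that these pullbacks are continuous at limit ordinals---exactly the subtlety Remark~\ref{rem:filtr} flags via~\cite[Lemma 2.10]{SaSt}. Likewise, the claim that ``the cokernel, being $\clS$-filtered, lies in $\F$'' is left unjustified; it requires the cokernel functor to commute with the $\lambda$-colimit, an interchange-of-colimits argument that works once (Ef1) guarantees all the relevant colimits exist, but which has to be spelled out. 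Finally, a small slip: your concatenated $\lambda'$-sequence starts at $X_0$, not $0$, so it is not literally an $\clS$-filtration of $X$; to conclude $X\in\Filt\clS$ you should also prepend an $\clS$-filtration of $X_0$.

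In fact the refinement to $\clS$ is unnecessary. Put $C_\alpha=\Coker(X_0\to X_\alpha)$, which exists since $X_0\to X_\alpha$ is an inflation by (Ef1); by~\cite[Lemma 3.5]{Bu} each $C_\alpha\to C_{\alpha+1}$ is an inflation with cokernel $\cong F_\alpha\in\F$, and the interchange argument above gives continuity of $(C_\alpha)$ at limit ordinals, so $(C_\alpha)$ is an $\F$-filtration and $\Coker(X_0\to\li X_\alpha)=\li C_\alpha$ lies in $\F$ by Lemma~\ref{lem:deconstr-basic}. The same lemma gives $\li X_\alpha\in\F$ (prepend $0\to X_0$, whose cokernel $X_0$ lies in $\F$). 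This proves (Ef1) for $\F$ while sidestepping both the pullback-lifting and the starting-point issue; only the interchange of colimits is essential. Your Step~2 is then correct.
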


\begin{prop} \label{prop:deconstr-advanced} \cite[Proposition 2.9]{St2}
Let $\G$ be a Grothendieck category, considered as an exact category with the abelian exact structure. Then:
\begin{enumerate}
\item The closure of a deconstructible class of $\G$ under retracts is deconstructible.
\item The intersection $\bigcap_{i \in I} \F_i$ of a collection of deconstructible classes $(\F_i \mid i \in I)$ of $\G$, indexed by a set $I$, is deconstructible.
\end{enumerate}
\end{prop}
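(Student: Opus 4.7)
The plan is to prove both parts by combining the generalized Hill Lemma (Proposition~\ref{prop:hill-lemma}) with iterated closing-off arguments, tracking cardinalities carefully throughout.

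\emph{Part (1).} Write $\F=\Filt\clS$ and denote its closure under retracts by $\F^\oplus$. Fix a regular cardinal $\kappa$ large enough that Proposition~\ref{prop:hill-lemma} applies to $\clS$-filtrations in $\G$ and that every element of $\clS$ is $<\kappa$-presented, and let $\clS'$ be a representative set of $<\kappa$-presented objects of $\F^\oplus$. Given $Y\in\F^\oplus$, exhibit it as a direct summand $X=Y\oplus K$ of some $X\in\F$ via a section $i\dd Y\to X$ with retraction $p\dd X\to Y$; fix an $\clS$-filtration of $X$ with associated Hill assignment $\ell\dd\clL\to\Subobj X$. The key construction is an $\omega$-iterated closing-off under $p$: starting from any $<\kappa$-sized $S_0\in\clL$, repeatedly apply (H4) to pick $<\kappa$-sized $S_{n+1}\supseteq S_n$ in $\clL$ with $p(\ell(S_n))\subseteq\ell(S_{n+1})$; then $S_\infty:=\bigcup_n S_n$ lies in $\clL$ by (H2), still has size $<\kappa$, and satisfies $p(\ell(S_\infty))\subseteq\ell(S_\infty)$. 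This $p$-stability yields a splitting $\ell(S_\infty)=(\ell(S_\infty)\cap Y)\oplus(\ell(S_\infty)\cap K)$ in $\G$, so $\ell(S_\infty)\cap Y$ is a retract of $\ell(S_\infty)$; by (H3) the latter is $\clS$-filtered of length $<\kappa$, hence $<\kappa$-presented, so $\ell(S_\infty)\cap Y\in\clS'$. Transfinitely iterating---at each step enlarging to the next $p$-stable lattice element that absorbs one more fresh small subobject of $Y$---produces a continuous chain $(Y_\alpha)$ of subobjects of $Y$ with union $Y$ whose successive quotients are retracts of the $\clS$-filtered quotients $\ell(S_{\alpha+1})/\ell(S_\alpha)$, giving the desired $\clS'$-filtration.

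\emph{Part (2).} Write $\F_i=\Filt\clS_i$ and fix a single regular cardinal $\kappa>|I|$ meeting the Hill Lemma assumptions for every $\clS_i$ simultaneously. Let $\clS$ be a representative set of $<\kappa$-presented objects of $\bigcap_i\F_i$; the inclusion $\Filt\clS\subseteq\bigcap_i\F_i$ is immediate from Lemma~\ref{lem:deconstr-basic}, so only the converse needs proof. For $X\in\bigcap_i\F_i$, fix an $\clS_i$-filtration of $X$ and its Hill assignment $\ell_i\dd\clL_i\to\Subobj X$ for each $i$. I build a continuous chain $X_0\subseteq X_1\subseteq\cdots\subseteq X$ together with, for each $\alpha$ and each $i$, an element $S_\alpha^i\in\clL_i$ so that $X_\alpha=\ell_i(S_\alpha^i)$ \emph{simultaneously} for every $i$. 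The successor step is an $\omega$-iterated simultaneous closing-off: given a small fresh $Z\subseteq X$ not lying in $X_\alpha$, pick $T_i^{(0)}\in\clL_i$ of size $<\kappa$ with $S_\alpha^i\subseteq T_i^{(0)}$ and $Z\subseteq\ell_i(T_i^{(0)})$ for each $i$; inductively pick $T_i^{(n+1)}\supseteq T_i^{(n)}$ in $\clL_i$ of size $<\kappa$ with $\sum_{j\in I}\ell_j(T_j^{(n)})\subseteq\ell_i(T_i^{(n+1)})$, which is possible by (H4) since this sum is $<\kappa$-generated thanks to $|I|<\kappa$ and the regularity of $\kappa$. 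Setting $S_{\alpha+1}^i:=\bigcup_n T_i^{(n)}$ lies in $\clL_i$ by completeness and has size $<\kappa$, and (H2) together with the interlocking containments yields $\ell_i(S_{\alpha+1}^i)=\bigcup_n\sum_j\ell_j(T_j^{(n)})=:X_{\alpha+1}$ uniformly in $i$. Then (H3) applied in each $\F_i$ shows $X_{\alpha+1}/X_\alpha$ is $\clS_i$-filtered of length $<\kappa$ for every $i$, hence is $<\kappa$-presented and lies in $\bigcap_i\F_i$, so it belongs to $\clS$.

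The heart of both arguments is the Hill lattice together with cardinal bookkeeping, and this bookkeeping is where the main difficulty lies. In (1) it is clean, since we close off only against the single map $p$ via a countable iteration; in (2) the simultaneous closing-off across the whole family $(\F_i)_{i\in I}$ at each step only stays within the $<\kappa$ regime because $I$ is a set and $\kappa>|I|$ is regular---this is precisely where the hypotheses of the proposition are essentially used, and it is also why a proper class $I$ would break the argument.
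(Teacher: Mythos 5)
Your overall strategy---the Hill Lemma together with iterated closing-off arguments---is exactly the right one, and it matches what the paper indicates about the cited proof (the paper itself gives no proof here, but Corollary~\ref{cor:lhs-cot-converse} remarks that the proof of this proposition in~\cite{St2} uses the Hill Lemma). Part~(1) is essentially correct. Two small clarity points: what you really close off against is the idempotent $e=ip\colon X\to X$ rather than $p\colon X\to Y$ (it is $e$-stability of $\ell(S_\infty)$ that produces the splitting $\ell(S_\infty)=(\ell(S_\infty)\cap Y)\oplus(\ell(S_\infty)\cap K)$, via $e(\ell(S_\infty))=\ell(S_\infty)\cap Y$), and in the transfinite iteration you should say explicitly that $S_{\alpha+1}:=S_\alpha\cup T_\infty$ where $T_\infty$ is the $<\kappa$-sized $e$-stable set coming from the $\omega$-closure of a fresh small subobject; the union of two $e$-stable elements of $\clL$ is again $e$-stable by~(H2), so this works, and it guarantees $|S_{\alpha+1}\setminus S_\alpha|<\kappa$, which is what you need for $\ell(S_{\alpha+1})/\ell(S_\alpha)$ to be $<\kappa$-presented.

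In Part~(2) there is a genuine gap of exactly this bookkeeping sort, and here it is not merely a matter of phrasing. You require $T_i^{(0)}\supseteq S_\alpha^i$ with $|T_i^{(0)}|<\kappa$, and then claim $S_{\alpha+1}^i=\bigcup_n T_i^{(n)}$ has size $<\kappa$. But this cannot be sustained through the transfinite induction: each step enlarges $S_\alpha^i$ by $<\kappa$ new indices, so by regularity $|S_\alpha^i|<\kappa$ holds only for $\alpha<\kappa$; once $\alpha\geq\kappa$ you will typically have $|S_\alpha^i|\geq\kappa$, and then no $T_i^{(0)}$ of size $<\kappa$ can contain $S_\alpha^i$. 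The fix requires no new idea: drop the requirement $S_\alpha^i\subseteq T_i^{(0)}$, keep only $Z\subseteq\ell_i(T_i^{(0)})$, run the simultaneous $\omega$-closing-off to obtain $T_i^\infty\in\clL_i$ of size $<\kappa$ with $\ell_i(T_i^\infty)=\ell_j(T_j^\infty)=:W$ for all $i,j$, and then set $S_{\alpha+1}^i:=S_\alpha^i\cup T_i^\infty$ and $X_{\alpha+1}:=X_\alpha+W$. By~(H2) one has $\ell_i(S_{\alpha+1}^i)=\ell_i(S_\alpha^i)+\ell_i(T_i^\infty)=X_\alpha+W$ uniformly in $i$, the increment $S_{\alpha+1}^i\setminus S_\alpha^i\subseteq T_i^\infty$ has size $<\kappa$, and~(H3) then shows $X_{\alpha+1}/X_\alpha$ is $\clS_i$-filtered of length $<\kappa$ for each $i$, hence $<\kappa$-presented and in $\bigcap_i\F_i$. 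The remaining bookkeeping in your argument---in particular the use of $|I|<\kappa$ and the regularity of $\kappa$ to keep $\sum_{j\in I}\ell_j(T_j^{(n)})$ $<\kappa$-generated so that~(H4) applies---is exactly right.
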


\section{Weak factorization systems}
\label{sec:wfs}

The coming section deals with a rather abstract category theory which will be of use later in the construction of cotorsion pairs and model structures. The highlight is a version of Quillen's small object argument, specialized to nice enough exact categories. We omit several technical steps in proofs and refer to the monographs~\cite{H2,Hir} and paper~\cite{SaSt}. Although most of our arguments are included in these references, the term ``weak factorization system'' itself is not. This term and some notation has been taken from~\cite{AHRT}.

We start with an orthogonality relation on morphisms in an arbitrary category~$\C$.

\begin{defn} \label{def:lifting}
Given morphisms $f\dd A \to B$ and $g\dd X \to Y$ in $\C$, we write $f \lifts g$ if for any commutative square given by the solid arrows
\[
\xymatrix{
A \ar[r] \ar[d]_f     & X\phantom{,} \ar[d]^g      \\
B \ar[r] \ar@{.>}[ur] & Y,
}
\]
a morphism depicted by the diagonal dotted arrow exists \st both the triangles commute. We stress that we require only existence, not uniqueness of such a morphism. Note that if $\C$ is additive and we view $f$ and $g$ as complexes concentrated in two degrees, the orthogonality relation precisely says that every map from $f$ to $g$ is null-homotopic.

Given $f$, $g$ \st $f \lifts g$, we say that
$f$ has the \emph{left lifting property} for $g$ and
$g$ has the \emph{right lifting property} for $f$.
\end{defn}

Now we can define the central concept of the section.

\begin{defn} \label{def:wfs}
Let $\C$ be a category and $(\clL,\R)$ be a pair of classes of morphisms in $\C$. We say that $(\clL,\R)$ is a \emph{\wfs} if
\begin{enumerate}
\item[(FS0)] $\clL$ and $\R$ are closed under retracts. That is, given any commutative diagram
\[
\begin{CD}
A   @>>>   X   @>>>   A   \\
@V{h}VV  @V{f}VV  @V{h}VV \\
B   @>>>   Y   @>>>   B   \\
\end{CD}
\]
\st $f \in \clL$ and the rows compose to the identity morphisms, then $h \in \clL$ as well. We require the same for $\R$.

\item[(FS1)] $f \lifts g$ for all $f \in \clL$ and $g \in \R$.

\item[(FS2)] For every morphism $h\dd X \to Y$ in $\C$, there is a factorization
\[
\xymatrix{
X \ar[rr]^h \ar[dr]_f && Y  \\
& Z \ar[ur]_g
}
\]
with $f \in \clL$ and $g \in \R$.
\end{enumerate}

We say that $(\clL,\R)$ is a \emph{functorial \wfs} if the factorization in~(FS2) can be chosen functorially in $h$.
\end{defn}

\begin{rem} \label{rem:wfs-unique}
Although the factorization as in (FS2), be it functorial or not, is typically non-unique in the cases we are concerned with, we can ``compare'' different factorizations using the lifting property. Namely, given $h = gf = \tilde{g}\tilde{f}$ \st $f,\tilde{f} \in \clL$ and $g,\tilde{g} \in \R$, (FS1) in Definition~\ref{def:wfs} ensures that there is a morphism depicted by the dotted arrow making the diagram commutative:
\[
\xymatrix{
X \ar[rr]^f \ar@/_.7pc/[rrd]_{\tilde f} && Z \ar@{.>}[d] \ar[rr]^g && Y   \\
&& \tilde{Z} \ar@/_.7pc/[rru]_{\tilde g}
}
\]
\end{rem}

\begin{expl} \label{expl:wfs-misleading}
A well-known example of a \wfs in an abelian category $\A$ is $(\E,\M)$, where $\E$ is the class of all epimorphisms and $\M$ is the class of all monomorphisms. However, this example is rather misleading in our context. In Section~\ref{sec:cotorsion}, we will see \wfss $(\clL,\R)$ where $\clL$ is a class of monomorphisms and $\R$ is a class of epimorphisms. 

Another fact is that $(\E,\M)$ satisfies a stronger version of the lifting property from Definition~\ref{def:lifting}: the diagonal morphism is \emph{unique} for every square with $f \in \E$ and $g \in \M$, ensuring in view of Remark~\ref{rem:wfs-unique} that the factorizations as in (FS2) from Definition~\ref{def:wfs} are also unique. This is a strong property which our \wfss typically do not enjoy.
\end{expl}

A relatively easy observation concerning \wfss is, that the two classes of morphisms determine each other.

\begin{lem} \label{lem:wfs-ortho}
Let $(\clL,\R)$ be a \wfs in a category $\C$. Then
\[
\clL = \{ f \mid f \lifts g \textrm{ for all } g \in \R   \}
\quad \textrm{ and } \quad
\R   = \{ g \mid f \lifts g \textrm{ for all } f \in \clL \}.
\]
\end{lem}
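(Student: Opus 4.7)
The plan is to establish both equalities by two inclusions each, using the three axioms (FS0)--(FS2) in a standard ``retract argument''. I will write out the argument for $\clL$; the one for $\R$ is entirely dual.

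First, the inclusion $\clL \subseteq \{ f \mid f \lifts g \textrm{ for all } g \in \R \}$ is immediate from (FS1). For the converse, suppose $f\dd A \to B$ has the left lifting property with respect to every morphism in $\R$. Apply (FS2) to factor $f$ as
\[
\xymatrix{
A \ar[rr]^{f} \ar[dr]_{f'} && B \\
& Z \ar[ur]_{g'} &
}
\]
with $f' \in \clL$ and $g' \in \R$. The point is now to exhibit $f$ as a retract of $f'$ and invoke (FS0).

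Since $g' \in \R$, our assumption on $f$ gives a lift $h\dd B \to Z$ in the square
\[
\xymatrix{
A \ar[r]^{f'} \ar[d]_{f} & Z \ar[d]^{g'} \\
B \ar[r]_{1_B} \ar@{.>}[ur]^{h} & B
}
\]
so that $hf = f'$ and $g'h = 1_B$. Assembling these identities into the diagram
\[
\begin{CD}
A   @=    A    @=     A   \\
@V{f}VV  @V{f'}VV  @V{f}VV \\
B   @>{h}>>  Z  @>{g'}>>  B \\
\end{CD}
\]
we see that the top row is the identity, the bottom row composes to $g'h = 1_B$, and both squares commute (the left one by $hf = f'$, the right one by $g' f' = f$). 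Thus $f$ is a retract of $f' \in \clL$, and (FS0) forces $f \in \clL$. This proves the first equality.

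The second equality is proved by the dual argument: given $g\dd X \to Y$ with $f \lifts g$ for all $f \in \clL$, factor $g = g'' f''$ with $f'' \in \clL$ and $g'' \in \R$, solve the lifting problem given by $f'' \lifts g$ (with the identity of $X$ on the top and $g''$ on the bottom) to obtain a retraction exhibiting $g$ as a retract of $g''$, and conclude by (FS0). There is no real obstacle here; the only thing to be careful about is to pick the right identity square in each case so that the resulting commutative diagram really is a retract diagram in the morphism category of $\C$.
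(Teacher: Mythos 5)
Your proof is correct and uses essentially the same retract argument as the paper: factor the morphism via (FS2), solve the lifting problem against the $\R$-part of the factorization to obtain a section, assemble the retract diagram, and apply (FS0). The variable names differ but the structure of the argument is identical, including the appeal to duality for the second equality.
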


\begin{proof}
Clearly $\clL \subseteq \{ f \mid f \lifts g \textrm{ for all } g \in \R \}$ and we must prove the other inclusion. Take any $h\dd X \to Y$ \st $h \lifts g$ for all $g \in \R$ and consider a factorization
\[
\xymatrix{
X \ar[rr]^h \ar[dr]_f && Y  \\
& Z \ar[ur]_g
}
\]
with $f \in \clL$ and $g \in \R$. Since $h \lifts g$, the dotted arrow making the following diagram commutative exists
\[
\xymatrix{
X \ar[r]^f \ar[d]_h   & Z \ar[d]^g      \\
Y \ar@{=}[r] \ar@{.>}[ur]|\hole \ar@{}[ur]|s & Y
}
\]
Placing the morphisms we have considered so far into the following diagram
\[
\begin{CD}
X   @=        X   @=        X\phantom{,}  \\
@V{h}VV     @V{f}VV     @V{h}VV           \\
Y   @>{s}>>   Z   @>{g}>>   Y,            \\
\end{CD}
\]
we observe that $gs = 1_Y$. Thus, $h$ is a retract of $f$ and as such it must belong to $\clL$.

The argument for $\R = \{ g \mid f \lifts g \textrm{ for all } f \in \clL \}$ is dual.
\end{proof}

Now we can deduce closure properties of the left orthogonal of a class of morphisms \wrt $\lifts$. Note in particular that the left hand side class of any \wfs $(\clL,\R)$ has these properties.

\begin{lem} \label{lem:wfs-closure}
Let $\R$ be a class of morphisms in a category $\C$ and denote
\[ \clL = \{ f \mid f \lifts g \textrm{ for each } g \in \R \}. \]
Then the following hold for $\clL$:
\begin{enumerate}
\item $\clL$ is closed under pushouts. That is, if we are given a diagram
\[
\begin{CD}
A @>>> \tilde A \\
@V{f}VV         \\
B
\end{CD}
\]
with $f \in \clL$ and if the pushout
\[
\begin{CD}
A   @>>>   \tilde{A}     \\
@V{f}VV  @VV{\tilde{f}}V \\
B   @>>>   \tilde{B}
\end{CD}
\]
exists in $\C$, then also $\tilde f \in \clL$.

\item $\clL$ is closed under transfinite compositions. That is, given a $\lambda$-sequence $(X_\alpha, f_{\alpha\beta})_{\alpha<\beta<\lambda}$ with $f_{\alpha,\alpha+1} \in \clL$ for every $\alpha+1<\lambda$, the composition $X_0 \to \li_{\alpha<\lambda} X_\alpha$, if it exists, belongs to $\clL$, too.
\end{enumerate}
\end{lem}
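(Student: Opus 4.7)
The plan is to verify each closure property by reducing it to the hypothesis $f\lifts g$ for the relevant morphisms, combined with the appropriate universal property (pushout in~(1), colimit in~(2)).

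For part~(1), I would take any $g\dd X\to Y$ in $\R$ together with a test square
\[
\xymatrix{
\tilde A \ar[r]^u \ar[d]_{\tilde f} & X \ar[d]^g \\
\tilde B \ar[r]_v                   & Y
}
\]
and precompose with the pushout maps $a\dd A\to\tilde A$ and $b\dd B\to\tilde B$. Using $b\circ f=\tilde f\circ a$ yields a commutative square from $f$ to $g$, and since $f\in\clL$ there is a diagonal $s\dd B\to X$ with $s\circ f=u\circ a$ and $g\circ s=v\circ b$. The two morphisms $s\dd B\to X$ and $u\dd\tilde A\to X$ agree on $A$, so by the universal property of the pushout they induce a unique $\tilde s\dd\tilde B\to X$ with $\tilde s\circ b=s$ and $\tilde s\circ\tilde f=u$. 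One then checks that $g\circ\tilde s=v$ by testing against the two pushout legs: $g\tilde s b=gs=vb$ and $g\tilde s\tilde f=gu=v\tilde f=vb f/\!\!/$ (here I mean both legs match $v$), and uniqueness in the pushout forces $g\circ\tilde s=v$. Hence $\tilde f\lifts g$, and since $g\in\R$ was arbitrary, $\tilde f\in\clL$.

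For part~(2), let $(X_\alpha,f_{\alpha\beta})_{\alpha<\beta<\lambda}$ be the given $\lambda$-sequence with colimit $X_\lambda=\li_{\alpha<\lambda}X_\alpha$ and structure maps $f_{\alpha\lambda}\dd X_\alpha\to X_\lambda$, and let $h=f_{0\lambda}$ be the composition. Pick any $g\dd X\to Y$ in $\R$ and a test square
\[
\xymatrix{
X_0 \ar[r]^{u_0} \ar[d]_{h} & X \ar[d]^g \\
X_\lambda \ar[r]_{v}        & Y.
}
\]
I would construct by transfinite recursion on $\alpha\le\lambda$ a compatible family of morphisms $u_\alpha\dd X_\alpha\to X$ \st $u_\alpha\circ f_{\beta\alpha}=u_\beta$ for all $\beta\le\alpha$ and $g\circ u_\alpha=v\circ f_{\alpha\lambda}$ for all $\alpha\le\lambda$. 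The base case is the given $u_0$. At a successor step, given $u_\alpha$, the square
\[
\xymatrix{
X_\alpha \ar[r]^{u_\alpha} \ar[d]_{f_{\alpha,\alpha+1}} & X \ar[d]^g \\
X_{\alpha+1} \ar[r]_{v\circ f_{\alpha+1,\lambda}}       & Y
}
\]
commutes, and since $f_{\alpha,\alpha+1}\in\clL$ and $g\in\R$ the lifting property delivers a diagonal $u_{\alpha+1}$. At a limit step $\mu$, the family $(u_\alpha)_{\alpha<\mu}$ is compatible with the transition maps, so by the colimit property defining a $\lambda$-sequence (Definition~\ref{def:transf}) there is a unique induced $u_\mu\dd X_\mu\to X$; its compatibility with $g$ and $v$ on each $X_\alpha$ for $\alpha<\mu$ and the uniqueness in the colimit force $g\circ u_\mu=v\circ f_{\mu\lambda}$. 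Taking $\alpha=\lambda$ gives the desired diagonal $u_\lambda\dd X_\lambda\to X$ for the original square, proving $h\lifts g$ and hence $h\in\clL$.

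The routine part is purely diagrammatic chasing and the use of universal properties; the main obstacle I expect is bookkeeping in~(2)---ensuring that the commutativity condition $g\circ u_\alpha=v\circ f_{\alpha\lambda}$ is maintained at limit ordinals, which is handled by invoking uniqueness in the defining colimit of the $\lambda$-sequence rather than reproving commutativity from scratch.
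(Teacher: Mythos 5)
Your proof is correct. The paper does not actually supply an argument here---it states ``This is an easy fact whose proof is left to the reader''---and your two transfinite/universal-property diagram chases are exactly the standard argument the reader is expected to carry out: in~(1) you precompose the test square with the pushout legs, lift through~$f$, reassemble the diagonal via the universal property, and verify the bottom triangle by testing against both legs; in~(2) you build a compatible cone of diagonals by transfinite recursion, using the lifting property at successors and uniqueness of maps out of the colimit at limits. (There is a small typographical slip at the end of your part~(1), where the chain $g\tilde s\tilde f = gu = v\tilde f$ appears to have a stray ``$vbf$'' appended; the intended statement---that $g\tilde s$ and $v$ agree on both pushout legs---is clear from your parenthetical remark and is correct.)
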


\begin{proof}
This is an easy fact whose proof is left to the reader.
\end{proof}

Inspired by the previous lemma, let us state another definition.

\begin{defn} \label{def:I-cell}
Given a set $\I$ of morphisms of a category $\C$, we define a \emph{relative $\I$-cell complex} as a transfinite composition of pushouts of morphisms from $\I$. The class of all relative $\I$-cell complexes will be denoted by $\Icell$.
\end{defn}

We are now ready to state the highlight of the section---Quillen's small object argument. We state it first in a very general form taken from~\cite{SaSt} and specialize it later. We will not give a full proof, but rather refer to the literature. A version of the argument can be found in~\cite{QHtp}, while nice treatments with notation very close to ours can be found in~\cite{Hir,H2}.

\begin{thm} \label{thm:small-obj}
Let $\C$ be a category and $\M$ be a class of morphisms \st
\begin{enumerate}
\item Arbitrary pushouts of morphisms of $\M$ exist and belong again to $\M$.
\item Arbitrary coproducts of morphisms of $\M$ exist and belong again to $\M$.
\item Arbitrary transfinite compositions of maps of $\M$ exist and belong to $\M$.
\end{enumerate}
Let $\I \subseteq \M$ be a set (not a proper class!) of morphisms and put
\begin{align*}
\R   &= \{ g \mid f \lifts g \textrm{ for all } f \in \I \},   \\
\clL &= \{ f \mid f \lifts g \textrm{ for all } g \in \R \}.
\end{align*}
If for every $i\dd A \to B$ in $\I$ the domain $A$ is small with respect to relative $\I$-cell complexes (Definition~\ref{def:small}), then $(\clL,\R)$ is a functorial \wfs in $\C$ and $\clL$ consists precisely of retracts of relative $\I$-cell complexes.
\end{thm}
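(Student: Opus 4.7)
The plan is to implement Quillen's small object argument inside the ambient class $\M$, exploiting the smallness hypothesis and the fact that $\I$ is a set. Given any morphism $h\dd X \to Y$, I will construct the factorization $h = g \circ j$ by transfinite recursion on an ordinal $\lambda$ to be chosen. Set $Z_0 = X$ with $g_0 = h$; at each successor stage, let $\clS_\alpha$ be the set of all commutative squares from some $i_s\dd A_s \to B_s$ in $\I$ to $g_\alpha\dd Z_\alpha \to Y$, and define $Z_{\alpha+1}$ by the pushout of the coproduct morphism $\coprod_{s \in \clS_\alpha} i_s$ along the tautological map $\coprod A_s \to Z_\alpha$. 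Hypotheses~(1)--(3) on $\M$ guarantee the coproduct and the pushout exist and that each transition $Z_\alpha \to Z_{\alpha+1}$ lies in $\Icell \cap \M$; the universal property of the pushout supplies $g_{\alpha+1}\dd Z_{\alpha+1} \to Y$. At limit ordinals I take colimits, which exist and lie in $\M$ by~(3). The construction is manifestly functorial in $h$.

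Next I choose an infinite regular cardinal $\lambda$ large enough that every domain of a morphism in $\I$ is $\lambda$-small relative to $\Icell$; such $\lambda$ exists because $\I$ is a set. Letting $j\dd X \to Z_\lambda$ be the composition of the resulting $\lambda$-sequence and $g\dd Z_\lambda \to Y$ the induced morphism, $j$ belongs to $\Icell$ by construction. To see $g \in \R$, take any square from $i\dd A \to B$ in $\I$ to $g$: by $\lambda$-smallness, the top map $A \to Z_\lambda$ factors through $A \to Z_\alpha$ for some $\alpha < \lambda$, so the resulting square is an element of $\clS_\alpha$, and the pushout step at stage $\alpha+1$ produces the desired diagonal $B \to Z_{\alpha+1} \to Z_\lambda$. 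This establishes (FS2); moreover $\Icell \subseteq \clL$, since $\I \subseteq \clL$ by definition of $\clL$ and Lemma~\ref{lem:wfs-closure} closes $\clL$ under pushouts and transfinite compositions.

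Condition (FS1) is just the definition of $\clL$ as the left orthogonal of $\R$. For (FS0), closure of $\R$ under retracts is a direct diagram chase: given a retract diagram exhibiting $g$ as a retract of some $g' \in \R$, any square from $i \in \I$ to $g$ composes with the retract to yield a square into $g'$, whose lift projects back to a diagonal for $g$; closure of $\clL$ under retracts is analogous. Finally, for the characterization of $\clL$ as retracts of relative $\I$-cell complexes, one direction follows from $\Icell \subseteq \clL$ together with (FS0). Conversely, given $f \in \clL$, I factor $f = g \circ j$ with $j \in \Icell$ and $g \in \R$ by (FS2); since $f \lifts g$, a lift $s$ exists in the square with top row $j$, left column $f$, right column $g$, bottom row identity, and the data $(j,s)$ exhibit $f$ as a retract of $j$, exactly the trick used in the proof of Lemma~\ref{lem:wfs-ortho}.

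The main obstacle is the smallness step: the existence of a single regular cardinal $\lambda$ dominating the smallness cardinals of \emph{all} domains of morphisms in $\I$ simultaneously is crucial, and this is precisely where the assumption that $\I$ is a set (rather than a proper class) and the closure of $\M$ under coproducts, pushouts, and transfinite compositions enter decisively. The technical verification at this level of generality is carried out in~\cite{SaSt}, and the classical versions can be found in~\cite{QHtp,H2,Hir}.
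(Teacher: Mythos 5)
Your proposal is correct and follows exactly the route the paper intends: the paper's own proof notes that (FS0) and (FS1) are immediate and then defers the transfinite-recursion construction for (FS2) and the retract characterization of $\clL$ to the standard references (Hirschhorn, Hovey), whereas you have written out precisely that argument, adapted to the hypothesis that only $\M$ (rather than all of $\C$) is closed under the relevant colimits. The only detail worth flagging is that identifying the transition $Z_\alpha \to Z_{\alpha+1}$ (a pushout of a coproduct of maps from $\I$) as a relative $\I$-cell complex uses the standard observation that such a pushout can be rewritten as a transfinite composition of pushouts of the individual $i_s$, which matches Definition~\ref{def:I-cell}.
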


\begin{proof}
Clearly, conditions~(FS0) and~(FS1) of Definition~\ref{def:wfs} are satisfied for $(\clL,\R)$, so we only have to prove the functorial version of condition~(FS2) and the structure of morphisms $\clL$. Here we refer to~\cite[Proposition 10.5.16 and Corollary 10.5.22]{Hir} or~\cite[Theorem 2.1.14 and Corollary 2.1.15]{H2}. Although the assumptions in~\cite{Hir,H2} differ slightly, identical proof works.
\end{proof}

Now we can specialize. First we state the correspondingly simplified version for Grothendieck categories.

\begin{cor} \label{cor:small-obj-Groth}
Let $\G$ be a Grothendieck category and $\I$ be a set of morphisms. If $(\clL,\R)$ is as in Theorem~\ref{thm:small-obj}, then $(\clL,\R)$ is a functorial \wfs in $\G$ and $\clL$ consists precisely of retracts of relative $\I$-cell complexes.
\end{cor}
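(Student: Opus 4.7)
The plan is to reduce the statement to Theorem~\ref{thm:small-obj} by taking $\M$ to be the class of \emph{all} morphisms of $\G$. With this choice the three closure conditions on $\M$ in Theorem~\ref{thm:small-obj} become trivial consequences of the cocompleteness of a Grothendieck category: pushouts, arbitrary coproducts, and colimits of $\lambda$-sequences all exist in $\G$ for every ordinal $\lambda$, and each of these constructions automatically yields a morphism of $\G$, hence again lies in $\M$. Thus conditions~(1)--(3) of Theorem~\ref{thm:small-obj} are satisfied.

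Next I would verify the smallness hypothesis. Since $\I \subseteq \M$ is a \emph{set}, every relative $\I$-cell complex is in particular a morphism of $\G$, so it suffices to show that every object of $\G$ is small relative to \emph{all} morphisms of $\G$. This is precisely the observation already recorded inside the proof of Proposition~\ref{prop:exact-Grothendieck}: by the Popescu--Gabriel theorem~\cite[Theorem~X.4.1]{S}, for a fixed $A \in \G$ one can realize $\G$ as a reflective subcategory of $\ModR$ (for a suitable ring $R$) closed under $\lambda$-directed colimits for all sufficiently large regular $\lambda$, in such a way that $A$ corresponds to an $R$-module admitting a presentation by fewer than $\kappa$ generators and relations for some cardinal $\kappa$. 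For any regular $\lambda \ge \kappa$ the functor $\Hom_R(A,-)$ commutes with $\lambda$-directed colimits in $\ModR$, and since these colimits are computed identically in $\G$, the object $A$ is $\kappa$-small in $\G$ relative to arbitrary morphisms, and a fortiori relative to relative $\I$-cell complexes. In particular this applies to the domains of the morphisms in the set $\I$.

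With both hypotheses in place, Theorem~\ref{thm:small-obj} applies verbatim: it produces a functorial \wfs $(\clL,\R)$ in $\G$ with $\clL$ equal to the class of retracts of relative $\I$-cell complexes, and this is exactly the conclusion of the corollary.

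The only step requiring any real thought is the smallness assertion; everything else is formal from cocompleteness. The mild subtlety there is that one must invoke a nontrivial external input (Popescu--Gabriel) rather than argue directly from the Grothendieck axioms, but once this is available the argument is immediate and the rest of the corollary is just a specialization of Theorem~\ref{thm:small-obj}.
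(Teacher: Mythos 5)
Your proof is correct and follows exactly the same route as the paper: take $\M$ to be the class of all morphisms of $\G$, note that cocompleteness makes conditions (1)--(3) of Theorem~\ref{thm:small-obj} trivial, and appeal to the Popescu--Gabriel argument already recorded in the proof of Proposition~\ref{prop:exact-Grothendieck} for the smallness of every object relative to all morphisms. The only difference is that you spell out more details than the paper's terse proof, but the substance is identical.
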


\begin{proof}
Since $\G$ is cocomplete and has exact direct limits, hence also coproducts, by definition, we can take the class of all morphisms in $\G$ for $\M$. Regarding the smallness assumption on the domains of maps in $\I$, any object $A \in \G$ is small relative to the class \emph{all} morphisms---see the proof of Proposition~\ref{prop:exact-Grothendieck}.
\end{proof}

Another corollary of Theorem~\ref{thm:small-obj} can be stated for efficient exact categories (Definition~\ref{def:efficient-exact}), or even slightly more generally without requiring the existence of a generator.

\begin{cor} \label{cor:small-obj-exact}
Let $\E$ be an exact category \st
\begin{enumerate}
\item Arbitrary transfinite compositions of inflations exist and are again inflations.
\item Every object of $\E$ is small relative to the class of all inflations.
\end{enumerate}
Let $\I$ be a set of inflations and consider $(\clL,\R)$ as in Theorem~\ref{thm:small-obj}. Then $(\clL,\R)$ is a functorial \wfs in $\E$ and $\clL$ consists precisely of retracts of relative $\I$-cell complexes.
\end{cor}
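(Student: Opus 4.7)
The plan is to deduce the corollary as a direct application of Theorem~\ref{thm:small-obj}, taking $\M$ to be the class of all inflations in $\E$. Once the three closure hypotheses on $\M$ and the smallness hypothesis on the domains of maps in $\I$ are checked for this choice, the conclusion is immediate.

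First I would verify the three closure properties of $\M$. Closure under pushouts along arbitrary morphisms is built into the axioms of an exact category: the pushout of an inflation along any morphism exists and is again an inflation, cf.~\cite[Proposition 2.12]{Bu}. Closure under small coproducts follows from Lemma~\ref{lem:efficient-coprod}, whose assumption (Ef1) is exactly our hypothesis~(1); since every inflation fits into a conflation and small coproducts of conflations are conflations, the coproduct of any family of inflations is again an inflation. Closure under transfinite compositions is precisely hypothesis~(1).

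Next I would dispatch the smallness condition demanded by Theorem~\ref{thm:small-obj}: for every $i\dd A \to B$ in $\I$ the domain $A$ should be small relative to the class of relative $\I$-cell complexes. By hypothesis~(2), every object of $\E$ is small relative to the class of all inflations, so it suffices to observe that each relative $\I$-cell complex is an inflation. This follows from the previous step: in the underlying $\lambda$-sequence each map $f_{\alpha,\alpha+1}$ is a pushout of a morphism of $\I \subseteq \M$, hence an inflation, and its transfinite composition is again an inflation by~(1).

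With these items in hand, Theorem~\ref{thm:small-obj} yields directly that $(\clL,\R)$ is a functorial weak factorization system in $\E$ and that $\clL$ coincides with the class of retracts of members of $\Icell$. There is no substantial obstacle beyond this bookkeeping; the only mildly subtle point is recognising that smallness relative to the (a priori larger) class of all inflations automatically delivers the smallness relative to relative $\I$-cell complexes that the small object argument requires, so no separate smallness check is needed.
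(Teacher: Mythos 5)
Your proposal is correct and follows exactly the paper's route: take $\M$ to be the class of all inflations, verify pushout-closure from the exact category axioms, coproduct-closure from Lemma~\ref{lem:efficient-coprod}, and transfinite-composition-closure from hypothesis~(1), then note that smallness relative to inflations subsumes smallness relative to $\Icell$ since relative $\I$-cell complexes are inflations. You only spell out the last (smallness) point a bit more explicitly than the paper does, but that is the same argument, not a different one.
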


\begin{proof}
In this case, we can take for $\M$ the class of all inflations. Then $\M$ is closed under taking pushouts by the axioms of an exact category, and $\M$ is closed under coproducts by Lemma~\ref{lem:efficient-coprod}.
\end{proof}

\section{Complete cotorsion pairs}
\label{sec:cotorsion}

In this section, we define complete cotorsion pairs and show how \wfss are related to them. Along with this, we shall prove some properties of exact categories promised in Section~\ref{sec:groth-exact}.

\subsection{Definitions}
\label{subsec:cotorsion-def}

Let us first briefly recall Yoneda's definition of Ext in exact categories, as our next discussion will be based on manipulation with these functors. We refer to~\cite[Chapter III and \S XII.5]{McL} for a detailed account on the topic and further properties of Yoneda's Ext. Given an exact category $\E$, a sequence of morphisms
\[
\ep\dd \quad 0 \la Z_n \la E_n \la \cdots \la E_1 \la Z_0 \la 0
\]
is called \emph{exact} if it arises by splicing $n$ conflations in $\E$ of the form
\[ 0 \la Z_i \la E_i \la Z_{i-1} \la 0. \]

Given further a pair of objects $X,Y \in \C$, let $E^n(X,Y)$ be the class of all exact sequences in $\E$ which are of the form
\[
\begin{CD}
\ep\dd \quad 0 @>>> Y @>>> E_n @>>> \cdots @>>> E_1 @>>> X @>>> 0.
\end{CD}
\]
We define an equivalence relation $\sim$ on $E^n(X,Y)$ by relating $\ep \sim \tilde\ep$ whenever we have a commutative diagram of the form
\[
\begin{CD}
\ep\dd       \quad 0 @>>> Y @>>>      E_n    @>>> \cdots @>>>     E_1     @>>> X @>>> 0  \\
                   @.    @|          @VVV            @.          @VVV         @|         \\
\tilde\ep\dd \quad 0 @>>> Y @>>> \tilde{E}_n @>>> \cdots @>>> \tilde{E}_1 @>>> X @>>> 0
\end{CD}
\]
and taking the symmetric and transitive closure. The $n$-th Yoneda Ext of $X$ and $Y$ is defined as $\Ext^n_\E(X,Y) = E^n(X,Y)/\sim$. Although it is not a priori clear whether the Yoneda Ext is a set (and indeed in general it may be a proper class even for $n=1$, see~\cite[Exercise 1, p. 131]{Fr}), we can nevertheless always give $\Ext^n_\E(X,Y)/\sim$ a structure of an abelian group using the so-called Baer sums and, whenever we are able to rule out the possible set theoretic pitfall,
\[ \Ext^n_\E(-,-)\dd \E\op \times \E \la \Ab \]
becomes an additive functor. Considering the case $n=1$, the zero element of $\Ext^1_\E(X,Y)$ is precisely the class of all split conflations.

Cotorsion pairs, originally defined in~\cite{Sal}, are simply pairs of classes of objects mutually orthogonal \wrt $\Ext^1_\E(-,-)$.

\begin{defn} \label{def:cotorsion}
Let $\E$ be an exact category. For a class $\clS$ of objects of $\E$ we define
\begin{align*}
\clS^\perp &= \{ B \in \E \mid \Ext^1_\E(S,B) = 0 \textrm{ for all } S \in \clS \},  \\
^\perp\clS &= \{ A \in \E \mid \Ext^1_\E(A,S) = 0 \textrm{ for all } S \in \clS \}.
\end{align*}

A pair $(\A,\B)$ of full subcategories of $\E$ is called a \emph{cotorsion pair} provided that
\[ \A = {^\perp \B} \qquad \textrm{ and } \qquad \A^\perp = \B. \]

A cotorsion pair $(\A,\B)$ is said to be \emph{complete} if every $X \in \E$ admits so-called \emph{approximation sequences}; that is, conflations of the form
\[
0 \la X \la B_X \la A_X \la 0 \quad \textrm{ and } \quad
0 \la B^X \la A^X \la X \la 0
\]
with $A_X, A^X \in \A$ and $B_X, B^X \in \B$.

The cotorsion pair is called \emph{functorially complete} if, moreover, the approximation sequences can be chosen to be functorial in $X$.
\end{defn}

\begin{expl} \label{expl:cotorsion-inj}
In every Grothendieck category $\G$ there is always a trivial complete cotorsion pair. Namely, denote by $\Inj\G$ the full subcategory of all injective objects. Then $(\G,\Inj\G)$ is a complete cotorsion pair, where for given $X \in \G$ we have the following approximation sequences:
\[
0 \la X \la EX \la EX/X \la 0 \; \textrm{ and } \;
0 \la 0 \la X \la X \la 0.
\]
We denote by $X \to EX$ an injective envelope of $X$ (cf.\ Theorem~\ref{thm:enough-inj}). In fact, the trivial cotorsion pair is functorially complete, and as we shall see in Corollary~\ref{cor:enough-inj}, we have a similar result for exact categories of Grothendieck type.
\end{expl}

\subsection{Generators for the Ext functors}
\label{subsec:generators-ext}

Next we shall develop a technical result necessary for the construction of complete cotorsion pairs using the small object argument from Section~\ref{sec:wfs}. A reader familiar with~\cite{H1} will quickly notice the connection to the concept of a small cotorsion pair from~\cite[Definition 6.4]{H1}. As an easy consequence we will also show that given an efficient exact category (Definition~\ref{def:efficient-exact}), $\Ext^n_\E(X,Y)$ is a set for every choice of $X,Y \in \E$ and $n \ge 1$. Thus, no set-theoretic problems are to arise in our setting.

\begin{prop} \label{prop:homolog-set}
Let $\E$ be an efficient exact category with a generator $G$, and let $S \in \E$ be an object. Then there exists a set $\I_S$ of inflations of $\E$ with the following properties:
\begin{enumerate}
\item Each $f \in \I_S$ fits into a conflation
\[ 0 \la K \overset{f}\la G^{(I)} \la S \la 0. \]

\item If $h\dd X \to Y$ is an inflation in $\E$ with cokernel isomorphic to $S$, then $h$ is a pushout of some $f \in \I_S$. In other words, for each such $h$ there exists $f \in \I_S$ so that we have a commutative diagram with conflations in rows:
\[
\begin{CD}
0 @>>> K @>{f}>> G^{(I)} @>>>    S @>>> 0\phantom{.}  \\
@.   @VVV         @VVV           @|                   \\
0 @>>> X @>{h}>>    Y    @>{p}>> S @>>> 0.
\end{CD}
\]
\end{enumerate}
\end{prop}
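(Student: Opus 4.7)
The plan is to realise $\I_S$ as a set of ``universal resolutions'' of $S$ by coproducts $G^{(I)}$ with bounded index cardinality. The central ingredient is a pushout lemma which I would establish first: given two conflations $0 \la K \la E \la S \la 0$ and $0 \la X \la Y \la S \la 0$ together with a morphism between them inducing the identity on $S$, the resulting left-hand commutative square is a pushout. To prove this, form the actual pushout $P$ of $K \to X$ along $K \to E$; since pushouts of inflations along arbitrary morphisms are again inflations in an exact category, $P$ sits in a conflation $0 \to X \to P \to S \to 0$, and the short five lemma identifies the induced map $P \to Y$ as an isomorphism.

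Using~(Ef3) applied to $S$, fix a deflation $q_0\dd G^{(I_0)} \to S$ and set $\kappa = |I_0|$. Then define $\I_S$ to be the set of all inflations $f\dd K \to G^{(I)}$ whose cokernel is isomorphic to $S$ and with $|I| \le \kappa$. Condition~(1) holds by construction, and $\I_S$ is genuinely a set: index sets $I$ of cardinality at most $\kappa$ form a set up to bijection, and for each such $I$, equivalence classes of inflations into $G^{(I)}$ are parameterized by subobjects of $G^{(I)}$, which form a set in any efficient exact category.

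For condition~(2), given an inflation $h\dd X \to Y$ with cokernel $S$ and deflation $p\dd Y \to S$, choose any deflation $\pi\dd G^{(J)} \to Y$ via~(Ef3); then $p\pi\dd G^{(J)} \to S$ is a deflation. The key step is to select a subset $I \subseteq J$ with $|I| \le \kappa$ so that the restriction $(p\pi)|_{G^{(I)}}\dd G^{(I)} \to S$ remains a deflation. Granting this, set $\sigma = \pi|_{G^{(I)}}$ and let $f\dd K \to G^{(I)}$ be the kernel of $p\sigma$. Then $f \in \I_S$, while $\sigma$ restricts to a morphism $K \to X$ (since $p\sigma$ vanishes on $K$), giving a morphism of conflations; the pushout lemma exhibits $h$ as the pushout of $f$ along this restriction.

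The main obstacle is exactly this bounding step: finding $I \subseteq J$ with $|I| \le \kappa$ so that $(p\pi)|_{G^{(I)}}$ is still a deflation. In a module category this is immediate---lift each of the $\kappa$ generators of $S$ furnished by $q_0$ along the epimorphism $p$, and read off the corresponding coordinates in $J$. In a general efficient exact category there is no such pointwise lifting, and one must argue more globally; a natural route is to form the pullback $P = Y \times_S G^{(I_0)}$, apply~(Ef3) to $P$ to obtain a deflation $G^{(L)} \to P$, and exploit the composite $G^{(L)} \to P \to G^{(I_0)}$ to extract a subcoproduct of size $\kappa$ producing the required lift. This is the one genuinely new technical point beyond the abelian setting.
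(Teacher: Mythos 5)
Your overall architecture — establish a pushout lemma for morphisms of conflations inducing the identity on cokernels, then realize every inflation with cokernel $S$ as a pushout from a ``small'' coproduct of $G$'s — matches the spirit of the paper's argument, but the two concrete ingredients you deploy both have genuine gaps.

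First, the cardinal bound $\kappa = |I_0|$ does not do what you need. Take $\E = \Ab$, $G = \Z^2$, $S = \Z$, and $q_0\dd G \to S$ the first projection, so $|I_0|=1$. Given $Y = G^{(J)}$ with $J$ of size two and the two components mapping onto $2\Z$ and $3\Z$ respectively, no single component restriction is a deflation onto $S$, so no subset $I\subseteq J$ with $|I|\le\kappa$ makes $(p\pi)|_{G^{(I)}}$ a deflation. (You could repair this by choosing a different $\pi$, but your argument fixes $\pi$ first and only afterwards tries to shrink $J$.) The paper avoids this by not bounding the cardinality directly at all: it indexes $\I_S$ by subsets $I\subseteq\E(G,S)$ for which the canonical map $p_I\dd G^{(I)}\to S$ (with $p_I j_i = i$) is a deflation. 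This makes $\I_S$ a set immediately — no appeal to ``subobjects of $G^{(I)}$ form a set,'' a claim the paper's axioms (Ef0)--(Ef3) do not obviously imply — and it supplies exactly the right size bound.

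Second, the Grothendieck-case cut-down you propose (``lift each of the $\kappa$ generators of $S$ furnished by $q_0$ along $p$ and read off coordinates'') requires lifting morphisms $G \to S$ through the deflation $p\dd Y\to S$, i.e.\ it requires $G$ to be projective. A Grothendieck category need not have a projective generator (and the paper's applications, e.g.\ $\Qco X$, typically don't), so this step fails even in the restricted setting where the paper gives a self-contained proof. The paper's actual mechanism is different: starting from any deflation $g\dd G^{(J)}\to Y$, it defines an equivalence relation on $J$ by $i\sim i'$ iff $pg\ell_i = pg\ell_{i'}$, picks one representative per class, and observes that $\Img(pg')=\sum_i\Img(pg\ell_i)$ is unchanged — so surjectivity is preserved. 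The resulting $J'$ injects into $\E(G,S)$, landing in the set $\I_S$ as constructed. No lifting and no projectivity needed. Your sketched pullback route for the general exact-category case stops short of an argument, but that is forgivable since the paper itself defers the general case to~\cite[Proposition~2.7]{SaSt}; the substantive gaps are in the two points above.
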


\begin{proof}
We shall give a direct construction of $\I_S$ and then prove that it has the properties we require. Consider a set $I \subseteq \E(G,S)$ and let 
\[ p_I\dd G^{(I)} \la S, \]
be the canonical morphism---the one defined by $p_I\circ j_i = i$ for all $i \in I$, where $j_i\dd G \to G^{(I)}$ is the $i$-th coproduct injection. Let $\D_S$ be the collection of all $I \subseteq \E(G,S)$ \st $p_I$ is a deflation in $\E$. As $G$ is a generator, we have $\E(G,S) \in \D_S$---for module and Grothendieck categories this is clear while for efficient exact categories this is shown in the proof of~\cite[Proposition 2.7]{SaSt}.

We define $\I_S$ as the collection of the kernels $k_I$ of the morphisms $p_I$ for all $I \in \D_S$. That is, given $I \in \D_S$, we take a conflation
\[ 0 \la K_I \overset{k_I}\la G^{(I)} \overset{p_I}\la S \la 0. \]
and include $k_I$ in $\I_S$.

As (1) is obviously satisfied for such an $\I_S$, we shall focus on (2). In fact here we give a proof only under the additional assumption that $\E$ is a Grothendieck category with the abelian exact structure. The fully general argument can be found in~\cite[Proposition 2.7]{SaSt} and it is similar, but more technical and not so enlightening.

Thus, consider a morphism $h\dd X \to Y$ which fits into a short exact sequence
\[ 0 \la X \overset{h}\la Y \overset{p}\la S \la 0. \]
The object $G$ being a generator, there is an epimorphism $G^{(J)} \overset{g}\to Y$. Consider the composition $pg\dd G^{(J)} \to S$, which is necessarily an epimorphism too. Denoting by $\ell_i\dd G \to G^{(J)}$ the coproduct inclusions, we may get the same compositions $pg\ell_i\dd G \to S$ for distinct elements $i \in J$. We therefore define an equivalence relation on $J$ by putting
\[ i \sim i' \textrm{ for } i,i' \in J \qquad \textrm{ if } \qquad pg\ell_i = pg\ell_{i'}. \]

Let now $J' \subseteq J$ be any set of representatives for the equivalence classes \wrt $\sim$, and define $g'\dd G^{(J')} \to Y$ as the restriction of $g$ to $G^{(J')}$. We claim that the composition
\[ pg'\dd G^{(J')} \la S \]
is still an epimorphism. Indeed, as the morphisms $pg\ell_i\dd G \to S$ for equivalent $i \in J$ contribute equally to the image of $pg$, we have
\[ \Img pg' = \sum_{i \in J'} \Img pg\ell_i = \sum_{i \in J} \Img pg\ell_i = \Img pg = S. \]
This allows us to construct the diagram with exact rows as in (2), whose left hand side commutative square is necessarily bicartesian. We conclude the proof by noting that when identifying $J'$ with a subset $I \subseteq \E(G,S)$ via the injective mapping $J' \to \E(G,S)$ given by $i \mapsto pg'\ell_i$, the kernel map $f$ of $pg'$ belongs (up to isomorphism) to the set $\I_S$ constructed above.
\end{proof}

It is not so difficult to extend the statement for $n$-fold extensions.

\begin{cor} \label{cor:generators-ext}
Let $\E$ and $G$ be as in Proposition~\ref{prop:homolog-set}. Then any exact sequence 
\[
\begin{CD}
0 @>>> Y @>>> E_n @>>> \cdots @>>> E_1 @>>> X @>>> 0.
\end{CD}
\]
in $\E$ admits a commutative diagram with exact rows
\[
\begin{CD}
0 @>>> K @>>> G^{(I_n)} @>>> \cdots @>>> G^{(I_1)} @>>> X @>>> 0\phantom{,}  \\
@.    @VVV      @VVV            @.         @VVV         @|                   \\
0 @>>> Y @>>>   E_n     @>>> \cdots @>>>    E_1    @>>> X @>>> 0,
\end{CD}
\]
\st the cardinalities of $I_1, \dots, I_n$ are bounded by a cardinal $\kappa = \kappa(X,G)$, which only depends on $X,G \in \E$.
\end{cor}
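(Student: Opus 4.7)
The plan is to build the top row from right to left, iterating Proposition~\ref{prop:homolog-set} and realigning successive steps via pullbacks along the vertical morphisms produced in the previous step.

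First splice the given sequence into conflations $0 \to Z_i \to E_i \to Z_{i-1} \to 0$ for $i=1,\dots,n$, with $Z_0 = X$ and $Z_n = Y$. Applying Proposition~\ref{prop:homolog-set}(2) to the rightmost conflation produces an element $f_1 \in \I_X$, that is an inflation $K_1 \to G^{(I_1)}$ fitting in a conflation with cokernel $X$, together with a pushout square mapping it down to $0 \to Z_1 \to E_1 \to X \to 0$. By the construction of $\I_X$ recalled in the proof of Proposition~\ref{prop:homolog-set}, one has $|I_1| \le |\E(G,X)|$, and we record the vertical morphism $\phi_1\dd K_1 \to Z_1$ for use in the next step.

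For the inductive step, suppose the first $j$ vertical strips have been built with rightmost vertical map $\phi_j\dd K_j \to Z_j$. I would pull the deflation $E_{j+1} \to Z_j$ back along $\phi_j$; by the exact-category axioms the pullback is again a deflation and yields a conflation $0 \to Z_{j+1} \to E'_{j+1} \to K_j \to 0$ mapping to $0 \to Z_{j+1} \to E_{j+1} \to Z_j \to 0$ by the identity on $Z_{j+1}$ and by $\phi_j$ on $K_j$. Now apply Proposition~\ref{prop:homolog-set} to this pullback conflation, whose rightmost term is $K_j$, obtaining some $f_{j+1} \in \I_{K_j}$, a conflation $0 \to K_{j+1} \to G^{(I_{j+1})} \to K_j \to 0$, and an accompanying pushout square. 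Composing the pushout with the pullback produces the $(j{+}1)$-st vertical strip of the desired diagram together with a morphism $\phi_{j+1}\dd K_{j+1} \to Z_{j+1}$ ready for the subsequent iteration. After $n$ steps, setting $K := K_n$ and taking $\phi_n\dd K \to Y$ as the leftmost vertical map yields the required commutative diagram with exact rows.

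The main obstacle is to bound all the $|I_j|$ uniformly by a single cardinal $\kappa(X,G)$ depending only on $X$ and $G$. By the construction of $\I_{(-)}$ we have $|I_{j+1}| \le |\E(G, K_j)|$, and $K_j$ sits inside $G^{(I_j)}$ as an inflation from the preceding conflation. I would therefore fix an infinite cardinal $\kappa \ge |\E(G, X)|$ which is closed under the operation $\mu \mapsto \sup\{\,|\E(G, K)| \mid K \to G^{(\mu)} \text{ an inflation}\,\}$; such a $\kappa$ exists because, by (Ef2), every object of $\E$ is small relative to inflations, so $|\E(G, G^{(\mu)})|$ is controlled in terms of $\mu$ and a bounded contribution from small subobjects. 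A straightforward induction on $j$ then delivers $|I_j| \le \kappa$ for every $j$, completing the argument. The diagrammatic bookkeeping (verifying that pushing out by $f_{j+1}$ indeed recovers $E'_{j+1}$, and that composing with the pullback gives a morphism of conflations) is routine once Proposition~\ref{prop:homolog-set} is in hand.
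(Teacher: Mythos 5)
Your diagrammatic construction matches the paper's proof exactly: both proceed by induction, pulling back the next conflation $0 \to Z_{j+1} \to E_{j+1} \to Z_j \to 0$ along the previously constructed vertical morphism $K_j \to Z_j$, applying Proposition~\ref{prop:homolog-set} to the resulting conflation with cokernel $K_j$, and splicing; the paper organizes this as a single step from $n-1$ to $n$, but it is the same iteration.

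Where you go beyond the paper is in explicitly addressing the bound $\kappa(X,G)$, which the paper's proof leaves implicit. Your closure-cardinal idea is correct, but the justification offered is off target. The smallness axiom (Ef2) governs the behavior of $\E(G,-)$ under transfinite compositions of inflations; it does not control, nor even mention, the cardinality of hom-sets such as $\E(G,G^{(\mu)})$. The fact you actually need is much cheaper: inflations are monomorphisms by the axioms of an exact category, so $|\E(G,K)| \le |\E(G,G^{(I)})|$ for any inflation $K \to G^{(I)}$. Thus the operation $\Phi(\mu) := \sup\{\,|\E(G,K)| \mid K \to G^{(I)}\ \text{an inflation},\ |I|\le\mu\,\}$ is bounded by $\sup_{\lambda\le\mu}|\E(G,G^{(\lambda)})|$, hence is a well-defined cardinal-valued function, and iterating it countably many times starting from $|\E(G,X)|$ and taking the supremum yields a $\kappa(X,G)$ that works uniformly in $n$. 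The appeal to (Ef2) should simply be dropped.
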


\begin{proof}
We prove the statement by induction in $n$. The case $n=1$ is clear from Proposition~\ref{prop:homolog-set}. If $n>1$, we use the inductive hypothesis to obtain a diagram
\[
\begin{CD}
0 @>>>    K'   @>>> G^{(I_{n-1})} @>>> \cdots @>>> G^{(I_1)} @>>> X @>>> 0\phantom{.}  \\
@.     @V{t}VV          @VVV              @.         @VVV         @|                   \\
0 @>>> Z_{n-1} @>>>    E_{n-1}    @>>> \cdots @>>>    E_1    @>>> X @>>> 0.
\end{CD}
\]
We also have the commutative diagram with conflations in rows
\[
\begin{CD}
0 @>>> K @>>> G^{(I_n)} @>>>    K'   @>>> 0\phantom{,}  \\
@.   @VVV       @VVV            @|                      \\
0 @>>> Y @>>>    E'     @>>>    K'   @>>> 0\phantom{,}  \\
@.     @|       @VVV         @VV{t}V                    \\
0 @>>> Y @>>>   E_n     @>>> Z_{n-1} @>>> 0,
\end{CD}
\]
where the middle row is the pullback of the bottom row along $t$ and the upper row is obtained using Proposition~\ref{prop:homolog-set}. The diagram from the statement arises by ignoring the middle row in the last diagram and splicing it with the second last diagram.
\end{proof}

As an easy corollary, we get the promised rectification of the potential set-theoretic issue.

\begin{cor} \label{cor:small-ext}
Let $\E$ be an efficient exact category. Then $\Ext^n_\E(X,Y)$ is a set for each $X,Y \in \E$ and $n \ge 1$.
\end{cor}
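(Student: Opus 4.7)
The plan is to apply Corollary~\ref{cor:generators-ext} to parameterize every Yoneda class by elements of an explicit set.

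First I would fix a generator $G$ of $\E$, available by axiom~(Ef3), and let $\kappa = \kappa(X,G)$ be the cardinal supplied by Corollary~\ref{cor:generators-ext}. Consider the collection $\Sigma$ of all exact sequences of the form
\[ 0 \la K \la G^{(I_n)} \la \cdots \la G^{(I_1)} \la X \la 0 \]
in which each indexing set $I_k$ is a subset of $\kappa$. Because $\E$ is locally small, for each fixed tuple of indexing sets the possible morphisms between the successive coproducts $G^{(I_k)}$, as well as the terminal map to $X$, form honest sets. Hence $\Sigma$ is a set. For each $\sigma \in \Sigma$, the leftmost term $K_\sigma$ is uniquely determined as a kernel, so $\Hom_\E(K_\sigma, Y)$ is again a set.

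Next I would argue that the assignment sending a pair $(\sigma,\phi)$ in $\coprod_{\sigma \in \Sigma} \Hom_\E(K_\sigma, Y)$ to the Yoneda class of the pushout $\sigma \sqcup_{K_\sigma} Y$ (formed along the inflation $K_\sigma \to G^{(I_n)}$ in the leftmost conflation of $\sigma$) surjects onto $\Ext^n_\E(X,Y)$. Given any representative $\epsilon \in E^n(X,Y)$, Corollary~\ref{cor:generators-ext} produces some $\sigma \in \Sigma$ together with a morphism of exact sequences $\sigma \to \epsilon$ that is the identity on $X$. Its leftmost component is a morphism $\phi\dd K_\sigma \to Y$, and the universal property of the pushout factors $\sigma \to \epsilon$ as $\sigma \to \sigma \sqcup_{K_\sigma} Y \to \epsilon$, where the second map lies in $E^n(X,Y)$ and is the identity on both $X$ and $Y$. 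By the very definition of Yoneda equivalence, $[\epsilon] = [\sigma \sqcup_{K_\sigma} Y]$ in $\Ext^n_\E(X,Y)$.

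Combining the two steps, $\Ext^n_\E(X,Y)$ is the image of a class-valued map whose source is the set $\coprod_{\sigma \in \Sigma} \Hom_\E(K_\sigma, Y)$, and is therefore itself a set. The subtle step, and the one I would write out most carefully, is the last one: namely that the morphism of exact sequences supplied by Corollary~\ref{cor:generators-ext} genuinely factors through the expected pushout sequence with identity endpoints. For $n=1$ this is immediate from the pushout axiom of an exact category; for $n>1$ one only pushes out the leftmost conflation and splices unchanged with the remaining $n-1$ conflations, and the factorization is then a straightforward diagram chase.
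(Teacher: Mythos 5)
Your proof takes essentially the same approach as the paper: fix the generator $G$ and the cardinal $\kappa = \kappa(X,G)$ from Corollary~\ref{cor:generators-ext}, parameterize a set of ``test'' exact sequences $\sigma$ of the form $0 \to K_\sigma \to G^{(I_n)} \to \cdots \to G^{(I_1)} \to X \to 0$ with bounded indexing sets, and then exhibit $\Ext^n_\E(X,Y)$ as the image of the set $\coprod_\sigma \E(K_\sigma,Y)$ under pushout. You merely spell out two points the paper leaves implicit — why $\Sigma$ is a set (by normalizing $I_k \subseteq \kappa$) and why the pushout of the leftmost conflation factors the morphism supplied by Corollary~\ref{cor:generators-ext} — which is a reasonable elaboration but not a genuinely different route.
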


\begin{proof}
Let $\E$ and $G \in \E$ be as in Proposition~\ref{prop:homolog-set}, and fix $X \in \E$ and $n \ge 1$. Let $\kappa = \kappa(X,G)$ be a cardinal as in Corollary~\ref{cor:small-ext} and consider an exact sequence
\[
\ep\dd \quad 0 \la K_\ep \la G^{(I_n)} \la \cdots \la G^{(I_1)} \la X \la 0
\]
\st $\card{I_1}, \dots, \card{I_n} \le \kappa$. Given $Y \in \E$, there is an obvious map $\E(K_\ep,Y) \to \Ext^n_\E(X,Y)$ which acts by taking the pushouts of $\ep$ along maps in $\E(K_\ep,Y)$. The assignment is functorial in $Y$, so that we have a natural transformation $\E(K_\ep,-) \to \Ext^n_\E(X,-)$. If we sum these transformations over all possible sequences $\ep$, we get a transformation
\[ \coprod_\ep \E(K_\ep,-) \la \Ext^n_\E(X,-), \]
which is surjective for every $Y \in \E$ by Corollary~\ref{cor:generators-ext}.
\end{proof}

\subsection{From orthogonality on morphisms to vanishing of Ext}
\label{subsec:lifting-to-Ext}

We start to work out the connection between complete cotorsion pairs and \wfss and prove that exact categories of Grothendieck type have enough injectives. We shall start with two statements, which are taken from~\cite{SaSt} in the generality in which we state them.

\begin{lem} \label{lem:homolog-set} \cite[2.5]{SaSt}
Let $\E$ be an efficient exact category, $G \in \E$ a generator and $S \in \E$ any fixed object. Consider the set $\I_S$ from Proposition~\ref{prop:homolog-set}. Then the following are equivalent for $Y \in \E$:
\begin{enumerate}
\item $\Ext^1_\E(S,Y) = 0$.
\item $f \lifts (Y \to 0)$ for every $f \in \I_S$ (cf.\ Definition~\ref{def:lifting}).
\end{enumerate}
\end{lem}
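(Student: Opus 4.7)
The plan is to unpack both conditions into a statement about pushout conflations and splittings. Recall that $\I_S$ consists of the inflations $f_I\dd K_I \to G^{(I)}$ arising from conflations $0 \to K_I \overset{f_I}\to G^{(I)} \overset{p_I}\to S \to 0$, indexed by those $I \subseteq \E(G,S)$ for which $p_I$ is a deflation. Also note that $f \lifts (Y \to 0)$ simply means: every morphism $\alpha\dd K \to Y$ factors through $f$, i.e.\ extends to some $\gamma\dd G^{(I)} \to Y$ with $\gamma f = \alpha$.

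For the implication (1)$\Rightarrow$(2), I would take $f \in \I_S$ together with a morphism $\alpha\dd K \to Y$, and form the pushout of the conflation defining $f$ along $\alpha$. The axioms of an exact category produce a conflation
\[ 0 \la Y \la Z \la S \la 0, \]
and since $\Ext^1_\E(S,Y) = 0$ by hypothesis, this conflation splits. Composing the induced map $G^{(I)} \to Z$ with a retraction $Z \to Y$ yields the desired extension $\gamma$.

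For the converse (2)$\Rightarrow$(1), I would pick an arbitrary element of $\Ext^1_\E(S,Y)$ represented by a conflation $0 \to Y \overset{h}\to E \overset{p}\to S \to 0$. By Proposition~\ref{prop:homolog-set}(2), there is some $f\dd K \to G^{(I)}$ in $\I_S$ and a commutative diagram
\[
\begin{CD}
0 @>>> K @>{f}>>     G^{(I)}   @>>>     S @>>> 0\phantom{,} \\
@.   @V{\alpha}VV   @V{\beta}VV          @|                  \\
0 @>>> Y @>{h}>>        E      @>{p}>>  S @>>> 0,
\end{CD}
\]
whose left square is bicartesian. By hypothesis (2), $\alpha\dd K \to Y$ extends to $\gamma\dd G^{(I)} \to Y$ with $\gamma f = \alpha$. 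The universal property of the pushout applied to $\gamma$ and $1_Y$ yields a morphism $r\dd E \to Y$ satisfying $rh = 1_Y$, so $h$ splits. As the conflation was arbitrary, $\Ext^1_\E(S,Y) = 0$.

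The proof is essentially routine; the only subtle point is that the proof of (2)$\Rightarrow$(1) genuinely needs the full strength of Proposition~\ref{prop:homolog-set}(2), guaranteeing that every conflation with cokernel $S$ really does arise as a pushout of an element of the set $\I_S$, rather than merely a proper class of candidates. This is what makes $\I_S$ a \emph{set} of test morphisms detecting $\Ext^1_\E(S,-) = 0$, which is exactly what will be needed to invoke the small object argument.
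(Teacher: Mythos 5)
Your proof is correct and takes essentially the same route as the paper's. For (2)$\Rightarrow$(1), the paper also invokes Proposition~\ref{prop:homolog-set}(2) to realize an arbitrary conflation $0 \to Y \to E \to S \to 0$ as a pushout of some $f \in \I_S$, and then uses the lifting property to split it; you merely make explicit the invocation of the universal property of the pushout to produce the retraction $r\dd E \to Y$, which the paper compresses to ``this precisely means that $\ep$ splits.'' For (1)$\Rightarrow$(2), the paper simply says the implication is easy and leaves it to the reader, whereas you supply the standard pushout-and-split argument, which is indeed the intended one.
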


\begin{proof}
(1) $\implies$ (2) is easy and left to the reader. For (2) $\implies$ (1), we will assume that $Y \to 0$ has the right lifting property for all $f \in \I_S$. We must prove that any fixed conflation
\[ \ep\dd \quad 0 \la Y \la E \la S \la 0 \]
with $S \in \clS$ splits. By the choice of $\I_S$, we know that we have the solid part of the commutative diagram
\[
\xymatrix{
0 \ar[r] & K \ar[r]^f \ar[d] & G^{(J)} \ar[r] \ar[r] \ar@{.>}[dl] \ar[d] & S \ar[r] \ar@{=}[d] & 0  \\
0 \ar[r] & Y \ar[r] & E \ar[r] & S \ar[r] & 0
}
\]
with conflations in rows and \st $f \in \I_S$. Since $f \lifts (Y \to 0)$, we can fill in the dotted arrow so that the upper triangle commutes. This precisely means that $\ep$ splits.
\end{proof}

The second result is, in a way, an analogue of Lemma~\ref{lem:wfs-closure}(2) for cotorsion pairs. For modules, the corresponding result is called the Eklof lemma, see~\cite[Lemma 3.1.2]{GT}. We will present a rather different proof, however, which is taken from~\cite[Proposition 2.12]{SaSt}.

\begin{prop} \label{prop:eklof} \cite[2.12]{SaSt}
Let $\E$ be an exact category satisfying (Ef1) of Definition~\ref{def:efficient-exact}. Let $\B$ be a class of objects in $\E$ and denote $\A = {^\perp\B}$ (see Definition~\ref{def:cotorsion}). Then any $\A$-filtered object belongs to $\A$.
\end{prop}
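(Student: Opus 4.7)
The plan is to show $\Ext^1_\E(X,B)=0$ for every $B\in\B$ by proving that an arbitrary conflation $\eta\dd 0\la B\overset{j}\la E\overset{p}\la X\la 0$ splits. Writing the $\A$-filtration of $X$ as $(X_\alpha\mid\alpha\le\sigma)$, I would construct by transfinite induction a compatible family of partial sections $s_\alpha\dd X_\alpha\to E$ satisfying $p s_\alpha=\iota_\alpha$ (the coprojection to $X_\sigma=X$) and $s_{\alpha+1}f_{\alpha,\alpha+1}=s_\alpha$. The base case is $s_0=0$. At a limit ordinal $\mu$ the definition of a $\lambda$-sequence forces $X_\mu$ to be the colimit of the preceding stages (an existence guaranteed by (Ef1)), so the compatible family $(s_\alpha)_{\alpha<\mu}$ extends uniquely to $s_\mu$, and the glued identities $ps_\alpha=\iota_\alpha$ force $p s_\mu=\iota_\mu$ by the uniqueness part of the universal property.

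For the successor step I would form the pushout
$$
\begin{CD}
X_\alpha     @>{s_\alpha}>>        E          \\
@V{f_{\alpha,\alpha+1}}VV          @VV{g}V    \\
X_{\alpha+1} @>>{t}>               P,
\end{CD}
$$
which exists because $f_{\alpha,\alpha+1}$ is an inflation, producing a conflation $\ep\dd 0\to E\overset{g}\to P\overset{\pi}\to A_\alpha\to 0$ with $A_\alpha=X_{\alpha+1}/X_\alpha\in\A$. Any retraction $\rho$ of $g$ yields a candidate $s_{\alpha+1}=\rho t$, so it suffices to prove $[\ep]=0$ in $\Ext^1_\E(A_\alpha,E)$. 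Pushing $\ep$ forward along $p$ gives, by a routine iterated-pushout calculation, the conflation $0\to X\to X\sqcup_{X_\alpha}X_{\alpha+1}\to A_\alpha\to 0$, i.e.\ the push-out of the filtration step $0\to X_\alpha\to X_{\alpha+1}\to A_\alpha\to 0$ along $\iota_\alpha$; this already splits because $\iota_{\alpha+1}$ extends $\iota_\alpha$ over $X_{\alpha+1}$. Hence $p_*[\ep]=0$ in $\Ext^1_\E(A_\alpha,X)$, and combined with $\Ext^1_\E(A_\alpha,B)=0$ (from $A_\alpha\in\A={}^\perp\B$) and the exactness of $\Ext^1_\E(A_\alpha,B)\to\Ext^1_\E(A_\alpha,E)\to\Ext^1_\E(A_\alpha,X)$ in the long exact sequence attached to $\eta$, this forces $[\ep]=0$.

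Finally I must arrange $p s_{\alpha+1}=\iota_{\alpha+1}$ rather than only $p s_{\alpha+1}f_{\alpha,\alpha+1}=\iota_\alpha$. The discrepancy $p\rho t-\iota_{\alpha+1}$ annihilates $f_{\alpha,\alpha+1}$, so it factors through the cokernel $X_{\alpha+1}\to A_\alpha$ as $d\pi'$ for some $d\dd A_\alpha\to X$; pulling $\eta$ back along $d$ produces an extension in $\Ext^1_\E(A_\alpha,B)=0$, so $d$ lifts to $\tilde d\dd A_\alpha\to E$ with $p\tilde d=d$, and replacing $\rho$ by $\rho-\tilde d\pi$ preserves the retraction property while enforcing $p\rho t=\iota_{\alpha+1}$. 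The main technical obstacle I anticipate is bookkeeping rather than conceptual: one must make sure that the long exact sequence for Yoneda $\Ext$ and the pushout/pullback identities all function correctly inside an exact category assumed only to satisfy (Ef1). These are standard features of Yoneda $\Ext$ in arbitrary exact categories, and (Ef1) is precisely what keeps the transfinite colimit step inside $\E$.
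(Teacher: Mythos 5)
Your proof is correct, but it takes a genuinely different (and in fact dual) route from the paper's. The paper considers the conflation $0\to Y\overset{j}\to E\overset{p}\to X\to0$ and builds retractions of $j$: it first constructs a $\lambda$-sequence of inflations $Y=E_0\to E_1\to\cdots$ inside $E$ with $\Coker(E_\alpha\to E_{\alpha+1})\cong A_\alpha$ (preimages $p^{-1}(X_\alpha)$ in the abelian case; in the general exact case this requires the technical Lemma 2.10 of~\cite{SaSt}), and then inductively extends $g_0=1_Y\colon E_0\to Y$ to morphisms $g_\alpha\colon E_\alpha\to Y$. The successor step there is a plain factorization problem whose obstruction is the pushout of the conflation $0\to E_\alpha\to E_{\alpha+1}\to A_\alpha\to0$ along $g_\alpha$, landing directly in $\Ext^1_\E(A_\alpha,Y)=0$; no long exact sequence is needed. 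You instead build sections of $p$, which buys you something --- you never have to construct the auxiliary $\lambda$-sequence $(E_\alpha)$, so you sidestep the technical lemma entirely --- but you pay for it twice. First, the obstruction $[\ep]$ lives in $\Ext^1_\E(A_\alpha,E)$ rather than in an Ext-group into $\B$, so you must route its vanishing through the long exact sequence attached to $\eta$ and the computation $p_*[\ep]=0$. Second, compatibility with the filtration does not by itself force $ps_{\alpha+1}=\iota_{\alpha+1}$ (since $f_{\alpha,\alpha+1}$ is not epi), so you need the additional correction step modifying $\rho$ by $\tilde d\pi$; the paper's dual bookkeeping has no analogue of this because $g_{\alpha+1}j_{\alpha,\alpha+1}=g_\alpha$ is the only condition to maintain. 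Both arguments are sound, and the trade-off is roughly: one technical input (construction of the preimage filtration) versus two (long exact sequence for Yoneda Ext plus the correction step), both resting on the same underlying computation that $\Ext^1_\E(A_\alpha,B)=0$.
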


\begin{proof}
Consider an $\A$-filtered object $X$ and $Y \in \B$. We must prove that any fixed extension
\[ \ep\dd \quad 0 \la Y \overset{j}\la E \overset{p}\la X \la 0 \]
splits. Let us also fix an $\A$-filtration (see Definition~\ref{def:filtr})
\[
\xymatrix@1{
0 = X_0 \ar[r]^{f_{01}} &
X_1 \ar[r]^{f_{12}} &
X_2 \ar[r]^{f_{23}} &
X_3 \ar[r] &
\cdots \ar[r] &
X_\omega \ar[r]^{f_{\omega,\omega+1}} &
X_{\omega+1} \ar[r] &
\cdots
}
\]
for $X$ and let $\lambda$ be the ordinal by which this filtration is indexed. To facilitate the notation, we put $X_\lambda = X$.

Next we shall construct a $\lambda$-sequence of inflations
\[
\xymatrix@1{
Y = E_0 \ar[r]^{j_{01}} &
E_1 \ar[r]^{j_{12}} &
E_2 \ar[r]^{j_{23}} &
E_3 \ar[r] &
\cdots \ar[r] &
E_\omega \ar[r]^{j_{\omega,\omega+1}} &
E_{\omega+1} \ar[r] &
\cdots
}
\]
whose transfinite composition is $j\dd Y \to E = E_\lambda$ and \st $\Coker f_{\alpha,\alpha+1} = \Coker j_{\alpha,\alpha+1}$ for each $\alpha+1<\lambda$. If $\E$ is a Grothendieck category with the abelian exact structure, we just take the preimages $E_ \alpha = p\inv(X_\alpha)$. In the general case, the construction is slightly more technical and we refer to~\cite[Lemma 2.10]{SaSt}.

Finally we shall inductively construct a collection of morphisms $g_\alpha\dd E_\alpha \to Y$ such that $g_0 = 1_Y$ and for each $\alpha<\beta\le\lambda$, the triangle
\[
\xymatrix{
E_\alpha \ar[r]^{j_{\alpha\beta}} \ar[d]_{g_\alpha} & E_\beta \ar[dl]^(.37){g_\beta}  \\
Y
}
\]
commutes. For $\alpha = 0$ and $\beta = \lambda$, this precisely says that $\ep$ splits since then $j_{0\lambda} = j$.

Regarding the induction, we put $g_0 = 1_Y$ as required and at limit steps, we construct $g_\alpha$ as the colimit map of $(g_\gamma)_{\gamma<\alpha}$. For ordinal successors $\alpha+1\le\lambda$, suppose we have constructed $g_\alpha\dd E_\alpha \to Y$. We need to construct a factorization
\[
\xymatrix{
0 \ar[r] & E_\alpha \ar[r]^{j_{\alpha,\alpha+1}} \ar[d]_{g_\alpha} & E_{\alpha+1} \ar[r] \ar@{.>}[dl]^(.37){g_{\alpha+1}} & A_\alpha \ar[r] & 0,  \\
& Y
}
\]
but this exists since $\Ext^1_\E(A_\alpha,Y) = 0$.
\end{proof}

Now we are in a position to give a criterion for the existence of one of the approximation sequences required for the completeness of a cotorsion pair in Definition~\ref{def:cotorsion}.

\begin{prop} \label{prop:cotorsion-semicomplete} \cite[2.13(4)]{SaSt}
Let $\E$ be an efficient exact category, let $\clS$ be a set (not a proper class!) of objects, and put $\B = \clS^\perp$. Then there exist for every $X \in \E$ a short exact sequence
\[ 0 \la X \la B_X \la A_X \la 0 \]
\st $B_X \in \B$ and $A_X \in \Filt\clS$ ($\subseteq {^\perp\B}$ by Proposition~\ref{prop:eklof}). The exact sequences can be chosen to be functorial in $X$.
\end{prop}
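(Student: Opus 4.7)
The plan is to apply Quillen's small object argument (Corollary~\ref{cor:small-obj-exact}) to the morphism $X \to 0$, using a carefully chosen set of inflations built from $\clS$ so that the two halves of the resulting functorial \wfs directly deliver the inflation into $\B$ and the cokernel in $\Filt\clS$.

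Concretely, fix a generator $G$ of $\E$, and for each $S \in \clS$ let $\I_S$ be the set of inflations of the form $K \to G^{(I)}$ with cokernel $S$ produced by Proposition~\ref{prop:homolog-set}. Set $\I = \bigcup_{S \in \clS} \I_S$, which is a set since $\clS$ is. Axioms (Ef1) and (Ef2) let us invoke Corollary~\ref{cor:small-obj-exact}, yielding a functorial \wfs $(\clL,\R)$ with
\[
\R = \{ g \mid f \lifts g \textrm{ for all } f \in \I \}.
\]
Factoring $X \to 0$ functorially through this system gives $X \overset{f_X}\la B_X \overset{g_X}\la 0$ with $f_X \in \clL$ and $g_X \in \R$.

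By the explicit construction in the small object argument, $f_X$ is itself a relative $\I$-cell complex: a transfinite composition of a $\sigma$-sequence $(X_\alpha)_{\alpha\le\sigma}$ with $X_0=X$, $X_\sigma = B_X$, in which each $X_\alpha \to X_{\alpha+1}$ is a pushout of some morphism in $\I$. Pushouts preserve inflations and their cokernels, so each step is an inflation with cokernel in $\clS$, and (Ef1) then ensures that $f_X$ itself is an inflation. Passing to cokernels along $X \to X_\alpha$ produces a $\sigma$-sequence whose successive quotients lie in $\clS$ and whose colimit is $A_X := \Coker f_X$; thus $A_X \in \Filt\clS$, and in particular $A_X \in {^\perp\B}$ by Proposition~\ref{prop:eklof}. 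On the other hand $g_X \in \R$ means that $B_X \to 0$ has the right lifting property \wrt every $f \in \I_S$ and every $S \in \clS$, which by Lemma~\ref{lem:homolog-set} is exactly $\Ext^1_\E(S, B_X) = 0$ for all $S \in \clS$; hence $B_X \in \clS^\perp = \B$. Functoriality of the small object argument transfers to functoriality of the resulting short exact sequence $0 \to X \to B_X \to A_X \to 0$ in $X$.

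The main technical obstacle is the identification of $A_X$ as an $\clS$-filtered object: one must verify that taking cokernels of the compositions $X \to X_\alpha$ yields a $\sigma$-sequence in the sense of Definition~\ref{def:transf} whose transfinite composition is $\Coker f_X$, so that the successive cokernels (which are in $\clS$ by the pushout step) really assemble into an $\clS$-filtration of $A_X$. In an abelian ambient this is routine, but in a general efficient exact category it requires combining (Ef1) with the fact that each filtration step is a pushout; once this bookkeeping is done, everything else in the argument is a direct assembly of the cited facts.
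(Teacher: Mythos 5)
Your proof is correct and follows essentially the same route as the paper's: both take $\I = \bigcup_{S\in\clS}\I_S$ from Proposition~\ref{prop:homolog-set}, apply Corollary~\ref{cor:small-obj-exact} to factor $X \to 0$, and read off $B_X \in \B$ via Lemma~\ref{lem:homolog-set} and $A_X \in \Filt\clS$ from $f_X \in \Icell$ together with (Ef1). You give somewhat more detail than the paper on the bookkeeping step of turning the $\Icell$ structure of $f_X$ into an $\clS$-filtration of $\Coker f_X$, which the paper dismisses with a single appeal to (Ef1), but the underlying argument is the same.
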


\begin{proof}
Choose a generator $G$ for $\E$ and denote for each $S \in \clS$ by $\I_S$ a set of inflations as in Proposition~\ref{prop:homolog-set}. Let $\I = \bigcup_{S \in \clS} \I_S$ and put
\[ \R = \{ g \mid f \lifts g \textrm{ for all } f \in \I \} \qquad \textrm{ and } \qquad \clL = \{ f \mid f \lifts g \textrm{ for all } g \in \R \}. \]
Then $(\clL,\R)$ is a functorial \wfs by Corollary~\ref{cor:small-obj-exact}. If $X \in \E$ is an arbitrary object, we have a functorial factorization
\[
\xymatrix{
X \ar[rr] \ar[dr]_{f_X} && 0  \\
& B_X \ar[ur]_{g_X}
}
\]
with $f_X \in \Icell$ (see Definition~\ref{def:I-cell}) and $g_X \in \R$. Now $B_X \in \B$ by Lemma~\ref{lem:homolog-set} and $f_X$ is an inflation with $\clS$-filtered cokernel by (Ef1) of Definition~\ref{def:efficient-exact}. Hence we have functorial short exact sequences $ 0 \la X \overset{f_X}\la B_X \la A_X \la 0 $ as required.
\end{proof}

Now we can prove another aforementioned result---the existence of enough injectives for exact categories of Grothendieck type. Recall that $I \in \E$ is called \emph{injective} if $\Ext^1_\E(-,I) \equiv 0$. Denote the class of all injective objects by $\Inj\E$.

\begin{cor} \label{cor:enough-inj}
Let $\E$ be an exact category of Grothendieck type (Definition~\ref{def:Grothendieck-exact}). Then $(\E,\Inj\E)$ is a functorially complete cotorsion pair.
\end{cor}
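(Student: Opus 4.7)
The plan is to produce one approximation sequence via Proposition~\ref{prop:cotorsion-semicomplete}, applied to a generating set of filtering objects supplied by (GT4), and to use the Eklof lemma (Proposition~\ref{prop:eklof}) to identify $\clS^\perp$ with $\Inj\E$. The second approximation sequence will be trivial.

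First, I would verify that $(\E, \Inj\E)$ is a cotorsion pair in the sense of Definition~\ref{def:cotorsion}. Indeed, by the very definition of injectivity we have $\E^\perp = \Inj\E$, and ${}^\perp(\Inj\E) = \E$ holds tautologically since $\Ext^1_\E(X,I)=0$ for every $X \in \E$ and $I \in \Inj\E$. Next, using (GT4) of Definition~\ref{def:Grothendieck-exact}, fix a set of objects $\clS \subseteq \E$ such that $\E = \Filt\clS$. Since $\E$ is of Grothendieck type, it is in particular efficient (Definition~\ref{def:efficient-exact}), so Proposition~\ref{prop:cotorsion-semicomplete} applies to this $\clS$ and produces, functorially in $X \in \E$, a conflation
\[ 0 \la X \la B_X \la A_X \la 0 \]
with $B_X \in \clS^\perp$ and $A_X \in \Filt\clS = \E$.

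The key step is then to show $\clS^\perp = \Inj\E$. The inclusion $\Inj\E \subseteq \clS^\perp$ is obvious. For the converse, set $\B = \clS^\perp$ and $\A = {}^\perp\B$; by construction $\clS \subseteq \A$, so Proposition~\ref{prop:eklof} gives $\Filt\clS \subseteq \A$. In view of (GT4), this reads $\E \subseteq {}^\perp(\clS^\perp)$, which means precisely that $\Ext^1_\E(X,B)=0$ for all $X \in \E$ and $B \in \clS^\perp$, i.e.\ $\clS^\perp \subseteq \Inj\E$. Hence $B_X \in \Inj\E$ and we have obtained a functorial ``special $\Inj\E$-preenvelope'' sequence with cokernel in $\E$.

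For the other half of completeness, the trivial conflation
\[ 0 \la 0 \la X \la X \la 0 \]
(with $B^X = 0 \in \Inj\E$ and $A^X = X \in \E$) is visibly functorial in $X$. Both approximation sequences being functorial, the cotorsion pair $(\E,\Inj\E)$ is functorially complete, as required. The only real point of substance in the argument is the identification $\clS^\perp = \Inj\E$, i.e.\ promoting orthogonality against a generating set of filtering objects to orthogonality against the whole category; this is exactly what Eklof's lemma provides, and it is where condition (GT4) is essential.
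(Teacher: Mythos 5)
Your proof is correct and follows essentially the same route as the paper: fix $\clS$ with $\E = \Filt\clS$ via (GT4), invoke Proposition~\ref{prop:eklof} to identify $\clS^\perp = \Inj\E$, apply Proposition~\ref{prop:cotorsion-semicomplete} for the functorial special preenvelope, and observe the other approximation sequence is trivial as in Example~\ref{expl:cotorsion-inj}. You simply spell out the Eklof argument and the cotorsion-pair axioms in more detail than the paper's terse write-up.
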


\begin{proof}
Fix a set $\clS \subseteq \E$ \st $\E = \Filt\clS$---we can do that thanks to (GT4) of Definition~\ref{def:Grothendieck-exact}. Then $\clS^\perp = \Inj\E$ by Proposition~\ref{prop:eklof} and, by Proposition~\ref{prop:cotorsion-semicomplete}, there are exact sequences
\[ 0 \la X \la E(X) \la \Cosyz{}X \la 0 \]
functorial in $X$ \st $E(X) \in \Inj\E$. The approximation sequences of the other type (Definition~\ref{def:cotorsion}) are trivial, see Example~\ref{expl:cotorsion-inj}.
\end{proof}

\begin{rem} \label{rem:enough-inj}
We cannot really speak of an injective cogenerator in general since exact categories of Grothendieck type may not have products. For instance, $\E = \FlatR$ from Example~\ref{expl:flat} has products \iff $R$ is left coherent by~\cite[Theorem 2.1]{CB}. This deficiency, however, does not pose any problems here.
\end{rem}

\subsection{Weak factorization systems versus cotorsion pairs}
\label{subsec:wfs-to-cotorsion}

Now we are going to relate cotorsion pairs to \wfss compatible with the exact structure. As a consequence, we prove a vast generalization of the criterion for completeness of cotorsion pairs from~\cite{ET01}. The ideas are mostly taken from~\cite[\S\S4--5]{H1} and~\cite{Ro}. A similar presentation has been later given, specifically for categories of complexes of modules, in~\cite[Chapter 6]{EJv2}.

Let us make precise in which way we want our \wfs $(\clL,\R)$ to be compatible with the exact structure. Note first that $f \in \clL$ is an inflation, then $0 \to \Coker f$ belongs to $\clL$ by Lemma~\ref{lem:wfs-closure}(1). Dually, if $f \in \R$ is a deflation, then $\Ker f \to 0$ is in $\R$. Using the terminology in accordance with~\cite{G5} we define:

\begin{defn} \label{def:exact-wfs}
Let $\E$ be an arbitrary exact category and $(\clL,\R)$ be a weak factorization system.
We call $(\clL,\R)$ an \emph{exact \wfs}if the following two conditions are satisfied for a morphism $f$ in $\E$:
\begin{enumerate}
\item[(FS3)] $f \in \clL$ \iff $f$ is an inflation and $0 \to \Coker f$ belongs to $\clL$.
\item[(FS4)] $f \in \R$ \iff $f$ is an deflation and $\Ker f \to 0$ belongs to $\R$.
\end{enumerate}
\emph{Exact functorial \wfss}are defined analogously.
\end{defn}

Thus, an exact \wfs is determined by a pair of classes of objects---the cokernels of morphisms in $\clL$ and the kernels of morphisms in $\R$. Before stating the main result of the section, we shall introduce notations for this correspondence.

\begin{nota} \label{nota:wfs-to-cotorsion}
Given a class $\clL$ of inflations, we put $\Coker\clL = \{ A \mid A \cong \Coker f \textrm{ for some } f \in \clL \}$. Dually for a class $\R$ of deflations, $\Ker\R = \{ B \mid B \cong \Ker f \textrm{ for some } f \in \R \}$.

Conversely, given classes of objects $\A,\B \subseteq \E$, we denote by $\Infl\A$ the class of all inflations with cokernel in $\A$, and by $\Defl\B$ the class of all deflations with kernel in $\B$.
\end{nota}

\begin{thm} \label{thm:wfs-to-cotorsion}
Let $\E$ be an exact category. Then
\[ (\clL,\R) \longmapsto (\Coker\clL,\Ker\R) \qquad \textrm{ and } \qquad (\A,\B) \mapsto (\Infl\A,\Defl\B) \]
define mutually inverse bijective mappings between exact \wfss $(\clL,\R)$ and complete cotorsion pairs $(\A,\B)$. The bijections restrict to mutually inverse mappings between exact functorial \wfss $(\clL,\R)$ and functorially complete cotorsion pairs $(\A,\B)$.
\end{thm}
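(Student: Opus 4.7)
My plan is to prove both directions of the bijection, first showing that the two constructions land in the correct classes, and then verifying they are mutually inverse. Throughout I will use the correspondence between lifting data and vanishing of $\mathrm{Ext}^1$ that was implicit in Lemma~\ref{lem:homolog-set} and Proposition~\ref{prop:eklof}.

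For the forward direction, let $(\clL,\R)$ be an exact wfs and set $\A=\Coker\clL$, $\B=\Ker\R$. The key observation is that (FS3) and (FS4) encode $\A$ and $\B$ as the classes for which $0\to A\in\clL$ and $B\to 0\in\R$ respectively. The orthogonality $\Ext^1_\E(A,B)=0$ for $A\in\A$, $B\in\B$ then follows by the standard argument: given a conflation $0\to B\to E\overset p\to A\to 0$, one lifts $1_A$ along $p$ using $(0\to A)\lifts(B\to 0)\lifts(E\to 0)$, more precisely by solving the square with legs $(0\to A)$ and $p$ (noting $p\in\R$ by (FS4)), which produces a splitting. Completeness is obtained by applying (FS2) to $0\to X$ and to $X\to 0$: factoring $0\to X$ as $0\to A^X\overset g\to X$ with $g\in\R$ yields the conflation $0\to B^X\to A^X\to X\to 0$ with $A^X\in\A$, $B^X\in\B$; the dual factorization of $X\to 0$ gives the other approximation sequence. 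Closure of $\A,\B$ under retracts transfers directly from (FS0) (since $0\to A$ is a retract of $0\to A'$ whenever $A$ is a retract of $A'$), and from this together with the approximation sequences one obtains $\A={}^\perp\B$ and $\B=\A^\perp$ by the usual retract-of-approximation argument.

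For the reverse direction, given a complete cotorsion pair $(\A,\B)$ I set $\clL=\Infl\A$ and $\R=\Defl\B$. Axioms (FS3) and (FS4) are then tautological, and (FS0) follows from the fact that both $\A$ and $\B$, being orthogonal classes, are closed under retracts, combined with the corresponding retract-closure of inflations and deflations in a weakly idempotent complete exact category. For (FS1), given $f\in\clL$ with cokernel $A\in\A$ and $g\in\R$ with kernel $B\in\B$, the obstruction to a diagonal filler in a commutative square is the class in $\Ext^1_\E(A,B)$ obtained by forming the pullback of the extension along the bottom map, then pushing out along $f$; since $\Ext^1_\E(A,B)=0$ this class vanishes and the lift exists.

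The main obstacle will be (FS2), the factorization of an arbitrary $h\colon X\to Y$. My construction proceeds in three stages. First, pick a $\B$-approximation $0\to B\to A\overset q\to Y\to 0$ of $Y$, so that $(h,q)\colon X\oplus A\to Y$ is a deflation whose kernel $P$ fits into $0\to B\to P\to X\to 0$. Second, take a $\B$-approximation of $P$, giving an inflation $\iota\colon P\to B'$ with cokernel $A'\in\A$. Third, form the pushout
\[
\begin{CD}
P @>>> X\oplus A \\
@V\iota VV @VV{\lambda}V \\
B' @>>> Z
\end{CD}
\]
The composite $X\hookrightarrow X\oplus A\overset\lambda\to Z$ is an inflation whose cokernel is an extension of $A$ by $A'$, hence in $\A$ by closure of $\A$ under extensions, so it lies in $\clL$. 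On the other hand the universal property of the pushout gives a map $Z\to Y$ extending $(h,q)$ on $X\oplus A$ and the zero map on $B'$, which is compatible since $P\to X\oplus A\to Y$ is zero by construction. A computation with the pushout description of $Z$ identifies its kernel canonically with $B'\in\B$, and one checks that $Z\to Y$ is a deflation by restricting to the deflation $q\colon A\to Y$ through $Z$ and invoking Lemma~\ref{lem:id-compl}. This places $Z\to Y$ in $\R$ and produces the desired factorization.

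Finally, the two assignments are mutually inverse essentially by construction: starting from $(\clL,\R)$, the cotorsion pair $(\Coker\clL,\Ker\R)$ gives back the classes of inflations with cokernel in $\Coker\clL$ and deflations with kernel in $\Ker\R$, which coincide with $\clL$ and $\R$ by (FS3) and (FS4); conversely, starting from $(\A,\B)$, taking cokernels of $\Infl\A$ and kernels of $\Defl\B$ recovers $\A$ and $\B$ trivially since these classes contain the morphisms $0\to A$ and $B\to 0$. The functorial refinement follows because a functorial choice of approximation sequences in Proposition~\ref{prop:cotorsion-semicomplete} produces, via the three-step construction above, a functorial factorization of $h$; conversely a functorial (FS2) specializes to functorial approximations by applying it to $0\to X$ and $X\to 0$.
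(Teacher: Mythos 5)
Your proposal is correct, and the overall architecture (establish both landings, then verify mutual inversion) matches the paper's. The genuinely different part is your verification of (FS2). The paper's proof handles this in three stages: first it factors inflations $h$ by pulling back an approximation sequence for $\Coker h$ (diagram $(\ddag)$), then dually for deflations, and finally factors an arbitrary $h$ through the split inflation $X \to X\oplus Y$ followed by the split deflation $(h,1_Y)$, invoking the previous two cases in turn. Your construction dispenses with the case analysis: you take a deflation $A^Y \to Y$ from an approximation sequence, pass to $(h,q)\colon X\oplus A^Y \to Y$ and its kernel $P$, take an approximation sequence for $P$, and push out. This produces $X \to Z \to Y$ directly, with $\Coker(X\to Z)$ an extension of objects of $\A$ and $\Ker(Z\to Y) = B' \in \B$. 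The verification that $Z \to Y$ is a deflation via Lemma~\ref{lem:id-compl}(2) and the identification of its kernel with $B'$ via the pushout square both go through exactly as you sketch. Your route is arguably more economical; the paper's route, on the other hand, makes the key special cases (inflations and deflations) explicit, which is convenient for later reference.

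One small caveat. Your (FS1) argument attributes the lift's existence to "the class in $\Ext^1_\E(A,B)$ obtained by forming the pullback of the extension along the bottom map, then pushing out along $f$." As stated this is not quite a coherent recipe (one pushes out along a map \emph{from} the kernel, not along $f$). The correct obstruction-theoretic argument is what the paper proves as Lemma~\ref{lem:zero-ext-to-lifting}, via the long exact sequences of $\Hom$ and $\Ext$ groups and the exactness of the mapping cone; the image of the given square in $\Ext^1_\E(\Coker f,\Ker g)$ must vanish and this produces the filler. Your idea is the right one, but to be rigorous you should either carry out that computation or simply invoke Lemma~\ref{lem:zero-ext-to-lifting} as the paper does. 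Similarly, in the forward direction it is cleanest to invoke Lemma~\ref{lem:lifting-to-zero-ext} (applied to $B \to 0$, which is in $\R$ by (FS4)) rather than redoing the splitting argument. These are presentational points; the substance of your proof is sound.
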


Before proving the theorem, we shall establish two auxiliary lemmas, which should be rather self-explanatory.

\begin{lem} \label{lem:zero-ext-to-lifting}
Let $\E$ be an exact category, $f$ be an inflation and $g$ be a deflation in $\E$. If $\Ext^1_\E(\Coker f,\Ker g) = 0$, then $f \lifts g$.
\end{lem}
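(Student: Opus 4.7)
The plan is to reduce the lifting problem $f \lifts g$ to the splitting of an auxiliary conflation and to apply the hypothesis $\Ext^1_\E(\Coker f,\Ker g) = 0$ twice along the way. Given a commutative square $vf = gu$ with $u\dd A \to X$ and $v\dd B \to Y$, I would first form the pullback $P = B \times_Y X$. Since $g$ is a deflation with kernel $K = \Ker g$, the projection $q\dd P \to B$ is again a deflation with the same kernel, and the second projection $\pi\dd P \to X$ satisfies $g\pi = vq$. The universal property of the pullback supplies a canonical morphism $a\dd A \to P$ with $qa = f$ and $\pi a = u$. It therefore suffices to construct a section $s\dd B \to P$ of $q$ satisfying $sf = a$: the composite $h := \pi s\dd B \to X$ will then be the sought diagonal, because $gh = vqs = v$ and $hf = \pi a = u$.

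The next step is to apply $\Hom_\E(-,K)$ to the conflation $0 \to A \xrightarrow{f} B \to C \to 0$ with $C = \Coker f$ and extract the six-term exact sequence
\[ \Hom(B,K) \to \Hom(A,K) \to \Ext^1_\E(C,K) \to \Ext^1_\E(B,K) \to \Ext^1_\E(A,K). \]
The extension class $\xi \in \Ext^1_\E(B,K)$ of $0 \to K \to P \overset{q}{\to} B \to 0$ pulls back along $f$ to the class of $0 \to K \to A\times_B P \to A \to 0$, and this pullback splits because $a$ provides a section. By the hypothesis $\Ext^1_\E(C,K) = 0$ the rightmost map is injective, forcing $\xi = 0$; hence $q$ admits \emph{some} section $s_0\dd B \to P$.

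The main obstacle is that $s_0 f$ need not coincide with $a$, so the compatibility with $a$ must be repaired. This is where the vanishing hypothesis enters a second time. Since $q(s_0 f - a) = f - f = 0$, the difference factors through $K = \Ker q$ as a morphism $k\dd A \to K$. The same six-term sequence, combined with $\Ext^1_\E(C,K) = 0$, also shows that $\Hom(B,K) \to \Hom(A,K)$ is surjective, so $k$ extends to some $\tilde k\dd B \to K$ with $\tilde k f = k$. Then $s := s_0 - \tilde k$ is still a section of $q$, because $\tilde k$ factors through $\Ker q$, and now $sf = s_0 f - k = a$, so $h := \pi s$ is the required diagonal lift. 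Both uses of the Ext-vanishing are essential: the first produces a section at all, and the second makes it compatible with the prescribed lift $a$.
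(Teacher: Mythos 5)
Your proof is correct and takes a genuinely different route from the paper's. The paper applies $\E(-,X)$ and $\E(-,Y)$ to the conflation $0 \to A \to B \to C \to 0$, views the two resulting long exact sequences as a map of cochain complexes linked by postcomposition with $g$, and forms the mapping cone; exactness of the cone then yields the lift once one checks that the pair $(-u,v)$ lies in the kernel of the cone differential, with $\Ext^1_\E(C,K)=0$ entering through the injectivity of $\Ext^1_\E(C,g)$. You instead first form the pullback $P = B\times_Y X$, reducing the lifting problem to producing a section of the deflation $q\dd P\to B$ compatible with the canonical partial section $a\dd A \to P$. You then apply $\E(-,K)$ to the $f$-conflation only, and the vanishing hypothesis enters twice: once to get injectivity of $\Ext^1_\E(f,K)$ (so that the splitting of the pullback along $f$ of the class of $0\to K\to P\to B\to 0$ forces that class to vanish, giving a section $s_0$), and once to get surjectivity of $\E(f,K)$ (so that $s_0$ can be corrected by a map factoring through $K$ to make it compatible with $a$). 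Your route is a bit longer but cleanly separates the geometric reduction (pullback) from the homological input, and it makes explicit the two distinct roles the hypothesis plays---existence of a section, then compatibility with the prescribed partial data---which the cone argument compresses into a single exactness check.
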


\begin{proof}
Suppose that we have a commutative square formed by the solid arrows
\[
\xymatrix{
A \ar[r]^u \ar[d]_f     & X\phantom{.} \ar[d]^g      \\
B \ar[r]^v \ar@{.>}[ur]|h & Y.
}
\]
We must prove the existence of the dotted morphism $h\dd B \to X$. If we denote $C = \Coker f$ and $K = \Ker g$, we obtain the following commutative diagram of abelian groups with exact rows:
\[
\xymatrix{
0 \ar[r] & \E(C,X) \ar[r] \ar[d] & \E(B,X) \ar[d]_{\E(B,g)} \ar[r]^{\E(f,X)} & \E(A,X) \ar[r]^(.45)\dif \ar[d]^{\E(A,g)} & \Ext^1_\E(C,X) \ar[r] \ar[d]^{\Ext^1_\E(C,g)} & \cdots \\
0 \ar[r] & \E(C,Y) \ar[r] & \E(B,Y) \ar[r]^{\E(f,Y)} & \E(A,Y) \ar[r]^(.45)\dif & \Ext^1_\E(C,Y) \ar[r] & \cdots \\
}
\]
If we view the diagram as a map of complexes, the mapping cone
\[
\cdots \to \E(B,X) \oplus \E(C,Y) \to \E(A,X) \oplus \E(B,Y) \overset{\dif'}\to \Ext^1_\E(C,X) \oplus \E(A,Y) \to \cdots
\]
must be exact as well by~\cite[Proposition II.4.3]{McL}. Given the construction of the cone, we only need to prove that $\dif'(-u,v) = 0$. This amounts to showing that
\[ -\dif(-u) = 0 \qquad \textrm{and} \qquad \E(f,Y)(v) + \E(A,g)(-u) = 0. \]
The second equality holds since $gu = vf$, while for the first one we use the equalities
\[ \Ext^1_\E(C,g)\big(\dif(-u)\big) = \dif\big(\E(A,g)(-u)\big) = \dif\big(\E(f,Y)(-v)\big) = 0 \]
and the fact that $\Ext^1_\E(C,g)$ is a monomorphism because of
\[ 0 = \Ext^1_\E(C,K) \xrightarrow{\phantom{\Ext^1_\E(C,g)}} \Ext^1_\E(C,X) \xrightarrow{\Ext^1_\E(C,g)} \Ext^1_\E(C,Y). \qedhere \]
\end{proof}

\begin{lem} \label{lem:lifting-to-zero-ext}
Let $\E$ be an exact category. If $\A \subseteq \E$ is a class of objects and $g$ is a deflation \st $f \lifts g$ for each inflation $f$ with $\Coker f \in \A$, then $\Ker g \in \A^\perp$. Dually if $\B \subseteq \E$ and $f$ is an inflation \st $f \lifts g$ for each deflation $g$ with $\Ker g \in \B$, then $\Coker f \in {^\perp\B}$.
\end{lem}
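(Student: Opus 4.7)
The plan is to prove the first assertion directly from Yoneda's definition of $\Ext^1_\E$, and then dualize. I only need to show that every conflation representing a class in $\Ext^1_\E(A,\Ker g)$ splits, for every $A\in\A$.

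First, I would fix $A\in\A$ and an arbitrary conflation
\[
0\la \Ker g \overset{f}\la E \overset{p}\la A \la 0.
\]
Here $f$ is an inflation with $\Coker f \cong A \in \A$, so the standing hypothesis gives $f\lifts g$. Writing $g\dd X\to Y$ with canonical kernel inclusion $u\dd \Ker g \to X$, I would set up the lifting square with $u$ on the top edge and the zero map $E\to Y$ on the bottom edge; this square commutes because $gu=0=0\circ f$. A diagonal filler $h\dd E\to X$ then satisfies $hf=u$ and $gh=0$.

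The next step uses that $u$ is the kernel of $g$: since $gh=0$, the morphism $h$ factors as $h=ur$ for a unique $r\dd E\to\Ker g$. Then $u(rf)=hf=u$, and $u$ is a monomorphism (being an inflation), so $rf=1_{\Ker g}$. Thus $f$ is a split inflation and the conflation splits, proving $\Ext^1_\E(A,\Ker g)=0$.

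For the dual, given an inflation $f\dd U\to V$ with $\Coker f=C$ and the canonical deflation $\pi\dd V\to C$, I would fix $B\in\B$ and a conflation $0\to B\overset{j}\to E\overset{q}\to C\to 0$; here $q$ is a deflation with kernel in $\B$, so $f\lifts q$. Taking the square with $0\dd U\to E$ on top and $\pi\dd V\to C$ on the bottom (which commutes since $\pi f=0=q\circ 0$), a diagonal $s\dd V\to E$ satisfies $sf=0$ and $qs=\pi$; the first equation makes $s$ factor through $\pi$ as $s=t\pi$, and then $qt\pi=\pi$ forces $qt=1_C$ because $\pi$ is epic, splitting the conflation. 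There is no serious obstacle here — the only subtle point is invoking the universal properties of kernel and cokernel correctly in the exact (not abelian) setting, which is legitimate since inflations are kernels of their cokernels and vice versa.
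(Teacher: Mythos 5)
Your argument is correct and closely parallels the paper's, but you take a slightly more direct route. The paper first converts the hypothesis $f \lifts g$ into $f \lifts (\Ker g \to 0)$ by invoking the dual of Lemma~\ref{lem:wfs-closure}(1) (the class of maps having the right lifting property against $f$ is closed under pullbacks, and $\Ker g \to 0$ is the pullback of $g$ along $0 \to Y$), and then it reads off a retraction of $f$ from the square with $1_{\Ker g}$ on top. You instead solve the lifting problem against $g$ itself, obtaining $h\dd E \to X$ with $gh=0$ and $hf=u$, and then factor $h$ through the kernel inclusion $u$ to produce the retraction $r$ of $f$; this is essentially an inline proof of the pullback-stability step specialized to the case at hand. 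Both arguments are valid in any exact category, since an inflation is the kernel of its cokernel and a deflation is the cokernel of its kernel, and a split inflation yields a split conflation without any idempotent-completeness hypothesis. Your version is more self-contained; the paper's is more modular, reusing a closure property already established for weak factorization systems. The dual half of your proof mirrors this correctly.
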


\begin{proof}
Suppose that $g\dd X \to Y$ is a deflation \st $f \lifts g$ for each inflation $f$ with $\Coker f \in \A$. Denote $K = \Ker g$ and consider a conflation
\[ \ep\dd \quad 0 \la K \overset{i}\la E \la A \la 0 \]
with $A \in \A$. Then in particular $i \lifts g$ and by the dual of Lemma~\ref{lem:wfs-closure}(1) we also have $i \lifts (K \to 0)$. Applying the latter fact to the commutative square
\[
\xymatrix{
K \ar[r]^{1_K} \ar[d]_i & K \ar[d]      \\
E \ar[r]   \ar@{.>}[ur] & 0
}
\]
we see that $\ep$ splits. Since $\ep$ was chosen arbitrarily, we have $K \in \A^\perp$. The other part of the lemma is dual.
\end{proof}

Now we can finish the proof of  the theorem.

\begin{proof}[Proof of Theorem~\ref{thm:wfs-to-cotorsion}]
Suppose first that $(\clL,\R)$ is an exact \wfs and put $\A = \Coker\clL$ and $\B = \Ker\R$. Both $\A$ and $\B$ are closed under retracts since so are $\clL$ and $\R$, and $\Ext^1_\E(A,B) = 0$ for each $A \in \A$ and $B \in \B$ by Lemma~\ref{lem:lifting-to-zero-ext}. In order to prove the existence of approximation sequences from Definition~\ref{def:cotorsion}, consider $X \in \E$ and the following two factorizations
\[
\xymatrix{
X \ar[rr] \ar[dr]_{i_X} && 0  \\
& B_X \ar[ur]
}
\qquad \textrm{and} \qquad
\xymatrix{
0 \ar[rr] \ar[dr] && X  \\
& A^X \ar[ur]_{p^X}
}
\]
\wrt $(\clL,\R)$. It follows directly from Definition~\ref{def:exact-wfs} that there are conflations
\[
0 \la X \overset{i_X}\la B_X \la A_X \la 0 \quad \textrm{ and } \quad
0 \la B^X \la A^X \overset{p^X}\la X \la 0
\]
with $A_X, A^X \in \A$ and $B_X, B^X \in \B$. Finally, if $X \in {^\perp\B}$ then the second approximation sequence splits and so $X \in \A$. Similarly $\B = \A^\perp$.

Let conversely $(\A,\B)$ be a complete cotorsion pair in $\E$, and put $\clL = \Infl\A$ and $\R = \Defl\B$. Then $\clL$ and $\R$ are closed under retracts since so are $\A$ and $\B$. If $f \in \clL$ and $g \in \R$, then $f \lifts g$ by Lemma~\ref{lem:zero-ext-to-lifting}.

Next we shall prove that every morphism $h\dd X \to Y$ factorizes as $h = gf$ with $f \in \clL$ and $g \in \R$. Suppose first that $h$ is an inflation and consider the following pullback diagram with an approximation sequence for $C = \Coker f$ in the rightmost column. 
\[
\begin{CD}
  @.     @.       0  @.    0           \\
@.     @.       @VVV     @VVV          \\
  @.     @.      B^C @=   B^C          \\
@.     @.       @VVV     @VVV          \\
0 @>>> X @>{f}>>  Z  @>>> A^C @>>> 0   \\
@.     @|      @V{g}VV   @VVV          \\
0 @>>> X @>{h}>>  Y  @>>>  C  @>>> 0   \\
@.     @.       @VVV     @VVV          \\
  @.     @.       0  @.    0           \\
\end{CD}
\eqno{(\ddag)} \label{eqn:approx}
\]
The corresponding factorization of $h$ then appears in the leftmost square. A dual argument applies if $h$ is a deflation. If $h$ is an arbitrary morphism, we can functorially factorize it as
\[
\begin{CD}
X @>{\mathrm{inc}}>> X \oplus Y @>{(h, 1_Y)}>> Y.
\end{CD}
\]
As $X \to X \oplus Y$ is a split inflation, we can factor it as $\textrm{inc} = g_1f_1$ with $f_1 \in \clL$ and $g_1 \in \R$. Now the composition $(h, 1_Y) \circ g_1$ is a deflation, so that we can factor it as $g_2f_2$ with $f_2 \in \clL$ and $g_2 \in \R$:
\[
\begin{CD}
X @>{f_1}>>             Z_1     @>{f_2}>>     Z_2      \\
@|                   @VV{g_1}V             @VV{g_2}V   \\
X @>{\mathrm{inc}}>> X \oplus Y @>{(h, 1_Y)}>> Y       \\
\end{CD}
\]
It follows that $h = g_2(f_2f_1)$ and that $f_2f_1 \in \clL = \Infl\A$ by~\cite[Lemma 3.5]{Bu} since $\A$ is closed under extensions.

Conditions (FS3) and (FS4) from Definition~\ref{def:exact-wfs} are clearly satisfied by the very definition of $(\clL,\R)$. Thus, $(\clL,\R)$ is an exact weak factorization system.

Finally, the passage from factorization of morphisms to approximation sequences and back is clearly functorial. This proves the last claim.
\end{proof}

As a consequence, we can improve Proposition~\ref{prop:cotorsion-semicomplete} and recover an existence result for complete cotorsion pairs from~\cite[\S2]{SaSt}, which vastly generalizes the main result of~\cite{ET01}. The argument itself can be traced back to~\cite[Theorem 6.5]{H1} and~\cite{Ro}. To underline the significance of this type of result, we note that~\cite{ET01} was one of the starting points of the monograph~\cite{GT}, which discusses various results and techniques to study infinitely generated modules. Taking into account our Section~\ref{sec:Qco}, many of the techniques carry over to the categories of quasi-coherent sheaves directly.

\begin{thm} \label{thm:cotorsion-complete}
Let $\E$ be an efficient category and $\clS$ be a set (not a proper class!) of objects \st $\Filt\clS$ contains a generator for $\E$. Put
\[ \B = \clS^\perp \qquad \textrm{ and } \qquad \A = {^\perp\B}. \]
Then $(\A,\B)$ is a functorially complete cotorsion pair in $\E$ (Definition~\ref{def:cotorsion}) and $\A$ consists precisely of retracts of $\clS$-filtered objects. 
\end{thm}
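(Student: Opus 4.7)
The plan is to build on Proposition~\ref{prop:cotorsion-semicomplete}, which already produces one of the two approximation sequences required by Definition~\ref{def:cotorsion}, and then upgrade the result by a pushout trick that exploits an $\clS$-filtered generator $G$ of $\E$ to produce the other sequence. First I would verify that $(\A,\B)$ really is a cotorsion pair: by Proposition~\ref{prop:eklof} we have $\clS \subseteq \Filt\clS \subseteq {^\perp\B} = \A$, so $\A^\perp \subseteq \clS^\perp = \B$; the reverse inclusion $\B \subseteq \A^\perp$ is automatic since $\A = {^\perp\B}$, and by definition $\A = {^\perp\B}$.

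Proposition~\ref{prop:cotorsion-semicomplete} directly gives functorial conflations $0 \la X \la B_X \la A_X \la 0$ with $B_X \in \B$ and $A_X \in \Filt\clS \subseteq \A$. For the remaining approximation sequence, fix an $\clS$-filtered generator $G \in \Filt\clS$. Given $X \in \E$, the construction of a generator guarantees a deflation $G^{(I)} \la X$, giving a conflation $0 \la K \la G^{(I)} \la X \la 0$ in $\E$ (which can be made functorial, for example by taking $I = \E(G,X)$ and the canonical evaluation map). Apply Proposition~\ref{prop:cotorsion-semicomplete} to $K$ to obtain $0 \la K \la B_K \la A_K \la 0$ with $B_K \in \B$ and $A_K \in \Filt\clS$. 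Form the pushout of $K \to G^{(I)}$ along $K \to B_K$; let $A^X$ be the pushout object. The pushout diagram yields two conflations, namely $0 \la B_K \la A^X \la X \la 0$ and $0 \la G^{(I)} \la A^X \la A_K \la 0$. The object $G^{(I)}$ is $\clS$-filtered (concatenate $\clS$-filtrations of the summands $G$), and concatenating an $\clS$-filtration of $G^{(I)}$ with one of $A_K$ exhibits $A^X$ as $\clS$-filtered. Hence $A^X \in \Filt\clS \subseteq \A$, and the first conflation is the desired approximation sequence; functoriality propagates from the functorial ingredients $B_K, A_K$ and the pushout.

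For the final claim, retracts of $\clS$-filtered objects lie in $\A$: we have $\Filt\clS \subseteq \A$ by Proposition~\ref{prop:eklof}, and $\A = {^\perp\B}$ is trivially closed under retracts. Conversely, given $X \in \A$, the second approximation sequence $0 \la B^X \la A^X \la X \la 0$ constructed above has $A^X \in \Filt\clS$ and $B^X \in \B$; since $X \in {^\perp\B}$ implies $\Ext^1_\E(X,B^X) = 0$, this conflation splits, so $X$ is a retract of the $\clS$-filtered object $A^X$.

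The main technical obstacle I anticipate is the bookkeeping for the pushout construction in an exact (rather than abelian) category: one must invoke the pushout axioms of an exact category to guarantee both conflations $0 \la B_K \la A^X \la X \la 0$ and $0 \la G^{(I)} \la A^X \la A_K \la 0$ exist in $\E$, and that $A^X$ lies in $\E$ at all. The other delicate point is ensuring every step can be performed functorially; this reduces to observing that Proposition~\ref{prop:cotorsion-semicomplete} is already functorial, that the canonical deflation $G^{(\E(G,X))} \to X$ is functorial in $X$, and that pushouts in an exact category can be chosen functorially once the relevant data are.
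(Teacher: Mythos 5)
Your proof is correct, but it takes a genuinely different route from the paper's. The paper constructs the weak factorization system $(\clL,\R)$ directly from the generating sets $\I_S$ via Corollary~\ref{cor:small-obj-exact}, then identifies $\clL = \Infl\F$ (with $\F$ the retract closure of $\Filt\clS$) from the structure of relative $\I$-cell complexes, shows $\R = \Defl\B$ by combining the lifting property with weak idempotent completeness (Lemma~\ref{lem:id-compl}) and Lemma~\ref{lem:lifting-to-zero-ext}, and finally invokes Theorem~\ref{thm:wfs-to-cotorsion} to deliver both approximation sequences and the retract characterization simultaneously. You instead use Proposition~\ref{prop:cotorsion-semicomplete} as a black box for one approximation sequence and then recover the other via the classical Salce pushout argument applied to a deflation from an $\clS$-filtered generator $G^{(I)} \twoheadrightarrow X$; the retract characterization then drops out by splitting the resulting conflation for $X \in \A$. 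Your approach is arguably more elementary and illuminates the role of the hypothesis that $\Filt\clS$ contains a generator (it supplies the flat cover-style deflation), whereas the paper's approach gives a cleaner treatment of functoriality, since both approximation sequences come packaged directly from the functorial factorization of Corollary~\ref{cor:small-obj-exact} and Theorem~\ref{thm:wfs-to-cotorsion}. The one point you should pin down more carefully is the functoriality claim: you must verify that the canonical deflation $G^{(\E(G,X))} \to X$ is indeed a deflation in a general efficient exact category (not automatic---the paper addresses this via the reference inside the proof of Proposition~\ref{prop:homolog-set}), and you must make a coherent choice of pushouts across morphisms $X \to Y$; both are routine but deserve a word.
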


\begin{proof}
As in the proof of Proposition~\ref{prop:cotorsion-semicomplete}, we choose a generator $G$ for $\E$ and denote for each $S \in \clS$ by $\I_S$ a set of inflations given by Proposition~\ref{prop:homolog-set}. We further put $\I = \bigcup_{S \in \clS} \I_S$ and
\[ \R = \{ g \mid f \lifts g \textrm{ for all } f \in \I \} \qquad \textrm{ and } \qquad \clL = \{ f \mid f \lifts g \textrm{ for all } g \in \R \}, \]
obtaining a functorial \wfs $(\clL,\R)$.

Denote by $\F$ the closure of $\Filt\clS$ (see Definition~\ref{def:filtr}) under retracts. Thanks to the construction of $\I$, a morphisms $f'$ is an inflation with a cokernel from $\clS$ \iff it is a pushout of some $f \in \I$. It follows from the description of $\clL$ in Corollary~\ref{cor:small-obj-exact} that $\clL = \Infl\F$.

We claim that $\R$ consists of deflations with kernels in $\B$. Let $g\dd X \to Y$ be in $\R$ and consider a deflation $p\dd F \to Y$ with $F \in \F$---such a $p$ must exist since $\F$ contains a generator. Then the dotted arrow in the square
\[
\xymatrix{
0 \ar[r] \ar[d]         & X \ar[d]^g      \\
F \ar[r]^p \ar@{.>}[ur] & Y
}
\]
can be filled in in such a way that both triangles commute. Since $\E$ is weakly idempotent complete, $g$ is a deflation by Lemma~\ref{lem:id-compl}(2). Then $\Ker g \in \A^\perp = \B$ by Lemmas~\ref{lem:lifting-to-zero-ext} and~\ref{lem:wfs-closure}(1), finishing the proof of the claim.

In fact, we even have $\R = \Defl\B$ by Lemma~\ref{lem:zero-ext-to-lifting}. Hence $(\F,\B)$ is a functorially complete cotorsion pair by Theorem~\ref{thm:wfs-to-cotorsion} and necessarily $\F = \A$, which concludes the proof.
\end{proof}

As a consequence we obtain a convenient criterion for recognizing the left hand side class of a functorially complete cotorsion pair.

\begin{cor} \label{cor:lhs-cotorsion}
Let $\E$ be an efficient exact category and $\F \subseteq \C$ be a class of objects which is deconstructible, closed under retracts and contains a generator. Then $(\F,\F^\perp)$ is a functorially complete cotorsion pair in $\E$.
\end{cor}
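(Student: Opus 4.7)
The plan is to reduce the corollary directly to Theorem~\ref{thm:cotorsion-complete}. Since $\F$ is deconstructible, I would first fix a \emph{set} $\clS \subseteq \E$ with $\F = \Filt\clS$. By hypothesis, $\F$ contains a generator $G$ of $\E$, so in particular $\Filt\clS$ contains a generator. Thus the set-theoretic hypothesis of Theorem~\ref{thm:cotorsion-complete} is satisfied.

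Applying that theorem, I obtain a functorially complete cotorsion pair $(\A, \B)$, where $\B = \clS^\perp$ and $\A = {^\perp\B}$, and moreover $\A$ is precisely the closure of $\Filt\clS = \F$ under retracts. Since $\F$ is assumed to be closed under retracts, this gives immediately $\A = \F$.

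It remains to identify $\B$ with $\F^\perp$. The inclusion $\F^\perp \subseteq \clS^\perp$ is clear because $\clS \subseteq \Filt\clS = \F$ (each $S \in \clS$ being trivially $\clS$-filtered, using the filtration $0 \to S$). Conversely, if $B \in \clS^\perp$, then by the Eklof-type result (Proposition~\ref{prop:eklof}) the class ${^\perp\{B\}}$ is closed under $\clS$-filtrations, hence contains $\Filt\clS = \F$, which gives $B \in \F^\perp$. Therefore $\B = \clS^\perp = \F^\perp$, and we conclude that $(\F, \F^\perp) = (\A, \B)$ is a functorially complete cotorsion pair.

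No step appears genuinely hard; the substantive work is packaged in Theorem~\ref{thm:cotorsion-complete} and Proposition~\ref{prop:eklof}. The only point requiring a small argument is verifying $\clS^\perp = \F^\perp$, and the only place the closure-under-retracts hypothesis is actually used is in matching $\A$ with $\F$ (without it, one would only get that $\F$ is the left class up to retracts).
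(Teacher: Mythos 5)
Your proof is correct and follows the same route as the paper: fix a set $\clS$ with $\F = \Filt\clS$ and invoke Theorem~\ref{thm:cotorsion-complete}, using the closure of $\F$ under retracts to identify the left class with $\F$. The separate verification that $\clS^\perp = \F^\perp$ is not actually needed---once $\A = \F$ is established, the cotorsion-pair axiom $\B = \A^\perp$ yields $\B = \F^\perp$ for free---but it is harmless.
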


\begin{proof}
If $\clS$ is a set of objects \st $\F = \Filt\clS$, then $\F = {^\perp (\clS^\perp)}$ by Theorem~\ref{thm:cotorsion-complete}.
\end{proof}

A partial converse holds for Grothendieck categories, or more generally for exact categories of Grothendieck type which arise as in Theorem~\ref{thm:deconstr-Groth}.

\begin{cor} \label{cor:lhs-cot-converse}
Let $\E$ be a deconstructible subcategory closed under retracts in a Grothendieck category, with the induced exact structure (hence $\E$ is of Grothendieck type). If $(\A,\B)$ is a (functorially) complete cotorsion pair \st $\B = \clS^\perp$ for a set $\clS$, then $\A$ is deconstructible, closed under retracts and contains a generator.
\end{cor}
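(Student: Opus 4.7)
The overall plan is to reduce the problem to Theorem~\ref{thm:cotorsion-complete} by enlarging $\clS$ with a suitable replacement of a generator that lies in $\A$, and then to invoke Proposition~\ref{prop:deconstr-advanced}(1) in the ambient Grothendieck category $\G$ in which $\E$ sits as a deconstructible, retract-closed subcategory.

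Closure of $\A$ under retracts is immediate from $\A = {}^\perp\B$, since the functor $\Ext^1_\E(-,B)$ sends retracts to retracts of zero groups. For the generator inside $\A$, I would pick a generator $G$ of $\E$ (available by (GT3)) and use completeness of $(\A,\B)$ to obtain a conflation $0 \to B^G \to A^G \to G \to 0$ with $A^G \in \A$ and $B^G \in \B$. Then $A^G$ is itself a generator of $\E$: for an arbitrary $X \in \E$, take a deflation $G^{(I)} \to X$ coming from $G$, take the coproduct of $I$ copies of $A^G \to G$ (which is again a deflation by Lemma~\ref{lem:efficient-coprod}(2) and~(Ef0)), and compose to get a deflation $(A^G)^{(I)} \to X$.

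For deconstructibility I would put $\clT = \clS \cup \{A^G\}$. Since $A^G \in \A$, we have $\{A^G\}^\perp \supseteq \B$, and hence $\clT^\perp = \clS^\perp \cap \{A^G\}^\perp = \B$. Moreover $A^G \in \Filt\clT$ is a generator, so Theorem~\ref{thm:cotorsion-complete} applies and yields that $\A = {}^\perp(\clT^\perp)$ is precisely the class of retracts of $\clT$-filtered objects. By the hypothesis that $\E$ is deconstructible in $\G$, Lemma~\ref{lem:deconstr-transitive} implies $\Filt\clT$ is deconstructible in $\G$, and then Proposition~\ref{prop:deconstr-advanced}(1) shows that its closure under retracts taken in $\G$ is deconstructible in $\G$. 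Because $\E$ is closed under retracts in $\G$, this closure stays inside $\E$ and coincides with $\A$. A final application of Lemma~\ref{lem:deconstr-transitive} (this time in the other direction) transports deconstructibility of $\A$ from $\G$ back to $\E$.

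The main technical point on which the argument hinges is the interplay between retract closures and the embedding $\E \hookrightarrow \G$: if $\E$ were not closed under retracts in $\G$, the retract closure of $\Filt\clT$ computed in $\G$ could produce objects outside $\E$ and the identification with $\A$ would break; conversely, Proposition~\ref{prop:deconstr-advanced}(1) is only available in the Grothendieck setting, so one has to leave $\E$ to perform it. Everything else is essentially formal, modulo the inputs Theorem~\ref{thm:cotorsion-complete}, Proposition~\ref{prop:deconstr-advanced}(1) and Lemma~\ref{lem:deconstr-transitive}.
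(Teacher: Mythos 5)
Your argument is correct and follows essentially the same route as the paper's own (very terse) proof; the difference is that you make explicit a detail the paper silently absorbs. The paper simply asserts that $\A$ is the class of retracts of objects of $\Filt\clS$, but Theorem~\ref{thm:cotorsion-complete} only yields this when $\Filt\clS$ contains a generator, which is not part of the hypothesis. You repair this by constructing $A^G$ via an approximation sequence and enlarging $\clS$ to $\clT = \clS \cup \{A^G\}$ (noting $\clT^\perp = \B$ since $A^G \in \A = {}^\perp\B$), so that Theorem~\ref{thm:cotorsion-complete} genuinely applies and identifies $\A$ with the retract closure of $\Filt\clT$. The rest — passing to $\G$ via Lemma~\ref{lem:deconstr-transitive}, taking the retract closure in $\G$ via Proposition~\ref{prop:deconstr-advanced}(1), observing it stays inside $\E$ and equals $\A$, and transporting back — is exactly what the paper invokes. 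One small superfluity: you cite (Ef0) when composing the coproduct deflation $(A^G)^{(I)} \to G^{(I)}$ with $G^{(I)} \to X$; composition of deflations is a deflation in any exact category, and the coproduct step is Lemma~\ref{lem:efficient-coprod}(2) which rests on (Ef1) only, so (Ef0) plays no role here.
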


\begin{proof}
The only problem consists in proving that $\A$ is deconstructible, as we know that $\A$ is the collection of all \emph{retracts of} objects in $\Filt\clS$. But this follows from Proposition~\ref{prop:deconstr-advanced}(1) and Lemma~\ref{lem:deconstr-transitive}. For the sake of completeness, we note that the proof of Proposition~\ref{prop:deconstr-advanced} in~\cite{St2} uses the Hill Lemma (Proposition~\ref{prop:hill-lemma}).
\end{proof}

\section{Exact and hereditary model categories}
\label{sec:model}

In this section we shall give an account on Quillen model categories. The motivation for model categories is the following situation. We start with a category $\C$ and a class of morphisms $\we$, and we would like to understand the category $\C[\we\inv]$ where the morphisms in $\we$ are made artificially invertible. It is not difficult to construct such $\C[\we\inv]$ formally (see~\cite{GZ67}), up to a set-theoretic difficulty. Namely, the collections of morphisms $\C[\we\inv](X,Y)$ may be proper classes rather than sets---see~\cite{CN09} for an example of this pathology. If $(\C, \we)$ admits some extra structure making it a model category, the latter difficulty disappears, but there are more advantages as we shall see.

After recalling the classical properties of model categories, we aim at showing how model categories can be obtained using the tools from previous sections. The idea here is due to Hovey~\cite{H1}.

\subsection{The homotopy category of a model category}
\label{subsec:model-htp}

First, we shall briefly recall the general theory, as at this point this can be done rather quickly and efficiently. Standard sources for more information about the topic are~\cite{Hir,H2}.

\begin{defn} \label{def:model}
Let $\C$ be a category. A \emph{model structure} on $\C$ is a triple $(\cof,\we,\fib)$ of classes of morphisms, called \emph{cofibrations, weak equivalences} and \emph{fibrations}, respectively, \st
\begin{enumerate}
\item[(MS1)] $\we$ is closed under retracts and satisfies the 2-out-of-3 property for composition. That is, if $f,g$ is a pair of composable morphisms in $\C$ and two of $f,g,gf$ are in $\we$, so is the third.
\item[(MS2)] $(\cof,\we\cap\fib)$ and $(\cof\cap\we,\fib)$ are \wfss in $\C$.
\end{enumerate}
Morphisms in $\cof\cap\we$ are called \emph{trivial cofibrations} and morphisms in $\we\cap\fib$ are \emph{trivial fibrations}.

A \emph{model category} is a category with a model structure \st $\C$ has an initial object $\emptyset$, a terminal object $*$, all pushouts of trivial cofibrations along trivial cofibrations exists, and dually all pullbacks of trivial fibrations along trivial fibrations exist.
\end{defn}

\begin{rem} \label{rem:model}
The definition of a model structure on $\C$, although seemingly different, is equivalent to~\cite[Definition 1.1.3]{H2} up to one detail. We have dropped the functoriality of the \wfss from (MS2). This subtle change is inessential for the theory discussed here and we have mostly done the change to be able to use Proposition~\ref{prop:lifting-to-cpx}. On the other hand, almost all \wfss available in practice seem to be generated by a set of morphisms from the left hand side as in Theorem~\ref{thm:small-obj} (such model structures are usually called cofibrantly generated, \cite[11.1.2]{Hir}), so they functorial.

Our definition of a model category differs slightly from~\cite[Definition 7.1.3]{Hir} and~\cite[Definition 1.1.4]{H2} in that we do not require existence of all limits and colimits in $\C$, but only $\emptyset$, $*$, and certain pushouts and pullbacks. The reason for this digression is that exact categories of Grothendieck type are often not complete and cocomplete (see Example~\ref{expl:flat}), but our assumptions are enough for the fundamental Theorem~\ref{thm:htp-of-model} below. Moreover, if $\C$ is an exact category and the two \wfss in (MS2) are exact, then the existence of the pushouts and pullbacks which we need is implicit in the definition of an exact category, so that we even need no additional assumptions on (co)completeness of~$\C$!
\end{rem}

If we have a model category, the existence of the initial and terminal object allows us to define cofibrant and fibrant objects of $\C$, as well as cofibrant and fibrant ``approximations'' of any object.

\begin{defn} \label{def:replacements}
An object $B \in \C$ is called \emph{cofibrant} (\emph{trivially cofibrant}) if $\emptyset \to B$ is a cofibration (trivial cofibration, resp.) Dually $X \in \C$ is \emph{fibrant} (\emph{trivially fibrant}) if $X \to *$ is a fibration (trivial fibration, resp.)

If $X \in \C$ is arbitrary and $\emptyset \to CX \to X$ is a factorization of $\emptyset \to X$ \wrt $(\cof,\we\cap\fib)$, then $CX$ is called a \emph{cofibrant replacement} of $X$. Dually, the object $FX$ in a factorization $X \to FX \to *$ \wrt $(\cof\cap\we,\fib)$ is called a \emph{fibrant replacement} of $X$.
\end{defn}

It is further useful to notice that a model structure is overdetermined in that two of $\cof$, $\we$, $\fib$ determine the third. Before discussing the homotopy category of a model category, we need to briefly recall the homotopy relations.

\begin{defn} \label{def:model-htp} \cite[1.2.4]{H2}
Let $\C$ be a model category and $f,g\dd B \to X$ be two morphisms in $\C$.
\begin{enumerate}
\item A \emph{cylinder object} for $B$ is a factorization of the codiagonal map $\nabla\dd B \amalg B \to B$ into a cofibration $(i_0, i_1)$ followed by a weak equivalence $s$:
\[
\begin{CD}
B \amalg B @>{(i_0, i_1)}>> B' @>{s}>> B.
\end{CD}
\]

\item A \emph{path object} for $X$ is dually a factorization of the diagonal map $\Delta\dd X \to X \times X$ into a weak equivalence followed by a fibration:
\[
\begin{CD}
X @>{r}>> X' @>{\left(\begin{smallmatrix} p_0 \\ p_1 \end{smallmatrix}\right)}>> X \times X.
\end{CD}
\]

\item A \emph{left homotopy} from $f$ to $g$ is a map $H\dd B' \to X$ for some cylinder object $B'$ for $B$ \st such that $Hi_0 = f$ and $Hi_1 = g$. We say that $f$ and $g$ are \emph{left homotopic}, written $f \lhtp g$, if there is a left homotopy from $f$ to $g$. 

\item A \emph{right homotopy} from $f$ to $g$ is a map $K\dd B \to X'$ for some path object $X'$ for $X$ \st such that $p_0K = f$ and $p_1K = g$. We say that $f$ and $g$ are \emph{right homotopic}, $f \rhtp g$, if there is a right homotopy from $f$ to $g$. 

\item Morphisms $f$ and $g$ are said to be \emph{homotopic}, written $f \sim g$, if they are both left and right homotopic.
\end{enumerate}
\end{defn}

Let us summarize the relevant properties of the homotopy relations.

\begin{prop} \label{prop:model-htp}
Let $\C$ be a model category and suppose that $A,B \in \C$ are cofibrant and $X,Y \in \C$ are fibrant. Then the following hold:
\begin{enumerate}
\item Morphisms $f,g\dd B \to X$ are left homotopic \iff they are right homotopic \iff they are homotopic.
\item The homotopy is an equivalence relation on $\C(B,X)$.
\item If $h\dd A \to B$ is a morphism and $f \sim g$, then $fh \sim gh$.
\item If $h'\dd X \to Y$ is a morphism and $f \sim g$, then $h'f \sim h'g$.
\end{enumerate}
\end{prop}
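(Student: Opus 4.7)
The plan is to establish all four parts by first proving a key auxiliary lemma on cylinder and path objects, then deducing part~(1), then~(2), and finally obtaining (3)--(4) as short consequences.

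The auxiliary lemma I would start with is: if $B$ is cofibrant and $B\amalg B \xrightarrow{(i_0,i_1)} B' \xrightarrow{s} B$ is a cylinder object, then each $i_\ep\dd B \to B'$ ($\ep=0,1$) is a trivial cofibration; dually, if $X$ is fibrant and $X \xrightarrow{r} X' \xrightarrow{(p_0,p_1)} X\times X$ is a path object, then each $p_\ep\dd X' \to X$ is a trivial fibration. For the cylinder case, the coproduct inclusion $B\to B\amalg B$ is the pushout of $\emptyset\to B$ along $\emptyset\to B$, hence a cofibration (since $B$ is cofibrant), so $i_\ep$ is a cofibration; and $s i_\ep = 1_B$ together with the 2-out-of-3 property forces $i_\ep$ to be a weak equivalence. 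The path-object case is dual, using that $X\times X \to X$ is a fibration when $X$ is fibrant (pullback of $X\to *$).

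For part~(1), I would show $f\lhtp g \Rightarrow f\rhtp g$ (the converse is dual). Given a left homotopy $H\dd B'\to X$ and any path object $X'$, consider the lifting problem
\[
\xymatrix{
B \ar[r]^{rf} \ar[d]_{i_0} & X' \ar[d]^{(p_0,p_1)} \\
B' \ar[r]_(.4){(fs,H)} \ar@{.>}[ur]^K & X\times X,
}
\]
which commutes since $(p_0,p_1)rf = (f,f) = (fs,H)\,i_0$. The left vertical is a trivial cofibration (by the auxiliary lemma and cofibrancy of $B$), and the right vertical is a fibration, so a lift $K$ exists. Then $K i_1\dd B\to X'$ satisfies $p_0 K i_1 = fs i_1 = f$ and $p_1 K i_1 = H i_1 = g$, exhibiting a right homotopy from $f$ to $g$.

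For part~(2), reflexivity ($f\lhtp f$ via $fs$) and symmetry (swap $i_0,i_1$) of $\lhtp$ are immediate from the definition. Transitivity is the substantive step: given left homotopies $H\dd B'\to X$ from $f$ to $g$ and $H'\dd B''\to X$ from $g$ to $k$, I form the pushout $B'\cup_B B''$ along $i_1$ and $i_0'$ and glue $H, H'$ into one map to $X$. Using the auxiliary lemma together with the fact that pushouts of (trivial) cofibrations are (trivial) cofibrations, one verifies that $B\amalg B \to B'\cup_B B''$ is a cofibration and that the comparison map $B'\cup_B B'' \to B$ is a weak equivalence (by 2-out-of-3, via the trivial cofibration $B''\to B'\cup_B B''$ and $s'\dd B''\to B$). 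This yields a cylinder object witnessing $f\lhtp k$, so $\lhtp$ is an equivalence relation on $\C(B,X)$. Combining with part~(1) gives~(2) for $\sim$. Parts~(3) and~(4) are now immediate: a right homotopy $K\dd B\to X'$ from $f$ to $g$ (which exists by~(1)) composes on the right to give a right homotopy $Kh\dd A\to X'$ from $fh$ to $gh$, and~(1) applied to $A$ cofibrant and $X$ fibrant upgrades this to $fh\sim gh$; dually, left homotopies compose on the left with $h'$, yielding~(4).

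The main obstacle I anticipate is the transitivity step in~(2): one must carefully verify that the glued object $B'\cup_B B''$ really assembles into a cylinder object, which requires both that the canonical map $B\amalg B \to B'\cup_B B''$ remains a cofibration under the pushout construction and that the induced map $B'\cup_B B'' \to B$ is a weak equivalence. Both verifications rely crucially on the cofibrancy of $B$ (for the auxiliary lemma and for the stability of cofibrations under the relevant pushouts), and it is at this step where one may prefer to work with ``good'' cylinder objects (where $s$ is additionally a trivial fibration) to obtain clean pushout behaviour.
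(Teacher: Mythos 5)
Your proposal is correct and reconstructs in detail the proof of \cite[Proposition 1.2.5]{H2}, which is precisely what the paper cites (the paper's proof consists of this citation plus the observation that the pushouts needed for transitivity are of trivial cofibrations along trivial cofibrations and thus exist under Definition~\ref{def:model}). One small remark: the transitivity step does not actually require ``good'' cylinder objects -- the map $B\amalg B \to B'\cup_B B''$ can be shown to be a cofibration by factoring it as $B\amalg B \xrightarrow{1\amalg i'_1} B\amalg B'' \to B'\cup_B B''$, where the first map is a coproduct of cofibrations and the second is a pushout of the cofibration $(i_0,i_1)\dd B\amalg B\to B'$ along $1\amalg i'_0$ (a pushout that genuinely exists, since its target coincides with the pushout of two trivial cofibrations guaranteed by Definition~\ref{def:model}).
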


\begin{proof}
The proof of~\cite[Proposition 1.2.5]{H2} applies. Here we need the existence of the pullbacks and pushouts from Definition~\ref{def:model} for the transitivity of the homotopy relation.
\end{proof}

After introducing a piece of traditional terminology, we shall state the key theorem about the localization $\C[\we\inv]$ of $\C$.

\begin{defn} \label{def:htp-catg}
Let $\C$ be a model category and $\we$ the class of weak equivalences. Then the category $\C[\we\inv]$ is called the \emph{homotopy category} of $\C$ and denoted by $\Ho\C$.
\end{defn}

\begin{thm} \label{thm:htp-of-model}
Let $\C$ together with $(\cof,\we,\fib)$ be a model category, let $Q\dd \C \to \Ho\C$ be the canonical localization functor, and denote by $\C_{cf}$ the full subcategory given by the objects which are cofibrant and fibrant.

\begin{enumerate}
\item The composition $\C_{cf} \overset{\subseteq}\la \C \overset{Q}\la \Ho\C$ induces a category equivalence $\mbox{$(\C_{cf}/\sim)$} \to \Ho\C$, where $\C_{cf}/\sim$ is defined by $(\C_{cf}/\sim)(X,Y) = \C_{cf}(X,Y)/\sim$. In particular, $\Ho\C$ is a legal category with small homomorphisms spaces.

\item If $X \in \C$ is cofibrant and $Y \in \C$ is fibrant, then $Q$ induces an isomorphism $\C(X,Y)/\sim\, \overset{\cong}\la \Ho\C(QX,QY)$. In particular, there are canonical isomorphisms $\C(CX,FY)/\sim\, \overset{\cong}\la \Ho\C(QX,QY)$ for arbitrary $X,Y \in \C$, where $CX$ is a cofibrant replacement of $X$ and $FY$ is a fibrant replacement of $Y$.

\item If $f\dd X \to Y$ is a morphism in $\C$, them $Qf$ is an isomorphisms \iff $f$ is a weak equivalence.
\end{enumerate}
\end{thm}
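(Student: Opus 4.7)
The plan is to follow the classical Quillen strategy as in~\cite[\S1.2]{H2}, building an explicit inverse to the functor $\C_{cf}/\!\sim\, \to \Ho\C$ induced by $Q|_{\C_{cf}}$. All three claims will then fall out together; the single hard step is a Whitehead-type theorem saying that a weak equivalence between cofibrant-fibrant objects is a homotopy equivalence.

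First I would check that $\C_{cf}/\!\sim$ is a genuine category. Proposition~\ref{prop:model-htp} gives both that $\sim$ is an equivalence relation on $\C(B,X)$ for $B,X \in \C_{cf}$ and that the relation is a two-sided congruence, so composition descends. The class $\C_{cf}$ is stable under both cofibrant and fibrant replacement: the composition of two (trivial) cofibrations is a (trivial) cofibration by Lemma~\ref{lem:wfs-closure}, and dually for fibrations, so replacing a cofibrant object fibrantly (or vice versa) keeps it cofibrant-fibrant. Next I would prove the Whitehead theorem: if $f\dd X \to Y$ between cofibrant-fibrant objects is a weak equivalence, then factoring $f = pi$ with $i$ a trivial cofibration and $p$ a fibration (trivial by the 2-out-of-3 property~(MS1)), a lift in the square with rows $1_X$ and $p$ (and columns $i$ and $Y \to \ast$) produces a left inverse $r$ with $ri = 1_X$; another application to $ir$ in the appropriate diagram, combined with the observation that $i_0, i_1\dd B \to B'$ in a cylinder object for a cofibrant $B$ are trivial cofibrations, yields a homotopy inverse of $f$. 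The converse direction---a homotopy equivalence in $\C_{cf}$ is a weak equivalence---is a direct application of~(MS1) to the cylinder object, again using that $i_0$ and $i_1$ are weak equivalences.

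Next I would construct the inverse functor. For each $X \in \C$ choose a cofibrant replacement $CX$ and then a fibrant replacement $RX := F(CX) \in \C_{cf}$. For a morphism $f\dd X \to Y$, the lifting property (FS1) applied to the squares
\[
\xymatrix{
\emptyset \ar[r] \ar[d] & CY \ar[d] \\
CX \ar[r]_(.4){fp_X} \ar@{.>}[ur] & Y
}
\qquad\textrm{and}\qquad
\xymatrix{
CX \ar[r] \ar[d] & RY \ar[d] \\
RX \ar[r] \ar@{.>}[ur] & \ast
}
\]
produces lifts $Cf$ and $Rf$, unique up to left resp.\ right homotopy, hence well defined in $\C_{cf}/\!\sim$ by Proposition~\ref{prop:model-htp}(1). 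One checks functoriality modulo $\sim$, obtaining $R\dd \C \to \C_{cf}/\!\sim$; by the Whitehead theorem $R$ inverts weak equivalences. The universal property of $\C[\we\inv]$~\cite{GZ67} then yields a unique factorization $\overline R\dd \Ho\C \to \C_{cf}/\!\sim$ with $\overline R\, Q = R$, and the natural zig-zag $Z \xleftarrow{\sim} CZ \xrightarrow{\sim} RZ$ for $Z \in \C_{cf}$ shows that $\overline R$ is quasi-inverse to the functor induced by $Q|_{\C_{cf}}$, proving~(1).

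For~(2), the equivalence of~(1) identifies $\Ho\C(QX,QY)$ with $(\C_{cf}/\!\sim)(RX,RY) = \C(RX,RY)/\!\sim$; when $X$ is already cofibrant and $Y$ already fibrant, lifting arguments (again applications of~(FS1)) show that the canonical weak equivalences $X \to RX$ and $Y \to RY$ induce bijections $\C(X,Y)/\!\sim\, \cong \C(RX,Y)/\!\sim\, \cong \C(RX,RY)/\!\sim$, giving the claimed isomorphism. Finally~(3): if $f$ is a weak equivalence, $Qf$ is invertible by construction of $\Ho\C$; conversely if $Qf$ is invertible, then $Rf$ is an isomorphism in $\C_{cf}/\!\sim$, i.e.\ a homotopy equivalence between cofibrant-fibrant objects, hence a weak equivalence by the Whitehead theorem, and the 2-out-of-3 property applied to the natural weak equivalences $X \leftarrow CX \to RX$ (and similarly for $Y$) shows that $f$ itself is a weak equivalence. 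The main obstacle is the Whitehead theorem; everything else is a formal consequence of Proposition~\ref{prop:model-htp}, the lifting property, and the universal property of localization, and one must take mild care that Remark~\ref{rem:model} drops functoriality of factorizations so the replacements are only functors after quotienting by $\sim$.
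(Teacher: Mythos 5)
Your proposal reproduces the classical Quillen argument of \cite[Theorem~1.2.10]{H2}, which is exactly what the paper cites in lieu of giving its own proof, so you are taking essentially the same route. The sketch is correct in all essentials (the Whitehead theorem for cofibrant-fibrant objects, the construction of the quasi-inverse via replacements, the 2-out-of-3 chase for part~(3)), and you rightly flag the one wrinkle caused by the paper's weakened axioms (non-functorial factorizations, handled by passing to homotopy classes); only a small slip appears in the description of the lifting square for the left inverse $r$, where the right-hand column should be $X \to \ast$ rather than $Y \to \ast$.
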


\begin{proof}
These are standard facts dating back to~\cite{QHtp}. We refer to~\cite[Theorem 1.2.10]{H2} or~\cite[\S8.4]{Hir} for a proof.
\end{proof}

\subsection{Exact model structures}
\label{subsec:exact-models}

It is no surprise that following~\cite{H1}, we shall focus on exact categories with model structure compatible with the exact structure.

\begin{defn} \label{def:exact-models}
Let $\E$ be an exact category. An \emph{exact model structure} on $\E$ is a model structure $(\cof,\we,\fib)$ \st $(\cof,\we\cap\fib)$ and $(\cof\cap\we,\fib)$ are exact \wfss in the sense of Definition~\ref{def:exact-wfs}. An \emph{exact model category} is a weakly idempotent complete exact category $\E$ together with an exact model structure.
\end{defn}

Exact model structures are clearly determined by the classes of cofibrant and fibrant objects $\Cof$ and $\Fib$, respectively. Following the same philosophy, we shall consider the class $\We$ of trivial objects instead of the class $\we$ of weak equivalences. Here, and object $X \in \E$ is \emph{trivial} if $0 \to X$ is a weak equivalence.

The next theorem is a result due to Hovey~\cite{H1} which describes what conditions the classes $\Cof$, $\We$, $\Fib$ must satisfy in order to determine an exact model structure on $\E$. The result was stated in this generality in~\cite{G5}, but we prefer to give a full proof here.

\begin{thm} \label{thm:exact-models}
Let $\E$ be a weakly idempotent complete exact category. There is a bijective correspondence between exact model structures on $\E$ and the triples of classes $(\Cof,\We,\Fib)$ satisfying the following conditions
\begin{enumerate}
\item $\We$ is closed under retracts and satisfies the 2-out-of-3 property for extensions. That is, if $0 \to X \to Y \to Z \to 0$ is a conflation in $\E$ and two of $X,Y,Z$ are in $\We$, then so is the third.
\item $(\Cof,\We\cap\Fib)$ and $(\Cof\cap\We,\Fib)$ are complete cotorsion pairs in $\E$.
\end{enumerate}
The correspondence assigns to $(\Cof,\We,\Fib)$ the model structure $(\cof,\we,\fib)$ \st
\begin{enumerate}
\item[(a)] $c \in \cof$ \iff $c$ is an inflation with cokernel in $\Cof$,
\item[(b)] $w \in \we$ \iff $w = w_dw_i$ for an inflation $w_i$ with cokernel in $\We$ and a deflation $w_d$ with kernel in $\We$,
\item[(c)] $f \in \fib$ \iff $f$ is a deflation with kernel in $\Fib$.
\end{enumerate}
The inverse assigns to $(\cof,\we,\fib)$ the triple $(\Cof,\We,\Fib)$ \st $\Cof$ is the class of cofibrant objects, $\We$ is the class of trivial objects, and $\Fib$ is the class of fibrant objects.
\end{thm}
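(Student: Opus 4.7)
The strategy is to set up both directions of the bijection and verify they are mutually inverse. The backbone is Theorem~\ref{thm:wfs-to-cotorsion}, which translates exact \wfss into complete cotorsion pairs via cokernels (resp.\ kernels) of the left (resp.\ right) class of morphisms. With that in place, the main additional technical tool needed is the $3\times 3$ lemma for exact categories from~\cite{Bu}.

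For the direction from an exact model structure $(\cof,\we,\fib)$ to a triple, set $\Cof$, $\We$, $\Fib$ to be the cofibrant, trivial, and fibrant objects respectively. Theorem~\ref{thm:wfs-to-cotorsion} applied to the two exact \wfss $(\cof,\we\cap\fib)$ and $(\cof\cap\we,\fib)$ produces the two complete cotorsion pairs in~(2); indeed, one checks that $\Coker(\cof\cap\we)=\Cof\cap\We$ (since $B$ is the cokernel of some trivial cofibration iff $0\to B$ is itself a trivial cofibration, iff $B$ is cofibrant and trivial) and dually for the remaining classes. Retract closure of $\We$ in~(1) is the retract closure of $\we$ in (MS1) applied to $0\to X\to 0$. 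The 2-out-of-3 for extensions in $\We$ is the subtler part of this direction; once one knows the fact (which we prove as part of the reverse direction) that an inflation is a weak equivalence iff its cokernel is trivial, and dually for deflations, the property reduces to the 2-out-of-3 for $\we$ applied to the compositions $0\to X\to Y$ and $Y\to Z\to 0$ arising from the conflation.

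For the direction triple $\to$ model structure, define $\cof,\we,\fib$ by (a), (b), (c). Theorem~\ref{thm:wfs-to-cotorsion} applied to the two cotorsion pairs supplies exact \wfss $(\Infl\Cof,\Defl(\We\cap\Fib))$ and $(\Infl(\Cof\cap\We),\Defl\Fib)$. Matching these with the \wfss required by (MS2) reduces to showing $\cof\cap\we=\Infl(\Cof\cap\We)$ and the dual $\we\cap\fib=\Defl(\We\cap\Fib)$. The inclusion $\Infl(\Cof\cap\We)\subseteq\cof\cap\we$ is immediate: take $w_i=c$ and $w_d=\unit$ in~(b), using that $0\in\We$ (as $0$ lies in both classes of any cotorsion pair). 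The reverse inclusion is the main technical obstacle: given $c\in\cof\cap\we$, i.e.\ an inflation $c\dd X\to Y$ with cokernel $A\in\Cof$ admitting a factorization $c=w_d w_i$ where $w_i$ is an inflation with $\Coker w_i=B\in\We$ and $w_d$ a deflation with $\Ker w_d=C\in\We$, the $3\times 3$ lemma produces a conflation $0\to C\to B\to A\to 0$; condition~(1) then forces $A\in\We$, whence $A\in\Cof\cap\We$.

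The remaining axioms of (MS1)---retract closure and 2-out-of-3 for $\we$---are the other technical points. Retract closure follows by taking a factorization of the middle morphism in a retract diagram and transporting it across the retract maps via the appropriate pushout/pullback, using closure of $\We$ under retracts. The 2-out-of-3 property for $\we$, the subtlest step, is proved using the equivalent characterization (now available by the previous paragraph) that $\we$ consists of morphisms admitting a factorization as a trivial cofibration followed by a trivial fibration; iterated applications of the $3\times 3$ lemma, together with condition~(1), to the resulting factorizations yield the property in each of the three cases. Finally, the two constructions are mutually inverse by direct unravelling: starting from a triple and passing to the model structure and back recovers $\Cof$ as $\{B\mid 0\to B\in\cof\}$, $\Fib$ dually, and $\We$ via the characterization~(b) of $\we$.
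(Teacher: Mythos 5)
Your plan follows the paper's overall strategy: Theorem~\ref{thm:wfs-to-cotorsion} is indeed the backbone, and the $3\times 3$-lemma argument you give for $\cof\cap\we = \Infl(\Cof\cap\We)$ (producing the conflation $0\to C\to B\to A\to 0$ and applying the 2-out-of-3 property for $\We$) is essentially the special case of what the paper isolates as Lemma~\ref{lem:cancel}. The direction from a model structure to a triple, using the characterization of weak equivalences among inflations/deflations, also matches the paper.

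There is, however, a genuine gap in the triple~$\to$~model direction. You assert that, after identifying $\cof\cap\we$ with $\Infl(\Cof\cap\We)$, the ``equivalent characterization'' of $\we$ as the class of morphisms factoring as a trivial cofibration followed by a trivial fibration is ``now available.'' It is not: definition~(b) only asks for a factorization $w=w_dw_i$ through an inflation with cokernel in $\We$ and a deflation with kernel in $\We$, and these cokernels/kernels need not lie in $\Cof\cap\We$ or $\We\cap\Fib$. Obtaining the stronger factorization is a separate step: one must observe that $(\Cof\cap\We,\We\cap\Fib)$ is a complete cotorsion pair in the exact subcategory $\We$ (this uses condition~(1) to know $\We$ is extension closed), take an approximation sequence for $\Coker w_i$, form the pullback as in diagram $(\ddag)$ of the proof of Theorem~\ref{thm:wfs-to-cotorsion}, and then in the general composite case use closure of $\Cof\cap\We$ under extensions to glue the two refined factorizations.

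More seriously, ``iterated applications of the $3\times 3$ lemma together with condition~(1)'' cannot by themselves deliver composition closure (and hence cannot deliver 2-out-of-3) for $\we$. If $w_1=p_1i_1$ and $w_2=p_2i_2$ with the $i$'s trivial cofibrations and the $p$'s trivial fibrations, the composite $w_2w_1=p_2\,(i_2p_1)\,i_1$ contains a trivial fibration \emph{followed} by a trivial cofibration in the middle, which no diagram-chasing lemma can rewrite on its own. The missing ingredient is what the paper proves as Lemma~\ref{lem:fraction-swap}: one factors $i_2p_1$ afresh through the exact \wfs $(\cof,\we\cap\fib)$ and only then uses the $3\times 3$ lemma and condition~(1) to see that the new factors belong to $\Cof\cap\We$ and $\We\cap\Fib$. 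Without this swap lemma, there is no route to the 2-out-of-3 property (the paper's argument for 2-out-of-3 quotes composition closure, a pushout, and Lemma~\ref{lem:cancel}). Similarly, your sketch of retract closure for $\we$ via ``transporting via pushout/pullback'' is vague where the paper's argument reduces to direct sums and then invokes the already-established 2-out-of-3, and the direction model~$\to$~triple quietly depends on the stand-alone Lemma~\ref{lem:2-out-of-3}, which is proved using the model-structure axioms themselves and is not a byproduct of the triple~$\to$~model direction as you suggest.
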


\begin{rem} \label{rem:cofibrantly-generated}
If $\E$ is an efficient exact category (Definition~\ref{def:efficient-exact}) and the two cotorsion pairs $(\Cof,\We\cap\Fib)$ and $(\Cof\cap\We,\Fib)$ are both of the form $\big({^\perp (\clS^\perp)}, \clS^\perp\big)$ for a set $\clS$ as in Theorem~\ref{thm:cotorsion-complete}, then the resulting exact model structure is cofibrantly generated in the sense of~\cite[Definition 11.1.2]{Hir}. This is a very usual situation thanks to results in~\cite{GT,St2}. In such a case, approximations \wrt the two cotorsion pairs and the corresponding \wfss from Definition~\ref{def:model} are all functorial.
\end{rem}

Once we prove the theorem, there is an immediate corollary which poses further restrictions on the possible classes of trivial objects.

\begin{cor} \label{cor:exact-model-trivial}
Let $\E$ be an exact model category with the exact structure given by $(\Cof,\We,\Fib)$. Then the class $\We$ of trivial objects is generating and cogenerating in~$\E$.
\end{cor}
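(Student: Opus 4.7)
The plan is to read off the conclusion directly from Theorem~\ref{thm:exact-models}, which supplies two complete cotorsion pairs $(\Cof, \We\cap\Fib)$ and $(\Cof\cap\We, \Fib)$. Since ``generating'' for a class $\clS\subseteq\E$ means that every object $X$ admits a deflation $S\to X$ with $S\in\clS$, and dually ``cogenerating'' means every $X$ admits an inflation $X\to S$ with $S\in\clS$, the statement will follow by extracting from the two complete cotorsion pairs the appropriate halves of the approximation sequences provided by Definition~\ref{def:cotorsion}.

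Concretely, the first step is to fix an arbitrary $X\in\E$ and apply completeness of the cotorsion pair $(\Cof, \We\cap\Fib)$ to produce a conflation
\[
0 \la X \la B_X \la A_X \la 0
\]
with $B_X\in\We\cap\Fib$ and $A_X\in\Cof$. Since $B_X\in\We$, this exhibits an inflation from $X$ into a trivial object, proving that $\We$ is cogenerating.

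Second, applying the completeness of the other cotorsion pair $(\Cof\cap\We, \Fib)$ to the same $X$ gives a conflation
\[
0 \la B^X \la A^X \la X \la 0
\]
with $A^X\in\Cof\cap\We$ and $B^X\in\Fib$. Since $A^X\in\We$, this yields a deflation from a trivial object onto $X$, so $\We$ is generating. Because both halves come for free from Theorem~\ref{thm:exact-models} there is essentially no obstacle; the only point to verify is the trivial inclusions $\We\cap\Fib\subseteq\We$ and $\Cof\cap\We\subseteq\We$ that identify $B_X$ and $A^X$ as trivial objects in the sense of the definition preceding Theorem~\ref{thm:exact-models}.
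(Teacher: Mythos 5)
Your proof is correct and is precisely the argument the paper has in mind: the paper presents Corollary~\ref{cor:exact-model-trivial} as an immediate consequence of Theorem~\ref{thm:exact-models} without spelling out details, and your two-step extraction of approximation sequences from the complete cotorsion pairs $(\Cof,\We\cap\Fib)$ and $(\Cof\cap\We,\Fib)$ fills in exactly what is intended.
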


Before proving the theorem, we need to establish three lemmas. The first of them is syntactically very similar to a necessary condition for the calculus of non-commutative fractions, see~\cite[p. 12, cond. 2.2.c)]{GZ67}.

\begin{lem} \label{lem:fraction-swap} \cite[5.6]{H1}
Let $\E$ be a weakly idempotent complete exact category and $(\Cof,\We,\Fib)$ be a triple satisfying (1) and (2) from the statement of Theorem~\ref{thm:exact-models}. Denote $\tcof = \Infl(\Cof\cap\We)$ and $\tfib = \Defl(\We\cap\Fib)$, and suppose we have a pair of composable morphisms $p \in \tfib$ and $i \in \tcof$. Then there is commutative square
\[
\begin{CD}
  X   @>{j}>>   U     \\
@V{p}VV      @VV{q}V  \\ 
  Y   @>{i}>>   Z     \\
\end{CD} 
\]
with $j \in \tcof$ and $q \in \tfib$.
\end{lem}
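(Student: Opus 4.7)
The plan is to factor $ip$ via the weak factorization system attached to the complete cotorsion pair $(\Cof\cap\We,\Fib)$, and then to recognise the right-hand piece of this factorisation as a trivial fibration by a pullback and snake-lemma argument. Concretely, Theorem~\ref{thm:wfs-to-cotorsion} provides a factorisation $ip=qj$ with $j\dd X\to U$ in $\tcof$ (whose cokernel $C':=\Coker j$ lies in $\Cof\cap\We$) and $q\dd U\to Z$ a deflation whose kernel $L$ lies in $\Fib$. The resulting commutative square will be the one required as soon as $L\in\We$, since then $q\in\tfib$.

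To verify $L\in\We$, I form the pullback $V=Y\times_Z U$, which exists because $q$ is a deflation. The projection $\beta\dd V\to Y$ is a deflation with kernel $L$, while the projection $\alpha'\dd V\to U$ is an inflation with cokernel $C:=\Coker i$ (one identifies $V$ with the kernel of the composite deflation $U\to Z\to C$). The universal property yields $\alpha\dd X\to V$ with $\alpha'\alpha=j$ and $\beta\alpha=p$, and Lemma~\ref{lem:id-compl}(1), applied to the inflation $\alpha'\alpha=j$ and the inflation $\alpha'$, shows that $\alpha$ itself is an inflation. From the stacked inflations $X\xrightarrow{\alpha}V\xrightarrow{\alpha'}U$ I extract a Noether-type conflation $0\to\Coker\alpha\to C'\to C\to 0$; since $C,C'\in\Cof\cap\We$, the 2-out-of-3 hypothesis~(1) of Theorem~\ref{thm:exact-models} forces $\Coker\alpha\in\We$.

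The final step applies the snake lemma to the morphism of conflations
\[
\begin{CD}
0 @>>> K @>>> X @>p>> Y @>>> 0 \\
@. @VVV @V{\alpha}VV @V{1_Y}VV @. \\
0 @>>> L @>>> V @>\beta>> Y @>>> 0,
\end{CD}
\]
which, together with Lemma~\ref{lem:id-compl}(1) applied to the factorisation $K\to L\to V$ of the inflation $K\to V$, produces a conflation $0\to K\to L\to\Coker\alpha\to 0$. Since $K=\Ker p\in\We\cap\Fib$ and $\Coker\alpha\in\We$, hypothesis~(1) delivers $L\in\We$, whence $L\in\We\cap\Fib$ and $q\in\tfib$, finishing the construction.

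The chief technical worry is that the pullback and snake-lemma manipulations must be carried out inside the exact category rather than in an abelian ambient; one relies on the standard facts that, in any weakly idempotent complete exact category, the pullback of an inflation along a deflation is an inflation with the same cokernel, and that the snake lemma is available (cf.~\cite{Bu}).
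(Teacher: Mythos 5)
Your proof is correct, but it takes the dual route to the paper's. You factor $ip$ with respect to the exact weak factorization system associated to $(\Cof\cap\We,\Fib)$, obtaining $j\in\tcof$ and a fibration $q$, and then work to show $\Ker q\in\We$ so that $q\in\tfib$. The paper instead factors $ip$ with respect to the other system, associated to $(\Cof,\We\cap\Fib)$, obtaining a cofibration $j$ and $q\in\tfib$, and then shows $\Coker j\in\We$. After this initial choice, the two arguments proceed differently: the paper assembles a single $3\times 3$ diagram with rows $\Ker p\to\Ker q\to V$, $X\to U\to\Coker j$, $Y\to Z\to\Coker i$, and applies the 2-out-of-3 property twice (first to get $V\in\We$ from $\Ker p,\Ker q\in\We$, then to get $\Coker j\in\We$ from $V,\Coker i\in\We$); you instead introduce the pullback $V=Y\times_Z U$, extract the conflation $0\to\Coker\alpha\to\Coker j\to\Coker i\to 0$ from the stacked inflations $X\to V\to U$, and then produce the conflation $0\to\Ker p\to\Ker q\to\Coker\alpha\to 0$ via the ker--coker sequence for the morphism of conflations over $1_Y$, again applying 2-out-of-3 twice. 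Both arguments are of comparable length; the paper's is a bit more compact because it reads everything off a single $3\times 3$ diagram, whereas yours requires an explicit pullback and two separate conflation constructions, but the dual starting point is equally legitimate and your manipulations (in particular, the use of Lemma~\ref{lem:id-compl}(1) to see that $\alpha$ and $\Ker p\to\Ker q$ are inflations, and the Noether isomorphism \`a la \cite[Lemma 3.5]{Bu}) all go through in a weakly idempotent complete exact category. One minor remark: the appeal to ``the snake lemma'' should really be to the exact-category $3\times 3$ lemma \cite[Corollary 3.6]{Bu} or the ker--coker sequence of \cite[Proposition 8.12]{Bu}, since the general snake lemma is not available in this setting, but the specific case you invoke (last vertical map an identity) is covered.
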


\begin{proof}
If $\cof = \Infl\C$ as in (a) in the theorem, then $(\cof,\tfib)$ is an exact \wfs by Theorem~\ref{thm:wfs-to-cotorsion}. Consider a factorization of $ip = qj$ of $ip$ \wrt $(\cof,\tfib)$. We shall prove that $q,j$ have the desired properties. Using Lemma~\ref{lem:id-compl}, \cite[Proposition 3.1]{Bu} and the exact $3\times 3$ lemma~\cite[Corollary 3.6]{Bu}, we obtain a commutative diagram with conflations in rows and columns
\[
\begin{CD}
   @.      0     @.         0    @.       0              \\
@.        @VVV            @VVV          @VVV         @.  \\ 
0  @>>>  \Ker p  @>>>    \Ker q  @>>>     V    @>>>  0   \\
@.        @VVV            @VVV          @VVV         @.  \\ 
0  @>>>    X     @>{j}>>    U    @>>> \Coker j @>>>  0   \\
@.       @V{p}VV         @VV{q}V        @VVV         @.  \\ 
0  @>>>    Y     @>{i}>>    Z    @>>> \Coker i @>>>  0   \\
@.        @VVV            @VVV          @VVV         @.  \\ 
   @.      0     @.         0    @.       0              \\
\end{CD}
\]

Our task reduces to proving that $\Coker j \in \We$. To this end, $V \in \We$ since $\Ker p, \Ker q \in \We$, and $\Coker j \in \We$ since $V,\Coker i \in \We$.
\end{proof}

The second one will be useful for proving the 2-out-of-3 property for $\we$ when we start from $(\Cof,\We,\Fib)$.

\begin{lem} \label{lem:cancel}
Let $\E$ be a weakly idempotent complete exact category, $(\Cof,\We,\Fib)$ be a triple satisfying (1) and (2) from Theorem~\ref{thm:exact-models}, and let $\cof, \tcof, \fib, \tfib$ be as above. Let further $f\dd X \to Y$ and $g\dd Y \to Z$ be a pair of composable morphisms.
\begin{enumerate}
\item If $gf \in \fib$, $f \in \cof$ and two of the following conditions hold, then so does the third:
  \begin{enumerate}
    \item $f \in \tcof$,
    \item $g$ is a deflation and $\Coker g \in \We$,
    \item $gf \in \tfib$.
  \end{enumerate}

\item If $gf \in \cof$ and $g \in \fib$ and two of the following conditions hold, then so does the third:
  \begin{enumerate}
    \item $f$ is an inflation and $\Ker f \in \We$,
    \item $g \in \tfib$,
    \item $gf \in \tcof$.
  \end{enumerate}
\end{enumerate}
\end{lem}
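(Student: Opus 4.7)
The plan is to reduce each part to the 2-out-of-3 property for $\We$ along conflations, which is part of the standing hypothesis on $(\Cof,\We,\Fib)$. In each case I will produce a single canonical conflation whose three terms are precisely the objects whose membership in $\We$ encodes the three conditions (a), (b), (c); the result then drops out by applying the 2-out-of-3 axiom.

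For part~(1), I would first note that since $gf$ is a deflation and $\E$ is weakly idempotent complete, Lemma~\ref{lem:id-compl}(2) already forces $g$ itself to be a deflation, so the clause ``$g$ is a deflation'' in (b) is automatic. Given $f \in \cof$ and $gf \in \fib$, we also know $\Coker f \in \Cof$ and $\Ker(gf) \in \Fib$, so the extra content of (a), (b), (c) is exactly $\Coker f \in \We$, $\Ker g \in \We$, and $\Ker(gf) \in \We$, respectively. To link these three memberships I would compare the conflation $0 \la \Ker(gf) \la X \overset{gf}\la Z \la 0$ with $0 \la \Ker g \la Y \overset{g}\la Z \la 0$ through the morphism of conflations whose middle column is $f\dd X \to Y$ and whose right column is $1_Z$. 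Since $f$ is an inflation with cokernel $\Coker f$, Bühler's exact $3\times 3$ lemma~\cite[Corollary~3.6]{Bu} then produces a conflation $0 \la \Ker(gf) \la \Ker g \la \Coker f \la 0$, on which the 2-out-of-3 property for $\We$ immediately yields the desired equivalence of any two of (a), (b), (c) with the third.

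Part~(2) is strictly dual. Since $gf$ is an inflation, Lemma~\ref{lem:id-compl}(1) makes $f$ automatically an inflation, so the corresponding clause in (a) is automatic. With $gf \in \cof$ and $g \in \fib$ the extra content of (a), (b), (c) becomes $\Coker f \in \We$, $\Ker g \in \We$, and $\Coker(gf) \in \We$. Comparing the conflations $0 \la X \overset{f}\la Y \la \Coker f \la 0$ and $0 \la X \overset{gf}\la Z \la \Coker(gf) \la 0$ via the deflation $g\dd Y \to Z$ in the middle column, the $3\times 3$ lemma now produces the conflation $0 \la \Ker g \la \Coker f \la \Coker(gf) \la 0$, and the same 2-out-of-3 argument closes the proof.

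The only step demanding any care is the construction of the two conflations in the exact-category setting, since abelian snake-lemma arguments are not available verbatim; but Bühler's formulation of the $3\times 3$ lemma is tailored to exactly this situation and presents no real obstacle. The main conceptual point is to recognise that once the clauses forcing $g$ to be a deflation in (1b) and $f$ to be an inflation in (2a) are dispensed with by weak idempotent completeness, the three hypotheses in each part really are three statements of the form ``a specific kernel or cokernel lies in $\We$'', aligned exactly with the three terms of a single conflation.
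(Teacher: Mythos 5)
Your proof is correct and takes essentially the same route as the paper's: both reduce part~(1) to the single conflation $0 \to \Ker(gf) \to \Ker g \to \Coker f \to 0$ (produced from a $3\times 3$ diagram once weak idempotent completeness forces $g$ to be a deflation) and then invoke the 2-out-of-3 property of $\We$, with part~(2) dual via $0 \to \Ker g \to \Coker f \to \Coker(gf) \to 0$. You have also silently and correctly repaired two typographical slips in the statement---condition (1)(b) must read $\Ker g \in \We$ rather than $\Coker g \in \We$, and (2)(a) must read $\Coker f \in \We$ rather than $\Ker f \in \We$, since the cokernel of a deflation and the kernel of an inflation vanish and would render those clauses vacuous---and your explicit ``extra content'' reduction makes the intended reading transparent.
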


\begin{proof}
We will prove only (1) as (2) is dual. Our assumption says that $gf$ is a deflation, and so is $g$ by Lemma~\ref{lem:id-compl}. Using Lemma~\ref{lem:id-compl} again together with~\cite[Proposition 2.12]{Bu}, we construct a commutative diagram with conflations in rows and columns:
\[
\begin{CD}
   @.       0     @.       0                             \\
@.        @VVV           @VVV                            \\
0  @>>>  \Ker gf  @>>>     X    @>{gf}>>  Z  @>>>  0     \\
@.        @VVV          @V{f}VV           @|             \\
0  @>>>  \Ker g   @>>>     Y    @>{g}>>   Z  @>>>  0     \\
@.        @VVV           @VVV                            \\
   @.   \Coker f  @=   \Coker f                          \\
@.        @VVV           @VVV                            \\
   @.       0     @.       0                             \\
\end{CD}
\]
The conclusion follows from the 2-out-of-3 property of $\We$ applied to the leftmost vertical conflation.
\end{proof}

The last lemma characterizes weak equivalences among inflations and deflations when starting with an exact model structure $(\cof,\we,\fib)$.

\begin{lem} \label{lem:2-out-of-3}
Let $\E$ together with $(\cof,\we,\fib)$ be an exact model category. If $f$ is an inflation, then $f \in \we$ \iff $\Coker f$ is a trivial object. Dually if $g$ is a deflation, then $g \in \we$ \iff $\Ker g$ is a trivial object.
\end{lem}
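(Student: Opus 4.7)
\emph{The approach} is to analyse the inflation $f\colon X \to Y$ with cokernel $C$ by factoring it with respect to the weak factorization system $(\cof,\we\cap\fib)$: write $f = g \circ i$ with $i\colon X \to Z$ a cofibration and $g\colon Z \to Y$ a trivial fibration. Exactness of the model structure (FS3) gives $\Coker i =: D \in \Cof$, and (FS4) combined with the identity $B \to 0 \in \we \iff 0 \to B \in \we$ (obtained from 2-out-of-3 applied to $0 \to B \to 0 = \id_0 \in \we$) yields $\Ker g =: K \in \We \cap \Fib$.

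\emph{Next,} I would apply the $3 \times 3$-lemma for exact categories \cite[Corollary 3.6]{Bu} to the diagram whose rows are the conflations $0 \to 0 \to K \overset{=}\to K \to 0$, $0 \to X \overset{i}\to Z \to D \to 0$ and $0 \to X \overset{f}\to Y \to C \to 0$, and whose middle column is the kernel conflation $0 \to K \to Z \overset{g}\to Y \to 0$. The lemma produces a conflation
\[ 0 \la K \la D \la C \la 0, \]
so that the induced deflation $D \to C$ has kernel precisely $K \in \We \cap \Fib$. Invoking (FS4) once more, this forces $D \to C \in \we \cap \fib$, and in particular $D \to C \in \we$.

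\emph{Both directions now drop out} by 2-out-of-3 for $\we$ applied to the composition $0 \to D \to C$. If $f \in \we$, then $i \in \we$ (cancelling $g$), hence $i \in \cof \cap \we = \tcof$, and by (FS3) $D \in \Cof \cap \We$; combining $0 \to D \in \we$ with $D \to C \in \we$ gives $0 \to C \in \we$, i.e.\ $C \in \We$. Conversely, if $C \in \We$, then $0 \to C \in \we$, so 2-out-of-3 together with $D \to C \in \we$ forces $0 \to D \in \we$, i.e.\ $D \in \Cof \cap \We$, whence $i \in \tcof \subseteq \we$ and $f = g i \in \we$. The statement about deflations follows by the formally dual argument, factoring now via $(\cof \cap \we,\fib)$. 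The main technical point that needs care is the $3 \times 3$-lemma step, specifically verifying that the canonical map $D \to C$ induced by $g$ really fits in a conflation with kernel $K$; this is precisely \cite[Corollary 3.6]{Bu}.
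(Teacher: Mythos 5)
Your proof is correct and follows essentially the same strategy as the paper's: factor $f = gi$ with respect to $(\cof,\we\cap\fib)$, show that the induced map $D = \Coker i \to C = \Coker f$ is a weak equivalence, and then transfer triviality of $D$ to triviality of $C$ (and back) via two applications of 2-out-of-3. The only point where you diverge is the middle step: the paper observes that the right-hand square in the morphism of conflations is bicartesian (citing Bühler's Proposition~2.12) and concludes directly that the induced map is a weak equivalence, whereas you run the $3\times 3$-Lemma (Bühler's Corollary~3.6) on the same data to exhibit the explicit conflation $0 \to K \to D \to C \to 0$ and then invoke (FS4). Both routes are valid and of comparable length; your version has the small advantage of making the kernel $K \in \We\cap\Fib$ visible, which correctly identifies $D \to C$ as a \emph{trivial fibration} — the paper, in passing, calls the analogous map $q$ a trivial cofibration, which appears to be a slip (a pushout of the deflation $p$ gives a deflation with the same kernel, not a cofibration), though it is harmless there since only $q \in \we$ is used. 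Your bookkeeping with (FS3)/(FS4) and the elementary observation that $B \to 0 \in \we \iff 0 \to B \in \we$ is also correct, if more explicit than the paper's.
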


\begin{proof}
We will only prove the statement for inflations, the other case being dual. Let $f$ be an inflation and factor $f$ as $f = pi$ \wrt $(\cof,\we\cap\fib)$. Using Lemma~\ref{lem:id-compl}, we get a commutative diagram
\[
\begin{CD}
0  @>>>     X   @>{i}>>    Z    @>>>     C    @>>>  0     \\
@.         @|           @VV{p}V       @VV{q}V             \\
0  @>>>     X   @>{f}>>    Y    @>>> \Coker f @>>>  0     \\
\end{CD}
\]
Note that $q$ is a trivial cofibration since the right hand side square is a pushout square by~\cite[Proposition 2.12]{Bu}.
Now $f$ is a weak equivalence \iff $i$ is a trivial cofibration \iff $C$ is trivially cofibrant. Inspecting the composition $C \overset{q}\to \Coker f \to 0$, the latter is also equivalent to $\Coker f \to 0$ being a weak equivalence, or in other words, to the fact that $\Coker f$ is a trivial object.
\end{proof}

Now we can finish the proof of Theorem~\ref{thm:exact-models}.

\begin{proof}[Proof of Theorem~\ref{thm:exact-models}]
Using the notation $\tcof = \Infl(\Cof\cap\We)$ and $\tfib = \Defl(\We\cap\Fib)$, it is clear from Theorem~\ref{thm:wfs-to-cotorsion} that $(\cof,\tfib)$ is an exact \wfs \iff $(\Cof,\We\cap\Fib)$ is a complete cotorsion pair, and similarly for $(\tcof,\fib)$ and $(\Cof\cap\We,\Fib)$.

Suppose now that $(\Cof,\We,\Fib)$ is a triple satisfying (1) and (2). Note that the assumptions make sure that $(\Cof\cap\We,\We\cap\Fib)$ is a complete cotorsion pair in $\We$ when the latter is viewed as an exact category with the exact structure induced from~$\E$ (see Proposition~\ref{prop:exact-categories}).

Let $\we$ be the class of morphisms as in (b). The same argument as in the proof of Theorem~\ref{thm:wfs-to-cotorsion} shows that all morphisms $w \in \we$ factorize as $w = pi$ with $i \in \tcof$ and $p \in \tfib$. Indeed, if $w$ is an inflation with cokernel in $\We$ or a deflation with kernel in $\We$, we construct a diagram similar to~$(\ddag)$ in page~\pageref{eqn:approx} with approximation sequences \wrt $(\Cof\cap\We,\We\cap\Fib)$. If $w = w_dw_i$, we construct a commutative diagram
\[
\begin{CD}
X @>{i_1}>>     Z_1    @>{i_2}>>  Z_2      \\
@|           @VV{p_1}V        @VV{p_2}V    \\
X @>{w_i}>>      Z     @>{w_d}>>   Y       \\
\end{CD}
\]
with $i_1,i_2 \in \tcof$ and $p_1,p_2$ is $\tfib$, and we use the fact that $\Cof\cap\We$ is closed under extensions.

An immediate consequence of the latter description of $\we$ and Lemma~\ref{lem:cancel} is that $\tcof = \cof\cap\we$ and $\tfib=\we\cap\fib$. This yields the two \wfss required by Definition~\ref{def:model}(MS2).

It also easily follows that $\we$ is closed under compositions. Indeed, if $i_1,i_2 \in \tcof$ and $p_1,p_2 \in \tfib$ \st the composition $c = (p_2i_2)(p_1i_1)$ exists, we can write $c = (p_2q)(ji_1)$ for $j \in \tcof$ and $q \in \tfib$ by Lemma~\ref{lem:fraction-swap}.

Next we claim that $\we$ has the 2-out-of-3 property for composition. We shall prove that if $w = pi \in \we$ with $i \in \tcof$ and $p \in \tfib$, and if $wf = pif \in \we$ for some morphism $f$, then $f \in \we$. Since we can factor $f$ into a morphism from $\cof$ followed by a morphism from $\tfib$, we can without loss of generality assume that $f \in \cof$. As $wf \in \we$, we can write $p(if) = qj$ for some $j \in \tcof$ and $q \in \tfib$. If we form the pushout of $if \in \cof$ and $j \in \tcof$, we get a commutative diagram
\[
\xymatrix{
X \ar[r]^{if} \ar[d]_{j} & U \ar[d]_k \ar@/^/[ddr]^p \\
Y \ar[r]^\ell \ar@/_/[drr]_q & V \ar@{.>}[dr]|(.4)\hole|(.4)s  \\
&& Z
}
\]
As clearly $k \in \tcof$ and $p = sk \in \tfib$, the morphism $s$ is a deflation with $\Ker s \in \We$ by Lemma~\ref{lem:cancel}. As $q = s\ell$, we can apply Lemma~\ref{lem:cancel} again and we show that $\ell \in \tcof$. Then clearly $if \in \tcof$ as the square in the above diagram is a pushout square. Finally, the conflation
\[ 0 \la \Coker f \la \Coker if \la \Coker i \la 0 \]
for the composition $if$ obtained using~\cite[Lemma 3.5]{Bu} shows that $\Coker f \in \We$. Hence $f \in \tcof \subseteq \we$. The remaining case is dual and we have proved the claim.

Finally, let $w = w_1 \oplus w_2\dd X_1 \oplus X_2 \to Y_1 \oplus Y_2$ and suppose $w \in \we$. Then $w_n$ factor as $w_n = p_n i_n$, $i_n \in \cof$, $p_n \in \tfib$, for $n = 1,2$. Comparing this factorization with the one for $w = pi$ with $i \in \tcof$, $p \in \tfib$ for $w$, and using the 2-out-of-3 property for $\we$ proved above, we see that $i_n \in \tcof$ for $n=1,2$. Thus $\we$ us closed under retracts, finishing the proof that $(\cof,\we,\fib)$ is an exact model structure on $\E$.

Suppose conversely that $(\cof,\we,\fib)$ is an exact model structure, and denote by $\Cof$, $\We$, $\Fib$ the classes of cofibrant, trivial, and fibrant objects, respectively. Then $(\Cof,\We\cap\Fib)$ and $(\Cof\cap\We,\Fib)$ are functorially complete cotorsion pairs in $\E$ by Theorem~\ref{thm:wfs-to-cotorsion}. As $\we$ is closed under retractions, so is $\We$. In order to prove the 2-out-of-3 property for extensions for $\We$, let
\[ 0 \la W_1 \la W_2 \la W_3 \la 0 \]
be a conflation. Then if two of $0 \to W_1$, $W_1 \to W_2$ and $0 \to W_2$ belong to $\we$, so does the third. If we apply Lemma~\ref{lem:2-out-of-3}, we deduce that if two of $W_1, W_2, W_3$ are trivial objects, so is the third. Hence $(\Cof,\We,\Fib)$ satisfies Theorem~\ref{thm:exact-models}(1) and~(2), as required.
\end{proof}

\subsection{Hereditary model structures and triangles}
\label{subsec:hered-models}

An expected but important point is that the homotopy category of an exact model category often carries a natural algebraic triangulated structure. To this end, recall that a \emph{Frobenius exact category} is an exact category $\E'$ \st
\begin{enumerate}
\item $\E'$ has enough projective and injective objects, and
\item the classes of projective and injective objects coincide.
\end{enumerate}
If $\E'$ is such a category, then the quotient $\underline{\E'}$ of $\E'$ modulo the two-sided ideal of all morphisms which factor through projective injective objects is naturally a triangulated category. We refer to~\cite[Chapter I]{Ha} for details. Triangulated categories which arise in this way are called \emph{algebraic}~\cite[\S7]{Kr}.

In order to describe the prospective triangles, we adapt the definition of cofiber sequences from~\cite[Chapter 6]{H2}, incorporating the ideas from~\cite{Ha}.

\begin{defn} \label{def:co-fiber}
Let $\E$ together with $(\Cof,\We,\Fib)$ be an exact model category. A sequence of morphisms
\[
\begin{CD}
X @>{u}>> Y @>{v}>> Z @>{w}>> \Sigma X
\end{CD}
\]
is called a \emph{cofiber sequence} if $u,v,w$ fit into the following diagram
\[
\begin{CD}
0  @>>>    X  @>>>   W   @>>>    \Sigma X\phantom{,} @>>>  0   \\
@.       @V{u}VV   @VVV             @|                         \\
0  @>>>    Y @>{v}>> Z   @>{w}>> \Sigma X,           @>>>  0   \\
\end{CD}
\]
where both rows are conflations in $\E$ and $W \in \We$.
\end{defn}

In order to understand the homotopy category of an exact model category, we must first better understand the homotopy relations. We summarize the essentials in the following lemma.

\begin{lem} \label{lem:exact-homotopies}
Let $\E$ be an exact model category and $f,g\dd X \to Y$ be two morphisms.
\begin{enumerate}
\item $f$ and $g$ are right homotopic \iff $f-g$ factors through a trivially cofibrant object.
\item $f$ and $g$ are left homotopic \iff $f-g$ factors through a trivially fibrant object.
\item If $X$ is cofibrant and $Y$ is fibrant, then $f$ and $g$ are homotopic \iff $f-g$ factors through a trivially fibrant and cofibrant object.
\end{enumerate}
\end{lem}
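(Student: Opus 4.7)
The plan is to prove part~(2) first; part~(1) follows by the evident duality (cylinders $\leftrightarrow$ path objects, pushouts $\leftrightarrow$ pullbacks, and the two cotorsion pairs $(\Cof, \We\cap\Fib)$ and $(\Cof\cap\We, \Fib)$ swapping roles). Part~(3) will then be deduced from~(1) and~(2) together with a cotorsion-pair approximation.

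For the easy direction of~(2), suppose $f - g = \beta\alpha$ with $W \in \We\cap\Fib$. I would factor $\alpha\dd X \to W$ as a cofibration $\bar\alpha\dd X \to \bar W$ followed by a trivial fibration $\pi\dd \bar W \to W$ (using the \wfs $(\cof, \we\cap\fib)$); the 2-out-of-3 property for $\We$ applied to $0 \to \ker\pi \to \bar W \to W \to 0$, both endpoints in $\We$, gives $\bar W \in \We$. Then $X' = X \oplus \bar W$ with $i_0 = \left(\begin{smallmatrix} 1_X \\ 0 \end{smallmatrix}\right)$, $i_1 = \left(\begin{smallmatrix} 1_X \\ \bar\alpha \end{smallmatrix}\right)$ and $s\dd X' \to X$ the projection forms a valid cylinder: the matrix $\left(\begin{smallmatrix} 1 & 1 \\ 0 & \bar\alpha \end{smallmatrix}\right)$ defining $(i_0, i_1)$ decomposes, via the automorphism $\left(\begin{smallmatrix} 1 & 1 \\ 0 & 1 \end{smallmatrix}\right)$ of $X \oplus X$, as $1_X \oplus \bar\alpha$, an inflation with cokernel $\Coker\bar\alpha \in \Cof$, so $(i_0, i_1)$ is a cofibration; $s$ is a deflation with kernel $\bar W \in \We$, hence a weak equivalence by Lemma~\ref{lem:2-out-of-3}. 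Finally $H = (g, \beta\pi)\dd X' \to Y$ satisfies $Hi_0 = g$ and $Hi_1 = g + \beta\alpha = f$.

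For the hard direction of~(2), given a left homotopy $H^*\dd X^* \to Y$ on a cylinder $X^*$ with weak equivalence $s^*\dd X^* \to X$, I would factor $s^* = q \circ p$ with $p$ a trivial cofibration and $q$ a fibration; the 2-out-of-3 property for $\we$ forces $q$ to be a trivial fibration, whence $W := \ker q \in \We\cap\Fib$. Since $qp(i_0^* - i_1^*) = 0$, the map $p(i_0^* - i_1^*)$ factors through $W$ via some $\psi\dd X \to W$. Pushing out $p$ along $H^*$ produces a trivial cofibration $\iota\dd Y \to \bar Y$ (cokernel $D \in \Cof\cap\We$) and $\tilde H\dd X' \to \bar Y$ with $\iota(f - g)$ factoring through $W$ in $\bar Y$. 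Descending this factorization to $Y$ amounts to passing to the subobject $V \subseteq W$ consisting of the ``kernel'' of the composition $W \to \bar Y \to D$: then $\psi$ factors through $V$, the restricted map $V \to \bar Y$ lifts through $\iota$ to a map $V \to Y$, and $f - g$ factors through $V$. Verifying $V \in \We \cap \Fib$ combines the 2-out-of-3 for $\We$ with the closure of $\Fib$ under suitable extensions (or, if necessary, a further approximation through $(\Cof\cap\We, \Fib)$).

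For~(3), given $X \in \Cof$, $Y \in \Fib$ and $f \sim g$, part~(2) yields $f - g = \beta\alpha$ through some $W \in \We \cap \Fib$. The approximation sequence $0 \to B \to A \to W \to 0$ from the cotorsion pair $(\Cof, \We\cap\Fib)$ has $A \in \Cof$ and $B \in \We\cap\Fib$; the 2-out-of-3 for $\We$ and the closure of $\Fib$ under extensions (as the right class of a cotorsion pair) give $A \in \Cof\cap\We\cap\Fib$. Cofibrancy of $X$ yields $\Ext^1_\E(X, B) = 0$, so $\alpha$ lifts through $A \to W$ and $f - g$ factors through $A$. The converse direction is immediate from~(1) and~(2), since an object in $\Cof \cap \We \cap \Fib$ is both trivially fibrant and trivially cofibrant. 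The main obstacle throughout is the descent step in the forward direction of~(2): constructing $V$ as a suitable subobject of $W$ and verifying $V \in \We \cap \Fib$ must be handled carefully in an exact category, where arbitrary kernels and images of morphisms need not exist.
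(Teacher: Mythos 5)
The paper itself gives no proof of this lemma; it simply refers to \cite[Proposition 4.4]{G5} and \cite[Proposition 1.1.14]{Beck12}, so there is no in-paper argument to compare against. Your ``easy'' directions are correct: for the forward implication in~(2) the trick of factoring $\alpha = \pi\bar\alpha$ with $\bar\alpha$ a cofibration, observing $\bar W \in \We$ by 2-out-of-3, and then writing $(i_0,i_1) = (1_X\oplus\bar\alpha)\circ\left(\begin{smallmatrix}1&1\\0&1\end{smallmatrix}\right)$ to get a cylinder on $X\oplus\bar W$ is exactly the standard construction, and part~(3) correctly combines~(1),~(2) with a $(\Cof,\We\cap\Fib)$-approximation and the cofibrancy of $X$.

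The hard direction of~(2), however, has a genuine gap, and it is not a technicality about exact categories: the descent simply does not return a trivially fibrant object. With your notation ($s^* = qp$, $p$ a trivial cofibration with cokernel $D\in\Cof\cap\We$, $q$ a trivial fibration with kernel $W\in\We\cap\Fib$, $\tilde H\colon Z\to\bar Y$ the pushout of $H^*$ along $p$), the ``kernel'' $V$ of $W\to\bar Y\to D$ is computed as follows. The cokernel diagram of the pushout square identifies $\pi_D\tilde H\colon Z\to D$ with the quotient $Z\to Z/pX^*$; hence $V = W\cap\Ker(\pi_D\tilde H) = W\cap pX^* = p(\Ker s^*)$, i.e.\ $V$ is canonically isomorphic to $\Ker s^*$. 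So your $V$ is just the kernel of the original weak equivalence $s^*$, which is in $\We$ but has no reason whatsoever to lie in $\Fib$ --- and the argument has circled back to exactly the obstruction it set out to resolve. The escape clause you mention, ``a further approximation through $(\Cof\cap\We,\Fib)$,'' does not close the gap either: it produces a trivial cofibration $V\to B_V$ with $B_V\in\We\cap\Fib$, but to push $f-g$ through $B_V$ you need to extend $V\to Y$ along $V\to B_V$, and the lifting problem $V\to Y$, $V\to B_V$, $Y\to 0$ has a solution only when $Y$ is fibrant, which is not assumed. Indeed, fibrancy of $Y$ is exactly what makes the hard direction of~(2) easy --- one can then directly extend $H^*$ along $p$ by lifting against the fibration $Y\to 0$, obtaining $H'\colon Z\to Y$ with $f-g = H'\iota_W\psi$ factoring through $W\in\We\cap\Fib$, with no pushout or descent needed. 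Without fibrancy of $Y$ the claim is delicate (for the injective model structure on $\Cpx{\Z}$, the map $\psi\colon S^0(\Z)\to D^{-1}(\Z)$ is left homotopic to $0$ via the cylinder $S^0(\Z)\oplus D^{-1}(\Z)$, yet it admits no nonzero map to $D^{-1}(\Z)$ from any complex with divisible components, so it does not factor through a trivially fibrant object), so the descent step needs a genuinely different idea from the one in your sketch; you should consult the cited sources for the precise formulation and argument.
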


\begin{proof}
We refer to~\cite[Proposition 4.4]{G5} or~\cite[Proposition 1.1.14]{Beck12}.
\end{proof}

As indicated above, we need another condition in order to construct an algebraic triangulated structure---we need our model structure to be hereditary. The significance of this notion in connection to exact model structures has been very well demonstrated in the work of Gillespie~\cite{G3,G2,G1,G4,G5}. The property we are interested in is described by the following lemma.

\begin{lem} \label{lem:hered-cot}
Let $\E$ be a weakly idempotent complete exact category and $(\A,\B)$ be a cotorsion pair in $\E$ \st $\A$ is generating and $\B$ is cogenerating (e.g.\ if $(\A,\B)$ is complete). Then the following are equivalent:
\begin{enumerate}
\item $\Ext^n_\E(A,B) = 0$ for each $A \in \A$, $B \in \B$ and $n \ge 1$.
\item $\Ext^2_\E(A,B) = 0$ for each $A \in \A$, $B \in \B$.
\item If $f\dd A_1 \to A_2$ is a deflation with $A_1,A_2 \in \A$, then $\Ker f \in \A$.
\item If $g\dd B_1 \to B_2$ is an inflation with $B_1,B_2 \in \B$, then $\Coker g \in \B$.
\end{enumerate}
\end{lem}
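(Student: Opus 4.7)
The plan is to establish the cycle of implications $(1) \Rightarrow (2) \Rightarrow (3)$, $(2) \Rightarrow (4)$, and to close the loop with $(3) \Rightarrow (1)$ and $(4) \Rightarrow (1)$. The implication $(1) \Rightarrow (2)$ is immediate.

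For $(2) \Rightarrow (3)$, given a deflation $f\dd A_1 \to A_2$ with $A_1,A_2 \in \A$ and kernel $K$, I would apply $\Hom_\E(-,B)$ with $B \in \B$ to the conflation $0 \to K \to A_1 \to A_2 \to 0$; the relevant portion of the Yoneda long exact sequence is
\[
\Ext^1_\E(A_1,B) \la \Ext^1_\E(K,B) \la \Ext^2_\E(A_2,B),
\]
and the two outer terms vanish by the cotorsion pair property and by (2), forcing $\Ext^1_\E(K,B) = 0$ and hence $K \in {^\perp\B} = \A$. The implication $(2) \Rightarrow (4)$ is the symmetric argument applied to $\Hom_\E(A,-)$ and $0 \to B_1 \to B_2 \to C \to 0$.

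The heart of the proof is $(3) \Rightarrow (1)$, which I would prove by induction on $n$. The case $n=1$ is the definition of the cotorsion pair. For $n \geq 2$, represent a class in $\Ext^n_\E(A,B)$ by a Yoneda extension $\xi\dd 0 \to B \to E_n \to \cdots \to E_1 \to A \to 0$ and splice at the rightmost position: set $M = \Ker(E_1 \to A)$ and write $[\xi] = [\gamma] \cdot [\beta]$ with $[\gamma] \in \Ext^{n-1}_\E(M,B)$ and $[\beta] = [0 \to M \to E_1 \to A \to 0] \in \Ext^1_\E(A,M)$. Using that $\A$ generates, choose a deflation $A_0 \to E_1$ with $A_0 \in \A$; composing with $E_1 \to A$ yields a deflation $A_0 \to A$ whose kernel $K$ lies in $\A$ by~(3). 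The composition-of-deflations calculus in exact categories (\cite[Corollary 3.6]{Bu}) supplies a conflation $0 \to K_1 \to K \to M \to 0$, where $K_1 = \Ker(A_0 \to E_1)$, and in particular a deflation $q\dd K \to M$. A direct check identifies $[\beta]$ as the pushforward $q_*[\zeta]$, where $\zeta = [0 \to K \to A_0 \to A \to 0] \in \Ext^1_\E(A,K)$, by recognising the connecting map $\Hom_\E(K,M) \to \Ext^1_\E(A,M)$ as the pushout construction along the morphism $K \to M$. Naturality of Yoneda multiplication with respect to the middle variable then gives
\[
[\xi] = [\gamma] \cdot q_*[\zeta] = q^*[\gamma] \cdot [\zeta].
\]
Since $K \in \A$, the inductive hypothesis yields $q^*[\gamma] \in \Ext^{n-1}_\E(K,B) = 0$, and therefore $[\xi] = 0$. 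The implication $(4) \Rightarrow (1)$ is entirely dual: splice $\xi$ at the leftmost position by setting $N = \Coker(B \to E_n)$, invoke $\B$ cogenerating to pick an inflation $E_n \to B_0$ with $B_0 \in \B$, use (4) to ensure that the relevant cokernel lies in $\B$, and run the same dimension-shifting argument with pullback and pushforward swapped.

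The step I expect to be most delicate is the identification $[\beta] = q_*[\zeta]$ together with the ensuing naturality identity $[\gamma] \cdot q_*[\zeta] = q^*[\gamma] \cdot [\zeta]$, i.e.\ the associativity of Yoneda multiplication with morphisms viewed as length-zero extensions. This is routine in an abelian category, but in the exact setting one has to verify that all pullback and pushout squares involved in the representatives actually exist and produce conflations; this is where the exact category axioms together with the weak idempotent completeness are used, and it would be worth spelling out explicitly the identification of the pushout of $0 \to K \to A_0 \to A \to 0$ along $q\dd K \to M$ with the conflation $0 \to M \to E_1 \to A \to 0$ via the isomorphism $E_1 \cong A_0/K_1$.
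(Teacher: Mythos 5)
Your proof is correct, but it follows a genuinely different organization from the paper's. The paper's proof (of the crucial implication $(3)\Rightarrow(1)$) invokes a Cartan--Eilenberg comparison argument modeled on Hartshorne's Lemma I.4.6(i): given an $n$-fold extension $\ep$ of $A$ by $B$, one builds in one stroke a commutative ladder $\tilde\ep\to\ep$ with deflations in the columns and all middle terms $A_i$ of $\tilde\ep$ lying in $\A$, observes that $\tilde\ep$ represents the same class, and then reads off splitness because the cycle objects of $\tilde\ep$ lie in $\A$ by repeated use of (3). You instead run an explicit induction on $n$: splice off the rightmost $\Ext^1$-factor, replace $E_1$ by a generator $A_0\in\A$, use (3) once to put the new kernel $K$ in $\A$, and then push the obstruction leftward via the Yoneda product identity $[\gamma]\cdot q_*[\zeta]=q^*[\gamma]\cdot[\zeta]$, landing in $\Ext^{n-1}_\E(K,B)$ which dies by the inductive hypothesis. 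In effect your inductive step is one column of the paper's comparison ladder, so the underlying dimension-shifting idea is identical; the difference is that you trade the global comparison diagram for the naturality and associativity of the Yoneda pairing. The paper's formulation is compact and sits naturally in the exact-category framework via the cited literature; yours is more self-contained and makes the passage of the obstruction visible at each step, at the cost of having to justify the bicartesian square identifying $[\beta]=q_*[\zeta]$ and the naturality of the splice product in the exact setting — which you correctly flag as the delicate point and which does hold, e.g.\ by the argument you sketch using \cite[Proposition 2.12]{Bu} and the $3\times3$ lemma.

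Two small remarks. First, you prove more than the paper spells out: the paper only proves $(3)\Rightarrow(1)$ and cites the remainder to \cite[Lemma 4.25]{SaSt}, whereas you supply the full cycle of implications, with $(2)\Rightarrow(3)$ and $(2)\Rightarrow(4)$ via the Yoneda long exact sequences; this is correct and fills in what the paper leaves to references. Second, you correctly observe the bicartesian square argument identifying $E_1$ with the pushout of $A_0\leftarrow K\to M$: this follows from forming the pushout $P$, noting that the induced map $P\to E_1$ sits between two conflations with identical end terms, and applying the (short) five lemma for exact categories.
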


\begin{proof}
Essentially the same proof as in~\cite[Lemma 4.25]{SaSt} applies and the straightforward modifications for our setting can be made using results in~\cite[\S7]{Bu}. Alternatively, a different proof for $(\A,\B)$ complete was given in~\cite[Proposition 1.1.12]{Beck12}.

The crucial implications are (3)~$\implies$~(1) and (4)~$\implies$~(1). The other being dual, it suffices to prove the first one. The key point is that using an argument very similar to~\cite[Lemma I.4.6(i)]{Hart} we construct for any $n$-fold extension $\ep\dd 0 \to B \to E_n \to \cdots \to E_1 \to A \to 0$ a commutative diagram
\[
\begin{CD}
\tilde\ep\dd \quad 0 @>>> B @>{\ell}>>      A_n    @>>> \cdots @>>>     A_1     @>>> A @>>> 0  \\
                   @.    @|                @VVV            @.          @VVV         @|         \\
\ep\dd       \quad 0 @>>> B @>>>            E_n    @>>> \cdots @>>>     E_1     @>>> A @>>> 0  \\
\end{CD}
\]
with $A_i \in \A$ for $i = 1, \dots, n-1$ and deflations in all columns. Since $\tilde\ep$ clearly represents the same element of $\Ext^n_\E(A,B)$ as $\ep$ and the kernel of $A_i \to A_{i-1}$ belongs to $\A$ by (3) for every $1 \le i \le n-1$ (here $A_0 = A$ by convention), we deduce that $\ell\dd B \to A_n$ splits and $\tilde\ep$ represents the zero element.
\end{proof}

\begin{defn} \label{def:hered-cot}
Let $\E$ be a weakly idempotent complete exact category. A~\emph{hereditary cotorsion pair} is a cotorsion pair $(\A,\B)$ in $\E$ satisfying the equivalent conditions of Lemma~\ref{lem:hered-cot} (including that $\A$ is generating and $\B$ cogenerating).
\end{defn}

\begin{defn} \label{def:hered-model}
Let $\E$ be a weakly idempotent complete exact category and let $(\Cof,\We,\Fib)$ be a triple specifying an exact model structure as in Theorem~\ref{thm:exact-models}. The model structure is called \emph{hereditary} if the cotorsion pairs $(\Cof,\We\cap\Fib)$ and $(\Cof\cap\We,\Fib)$ are hereditary.

A \emph{hereditary model category} is an exact model category with a hereditary model structure.
\end{defn}

One of the key points about hereditary complete cotorsion pairs is that the following version of the Horseshoe Lemma holds:

\begin{lem} \label{lem:horseshoe}
Let $\E$ be a weakly idempotent complete exact category and $(\A,\B)$ be a complete hereditary cotorsion pair. Suppose we have the solid part of the commutative diagram
\[
\xymatrix{
              & 0          \ar[d]       & 0          \ar@{.>}[d]       & 0          \ar[d]             \\
0 \ar[r]      & X   \ar[r] \ar[d]_{i_1} & Y   \ar[r] \ar@{.>}[d]_{i_2} & Z   \ar[r] \ar[d]_{i_3} & 0   \\
0 \ar@{.>}[r] & B_X \ar@{.>}[r] \ar[d]  & B_Y \ar@{.>}[r] \ar@{.>}[d]  & B_Z \ar@{.>}[r] \ar[d]  & 0   \\
0 \ar@{.>}[r] & A_X \ar@{.>}[r] \ar[d]  & A_Y \ar@{.>}[r] \ar@{.>}[d]  & A_Z \ar@{.>}[r] \ar[d]  & 0   \\
              & 0                       & 0                            & 0 
}
\]
with a conflation in the upper row and approximation sequences in the two columns. Then the dotted part can be completed so that all rows are conflations and all columns are approximation sequences.
\end{lem}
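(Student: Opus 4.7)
The strategy is to build $B_Y$, $A_Y$ and the missing maps in three stages: a pushout that absorbs the left column, a lifting across Yoneda $\Ext$ that crucially uses heredity, and a routine $3\times 3$ assembly.

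Stage 1 (pushout). Form the pushout $P$ of the inflations $X\to Y$ and $i_1\dd X\to B_X$. The pushout property of an exact category, combined with~\cite[Proposition 2.12]{Bu}, yields two conflations
\[ 0 \to Y \to P \to A_X \to 0 \quad\text{and}\quad 0 \to B_X \to P \to Z \to 0. \]
The pushout square itself fills in the top-left $2\times 2$ portion of the target diagram.

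Stage 2 (lift via heredity). From the conflation $0\to Z\to B_Z\to A_Z\to 0$ and heredity we have $\Ext^1_\E(A_Z,B_X)=\Ext^2_\E(A_Z,B_X)=0$, so the long exact sequence of Yoneda $\Ext$ shows that the pullback map
\[ i_3^*\dd \Ext^1_\E(B_Z,B_X)\la\Ext^1_\E(Z,B_X) \]
is an isomorphism. Lift the class of the second conflation of Stage~1 across $i_3^*$ to obtain a conflation $0\to B_X\to B_Y\to B_Z\to 0$ together with a morphism $j\dd P\to B_Y$ fitting into the commutative diagram
\[
\xymatrix{
0 \ar[r] & B_X \ar[r] \ar@{=}[d] & P \ar[r] \ar[d]^j & Z \ar[r] \ar[d]^{i_3} & 0 \\
0 \ar[r] & B_X \ar[r] & B_Y \ar[r] & B_Z \ar[r] & 0
}
\]
whose right-hand square is bicartesian. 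Since $i_3$ is an inflation with cokernel $A_Z$, standard pullback stability in exact categories forces $j$ to be an inflation whose cokernel is canonically identified with $A_Z$.

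Stage 3 (assembly). Set $i_2\dd Y\to B_Y$ to be the composition $Y\to P\to B_Y$ of the inflations from Stages~1 and~2; this is itself an inflation. Let $A_Y$ denote its cokernel. Applying~\cite[Lemma 3.5]{Bu} to this two-step composition, whose intermediate cokernel is $A_X$ and whose upper cokernel is $A_Z$, produces a conflation $0\to A_X\to A_Y\to A_Z\to 0$. Closure of $\B$ and $\A$ under extensions (standard for any cotorsion pair) upgrades this to $B_Y\in\B$ and $A_Y\in\A$, so the middle column is an approximation sequence as required. Commutativity of the remaining squares is a direct check from the pushout property of $P$ and the pullback property of the lower-right square.

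The main obstacle is Stage~2: without the hereditary assumption the connecting map to $\Ext^2_\E(A_Z,B_X)$ need not vanish, and there is no reason the class of $0\to B_X\to P\to Z\to 0$ should lie in the image of $i_3^*$. Once $B_Y$ is secured, Stages~1 and~3 are formal consequences of the exact structure and the universal properties of pushouts and pullbacks.
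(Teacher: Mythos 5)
Your proof is correct and follows essentially the same route as the paper's: push out $X\to Y$ along $i_1$ to obtain $P$ with the conflation $0\to B_X\to P\to Z\to 0$, invoke heredity (via $\Ext^2_\E(A_Z,B_X)=0$) and the long exact sequence to lift this conflation along $i_3$ to $0\to B_X\to B_Y\to B_Z\to 0$, and set $i_2$ to be the composite $Y\to P\to B_Y$. The paper leaves the final verifications (closure of $\A,\B$ under extensions, the conflation in the bottom row) implicit, whereas you spell them out, but the underlying argument is the same.
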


\begin{proof}
A proof can be found for instance in~\cite[Lemma 5.2.3]{GT} and other references in~\cite[Lemma 3.3]{YL11}. We construct the pushout
\[
\xymatrix{
0 \ar[r] & X   \ar[r] \ar[d]_{i_1} & Y   \ar[r] \ar[d]_{i'}  & Z   \ar[r] \ar@{=}[d] & 0   \\
0 \ar[r] & B_X \ar[r]              & P   \ar[r]              & Z   \ar[r]            & 0   \\
}
\]
Since $\Ext^2_\E(A_Z,B_X) = 0$, we can use the long exact sequence of $\Ext$'s and construct the commutative diagram
\[
\xymatrix{
0 \ar[r] & B_X   \ar[r] \ar@{=}[d] & P   \ar[r] \ar[d]_{i''} & Z     \ar[r] \ar[d]_{i_3} & 0   \\
0 \ar[r] & B_X   \ar[r]            & B_Y \ar[r]              & B_Z   \ar[r]              & 0   \\
}
\]
Finally we simply put $i_2 = i''i'$.
\end{proof}

Now we are in a position to state and prove the main result of the section.

\begin{thm} \label{thm:alg-tria}
Let $\E$ be a hereditary model category, the exact model structure given by $(\Cof,\We,\Fib)$, and denote $\E_{cf} = \Cof\cap\Fib$ and $\omega = \E_{cf} \cap \We$.
\begin{enumerate}
\item $\E_{cf}$ with the induced exact structure is a Frobenius exact category and $\omega$ is precisely the class of projective injective objects in $\E_{cf}$. In particular, $\Ho\E$ is an algebraic triangulated category.
\item Every cofiber sequence in $\E$ becomes a triangle in $\Ho\E$, and conversely every triangle is isomorphic in $\Ho\E$ to a cofiber sequence. In particular, every conflation $0 \to X \overset{u}\to Y \overset{v}\to Z \to 0$ yields a triangle in $\Ho\E$:
\[ X \overset{u}\la Y \overset{v}\la Z \overset{w}\la \Sigma X. \]
\end{enumerate}
\end{thm}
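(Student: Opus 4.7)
The plan is to treat parts (1) and (2) separately. For (1), I verify that $\E_{cf} = \Cof \cap \Fib$, with conflations inherited from $\E$, is a weakly idempotent complete exact subcategory via Proposition~\ref{prop:exact-categories}, since $\Cof$ and $\Fib$ are extension-closed and retract-closed as one-sided classes of cotorsion pairs; then I identify $\omega$ as the class of projective-injective objects of $\E_{cf}$. Any $W \in \omega$ is projective in $\E_{cf}$ because $W \in \Cof \cap \We$ and every $X \in \E_{cf} \subseteq \Fib$ give $\Ext^1_\E(W, X) = 0$ via the cotorsion pair $(\Cof \cap \We, \Fib)$; injectivity of $W$ is dual. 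For enough projectives, given $X \in \E_{cf}$, the approximation $0 \to F \to W \to X \to 0$ from $(\Cof \cap \We, \Fib)$ has $F \in \Cof$ by Lemma~\ref{lem:hered-cot}(3) applied to the hereditary pair $(\Cof, \We \cap \Fib)$, and $W \in \Fib$ by extension-closure, so $F \in \E_{cf}$ and $W \in \omega$; enough injectives is dual using Lemma~\ref{lem:hered-cot}(4). Any projective in $\E_{cf}$ is then a retract of some $W \in \omega$ via the splitting of this conflation, and $\omega$ is retract-closed as an intersection of retract-closed classes, so projectives coincide with $\omega$ (dually for injectives). Hence $\E_{cf}$ is Frobenius and its stable category $\underline{\E_{cf}}$ is algebraic triangulated. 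The identification $\Ho\E \simeq \underline{\E_{cf}}$ follows by combining Theorem~\ref{thm:htp-of-model}(1), which gives $\Ho\E \simeq \E_{cf}/\!\sim$, with Lemma~\ref{lem:exact-homotopies}(3), which says homotopy on $\E_{cf}$ is exactly factoring through an object of $\We \cap \E_{cf} = \omega$.

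For (2), the core observation is that the Happel distinguished triangle in $\underline{\E_{cf}}$ built from a conflation $0 \to X \to Y \to Z \to 0$ in $\E_{cf}$ is precisely a cofiber sequence whose trivial object lies in $\omega$. I choose an inflation $X \to I$ with $I \in \omega$ and cokernel $\Sigma X \in \E_{cf}$ (the latter built via the enough-injectives argument above) and form the pushout $Z'$, which lies in $\E_{cf}$ by extension-closure. The two resulting conflations $0 \to Y \to Z' \to \Sigma X \to 0$ and $0 \to I \to Z' \to Z \to 0$ are such that the second splits (since $I$ is injective in $\E_{cf}$), so $Z' \cong Z$ in $\underline{\E_{cf}}$ and the induced map $Z \to \Sigma X$ coincides with Happel's connecting morphism. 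Consequently every conflation in $\E_{cf}$ yields a distinguished triangle coming from a cofiber sequence, and conversely every triangle in $\underline{\E_{cf}}$ arises from such a conflation and is thus isomorphic in $\Ho\E$ to a cofiber sequence.

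For an arbitrary cofiber sequence in $\E$ whose $W \in \We$ may lie outside $\E_{cf}$, I replace its data by an equivalent cofiber sequence in $\E_{cf}$ whose trivial middle object lies in $\omega$: cofibrant-fibrant replacements of $X$ and $Y$ become isomorphisms in $\Ho\E$ by Theorem~\ref{thm:htp-of-model}(3), and Lemma~\ref{lem:horseshoe} together with the cotorsion pair approximations lifts the first row of the cofiber sequence to a conflation in $\E_{cf}$ whose middle term remains trivial, hence lies in $\omega$; the preceding paragraph then applies. The final assertion, that every conflation in $\E$ yields a triangle in $\Ho\E$, follows by extending the conflation to a cofiber sequence: take the cotorsion pair approximation $0 \to X \to B \to A \to 0$ relative to $(\Cof, \We \cap \Fib)$ as the first row and push out. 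The main technical obstacle is the coherent lifting of the full two-row cofiber-sequence diagram through cofibrant-fibrant replacement, which requires iteratively invoking Lemma~\ref{lem:horseshoe} together with the hereditary structure of both cotorsion pairs.
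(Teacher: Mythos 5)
Your proposal is correct and follows the same overall strategy as the paper: identify $\Ho\E$ with the stable category of the Frobenius category $\E_{cf}$, then use Lemma~\ref{lem:horseshoe} and the two cotorsion pair approximations to replace an arbitrary cofiber sequence by one entirely in $\E_{cf}$, where Happel's construction applies. The main difference is one of detail and emphasis: the paper disposes of part~(1) by citing Gillespie's Proposition~5.2 in \cite{G5}, whereas you spell out the Frobenius verification (projective-injectives equal $\omega$, enough projectives and injectives via the cotorsion pairs combined with Lemma~\ref{lem:hered-cot}); and for the final ``conflation yields a triangle'' assertion, your construction---pushing out the conflation along the $(\Cof,\We\cap\Fib)$-approximation $0 \to X \to B \to A \to 0$ and noting that the resulting pushout $P$ is weakly equivalent to $Z$ because $0 \to B \to P \to Z \to 0$ has $B \in \We$---produces the triangle $X \to Y \to Z \to \Sigma X$ directly, whereas the paper's ``$\We$ is generating'' argument realizes the given conflation as the bottom row of a cofiber sequence (i.e.\ yields a triangle of the form $\Omega Z \to X \to Y \to Z$) and implicitly leaves a rotation to the reader. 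Both work; yours is marginally more explicit. The one place where your write-up, like the paper's, remains at the level of ``this requires iteratively invoking Lemma~\ref{lem:horseshoe}'' is the compatibility of the cofibrant-fibrant replacements across the full two-row diagram~$(*)$, but you flag this accurately and the paper's proof supplies no more detail there either.
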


\begin{proof}
Part (1) is an easy consequence of Lemmas~\ref{lem:exact-homotopies} and~\ref{lem:hered-cot} and it is proved in~\cite[Proposition 5.2]{G5}; see also~\cite[Proposition 1.1.15]{Beck12}.

We focus on (2) and suppose that we have a cofiber sequence given by the diagram
\[
\begin{CD}
0  @>>>    X  @>>>   W   @>>>    \Sigma X\phantom{,} @>>>  0   \\
@.       @V{u}VV   @VVV             @|                         \\
0  @>>>    Y @>{v}>> Z   @>{w}>> \Sigma X,           @>>>  0   \\
\end{CD}
\eqno{(*)}
\]
with $W \in \We$. If we choose approximation sequences of $X,Y$ and $\Sigma X$ \wrt the cotorsion pair $(\Cof\cap\We,\Fib)$, we can use Lemma~\ref{lem:horseshoe} to construct the following commutative diagram
\[
\xymatrix@C=1.5em{
& 0 \ar[rr] && X \ar[rr] \ar[dl]_u \ar[dd]|\hole && W \ar[rr] \ar[dl] \ar[dd]|\hole && \Sigma X \ar[rr] \ar@{=}[dl] \ar[dd]|\hole && 0  \\
0 \ar[rr] && Y \ar[rr]^(.6)v \ar[dd] && Z \ar[rr]^(.6)w \ar[dd] && \Sigma X \ar[rr] \ar[dd] && 0  \\
& 0 \ar[rr]|(.47)\hole && F_X \ar[rr]|\hole \ar[dl]_{u'} && F_W \ar[rr]|(.48)\hole \ar[dl] && \Sigma F_X \ar[rr] \ar@{=}[dl] && 0  \\
0 \ar[rr] && F_Y \ar[rr]^(.6){v'} && F_Z \ar[rr]^(.6){w'} && \Sigma F_X \ar[rr] && 0,  \\
}
\]
where all rows are conflations, all objects at the bottom are fibrant, and all vertical morphisms are trivial cofibrations. In particular,
\[
\begin{CD}
F_X @>{u'}>> F_Y @>{v'}>> F_Z @>{w'}>> \Sigma F_X
\end{CD}
\]
is a cofiber sequence of fibrant objects and it is isomorphic in $\Ho\E$ to the original one. In a similar way, we can find a cofiber sequence in $\E_{cf}$ which is isomorphic to the original one in $\Ho\E$. But the latter one is necessarily a triangle by~\cite[I.2.5, p. 15--16]{Ha}.

The last statement is clear since $\We$ is generating, and thus every short exact sequence $0 \to Y \overset{v}\to Z \overset{w}\to \Sigma X \to 0$ is a part of a diagram $(*)$ which defines a cofiber sequence.
\end{proof}

\section{Models for the derived category}
\label{sec:cpxs}

In this section we will be concerned with the construction of models for the unbounded derived category of an exact category of Grothendieck type (Definition~\ref{def:Grothendieck-exact}). We will show that there always exists at least one model structure (the one induced by injectives) if the exact category $\E$ arises as in Theorem~\ref{thm:deconstr-Groth}. There is usually a lot of model structures for the derived category of a Grothendieck category. A recipe for their construction has been found by Gillespie~\cite{G3,G2,G1} and various versions of the result were discussed in~\cite{EGPT,SaSt}, but our presentation here is based on an elegant idea from~\cite{YL11}. Finally, we will discuss probably the most interesting example available so far which is essentially due to Neeman: the projective model structure on the derived category of flat modules.

The guiding principle for our constructions is the following consequence of Theorem~\ref{thm:exact-models} and Corollary~\ref{cor:exact-model-trivial}.

\begin{prop} \label{prop:model-constr}
Let $\E$ be a weakly idempotent complete exact category and $\We \subseteq \E$ be a generating and cogenerating class of objects which is closed under retracts in $\E$ and satisfies the 2-out-of-3 property for extensions.

If $(\A_0,\B_0)$ is a complete hereditary cotorsion pair in $\We$ \st there exist complete cotorsion pairs $(\Cof,\B_0)$ and $(\A_0,\Fib)$ in $\E$, then $\E$ together with $(\Cof,\We,\Fib)$ is a hereditary model category.
\end{prop}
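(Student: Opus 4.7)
The plan is to verify the hypotheses of Theorem~\ref{thm:exact-models} for the triple $(\Cof,\We,\Fib)$. The 2-out-of-3 condition for $\We$ and its retract-closure are part of the hypothesis. The real content is therefore to exhibit $(\Cof,\We\cap\Fib)$ and $(\Cof\cap\We,\Fib)$ as complete cotorsion pairs in $\E$; once I can show $\We\cap\Fib=\B_0$ and $\Cof\cap\We=\A_0$, these are precisely the two complete cotorsion pairs in $\E$ furnished by the assumption. The remaining step is then to verify the hereditary property of Definition~\ref{def:hered-model}.

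First I would establish the easy inclusions. Since $\A_0,\B_0\subseteq\We$ and $\We$ is extension-closed in $\E$ by the 2-out-of-3 property, the Ext-orthogonality $\A_0\perp\B_0$ computed in $\We$ agrees with the one computed in $\E$. The cotorsion pairs $(\Cof,\B_0)$ and $(\A_0,\Fib)$ in $\E$ then yield $\A_0\subseteq{}^\perp\B_0=\Cof$ and $\B_0\subseteq\A_0^\perp=\Fib$, whence $\A_0\subseteq\Cof\cap\We$ and $\B_0\subseteq\We\cap\Fib$. For the reverse inclusion $\Cof\cap\We\subseteq\A_0$, I would take $X\in\Cof\cap\We$ and apply completeness of $(\A_0,\B_0)$ in $\We$ to obtain a conflation $0\to B^X\to A^X\to X\to 0$ with $A^X\in\A_0$ and $B^X\in\B_0$. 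Since $X\in\Cof$ and $B^X\in\B_0$, the cotorsion pair $(\Cof,\B_0)$ forces $\Ext^1_\E(X,B^X)=0$, the conflation splits, and $X$ becomes a retract of $A^X$ inside $\We$; retract-closure of $\A_0$ in $\We$ (as the left class of a cotorsion pair there) then gives $X\in\A_0$. The dual argument, using a $(\A_0,\B_0)$-approximation $0\to X\to B_X\to A_X\to 0$ of any $X\in\We\cap\Fib$ together with the pair $(\A_0,\Fib)$, yields $\We\cap\Fib\subseteq\B_0$.

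With these identifications in hand, the two cotorsion pairs required by Theorem~\ref{thm:exact-models} are literally $(\Cof,\B_0)$ and $(\A_0,\Fib)$, both complete by hypothesis, so an exact model structure $(\cof,\we,\fib)$ on $\E$ is produced. To confirm that it is hereditary, I would invoke Lemma~\ref{lem:hered-cot}(4): for an inflation $g\dd B_1\to B_2$ in $\E$ with $B_1,B_2\in\B_0\subseteq\We$, the 2-out-of-3 property forces $\Coker g\in\We$, so the entire conflation is a conflation inside the extension-closed subcategory $\We$, and the hereditary hypothesis for $(\A_0,\B_0)$ in $\We$ gives $\Coker g\in\B_0$; the argument for $(\A_0,\Fib)$ is dual. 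The chief subtlety---and the step I expect to be the main obstacle to state cleanly---is the double identification $\A_0=\Cof\cap\We$ and $\B_0=\We\cap\Fib$: this is the standard Hovey-style splitting argument, but it crucially combines the approximation sequences of the small cotorsion pair inside $\We$ with the orthogonalities provided by the two ambient cotorsion pairs in $\E$. Once this is in place, the rest is formal.
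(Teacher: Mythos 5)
Your proof is correct and fills in precisely what the paper's one-line proof ("direct consequence of Theorem~\ref{thm:exact-models} and Lemma~\ref{lem:hered-cot}") leaves implicit. The key identifications $\Cof\cap\We=\A_0$ and $\We\cap\Fib=\B_0$ via the splitting argument, and the verification that conditions (3)/(4) of Lemma~\ref{lem:hered-cot} transfer from $\We$ to $\E$ using 2-out-of-3, are exactly the right content; the approach matches the paper's intent.
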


\begin{proof}
This is a direct consequence of Theorem~\ref{thm:exact-models} and Lemma~\ref{lem:hered-cot}.
\end{proof}

\subsection{Lifting cotorsion pairs to categories of complexes}
\label{subsec:lifting-cpx}

Let us briefly discuss our setting and the tools to achieve the above goals. We start with an exact category $\E$ of Grothendieck type and consider the category $\Cpx\E$ of cochain complexes over $\E$ with the natural exact structure: A sequence $0 \to X \to Y \to Z \to 0$ of complexes is a conflation precisely when all the component sequences $0 \to X^n \to Y^n \to Z^n \to 0$, $n \in \Z$, are conflations.

As we are interested in the derived category $\Der\E$, our class of trivial objects will be $\We = \Cac\E$, the class of all acyclic complexes. Here, we call a complex
\[
X\dd \qquad \cdots \la X^0 \la X^1 \la X^2 \la X^3 \la \cdots
\]
\emph{acyclic} (or \emph{exact}) if it arises by splicing countably many conflations in $\E$ of the form
\[ 0 \la Z^i \la X^i \la Z^{i+1} \la 0. \]

In order to justify that this is a valid choice for $\We$, we need to prove that the necessary conditions from Theorem~\ref{thm:exact-models} and Corollary~\ref{cor:exact-model-trivial} are satisfied. To this end, we need a definition and a lemma.

\begin{defn} \label{def:admissible} \cite[8.1 and 8.8]{Bu}
Let $\E$ be an exact category. A morphism $f\dd X \to Y$ is called \emph{admissible} if it can be written as a composition $f = id$, where $i$ is an inflation and $d$ is a deflation. A sequence $X \overset{f}\to Y \overset{f'}\to Z$ of admissible morphisms $f = id$ and $f' = i'd'$ is \emph{exact} if $0 \to I \overset{i}\to Y \overset{d'}\to I' \to 0$ is a conflation.
\end{defn}

In other words, admissible morphisms are precisely those for which it makes sense to speak of an image. If $\E$ is abelian, every morphism is admissible.

\begin{lem} \label{lem:admissible-ext}
Let $\E$ be an exact category and suppose that we have a commutative diagram
\[
\begin{CD}
0   @>>>  X_1  @>{j_1}>>  Y_1  @>{p_1}>>  Z_1  @>>>   0   \\
@.      @VV{f_1}V       @VV{g_1}V       @VV{h_1}V         \\
0   @>>>  X_2  @>{j_2}>>  Y_2  @>{p_2}>>  Z_2  @>>>   0   \\
@.      @VV{f_2}V       @VV{g_2}V       @VV{h_2}V         \\
0   @>>>  X_3  @>{j_3}>>  Y_3  @>{p_3}>>  Z_3  @>>>   0   \\
\end{CD}
\]
with conflations in rows. If $f_2, h_1, h_2$ are admissible, $g_2g_1 = 0$ and $Z_1 \overset{h_1}\to Z_2 \overset{h_2}\to Z_3$ is exact, then $g_2$ is admissible and we have a commutative diagram.
\[
\begin{CD}
0   @>>>    X_3     @>{j_3}>>  Y_3     @>{p_3}>>  Z_3     @>>>   0   \\
@.         @VVV               @VVV               @VVV                \\
0   @>>> \Coker f_2 @>>>    \Coker g_2 @>>>    \Coker h_2 @>>>   0   \\
\end{CD}
\]
with conflations in rows and cokernel homomorphisms in columns.
\end{lem}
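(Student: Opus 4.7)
I would first show that $g_2$ is admissible and then derive the cokernel conflation via the exact $3\times 3$-lemma~\cite[Corollary~3.6]{Bu}. Begin by factoring the three admissible maps as $f_2 = \iota_2^X \delta_2^X$, $h_2 = \iota_2^Z \delta_2^Z$, and $h_1 = \iota_1^Z \delta_1^Z$, and let $I_2^X$, $I_2^Z$, $I_1^Z$ denote the corresponding intermediate images. Exactness of $Z_1 \to Z_2 \to Z_3$ at $Z_2$ is then the conflation $0 \to I_1^Z \to Z_2 \to I_2^Z \to 0$. From $g_2 g_1 = 0$, using that $j_3$ is a monomorphism and $p_1$ is a deflation, one also obtains $f_2 f_1 = 0$ and $h_2 h_1 = 0$, consistent with the assumed exactness.

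Next, construct a candidate ``image'' $M \hookrightarrow Y_3$ of $g_2$ as follows. Pull back the inflation $\iota_2^Z$ along the deflation $p_3$ to obtain a conflation $0 \to X_3 \to Q \overset{\rho}\la I_2^Z \to 0$ and an inflation $\pi\dd Q \hookrightarrow Y_3$; by the universal property of the pullback, $g_2 = \pi \tilde g_2$ for a unique $\tilde g_2\dd Y_2 \to Q$ with $\rho \tilde g_2 = \delta_2^Z p_2$ and $\tilde g_2 j_2$ equal to $f_2$ composed with the inclusion $X_3 \hookrightarrow Q$. Now push out the inflation $X_3 \hookrightarrow Q$ along the deflation $X_3 \to \Coker f_2$ to produce a conflation $0 \to \Coker f_2 \to Q' \to I_2^Z \to 0$, and define $M$ as the kernel of the resulting deflation $Q \to Q'$. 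Then $M$ sits in a conflation $0 \to I_2^X \to M \to I_2^Z \to 0$, and the inflation $\iota_M\dd M \hookrightarrow Q$ composed with $\pi$ gives an inflation $M \hookrightarrow Y_3$. The principal claim is that $\tilde g_2$ factors through $M$ as a deflation $\delta_M\dd Y_2 \twoheadrightarrow M$; granting this, $g_2 = (\pi \iota_M)\delta_M$ realizes $g_2$ as an inflation composed with a deflation, which proves admissibility with image $M$.

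The verification that $\delta_M$ exists and is a deflation is where all three extra hypotheses enter. Informally, the ``kernel'' of $\tilde g_2$ is generated by $g_1(Y_1)$ together with $j_2(\Ker f_2)$: any $y \in Y_2$ with $\tilde g_2(y)$ supported on $X_3$ has $p_2(y) \in \Ker h_2 = I_1^Z$ by exactness of the third column, lifts by admissibility of $h_1$ and the deflation $p_1$ to some $y_1 \in Y_1$ with $p_2(y) = p_2 g_1(y_1)$, whence $y - g_1(y_1) \in j_2(X_2)$; the hypothesis $g_2 g_1 = 0$ then forces the resulting $X_2$-piece to map into $I_2^X \subseteq X_3$, which is exactly the statement that $\tilde g_2$ lands in $M$. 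Translating this element-level chase into a formal pullback--pushout argument in a weakly idempotent complete exact category, using Lemma~\ref{lem:id-compl} and Bühler's results in~\cite{Bu} repeatedly, is the main obstacle and constitutes the bulk of the work.

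Once $g_2$ is known to be admissible with image $M$ fitting into $0 \to I_2^X \to M \to I_2^Z \to 0$, the cokernel conflation follows by applying the exact $3\times 3$-lemma to the commutative grid whose columns are the three approximation conflations $0 \to I_2^X \to X_3 \to \Coker f_2 \to 0$, $0 \to M \to Y_3 \to \Coker g_2 \to 0$, $0 \to I_2^Z \to Z_3 \to \Coker h_2 \to 0$, and whose known rows are the conflation for $M$ just constructed together with the middle row $0 \to X_3 \to Y_3 \to Z_3 \to 0$ of the hypothesis. This produces the third row $0 \to \Coker f_2 \to \Coker g_2 \to \Coker h_2 \to 0$ together with commuting vertical cokernel maps, which is the conclusion of the lemma.
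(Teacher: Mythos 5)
Your high-level plan---construct a candidate image $M$ of $g_2$, show $g_2$ factors through $M$ as a deflation followed by an inflation, then conclude via the exact $3\times3$ lemma---is sound in outline, but the construction of $M$ contains a concrete error. The pushout of the inflation $X_3 \to Q$ along the deflation $X_3 \to \Coker f_2$ yields a bicartesian square whose other map $Q \to Q'$ is a deflation with kernel equal to $\Ker(X_3 \to \Coker f_2) = I_2^X$; by the Noether isomorphism $Q' \cong Q/I_2^X$. So the object you define as $M = \Ker(Q \to Q')$ is simply $I_2^X$, and it cannot sit in a conflation $0 \to I_2^X \to M \to I_2^Z \to 0$ unless $I_2^Z = 0$. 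Beyond that, you explicitly defer the principal verification---that $\tilde g_2$ factors through the intended subobject of $Q$ as a deflation---to an ``element-level chase.'' That is precisely the step that cannot be carried out by elements in a general exact category and where the hypotheses $g_2 g_1 = 0$ and the exactness of $Z_1 \to Z_2 \to Z_3$ must do their work diagrammatically. As written the proof has a gap both in the definition of $M$ and in the central factorization claim.

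The argument in the paper sidesteps constructing the image of $g_2$ at all. It pushes out the bottom conflation along $X_3 \to \Coker f_2$ to obtain $\ep\colon 0 \to \Coker f_2 \to E \to Z_3 \to 0$, observes that the induced map $Y_2 \to E$ kills $X_2$ and hence factors through $Z_2$ as a morphism $k$, and uses $g_2 g_1 = 0$ together with exactness of the third column to show that $k$ vanishes on $\Img h_1 = \Ker h_2$. That means the pullback of $\ep$ along the inflation $\Img h_2 \to Z_3$ splits, so the long exact $\Ext$ sequence for $0 \to \Img h_2 \to Z_3 \to \Coker h_2 \to 0$ produces a conflation $0 \to \Coker f_2 \to E' \to \Coker h_2 \to 0$ pulling back to $\ep$. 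One then checks directly, using~\cite[Corollaries 3.2 and 3.6]{Bu}, that $Y_3 \to E \to E'$ is a cokernel of $g_2$ and that $g_2$ is admissible. This goes straight for the cokernel and avoids the image construction; if you want to rescue your route, you would first need to show that $0 \to \Coker f_2 \to Q' \to I_2^Z \to 0$ splits (essentially the same $\Ext$ computation) in order to even define the correct $M$, at which point the cokernel approach is shorter.
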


\begin{proof}
If we take the pushout of the lower conflation along $X_3 \to \Coker f_2$, we get a diagram of the form 
\[
\begin{CD}
\phantom{\ep\dd} \quad 0   @>>>    X_2     @>>>  Y_2  @>{p_2}>> Z_2  @>>>   0\phantom{.}   \\
                       @.        @VV{0}V      @VV{\ell}V     @VV{h_2}V                     \\
\ep\dd           \quad 0   @>>> \Coker f_2 @>>>   E  @>{p}>>    Z_3  @>>>   0.             \\
\end{CD}
\]
An easy diagram chase reveals that there exists $k\dd Z_2 \to E$ \st $\ell = kp_2$ and $h_2 = pk$. Since $0 = \ell g_1 = kp_2g_1 = kh_1p_1$ and $p_1$ is an epimorphism, we get $kh_1 = 0$. Hence $k$ factors through the image of $h_2$, and if we denote the inflation $\Img h_2 \to Z_3$ by $i$, we obtain a morphisms $k'\dd \Img h_2 \to Y_3$ \st $i = pk'$. Considering the equivalence class $[\ep] \in \Ext^1_\E(Z_3,\Coker f_2)$, the existence of $k'$ just says that
\[
\Ext^1_\E(i,\Coker f_2)([\ep]) = 0.
\]
Using the long exact sequence of Ext groups, this precisely means that there is a commutative diagram
\[
\begin{CD}
\ep\dd           \quad 0   @>>> \Coker f_2 @>>>  E   @>{p}>>    Z_3    @>>>   0\phantom{.}   \\
                       @.           @|         @VVV            @VVV                          \\
\phantom{\ep\dd} \quad 0   @>>> \Coker f_2 @>>>  E'  @>>>   \Coker h_2 @>>>   0.             \\
\end{CD}
\]

It follows from the construction that the composition $Y_3 \to E \to E'$ is a deflation and it is an easy application of~\cite[Corollaries 3.2 and 3.6]{Bu} that $g_2$ is admissible and the morphism $Y_3 \to E'$ is a cokernel of $g_2$.
\end{proof}

Now we can justify that the class $\We = \Cac\E$ is a well chosen class of trivial objects for an exact model structure. We also introduce some notation which will be useful later.

\begin{nota} \label{nota:discs-and-spheres}
Let $\E$ be an additive category, $X \in \E$ an object and $n$ an integer. We denote by $D^n(X)$ the complex
\[
D^n(X)\dd \qquad \cdots \la 0 \la X \overset{1_X}\la X \la 0 \la \cdots
\]
concentrated in cohomological degrees $n$ and $n+1$, and by $S^n(X)$ the complex
\[
S^n(X)\dd \qquad \cdots \la 0 \la X \la 0 \la 0 \la \cdots
\]
concentrated in cohomological degree $n$.
\end{nota}

\begin{prop} \label{prop:acyclics-ok}
Let $\E$ be a weakly idempotent complete exact category and consider $\Cpx\E$ with the exact structure induced from $\E$ (i.e.\ conflations in $\Cpx\E$ are defined as component wise conflations). Then $\We = \Cac\E$ is generating and cogenerating in $\Cpx\E$, it is closed under retracts and has the 2-out-of-3 property for extensions.
\end{prop}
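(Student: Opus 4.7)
The plan is to verify the four properties in turn, treating the easier ones first and reserving Lemma~\ref{lem:admissible-ext} as the key technical tool for the 2-out-of-3 property.

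First, the generating and cogenerating properties follow from the disc complexes of Notation~\ref{nota:discs-and-spheres}. For any $X \in \Cpx\E$, form $P(X) := \bigoplus_{n \in \Z} D^n(X^n)$; because each $D^n(X^n)$ is concentrated in degrees $n$ and $n+1$, this coproduct reduces in each degree $k$ to the finite biproduct $X^{k-1} \oplus X^k$, so the construction makes sense in any additive category. The chain map $P(X) \to X$ whose degree-$k$ component is $(d_X^{k-1},\,1_{X^k})\dd X^{k-1} \oplus X^k \to X^k$ is a split deflation in each degree, hence a deflation in $\Cpx\E$; moreover $P(X)$ is acyclic because each $D^n(X^n)$ is obviously acyclic and direct sums of conflations are conflations, computed pointwise. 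This establishes that $\We$ is generating, and the cogenerating property is dual, using the pointwise split inflation $X \to \bigoplus_{n} D^{n-1}(X^n)$ with degree-$k$ component $(1_{X^k},\,d_X^k)$.

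For closure under retracts, let $Y \in \Cac\E$ and suppose $X$ is a retract of $Y$ in $\Cpx\E$ via chain maps $s\dd X \to Y$ and $r\dd Y \to X$ with $rs = 1_X$. Weak idempotent completeness guarantees that in each degree $r^n$ has a kernel, whence $Y^n \cong X^n \oplus K^n$; since $r$ and $s$ are chain maps the differentials respect this splitting and yield an isomorphism $Y \cong X \oplus K$ in $\Cpx\E$. The cycle conflations $0 \to Z_Y^n \to Y^n \to Z_Y^{n+1} \to 0$ witnessing acyclicity of $Y$ then decompose, via the idempotent $sr$ and the fact that summands of conflations in a weakly idempotent complete exact category are again conflations, into cycle conflations $0 \to Z_X^n \to X^n \to Z_X^{n+1} \to 0$ that witness acyclicity of $X$.

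The core of the argument is the 2-out-of-3 property. Let $0 \to X \overset{j}\to Y \overset{p}\to Z \to 0$ be a conflation in $\Cpx\E$ and suppose two of $X, Y, Z$ are acyclic. I treat the case where $X$ and $Z$ are acyclic; the remaining two cases proceed analogously, using at one point the dual of Lemma~\ref{lem:admissible-ext} stated in terms of kernels. Fixing $n$, the conflations at levels $n, n+1, n+2$, together with the differentials as vertical arrows, assemble into exactly the kind of diagram featured in Lemma~\ref{lem:admissible-ext}. The required hypotheses are satisfied: $d_X^{n+1}$, $d_Z^n$, and $d_Z^{n+1}$ are admissible by the assumed acyclicity of $X$ and $Z$; the sequence $Z^n \to Z^{n+1} \to Z^{n+2}$ is exact in the sense of Definition~\ref{def:admissible} by the cycle conflations for $Z$; and $d_Y^{n+1} d_Y^n = 0$ since $Y$ is a complex. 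Lemma~\ref{lem:admissible-ext} then yields admissibility of $d_Y^{n+1}$ together with a conflation
\[
0 \la \Coker d_X^n \la \Coker d_Y^n \la \Coker d_Z^n \la 0.
\]
Identifying $\Coker d_X^n \cong Z_X^{n+2}$ and $\Coker d_Z^n \cong Z_Z^{n+2}$ via the acyclicity data for $X$ and $Z$, this specializes to a conflation $0 \to Z_X^{n+2} \to Z_Y^{n+2} \to Z_Z^{n+2} \to 0$ in $\E$, and varying $n$ supplies the cycle conflations showing $Y$ is acyclic.

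The hardest part is the bookkeeping in the last paragraph, in particular verifying that the cokernel identifications match the cycle data for $Y$ and that the dual formulation of Lemma~\ref{lem:admissible-ext} really handles the other two cases symmetrically. A cleaner alternative would be to invoke the Gabriel--Quillen embedding (Proposition~\ref{prop:exact-categories}(3)) and reduce the 2-out-of-3 statement to the corresponding elementary fact in an ambient abelian category together with the closure of $\E$ under extensions, but the direct route via Lemma~\ref{lem:admissible-ext} is evidently the intended one given its placement immediately before the proposition.
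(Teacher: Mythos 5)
Your treatment of the generating/cogenerating claim is correct and fleshes out exactly the disc-complex adjunction the paper invokes, and the retract argument (while glossing over whether the idempotent induced on the cycle objects actually splits under mere \emph{weak} idempotent completeness) is no looser than the paper's ``clear''. The real issues are in the 2-out-of-3 argument, which is the heart of the statement and where your sketch has two genuine gaps.

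First, the case $X,Z \in \Cac\E$. After applying Lemma~\ref{lem:admissible-ext} with $f_2 = d_X^{n+1}$, $g_2 = d_Y^{n+1}$, $h_2 = d_Z^{n+1}$ you correctly obtain admissibility of $d_Y^{n+1}$ and a conflation of cokernels (your indices are off by one: the lemma's conclusion concerns $\Coker f_2 = \Coker d_X^{n+1} \cong Z_X^{n+3}$, not $\Coker d_X^n$). But producing conflations among the objects you now \emph{name} $Z_Y^{m}$ is not the same as producing the cycle conflations $0 \to Z_Y^m \to Y^m \to Z_Y^{m+1} \to 0$ that define acyclicity of $Y$. Concretely, to conclude you must still check the splicing condition $\Ker d_Y^{m} = \Img d_Y^{m-1}$ (equivalently that $\Coker d_Y^{m-1}$ coincides with $\Img d_Y^m$), and this is precisely what the paper's cited $3\times3$ Lemma~\cite[Cor.\ 3.6]{Bu} is used for: the conflations $0 \to \Img d_X^n \to \Img d_Y^n \to \Img d_Z^n \to 0$ (obtained by taking kernels of the deflation morphism of conflations supplied by Lemma~\ref{lem:admissible-ext}) assemble with the cycle conflations of $X,Z$ and the rows $0\to X^{m}\to Y^{m}\to Z^{m}\to 0$ into a $3\times 3$ grid whose middle column is the desired cycle conflation of $Y$. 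Your proof stops short of this step, asserting the conclusion ``by varying $n$''.

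Second, the claim that the remaining two cases ``proceed analogously, using at one point the dual of Lemma~\ref{lem:admissible-ext}'' is not correct. Lemma~\ref{lem:admissible-ext} requires admissibility of $h_1,h_2$ (two consecutive differentials of $Z$), and its dual, once you write it out, requires admissibility of $f_1,f_2$ together with exactness of the $X$-column and admissibility of $h_1$. When the complex whose acyclicity you are trying to \emph{prove} is $Z$ (case $X,Y$ acyclic) or $X$ (case $Y,Z$ acyclic), the needed hypotheses are precisely the ones not available, so neither the lemma nor its dual applies. In fact these two cases do not need Lemma~\ref{lem:admissible-ext} at all: using the cycle conflations of the two acyclic complexes and Lemma~\ref{lem:id-compl}, the induced maps $Z_X^n \to Z_Y^n$ (resp.\ $Z_Y^n \to Z_Z^n$) are inflations (resp.\ deflations), and the resulting $3\times3$ grid with the rows $0\to X^n\to Y^n\to Z^n\to 0$ produces the cycle conflations of the third complex via the $3\times3$ Lemma alone. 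Finally, the ``cleaner alternative'' via Proposition~\ref{prop:exact-categories}(3) is not available as stated: that embedding result is only claimed for \emph{small} exact categories, and even granting an embedding one must still verify that the cycle objects land in $\E$, which is not automatic.
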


\begin{proof}
The fact that $\We$ is generating and cogenerating in $\Cpx\E$ is easily seen by the adjunctions
\[ \E(X,Y^n) \cong \Cpx\E(D^n(X),Y) \qquad \textrm{and} \qquad \E(Y^n,X) \cong \Cpx\E(Y,D^{n-1}(X)) \]
for each $X \in \E$ and $Y \in \Cpx\E$. The 2-out-of-3 property for extensions follows by Lemma~\ref{lem:admissible-ext} and the $3\times3$ Lemma~\cite[Corollary 3.6]{Bu}. The closure of $\We$ under retracts is clear.
\end{proof}

In order to apply Proposition~\ref{prop:model-constr}, we first need to construct complete hereditary cotorsion pairs in $\We$. In that respect, we show that every complete hereditary cotorsion pair $(\A,\B)$ in $\E$ can be lifted to a complete hereditary cotorsion pair in $\We$. Note that there are also alternative ways to lift cotorsion pairs to categories of complexes, see e.g.~\cite{G4} or~\cite[Chapter 7]{EJv2}. Before stating the result, we again introduce some notation.

\begin{nota} \label{nota:a-tilde}
Let $\E$ be an exact category and $\A \subseteq \E$ be extension closed. Then $\A$ is an exact category by Proposition~\ref{prop:exact-categories} and $\Cac\A$ can be viewed as a full subcategory of $\Cpx\E$. In this context, we denote for brevity $\Cac\A$ by $\tilde\A$ (this notation has been used by Gillespie in~\cite{G3,G2,G1}).
\end{nota}

\begin{prop} \label{prop:lifting-to-cpx}
Let $\E$ be a weakly idempotent complete exact category and consider $\We = \Cac\E$ with the exact structure induced from $\E$. If $(\A,\B)$ is a complete hereditary cotorsion pair in $\E$, then $(\tilde\A,\tilde\B)$ is a complete hereditary cotorsion pair in $\We$.
\end{prop}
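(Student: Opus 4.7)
The plan is to verify, in order: (i) the $\Ext^1$-vanishing $\Ext^1_\We(A,B)=0$ for $A\in\tilde\A$ and $B\in\tilde\B$; (ii) the existence of both approximation sequences inside $\We$; (iii) heredity of $(\tilde\A,\tilde\B)$; and (iv) the cotorsion-pair equations, computed inside $\We$. A useful preliminary is that $A\in\tilde\A$ forces every component $A^n\in\A$, since $A^n$ sits in the cycle conflation $0\to Z^nA\to A^n\to Z^{n+1}A\to 0$ and $\A$ is closed under extensions; dually $B^n\in\B$ for $B\in\tilde\B$.

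For (i), I first handle the disk complexes $D^n(S)$ of Notation~\ref{nota:discs-and-spheres} with $S\in\A$. These lie in $\tilde\A$ since their only nonzero cycle is $S$, and a chain-level splitting of any conflation $0\to B\to E\to D^n(S)\to 0$ reduces to choosing a section of the deflation $E^n\twoheadrightarrow S$; such a section exists because its kernel $B^n$ lies in $\B$ and $\Ext^1_\E(S,B^n)=0$. For a general $A\in\tilde\A$, the smart truncations $\tau_{\le m}A$ are again in $\tilde\A$, with successive quotients $\tau_{\le m}A/\tau_{\le m-1}A\cong D^{m-1}(Z^mA)$. An Eklof-style argument in the spirit of Proposition~\ref{prop:eklof}, combined with the vanishing of all higher $\Ext^k_\E$ on $\A\times\B$ guaranteed by heredity of $(\A,\B)$, then propagates the disk-case vanishing to $\Ext^1_{\Cpx\E}(A,B)=0$; extension-closure of $\We$ in $\Cpx\E$ identifies this with $\Ext^1_\We(A,B)$.

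For (ii), given $X\in\We$ I construct the approximation $0\to X\to\tilde B\to\tilde A\to 0$ degree by degree using the Horseshoe Lemma~\ref{lem:horseshoe}. For each $n$, choose an $\E$-approximation $0\to Z^nX\to P_n\to Q_n\to 0$ with $P_n\in\B$, $Q_n\in\A$; applying Horseshoe to the cycle conflation $0\to Z^nX\to X^n\to Z^{n+1}X\to 0$ against these fixed end-term approximations yields $0\to X^n\to P'_n\to Q'_n\to 0$ with $P'_n$ fitting into $0\to P_n\to P'_n\to P_{n+1}\to 0$, and analogously for $Q'_n$. Set $\tilde B^n:=P'_n$ with differential the composite $P'_n\twoheadrightarrow P_{n+1}\hookrightarrow P'_{n+1}$, and $\tilde A^n:=Q'_n$ likewise. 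One then checks that $d^2=0$, that $\tilde B$ is acyclic with $Z^n\tilde B=P_n\in\B$ (so $\tilde B\in\tilde\B$) and similarly $\tilde A\in\tilde\A$, and that the component-wise inflations $X^n\to P'_n$ commute with the differentials because the right-hand square of the Horseshoe diagram gives the identity $X^n\twoheadrightarrow Z^{n+1}X\to P_{n+1}=X^n\to P'_n\twoheadrightarrow P_{n+1}$. The dual approximation is produced symmetrically, starting instead from conflations $0\to P'_n\to Q'_n\to Z^nX\to 0$.

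For (iii), I verify condition~(3) of Lemma~\ref{lem:hered-cot}: any deflation $f\colon A_1\to A_2$ in $\We$ with $A_1,A_2\in\tilde\A$ has acyclic kernel $K$ by the long exact sequence, and $Z^nK=\ker(Z^nA_1\to Z^nA_2)\in\A$ by heredity of $(\A,\B)$, so $K\in\tilde\A$; the dual argument works for $\tilde\B$. Step~(iv) then follows from (i) and (ii) by the usual retract argument, using that $\tilde\A$ and $\tilde\B$ inherit closure under retracts from $\A$ and $\B$. The most delicate point I expect is Step~(i) for $A\in\tilde\A$ not bounded: the truncations $\{\tau_{\le m}A\}_{m\in\Z}$ form only a one-sided filtration of $A$, so the Eklof-style propagation has to be supplemented by a reverse limit argument, or alternatively by invoking the approximation of Step~(ii) to produce a comparison conflation which isolates the obstruction into cycle-level $\Ext$ groups already known to vanish.
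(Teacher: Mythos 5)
Your steps (ii), (iii) and (iv) are essentially the paper's argument: the approximations via Horseshoe over the cycle conflations is exactly the Yang--Liu trick the paper cites, and the closure-under-retracts and heredity arguments are the expected ones (the paper only sketches the heredity point, but your check via Lemma~\ref{lem:hered-cot}(3) is the right verification).

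Step~(i), however, has a genuine gap. First, a factual error: the smart truncation $\tau_{\le m}A$ of an acyclic complex $A$ is \emph{not} acyclic in general (it has $H^m \cong Z^m(A)$), so it does not lie in $\tilde\A$. This particular claim is not needed for the Eklof strategy, but the real obstruction is the one you flag yourself: the system $\{\tau_{\le m}A\}_{m\in\Z}$ is $\Z$-indexed and does not start at $0$ when $A$ is unbounded below, so it is not an $\clS$-filtration in the sense of Definition~\ref{def:filtr} and Proposition~\ref{prop:eklof} does not apply. Neither of the two remedies you gesture at is worked out, and neither is routine: inverse limits do not exist in a general exact category, and even when they do, $\Ext^1$ of an inverse limit is controlled by a $\lim^1$-term which you have no handle on; and it is unclear how the approximation from step~(ii) would ``isolate'' the missing half of the filtration into vanishing cycle-level $\Ext$ groups. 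As written, step~(i) proves the claim only for $A$ bounded below.

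The paper sidesteps this entirely by replacing the infinite filtration by disks with a \emph{single} conflation: for $A \in \tilde\A$ the cycle inclusions $Z^n(A) \hookrightarrow A^n$ assemble to a conflation
\[
0 \la \bigl(Z^\bullet(A),\,0\bigr) \la A \la \bigl(Z^{\bullet+1}(A),\,0\bigr) \la 0
\]
in $\Cpx\E$, where both outer terms are complexes with zero differentials, hence coproducts of spheres $S^n(A')$ with $A' \in \A$. This reduces to showing $\Ext^1_{\Cpx\E}(S^n(A'),B)=0$, where any extension is componentwise split and the splitting amounts to a null-homotopy, which exists because $\Ext^1_\E(A', Z^{n-1}(B)) = 0$. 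The two-term extension makes the reduction to the generating case a single application of the long exact sequence rather than a transfinite induction, so no well-ordering issue arises; you would do well to adopt this ``coproduct of spheres'' reduction in place of the disk filtration. Note also that the sphere reduction uses only $\Ext^1_\E$-vanishing, not the higher $\Ext^k_\E$-vanishing you invoke; heredity of $(\A,\B)$ enters the proposition only through step~(iii), not step~(i).
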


\begin{proof}
We first claim that $\Ext^1_\We(A,B) = 0$ for all $A \in \tilde\A$ and $B \in \tilde\B$. Using the trick from~\cite[\S7.3]{EJv2}, we write every $A \in \tilde\A$ as an extension
\[
\begin{CD}
@.           0     @.             0    @.             0    @.             0                 \\
  @.        @VVV                 @VVV                @VVV                @VVV               \\
\cdots @>>> Z^0(A) @>{0}>>      Z^1(A) @>{0}>>      Z^2(A) @>{0}>>      Z^3(A) @>>> \cdots  \\
  @.        @VVV                 @VVV                @VVV                @VVV               \\
\cdots @>>>  A^0   @>{\dif^0}>>   A^1  @>{\dif^1}>>   A^2  @>{\dif^2}>>   A^3  @>>> \cdots  \\
  @.        @VVV                 @VVV                @VVV                @VVV               \\
\cdots @>>> Z^1(A) @>{0}>>      Z^2(A) @>{0}>>      Z^3(A) @>{0}>>      Z^4(A) @>>> \cdots  \\
  @.        @VVV                 @VVV                @VVV                @VVV               \\
@.           0     @.             0    @.             0    @.             0                 \\
\end{CD}
\]
in $\Cpx\E$, where $Z^n(A) = \Ker\dif^n$ for $n \in \Z$. Hence it suffices to prove that $\Ext^1_{\Cpx\E}(S^n(A'),B) = 0$ for every $A' \in \A$ and $B \in \tilde\B$ (see Notation~\ref{nota:discs-and-spheres}). As any extension of $B$ by $S^n(A')$ is necessarily component wise split, it is enough to show that every cochain complex map $S^n(A') \to B$ is null-homotopic, but this is clear from $\Ext^1_\E(A',Z^{n-1}(B)) = 0$. This proves the claim.

Clearly $\tilde\A$ and $\tilde\B$ are closed under retracts since $\A$ and $\B$ are. Moreover, if $X \in \We$ and we fix approximation sequences $0 \to Z^n(X) \to B'_n \to A'_n \to 0$ and $0 \to B''_n \to A''_n \to Z^n(X) \to 0$ in $\E$ \wrt $(\A,\B)$ for each $n \in \Z$, then Lemma~\ref{lem:horseshoe} allows us to construct approximation sequences of $X$ in $\We$ \wrt $(\tilde\A,\tilde\B)$ (this trick appeared in~\cite{YL11}). It follows that $(\tilde\A,\tilde\B)$ is a complete cotorsion pair in $\We$.

Finally, the fact that $(\tilde\A,\tilde\B)$ is hereditary is a rather easy consequence of Lemma~\ref{lem:hered-cot}.
\end{proof}

\begin{rem} \label{rem:lifting-decostr}
For the sake of completeness we note that if $\E$ is deconstructible and closed under retracts in a Grothendieck category (hence exact of Grothendieck type by Theorem~\ref{thm:deconstr-Groth}), and if $\A$ is deconstructible in $\E$, then $\tilde\A$ is deconstructible in $\We = \Cac\E$. This follows from Lemma~\ref{lem:deconstr-transitive} and~\cite[Proposition 4.4]{St2}. In such a case, $(\tilde\A,\tilde\B)$ is even functorially complete in $\We$ by Theorem~\ref{thm:cotorsion-complete}.
\end{rem}

The final task in constructing a hereditary model structure using Proposition~\ref{prop:model-constr} consists in making good choices for the cotorsion pair $(\A,\B)$ in $\E$ so that $({^\perp\tilde\B},\tilde\B)$ and $(\tilde\A,\tilde\A^\perp)$ are complete cotorsion pairs in $\Cpx\E$. There does not seem to be any known method for proving that $({^\perp\tilde\B},\tilde\B)$ is a complete cotorsion pair at the level of generality we have worked in so far. It is, however, possible to show this in various still very general situations, and this is what we are going to discuss in the rest of this section.

\subsection{The injective model structure}
\label{subsec:injective-model}

First we are going to prove that for all known examples of exact categories $\E$ of Grothendieck type we have a so-called injective model structure for the unbounded derived category $\Der\E$. As indicated, we need an additional mild assumption on $\E$, namely that $\Cac\E$ is deconstructible in $\Cpx\E$. This is true for all exact categories of Grothendieck type which arise as in Theorem~\ref{thm:deconstr-Groth}.

\begin{lem} \label{lem:deconstr-acyclic}
Let $\E$ be a full subcategory of a Grothendieck category which is deconstructible and closed under retracts, considered with the induced exact structure (hence $\E$ is exact of Grothendieck type by Theorem~\ref{thm:deconstr-Groth}). Then $\Cac\E$ is deconstructible in $\Cpx\E$.
\end{lem}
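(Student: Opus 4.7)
The strategy is to reduce the problem to deconstructibility in the ambient Grothendieck category $\Cpx\G$ and to build filtrations of acyclic complexes via a version of the Hill Lemma (Proposition~\ref{prop:hill-lemma}) adapted to complexes.

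First, $\Cpx\G$ is itself a Grothendieck category: coproducts and direct limits are computed componentwise and direct limits are exact. Moreover, $\Cpx\E$ is deconstructible in $\Cpx\G$ (cf.\ Remark~\ref{rem:lifting-decostr}). Since $\Cac\E \subseteq \Cpx\E$, Lemma~\ref{lem:deconstr-transitive} reduces the problem to showing that $\Cac\E$ is deconstructible in $\Cpx\G$. Fix a set $\clS$ with $\E = \Filt\clS$, a set $\clS' \subseteq \Cpx\E$ with $\Cpx\E = \Filt\clS'$, and a regular cardinal $\kappa$ satisfying the hypotheses of Proposition~\ref{prop:hill-lemma} for both $(\G,\clS)$ and $(\Cpx\G,\clS')$. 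Let $\T$ be a set of representatives for the isomorphism classes of complexes $T \in \Cac\E$ whose components $T^n$ are $<\kappa$-presented in $\G$.

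The inclusion $\Filt\T \subseteq \Cac\E$ is routine. Indeed $\Cac\E$ is closed in $\Cpx\G$ under extensions: given a conflation $0 \to A \to B \to C \to 0$ of complexes in $\Cpx\E$ with $A, C \in \Cac\E$, chasing the induced exact sequence of cycles $0 \to Z^n(A) \to Z^n(B) \to Z^n(C) \to 0$ and using closure of $\E$ under extensions gives $Z^n(B) \in \E$, so $B \in \Cac\E$. The class $\Cac\E$ is also closed under transfinite compositions of inflations, since componentwise colimits of inflations in $\E$ are inflations by axiom (Ef1) (valid for $\E$ by Theorem~\ref{thm:deconstr-Groth}), and exactness of filtered colimits in $\G$ transfers the same conclusion to cycles.

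The hard direction $\Cac\E \subseteq \Filt\T$ is where the main work lies. Given $X \in \Cac\E$, fix an $\clS'$-filtration of $X$ in $\Cpx\G$ of length $\sigma$ and apply Proposition~\ref{prop:hill-lemma} to obtain a distributive sublattice $\clL$ of $\Pow\sigma$ together with a lattice map $\ell \dd \clL \to \Subobj{X}$. The main obstacle is that for $S \in \clL$ the subcomplex $\ell(S)$ is a priori only guaranteed to have components in $\E$; it need not be acyclic, and the quotient $X/\ell(S)$ need not lie in $\Cac\E$. To close this gap one passes to the sublattice $\clL'$ consisting of those $S$ for which $\ell(S) \in \Cac\E$ and $X/\ell(S) \in \Cac\E$, and proves the analogue of property~(H4) for $\clL'$: every $<\kappa$-presented subcomplex of $X$ is contained in a $<\kappa$-presented element of $\clL'$. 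The plan is to establish this by an iterative enlargement argument, starting from the initial Hill-cover and repeatedly adjoining Hill-covers of $\dif$-preimages and of cycles; the process terminates after $\omega$ steps by regularity of $\kappa$. Once this refined Hill property is in place, a transfinite induction produces an $\clL'$-indexed filtration of $X$ whose successive quotients lie in $\T$, and hence $X \in \Filt\T$.
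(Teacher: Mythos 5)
The paper's own proof is a one-line citation to \cite[Proposition 4.4]{St2}. You instead sketch a direct argument, and your overall strategy—reduce to deconstructibility in $\Cpx\G$ via Lemma~\ref{lem:deconstr-transitive}, then build filtrations by acyclic subcomplexes using an iterated Hill Lemma enlargement—is in fact the right shape and close in spirit to what \cite{St2} does. The reduction step, the verification that $\Cac\E$ is closed under extensions and transfinite compositions of inflations, and the easy inclusion $\Filt\T \subseteq \Cac\E$ are all correct.

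The genuine gap lies in the hard direction, in the sentence where you ``pass to the sublattice $\clL'$'' and ``repeatedly adjoin Hill-covers of $\dif$-preimages and of cycles.'' Recall that membership of a subcomplex $\ell(S)$ in $\Cac\E$ requires not only exactness and components in $\E$ but also that each cycle object $Z^n(\ell(S)) = \ell(S)^n \cap Z^n(X)$ lie in $\E$; and $\E$ is not closed under subobjects, kernels, or intersections, so this is not automatic. The Hill Lemma for $(\Cpx\G,\clS')$ controls the componentwise filtration structure of $\ell(S)$, but says nothing about the cycle objects, nor about the quotients $Z^n(\ell(S_{\alpha+1}))/Z^n(\ell(S_\alpha))$ lying in $\E$—and the latter is precisely what is needed for the successive quotients of your intended filtration to lie in $\T$. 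To handle this one must invoke a second, independent family of Hill Lemma lattices $\clL_n$ for $(\G,\clS)$ applied to each $Z^n(X) \in \E$, and coordinate the enlargement so that at the fixed point $S_\omega$ one has $Z^n(\ell(S_\omega)) = \ell_n(T^n_\omega)$ for suitable $T^n_\omega \in \clL_n$, and so that this synchronization persists throughout the transfinite construction of the filtration. This coordination between the complex-level Hill lattice and the cycle-level Hill lattices is the actual content of \cite[Proposition 4.4]{St2}, and your sketch names the destination without giving the route. A minor further slip: $\clL'$ as you define it need not be a sublattice of $\clL$, since intersections of acyclic subcomplexes of an acyclic complex are in general not acyclic; fortunately your subsequent argument does not seem to actually use the lattice structure of $\clL'$, only the (H4)-type covering property, so this is more a matter of terminology than of substance.
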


\begin{proof}
This is a special case of~\cite[Proposition 4.4]{St2}, since $\Cac\E = \tilde\E$ in Notation~\ref{nota:a-tilde}.
\end{proof}

Under this additional assumption we can show that $(\tilde\A,\tilde\A^\perp)$ from the previous section is always a complete cotorsion pair. The lemma will be also useful later.

\begin{lem} \label{lem:lifting-completeness} \cite[3.5]{YL11}
Let $\E$ be an exact category of Grothendieck type \st $\Cac\E$ is deconstructible in $\Cpx\E$. If $(\A,\B)$ is a complete hereditary cotorsion pair in $\E$, then $(\tilde\A,\tilde\A^\perp)$ is a complete (and hereditary) cotorsion pair in $\Cpx\E$.
\end{lem}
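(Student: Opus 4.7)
The plan is to apply Corollary~\ref{cor:lhs-cotorsion} in $\Cpx\E$ to the class $\tilde\A$, which yields that $(\tilde\A, \tilde\A^\perp)$ is a functorially complete cotorsion pair. The hereditary property will then follow from criterion~(3) of Lemma~\ref{lem:hered-cot}: given a deflation $f\dd A_1 \to A_2$ in $\Cpx\E$ with $A_1, A_2 \in \tilde\A$, the componentwise kernels lie in $\A$ by hereditariness of $(\A,\B)$ in $\E$; the kernel complex $K = \Ker f$ is acyclic by 2-out-of-3 for $\Cac\E$ (Proposition~\ref{prop:acyclics-ok}); and the cycles $Z^n(K)$, being kernels of the induced deflations $Z^n(A_1) \to Z^n(A_2)$ between objects of $\A$, again belong to $\A$.

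To apply Corollary~\ref{cor:lhs-cotorsion} I would verify that $\Cpx\E$ is itself efficient and that $\tilde\A$ is retract-closed and contains a generator. All four axioms of Definition~\ref{def:efficient-exact} transfer componentwise from $\E$ to $\Cpx\E$, while a generator of $\Cpx\E$ is given by $\bigoplus_{n\in\Z}D^n(G)$ for $G$ a generator of $\E$, via the adjunctions $\Cpx\E(D^n(X),Y)\cong\E(X,Y^n)$. Since $(\A,\B)$ is hereditary, $\A$ is generating in $\E$, so $G$ may be chosen inside $\A$; then each $D^n(G)$ is contractible with terms in $\A$ and hence lies in $\tilde\A$. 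Retract-closure of $\tilde\A$ is immediate from retract-closure of $\A$ and of the class of acyclic complexes.

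The nontrivial point is deconstructibility of $\tilde\A$ in $\Cpx\E$, and this is where the hypothesis that $\Cac\E$ be deconstructible in $\Cpx\E$ enters decisively. My plan is a two-stage transitive argument via Lemma~\ref{lem:deconstr-transitive}: first establish deconstructibility of $\tilde\A$ inside $\Cac\E$, then transfer along the deconstructibility of $\Cac\E$ inside $\Cpx\E$. For the inner step I would invoke Remark~\ref{rem:lifting-decostr}, which reduces the deconstructibility of $\tilde\A$ in $\Cac\E$ to that of $\A$ in $\E$; the latter follows from completeness of $(\A,\B)$ by combining Proposition~\ref{prop:homolog-set} with Corollary~\ref{cor:generators-ext} to bound the cardinality of generator copies appearing in approximation sequences, thereby producing a set $\clS \subseteq \A$ with $\B = \clS^\perp$, and then applying Corollary~\ref{cor:lhs-cot-converse}.

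The main obstacle I anticipate is precisely this extraction of a generating set $\clS$ with $\B = \clS^\perp$: this is the delicate point connected to the Hill Lemma and to the partial results of Proposition~\ref{prop:deconstr-advanced}, which the introduction openly flags as not yet optimal in the exact-categorical setting. Once this step is settled, the argument assembles cleanly: Corollary~\ref{cor:lhs-cotorsion} delivers completeness of $(\tilde\A, \tilde\A^\perp)$, and the componentwise computation of the first paragraph concludes hereditariness.
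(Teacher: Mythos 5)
Your plan hinges on establishing that $\tilde\A$ is deconstructible in $\Cpx\E$, and you correctly flag the crucial step: extracting a set $\clS \subseteq \A$ with $\B = \clS^\perp$ from the bare hypothesis that $(\A,\B)$ is complete. Unfortunately this step is not just delicate but actually unavailable: completeness of a cotorsion pair does \emph{not} imply that it is generated by a set from the left, and the tools you cite do not produce such a set. Proposition~\ref{prop:homolog-set} constructs, for a single fixed $S \in \E$, a set of inflations with cokernel $S$; Corollary~\ref{cor:generators-ext} bounds the size of resolutions representing a given extension class by objects built from a generator $G$. Neither gives a cardinality bound on the objects $A_X \in \A$ appearing in approximation sequences $0 \to X \to B_X \to A_X \to 0$, which is what would be needed to cut $\A$ down to a cogenerating set for $\B$. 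Worse, Corollary~\ref{cor:lhs-cot-converse} takes ``$\B = \clS^\perp$ for a set $\clS$'' as a \emph{hypothesis}, so invoking it here is circular. Finally, Remark~\ref{rem:lifting-decostr} and Corollary~\ref{cor:lhs-cot-converse} are stated for exact categories obtained as deconstructible subcategories of a Grothendieck category, which is narrower than an arbitrary exact category of Grothendieck type, so even the framing of the reduction overreaches.

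The paper's proof sidesteps deconstructibility of $\tilde\A$ entirely. It first uses Theorem~\ref{thm:cotorsion-complete} together with the hypothesis that $\Cac\E$ is deconstructible in $\Cpx\E$ to get the complete cotorsion pair $(\tilde\E,\tilde\E^\perp)$ in $\Cpx\E$, i.e.\ the special case $\A = \E$. It then lifts $(\A,\B)$ to the complete cotorsion pair $(\tilde\A,\tilde\B)$ inside $\Cac\E$ via Proposition~\ref{prop:lifting-to-cpx} (which only requires completeness of $(\A,\B)$, not deconstructibility). Given $X$, it takes the approximation $0 \to X \to I \to E \to 0$ with $I \in \tilde\E^\perp$ and $E \in \Cac\E$, approximates $E$ inside $\Cac\E$ by $0 \to B^E \to A^E \to E \to 0$ with $A^E \in \tilde\A$ and $B^E \in \tilde\B$, and forms the pullback of $A^E \to E$ along $I \to E$. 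The resulting middle term is an extension of $I$ by $B^E$, both of which lie in $\tilde\A^\perp$, so it lies in $\tilde\A^\perp$, yielding the approximation sequence. A dual pullback gives the other one. This two-stage bootstrap is the key idea your proposal is missing; once you have it, neither the hereditary argument in your first paragraph (which is sound in substance) nor any deconstructibility of $\A$ is an issue.
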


\begin{proof}
The special case for $\A = \E$ follows from Theorem~\ref{thm:cotorsion-complete}. Suppose that $(\A,\B)$ is an arbitrary complete hereditary cotorsion pair in $\E$. Given a complex $X \in \Cpx\E$, we must construct approximation sequences \wrt $(\tilde\A,\tilde\A^\perp)$. To this end, consider an approximation sequence $0 \to X \to I \to E \to 0$ \wrt $(\tilde\E,\tilde\E^\perp)$, i.e.\ $E \in \tilde\E = \Cac\E$ and $I \in \tilde\E^\perp$. Using Proposition~\ref{prop:lifting-to-cpx} we obtain an approximation sequence $0 \to B^E \to A^E \to E \to 0$ in $\Cac\E$ \wrt $(\tilde\A,\tilde\B)$. Then we find an approximation sequence for $X$ in $\Cpx\E$ \wrt $(\tilde\A,\tilde\A^\perp)$ in the second row of the following pullback diagram
\[
\begin{CD}
  @.       @.   0   @.   0            \\
@.     @.     @VVV     @VVV           \\
  @.       @.   B^E @=   B^E          \\
@.     @.     @VVV     @VVV           \\
0 @>>> X   @>>> B^X @>>> A^E @>>> 0   \\
@.     @|     @VVV     @VVV           \\
0 @>>> X   @>>> I   @>>> E   @>>> 0   \\
@.     @.     @VVV     @VVV           \\
  @.       @.   0   @.   0            \\
\end{CD}
\]
The other approximation sequence for $X$ is obtained analogously by taking an approximation sequence $0 \to I' \overset{f}\to E' \to X \to 0$ with $E' \in \tilde\E$ and $I' \in \tilde\E^\perp$, and considering the pullback of an approximation sequence $0 \to B^{E'} \to A^{E'} \to E' \to 0$ along $f$ (see the proof of~\cite[Theorem 3.5]{YL11} for details).
\end{proof}

Now we can prove the existence of a model structure for $\Der\E$.

\begin{thm} \label{thm:inj-model-for-D(E)}
Let $\E$ be an exact category of Grothendieck type (Definition~\ref{def:Grothendieck-exact}) \st $\Cac\E$ is deconstructible in $\Cpx\E$. Then there is a hereditary model structure (Definition~\ref{def:hered-model}) on $\Cpx\E$ \st
\begin{enumerate}
\item Every object of $\Cpx\E$ is cofibrant.
\item The trivial objects are precisely the acyclic complexes, $\We = \Cac\E$.
\item The class of fibrant objects is $\F = \Cac\E^\perp$.
\end{enumerate}
In particular we have $\Ho{\Cpx\E} = \Der\E$, the unbounded derived category of $\E$.
\end{thm}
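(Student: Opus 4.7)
The plan is to invoke Proposition~\ref{prop:model-constr} in $\Cpx\E$ with $\We = \Cac\E$, $\Cof = \Cpx\E$, and $\Fib = \Cac\E^\perp$. Proposition~\ref{prop:acyclics-ok} delivers the required properties of $\We$ (generating, cogenerating, retract-closed, and the 2-out-of-3 property for extensions). Lifting the trivial hereditary pair $(\E,\Inj\E)$ from Corollary~\ref{cor:enough-inj} via Proposition~\ref{prop:lifting-to-cpx} supplies the hereditary complete cotorsion pair $(\Cac\E,\widetilde{\Inj\E})$ in $\We$. What remains is to exhibit both $(\Cac\E,\Cac\E^\perp)$ and $(\Cpx\E,\widetilde{\Inj\E})$ as complete cotorsion pairs in $\Cpx\E$.

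The pair $(\Cac\E,\Cac\E^\perp)$ is immediate from Corollary~\ref{cor:lhs-cotorsion}: the class $\Cac\E$ is deconstructible in $\Cpx\E$ by hypothesis, is retract-closed (a componentwise consequence of weak idempotent completeness of $\E$), and contains a generator of $\Cpx\E$---for instance $\bigoplus_{n\in\Z} D^n(G)$ with $G$ a generator of $\E$, which is acyclic and generates $\Cpx\E$ via the adjunction $\Cpx\E(D^{n-1}(X),Y) \cong \E(X,Y^n)$. This pins down $\Fib = \Cac\E^\perp$.

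The pair $(\Cpx\E,\widetilde{\Inj\E})$ is the main obstacle. Completeness reduces to producing enough objects of $\widetilde{\Inj\E}$, and one must simultaneously identify $\widetilde{\Inj\E}$ with $\We \cap \Fib = \Cac\E \cap \Cac\E^\perp$. The inclusion $\widetilde{\Inj\E} \subseteq \Cac\E \cap \Cac\E^\perp$ combines Proposition~\ref{prop:lifting-to-cpx} with the fact that Ext groups computed in $\Cac\E$ and in $\Cpx\E$ agree on acyclic pairs (any extension of two acyclic complexes is acyclic), while the reverse inclusion is a standard sphere/disk computation forcing components to be injective in $\E$. For the embedding itself, my plan is to verify that $\Cpx\E$ is itself an exact category of Grothendieck type: axioms (GT0)--(GT3) pass componentwise from $\E$ with generator $\bigoplus_n D^n(G)$, while (GT4) is obtained from a filtration set $\clS$ for $\E$ by taking $\clS' = \{ S^n(S), D^n(S) \mid S \in \clS,\ n \in \Z \}$ and reorganizing componentwise $\clS$-filtrations of a given complex into a single $\clS'$-filtration alternating sphere- and disk-type layers. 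Corollary~\ref{cor:enough-inj} applied to $\Cpx\E$ then supplies the functorially complete cotorsion pair $(\Cpx\E,\Inj(\Cpx\E)) = (\Cpx\E,\widetilde{\Inj\E})$.

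With both cotorsion pairs in hand, Proposition~\ref{prop:model-constr} produces the desired hereditary exact model structure. The identification $\Ho{\Cpx\E} = \Der\E$ then follows from Theorem~\ref{thm:htp-of-model}(3) once one verifies that the weak equivalences coincide with quasi-isomorphisms. By Theorem~\ref{thm:exact-models}(b), a weak equivalence factors as an inflation with acyclic cokernel followed by a deflation with acyclic kernel, and both maps are quasi-isomorphisms by the long exact cohomology sequence; conversely, any quasi-isomorphism fits into such a factorization by applying the cotorsion-pair factorizations associated to $(\Cof,\We\cap\Fib)$ and $(\Cof\cap\We,\Fib)$ and using the 2-out-of-3 property.
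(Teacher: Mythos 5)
Your overall strategy is the same as the paper's: feed the triple $(\Cpx\E,\Cac\E,\Cac\E^\perp)$ into Proposition~\ref{prop:model-constr}, using Proposition~\ref{prop:acyclics-ok} for $\We$, Proposition~\ref{prop:lifting-to-cpx} for the pair inside $\We$, and deconstructibility of $\Cac\E$ for $(\Cac\E,\Cac\E^\perp)$. Steps 1--4 are fine, as is the identification $\widetilde{\Inj\E}=\Cac\E\cap\Cac\E^\perp$ and the final 2-out-of-3 argument identifying weak equivalences with quasi-isomorphisms.

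There is, however, a genuine gap in your treatment of completeness of $(\Cpx\E,\widetilde{\Inj\E})$. You propose to verify (GT4) for $\Cpx\E$ by showing $\Cpx\E=\Filt\clS'$ with $\clS'=\{S^n(S),D^n(S)\mid S\in\clS,\ n\in\Z\}$, obtained by ``reorganizing componentwise $\clS$-filtrations into alternating sphere/disk layers.'' This fails: even in $\E=\Modr\Z$, the complex $X$ with $X^n=\Z$ for $n\ge 0$, $X^n=0$ for $n<0$, and identity differentials has no nonzero subcomplex isomorphic to any $S^n(S)$ or $D^n(S)$. Indeed, for any subcomplex $Y\subseteq X$ of that form, the top nonzero degree $Y^k$ must satisfy $d(Y^k)\subseteq Y^{k+1}=0$, but $d^k=1_\Z$ on $X$ is injective, so $Y^k=0$; for a disk this then forces the lower degree to vanish as well. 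Hence $X\notin\Filt\clS'$, and the proposed construction of an $\clS'$-filtration from componentwise filtrations cannot exist --- the differentials impose constraints that componentwise reorganization does not respect. (Whether $\Cpx\E$ is of Grothendieck type at all under the stated hypotheses is a separate delicate question; the theorem's hypotheses give deconstructibility only of $\Cac\E$ in $\Cpx\E$, not of $\Cpx\E$ in itself.)

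The fix is to bypass (GT4) for $\Cpx\E$ entirely and produce enough injectives in $\Cpx\E$ directly, which is what the paper's terse ``clearly $(\Cpx\E,\tilde\I)$ is the trivial complete cotorsion pair'' has in mind. Given $X\in\Cpx\E$, take functorial injective preenvelopes $e_n\colon X^n\to E(X^n)$ from Corollary~\ref{cor:enough-inj} and form $J=\coprod_{n\in\Z}D^{n-1}(E(X^n))$. The map $X\to J$ whose degree-$m$ component is $(e_m,\,e_{m+1}d^m_X)\colon X^m\to E(X^m)\oplus E(X^{m+1})$ is a chain map; using injectivity of $E(X^{m+1})$ one finds an automorphism of $E(X^m)\oplus E(X^{m+1})$ carrying this map to $(e_m,0)$, exhibiting it as a degree-wise inflation. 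Since $\Cpx\E(X,J)\cong\prod_m\E(X^m,E(X^m))$, the functor $\Cpx\E(-,J)$ is exact, so $J\in\Inj(\Cpx\E)=\widetilde{\Inj\E}$ and we are done. Alternatively, first embed $X\hookrightarrow\cone(1_X)\in\Cac\E$ by a degree-wise split inflation, then apply the completeness of $(\Cac\E,\widetilde{\Inj\E})$ in $\Cac\E$ supplied by Proposition~\ref{prop:lifting-to-cpx} and compose.
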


\begin{proof}
We start with the trivial cotorsion pair $(\E,\I)$ in $\E$, whose existence is guaranteed by Corollary~\ref{cor:enough-inj} and which clearly is hereditary. Then we lift it to a complete cotorsion pair $(\tilde\E,\tilde\I)$ in $\Cac\E$---this is in fact an overkill since $\tilde\I = \Inj\Cpx\E$. Then clearly $(\Cpx\E,\tilde\I)$ is a complete cotorsion pair (the trivial one) and $(\tilde\E,\tilde\E^\perp)$ is complete by Lemma~\ref{lem:lifting-completeness}.

Thus, we can apply Proposition~\ref{prop:model-constr} to obtain a hereditary model structure on $\Cpx\E$ determined by $(\Cof,\We,\Fib) = (\Cpx\E,\Cac\E,\tilde\E^\perp)$ (see Theorem~\ref{thm:exact-models}). Note that the weak equivalences are the quasi-isomorphisms in the usual sense, that is the morphisms whose mapping cone is acyclic. To see that, note that the usual mapping cone construction is a special case of a cofiber sequence from Definition~\ref{def:co-fiber} and that $f\dd X \to Y$ is a weak equivalence \iff the third object in some (or any) cofiber sequence is trivial by Lemma~\ref{lem:2-out-of-3}. Hence $\Ho{\Cpx\E} = \Der\E$.
\end{proof}

\subsection{Models for Grothendieck categories}
\label{subsec:groth-model}

If $\G$ is a Grothendieck category, we can use any complete hereditary cotorsion pair to construct a model structure for $\Der\G$. This freedom in the construction is very useful when attempting to construct monoidal model structures as we shall show in Section~\ref{sec:tensor}. Various weaker versions of this result have been previously proved in~\cite{EGPT,G3,G2,G1,SaSt,YL11}.

Let us explain how the construction works. First of all, we will use the following simple adjunction formulas for Ext.

\begin{lem} \label{lem:ext-adj} \cite[4.2]{G4}
Let $\G$ be an abelian category, $X \in \G$ be an object and $Y \in \Cpx\G$ a complex. Then there are natural monomorphisms
\[ \Ext^1_\G(X,Z^n(Y)) \la \Ext^1_{\Cpx\G}(S^n(X),Y) \]
and
\[ \Ext^1_\G(Y^n/B^n(Y),X) \la \Ext^1_{\Cpx\G}(Y,S^n(X)), \]
where $Z^n(Y) = \Ker(Y^n \to Y^{n+1})$ and $B^n(Y) = \Img(Y^{n-1} \to Y^n)$ are the usual $n$-th cocycle and coboundary objects of $Y$, respectively. If, moreover, $Y \in \Cac\G$ is an acyclic complex, then these are isomorphisms.
\end{lem}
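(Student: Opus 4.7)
The plan is to construct the two maps explicitly via pushout and pullback respectively, and then analyse them through low-dimensional diagram chases. The two statements are formally dual (the second is obtained from the first by passing to the opposite category), so I will describe the argument only for the first map $\Phi\dd \Ext^1_\G(X, Z^n(Y)) \to \Ext^1_{\Cpx\G}(S^n(X), Y)$.

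Given an extension $\ep\dd 0 \to Z^n(Y) \to E \to X \to 0$ in $\G$, first push $E$ out along the inclusion $\iota\dd Z^n(Y) \hookrightarrow Y^n$ to obtain a short exact sequence $0 \to Y^n \to F \to X \to 0$ together with a canonical monomorphism $E \to F$. Next define a cochain complex $\tilde Y$ by $\tilde Y^k = Y^k$ for $k \ne n$ and $\tilde Y^n = F$, with $d^{n-1}_{\tilde Y}$ being $d_Y^{n-1}$ followed by $Y^n \to F$, and $d^n_{\tilde Y}\dd F \to Y^{n+1}$ induced by $d_Y^n$ on $Y^n$ and by $0$ on $E$ (well-defined because $d_Y^n$ vanishes on $Z^n(Y)$). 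The obvious surjection $\tilde Y \to S^n(X)$ has kernel $Y$, and the corresponding extension of complexes is $\Phi(\ep)$; independence of the Baer class and naturality in $X$ and $Y$ are routine verifications from the functoriality of pushouts.

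To see that $\Phi$ is a monomorphism, suppose $\Phi(\ep)$ splits via a chain map $s\dd S^n(X) \to \tilde Y$. In degree $n$ this amounts to a morphism $s^n\dd X \to F$ with $\pi s^n = 1_X$ \emph{and} $d^n_{\tilde Y}\circ s^n = 0$. A direct computation from the pushout description of $F$ shows that the kernel of $d^n_{\tilde Y}\dd F \to Y^{n+1}$ is canonically identified with $E$, where the inclusion $E \hookrightarrow F$ is the one arising from the pushout. Hence $s^n$ factors through $E$, providing a section of $\ep$ in $\G$.

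When $Y$ is acyclic I construct the inverse $\Psi$. Starting from any extension $0 \to Y \to \tilde Y \to S^n(X) \to 0$ of complexes, the degree $n$ and $n+1$ components form a morphism of short exact sequences in $\G$ whose snake lemma yields
\[
0 \to Z^n(Y) \to Z^n(\tilde Y) \to X \to H^{n+1}(Y) \to \cdots
\]
Acyclicity of $Y$ kills $H^{n+1}(Y)$, so the first four terms form a short exact sequence $\Psi(\tilde Y)$ in $\G$. The identification of $\Ker d^n_{\tilde Y}$ with $E$ from the previous paragraph gives $\Psi\Phi = \mathrm{id}$ on the nose, while $\Phi\Psi = \mathrm{id}$ follows from a five-lemma comparison in degree $n$ between $\Phi\Psi(\tilde Y)^n$ (a pushout of $Y^n$ and $Z^n(\tilde Y)$ along $Z^n(Y)$) and $\tilde Y^n$ itself. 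The only technical obstacle is the bookkeeping for these pushout and snake-lemma identifications; no deeper idea is required.
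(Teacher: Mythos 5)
Your argument is correct; since the paper does not prove this lemma itself but delegates to \cite[Lemma 4.2]{G4}, your construction serves as a self-contained substitute, and the pushout construction of $\Phi$, the identification of $\Ker d^n_{\tilde Y}$ with $E$, and the inverse $\Psi$ via cycles when $Y$ is acyclic are exactly the natural direct steps. Two points that you correctly flag as routine nevertheless deserve a sentence. To deduce injectivity from $\Ker\Phi = 0$ you need $\Phi$ to be a homomorphism of abelian groups, which holds because the pushout construction commutes with the pullbacks and pushouts entering the Baer sum. Also, the snake lemma applied literally to the degree-$n$ and degree-$(n+1)$ rows produces the connecting map $X \to \Coker(d^n_Y) = Y^{n+1}/B^{n+1}(Y)$, not $X \to H^{n+1}(Y)$; you should either observe directly that its image lies in the subobject $H^{n+1}(Y) = Z^{n+1}(Y)/B^{n+1}(Y)$, or, more cleanly, replace the degree-$(n+1)$ row by the row of cycles $0 \to Z^{n+1}(Y) \to Z^{n+1}(\tilde Y) \to 0 \to 0$ (still exact because $Y^{n+1} \cong \tilde Y^{n+1}$), which yields exactly the sequence you wrote. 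Finally, the five-lemma comparison giving $\Phi\Psi = \mathrm{id}$ is carried out in degree $n$, so one must verify that the resulting isomorphism $F' \cong \tilde Y^n$ commutes with the differentials in all degrees in order to obtain an equivalence of extensions of complexes rather than merely of degree-$n$ objects; this does hold by the pushout universal property, but it is not entirely automatic and is worth spelling out.
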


Given a complete hereditary cotorsion pair $(\A,\B)$ in $\G$, this allows us to deduce the existence of cotorsion pairs $({^\perp\tilde\B},\tilde\B)$ and $(\tilde\A,\tilde\A^\perp)$ in $\Cpx\G$.

\begin{lem} \label{lem:extension-of-tildes-abel}
Let $\G$ be an abelian category and $(\A,\B)$ be a cotorsion pair in $\G$ \st $\A$ is generating and $\B$ is cogenerating (e.g.\ if $(\A,\B)$ is complete). Then
\[
\tilde\B = \{ S^n(A) \mid A \in \A \textrm{ and } n \in \Z \}^\perp
\quad \textrm{and} \quad
\tilde\A = {^\perp \{ S^n(B) \mid B \in \B \textrm{ and } n \in \Z \}}
\]
In particular we have cotorsion pairs $({^\perp\tilde\B},\tilde\B)$ and $(\tilde\A,\tilde\A^\perp)$ in $\Cpx\G$.
\end{lem}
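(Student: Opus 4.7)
The plan is to prove the first identity $\tilde\B = \clS^\perp$ with $\clS = \{S^n(A) \mid A \in \A, n \in \Z\}$; the second identity will then follow by a perfectly dual argument using the second part of Lemma~\ref{lem:ext-adj}. The cotorsion pair assertions are automatic afterwards, since for any class $\clT$ the pair $\bigl({}^\perp(\clT^\perp),\,\clT^\perp\bigr)$ is always a cotorsion pair.

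The inclusion $\tilde\B \subseteq \clS^\perp$ is immediate from Lemma~\ref{lem:ext-adj}: any $Y \in \tilde\B = \Cac\B$ is acyclic with all cocycles $Z^n(Y)$ in $\B$, so $\Ext^1_{\Cpx\G}(S^n(A),Y) \cong \Ext^1_\G(A,Z^n(Y)) = 0$ for every $A \in \A$.

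For the reverse inclusion the argument has two stages. First I would establish that any $Y \in \clS^\perp$ is acyclic. Applying $\Cpx\G(-,Y)$ to the short exact sequence of complexes $0 \to S^{n+1}(A) \to D^n(A) \to S^n(A) \to 0$ and using $\Ext^1_{\Cpx\G}(S^n(A),Y) = 0$, the induced map $\Cpx\G(D^n(A),Y) \to \Cpx\G(S^{n+1}(A),Y)$ is surjective; under the natural identifications $\Cpx\G(D^n(A),Y) \cong \G(A,Y^n)$ and $\Cpx\G(S^{n+1}(A),Y) \cong \G(A,Z^{n+1}(Y))$ this says that post-composition with the differential $d^n \dd Y^n \to Z^{n+1}(Y)$ is surjective on $\G(A,-)$ for every $A \in \A$. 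Using that $\A$ is generating, I pick an epimorphism $f \dd A \twoheadrightarrow Z^{n+1}(Y)$ with $A \in \A$ and lift it to $h \dd A \to Y^n$ with $d^n h = f$. The composition $A \to Z^{n+1}(Y) \to H^{n+1}(Y)$ is then both an epimorphism (as $f$ is) and zero (since it factors through $d^n h$ and $d^n$ dies in cohomology), forcing $H^{n+1}(Y) = 0$.

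Once $Y$ is acyclic, the second part of Lemma~\ref{lem:ext-adj} upgrades the hypothesis to $\Ext^1_\G(A,Z^n(Y)) = 0$ for all $A \in \A$, so $Z^n(Y) \in \A^\perp = \B$; extension-closure of $\B$ applied to $0 \to Z^n(Y) \to Y^n \to Z^{n+1}(Y) \to 0$ then gives $Y^n \in \B$, and hence $Y \in \Cac\B = \tilde\B$. The main obstacle I anticipate is the acyclicity step: it is the only place that genuinely exploits the generating hypothesis on $\A$, and it requires correctly translating the Ext-vanishing into a lifting statement via the standard sequence $0 \to S^{n+1}(A) \to D^n(A) \to S^n(A) \to 0$. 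The rest is bookkeeping with Lemma~\ref{lem:ext-adj} and the closure properties of the cotorsion pair.
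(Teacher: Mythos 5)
Your proposal is correct and follows essentially the same route as the paper: show that $\clS^\perp$ consists of acyclic complexes using the generating hypothesis on $\A$, then apply Lemma~\ref{lem:ext-adj} (the acyclic case) to land in $\tilde\B$, and dualize. The only cosmetic difference is that you extract the acyclicity from the long exact sequence of $\Ext$ applied to $0 \to S^{n+1}(A) \to D^n(A) \to S^n(A) \to 0$, whereas the paper phrases the same fact via splitting of degree-wise split extensions and null-homotopy of maps $S^{n+1}(A) \to X$; these are two descriptions of the identical mechanism.
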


\begin{proof}
Denote $\X = \{ S^n(A) \mid A \in \A \textrm{ and } n \in \Z \}^\perp$. Since $\A$ is generating, the class $\X$ consists of acyclic complexes. Indeed, component wise split short exact sequences $0 \to X \to E \to S^n(A) \to 0$ must split for every $X \in \X$, so that every cochain complex homomorphism $S^{n+1}(A) \to X$ must be null-homotopic, and consequently $X^n \to Z^{n+1}(X)$ must be an epimorphism for every $n \in \Z$. Then $\X \subseteq \tilde\B$ by Lemma~\ref{lem:ext-adj}. The inclusion $\X \supseteq \tilde\B$ is easy and the other case is dual.
\end{proof}

The important feature is completeness of $({^\perp\tilde\B},\tilde\B)$ and $(\tilde\A,\tilde\A^\perp)$, which we can prove for Grothendieck categories.

\begin{prop} \label{prop:lift-for-Grothendieck}
Let $\G$ be a Grothendieck category and $(\A,\B)$ be a complete hereditary cotorsion pair in $\G$. Then $({^\perp\tilde\B},\tilde\B)$ and $(\tilde\A,\tilde\A^\perp)$ are complete (and hereditary) cotorsion pairs in $\Cpx\G$.
\end{prop}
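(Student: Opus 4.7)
The plan is to handle the two cotorsion pairs separately. In both cases the orthogonality structure itself is already supplied by Lemma~\ref{lem:extension-of-tildes-abel}, so the content lies purely in establishing completeness; hereditariness will then follow from Lemma~\ref{lem:hered-cot}, since a short exact sequence of acyclic complexes in $\Cpx\G$ induces a termwise short exact sequence of cocycles, allowing the hereditary property of $(\A,\B)$ to be lifted directly.

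For $(\tilde\A,\tilde\A^\perp)$ I would just cite Lemma~\ref{lem:lifting-completeness}. A Grothendieck category is exact of Grothendieck type by Proposition~\ref{prop:exact-Grothendieck}, and $\Cac\G$ is deconstructible in $\Cpx\G$ by Lemma~\ref{lem:deconstr-acyclic} applied with $\E = \G$, so the hypotheses of Lemma~\ref{lem:lifting-completeness} are met verbatim.

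For $({^\perp\tilde\B},\tilde\B)$ I would apply Theorem~\ref{thm:cotorsion-complete} to the Grothendieck category $\Cpx\G$. First fix, via Corollary~\ref{cor:lhs-cot-converse}, a set $\clS \subseteq \A$ with $\A = \Filt\clS$, arranged to contain a generator $G$ of $\G$ lying in $\A$---such a $G$ exists because the $\A$-part of an $\A$-approximation of any generator is itself a generator. Set $\T = \{S^n(S) \mid S \in \clS,\ n \in \Z\}$. The two key observations are: (i) since $S^n(-)$ sends $\clS$-filtrations to $\T$-filtrations, Proposition~\ref{prop:eklof} combined with Lemma~\ref{lem:extension-of-tildes-abel} gives $\T^\perp = \{S^n(A) \mid A \in \A,\ n \in \Z\}^\perp = \tilde\B$; and (ii) the conflations $0 \to S^{n+1}(G) \to D^n(G) \to S^n(G) \to 0$ exhibit each disc $D^n(G)$ as a two-step $\T$-filtered object, so the standard generator $\bigoplus_{n \in \Z} D^n(G)$ of $\Cpx\G$ lies in $\Filt\T$ by closure of $\T$-filtrations under coproducts (Lemma~\ref{lem:deconstr-basic}). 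Theorem~\ref{thm:cotorsion-complete} then supplies a functorially complete cotorsion pair $({^\perp(\T^\perp)},\T^\perp) = ({^\perp\tilde\B},\tilde\B)$ in $\Cpx\G$.

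The main obstacle is the appeal to Corollary~\ref{cor:lhs-cot-converse}: its stated hypothesis requires $\B = \clS^\perp$ for some set $\clS$, not just abstract completeness of $(\A,\B)$. This is the standard ``cogenerated by a set'' hypothesis of approximation theory, equivalent in this setting to deconstructibility of $\A$; it holds in every cotorsion pair arising in practice in a Grothendieck category but would either need to be added as an explicit assumption or deduced from some additional input. Once deconstructibility of $\A$ is granted, the rest of the argument is elementary bookkeeping around Theorem~\ref{thm:cotorsion-complete}.
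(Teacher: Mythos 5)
Your treatment of the pair $(\tilde\A,\tilde\A^\perp)$ matches the paper exactly: cite Proposition~\ref{prop:exact-Grothendieck}, Lemma~\ref{lem:deconstr-acyclic} and Lemma~\ref{lem:lifting-completeness}. The hereditariness remark is also fine.

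The problem is the second pair $({^\perp\tilde\B},\tilde\B)$, and you have correctly diagnosed it yourself: your route requires $\B = \clS^\perp$ for some set $\clS$ (equivalently, deconstructibility of $\A$), which is \emph{not} among the hypotheses of the proposition. The statement assumes only that $(\A,\B)$ is a complete hereditary cotorsion pair. Invoking Corollary~\ref{cor:lhs-cot-converse} smuggles in an extra hypothesis, so what you prove is a weaker statement. It is not known in general that every complete cotorsion pair in a Grothendieck category is cogenerated by a set, so this is not a formality you can wave away; as written the proof is incomplete.

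The paper closes the gap without any deconstructibility assumption on $\A$. It picks a generator $G$ of $\G$ lying in $\A$ (available because $(\A,\B)$ is complete) and forms the cotorsion pair $(\clP,\clP^\perp)$ in $\Cpx\G$ cogenerated by the single set $\{S^n(G) \mid n \in \Z\}$; this \emph{is} complete by Theorem~\ref{thm:cotorsion-complete} because the discs $D^n(G)$ are $\{S^n(G)\}$-filtered. Then one observes $\clP \subseteq {^\perp\tilde\B}$ (by Lemma~\ref{lem:extension-of-tildes-abel}, since $G \in \A$) and $\clP^\perp \subseteq \Cac\G$ (since $G$ generates), and runs the same pullback/pushout trick as in the proof of Lemma~\ref{lem:lifting-completeness}, only replacing the auxiliary cotorsion pair $(\tilde\E,\tilde\E^\perp)$ there by $(\clP,\clP^\perp)$: approximate $X$ by an acyclic complex in $\clP^\perp$, approximate that in $\Cac\G$ via Proposition~\ref{prop:lifting-to-cpx}, and glue. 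Thus the deconstructibility of $\A$ is never needed; only completeness of $(\A,\B)$ together with the one generated cotorsion pair $(\clP,\clP^\perp)$. Your own observation (ii) about discs being $\T$-filtered is exactly the ingredient needed for $(\clP,\clP^\perp)$ to exist, so you had the right raw material but applied it to the wrong cotorsion pair.
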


\begin{proof}
The pair $(\tilde\A,\tilde\A^\perp)$ is complete by Lemmas~\ref{lem:deconstr-acyclic} and~\ref{lem:lifting-completeness}. For $({^\perp\tilde\B},\tilde\B)$, let $G \in \A$ be a generator for $\G$ and consider the cotorsion pair $(\clP,\clP^\perp)$ generated from the left hand side by the set $\{ S^n(G) \mid n \in \Z \}$. Then $\clP \subseteq {^\perp\tilde\B}$ by Lemma~\ref{lem:extension-of-tildes-abel} and also $\clP^\perp \subseteq \Cac\G$. Now we can use the dual version of the proof for Lemma~\ref{lem:lifting-completeness}, just taking the corresponding approximations \wrt $(\clP,\clP^\perp)$ instead of the ones for $(\tilde\G,\tilde\G^\perp)$.
\end{proof}

\begin{rem} \label{rem:dg-notation}
Gillespie~\cite{G3,G2,G1} denotes ${^\perp\tilde\B}$ by $dg\tilde\A$ and $\tilde\A^\perp$ by $dg\tilde\B$. If $\A$ is deconstructible in $\G$, i.e.\ $\A = \Filt\clS$ for a set of objects $\clS$, then both $\tilde\A$ and $dg\tilde\A$ are deconstructible by~\cite[Theorem 4.2]{St2}. This implies that $(dg\tilde\A,\tilde\B)$ and $(\tilde\A,dg\tilde\B)$ are functorially complete and the model structure from Theorem~\ref{thm:inj-model-for-D(Groth)} is cofibrantly generated (see Remark~\ref{rem:cofibrantly-generated}). The objects of $dg\tilde\A$ are then characterized as retracts of objects in $\Filt\{ S^n(X) \mid X \in \clS \textrm{ and } n \in \Z \}$ (see also~\cite[Proposition 4.5]{St2}).
\end{rem}

Now we can state and prove the result on the existence of models for $\Der\G$.

\begin{thm} \label{thm:inj-model-for-D(Groth)}
Let $\G$ be a Grothendieck category and $(\A,\B)$ a complete hereditary cotorsion pair in $\G$. Then there is a hereditary model structure on $\Cpx\G$ \st
\begin{enumerate}
\item The class of cofibrant objects equals ${^\perp\tilde\B}$ (using Notation~\ref{nota:a-tilde}).
\item The trivial objects are precisely the acyclic complexes, $\We = \Cac\G$.
\item The class of fibrant objects equals is $\tilde\A^\perp$.
\end{enumerate}
In particular we have $\Ho{\Cpx\G} = \Der\G$, the unbounded derived category of $\G$.
\end{thm}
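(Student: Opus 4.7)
The plan is to verify all the hypotheses of Proposition~\ref{prop:model-constr} with $\E$ taken to be $\Cpx\G$ (equipped with the componentwise exact structure induced from $\G$), with $\We = \Cac\G$, and with the cotorsion pair in $\We$ being $(\A_0,\B_0) = (\tilde\A,\tilde\B)$. Once this is done, the prescribed triple $(\Cof,\We,\Fib) = ({}^\perp\tilde\B,\Cac\G,\tilde\A^\perp)$ will be that of a hereditary model structure on $\Cpx\G$ by Theorem~\ref{thm:exact-models}.

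First, I would record that $\Cpx\G$ is weakly idempotent complete (since $\G$ is) and that $\We = \Cac\G$ is generating, cogenerating, closed under retracts, and satisfies the 2-out-of-3 property for extensions; this is precisely the content of Proposition~\ref{prop:acyclics-ok}. Next, by Proposition~\ref{prop:lifting-to-cpx} the lifted pair $(\tilde\A,\tilde\B)$ is a complete hereditary cotorsion pair in the exact subcategory $\We = \Cac\G$, giving our $(\A_0,\B_0)$.

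The essential step is then to produce the two outer complete cotorsion pairs $(\Cof,\B_0) = ({}^\perp\tilde\B,\tilde\B)$ and $(\A_0,\Fib) = (\tilde\A,\tilde\A^\perp)$ in $\Cpx\G$. Both completeness statements are supplied by Proposition~\ref{prop:lift-for-Grothendieck}; note that by Lemma~\ref{lem:extension-of-tildes-abel} the right orthogonal to $\{S^n(A) \mid A \in \A, n \in \Z\}$ is exactly $\tilde\B$, and dually ${}^\perp\{S^n(B) \mid B \in \B, n \in \Z\} = \tilde\A$, so the outer classes identify correctly. Together with Lemma~\ref{lem:hered-cot}, these pairs are hereditary too. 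This is the step I expect to be the main load-bearing one: completeness of $({}^\perp\tilde\B,\tilde\B)$ in $\Cpx\G$ requires the small object argument applied to a set of generators built from a generator $G \in \A$ of $\G$ (not just a generator of $\Cac\G$), and crucially uses that $\Cac\G$ is deconstructible in $\Cpx\G$ (Lemma~\ref{lem:deconstr-acyclic}) so that Lemma~\ref{lem:lifting-completeness} and its dual apply.

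With all hypotheses verified, Proposition~\ref{prop:model-constr} yields a hereditary model structure on $\Cpx\G$ with the claimed cofibrant, trivial, and fibrant classes. Finally, to identify the homotopy category, I would argue exactly as in the proof of Theorem~\ref{thm:inj-model-for-D(E)}: the standard mapping cone of a chain map $f\dd X \to Y$ is a particular instance of a cofiber sequence (Definition~\ref{def:co-fiber}), and by Lemma~\ref{lem:2-out-of-3} such an $f$ is a weak equivalence if and only if its cone is trivial, i.e.\ acyclic. Hence the weak equivalences are precisely the quasi-isomorphisms, and $\Ho{\Cpx\G}$ is the usual unbounded derived category $\Der\G$.
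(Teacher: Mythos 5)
Your proof is correct and follows essentially the same route as the paper: lift $(\A,\B)$ to $(\tilde\A,\tilde\B)$ in $\Cac\G$ via Proposition~\ref{prop:lifting-to-cpx}, obtain completeness of the two outer cotorsion pairs in $\Cpx\G$ from Proposition~\ref{prop:lift-for-Grothendieck}, feed the data into Proposition~\ref{prop:model-constr}, and identify weak equivalences with quasi-isomorphisms via the cofiber-sequence argument from Theorem~\ref{thm:inj-model-for-D(E)}. You spell out the verification of the hypotheses (in particular citing Proposition~\ref{prop:acyclics-ok}) more explicitly than the paper does, but the structure and the load-bearing ingredients are identical.
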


\begin{proof}
We lift $(\A,\B)$ to a complete cotorsion pair $(\tilde\A,\tilde\B)$ in $\Cac\G$ and use Proposition~\ref{prop:lift-for-Grothendieck} to prove that $({^\perp\tilde\B},\tilde\B)$ and $(\tilde\A,\tilde\A^\perp)$ are complete cotorsion pairs in $\Cpx\G$. The hereditary model structure determined by $(\Cof,\We,\Fib) = ({^\perp\tilde\B},\Cac\G,\tilde\A^\perp)$ is obtained from Proposition~\ref{prop:model-constr}. The weak equivalences are usual quasi-isomorphism by the same argument as for Theorem~\ref{thm:inj-model-for-D(E)}, so that $\Ho{\Cpx\G} = \Der\G$.
\end{proof}

\subsection{Neeman's pure derived category of flat modules}
\label{subsec:proj-d(flat)}

The last example that we discuss here is due to Neeman~\cite{Nee3}. Although model categories are not mentioned at all in~\cite{Nee3}, there is a direct interpretation of the main results in our language. Basically, Neeman considers two model structures for the derived category of flat modules over a ring---the injective one described in Section~\ref{subsec:injective-model}, and the projective one described in the following theorem.

The significance of $\Der\FlatR$ is that for a left coherent ring $R$ it is compactly generated and the category of compact objects is triangle equivalent to $\Db{\modR\op}\op$, the opposite category of the bounded derived category of the category $\modR\op$ of finitely presented left modules. If $R$ is commutative noetherian, there are interesting connections to the classical Grothendieck duality~\cite{Hart}.

\begin{thm} \label{thm:neeman's}
Let $R$ be a ring and $\FlatR$ be the category of all flat right $R$-modules with the induced exact structure. Then there is a cofibrantly generated hereditary model structure on $\Cpx\FlatR$ \st
\begin{enumerate}
\item The class of cofibrant objects is precisely $\Cpx\ProjR$, the complexes with projective components.
\item The trivial objects are precisely the acyclic complexes, $\We = \Cac\FlatR$.
\item Every object of $\Cpx\FlatR$ is fibrant.
\end{enumerate}
In particular we have $\Ho{\Cpx\FlatR} = \Der\FlatR$.
\end{thm}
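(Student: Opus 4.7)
The strategy is to apply Proposition~\ref{prop:model-constr} to the exact category $\E = \Cpx\FlatR$ with trivial class $\We = \Cac\FlatR$, cofibrant class $\Cof = \Cpx\ProjR$, and $\Fib = \Cpx\FlatR$. By Example~\ref{expl:flat}, $\FlatR$ is of Grothendieck type, and a brutal truncation argument together with the existence of a generator for $\FlatR$ shows that $\Cpx\FlatR$ is of Grothendieck type as well. Proposition~\ref{prop:acyclics-ok} supplies the required closure properties of $\We$, so everything reduces to constructing two complete hereditary cotorsion pairs in $\Cpx\FlatR$: the pair $(\Cpx\ProjR, \Cac\FlatR)$ and the pair $(\Cpx\ProjR \cap \Cac\FlatR, \Cpx\FlatR)$.

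A handy preliminary observation is that deflations in $\Cpx\FlatR$ are precisely the componentwise surjections between complexes of flat modules: if $M \twoheadrightarrow F^n$ is any surjection with $M, F^n$ flat, then the kernel is flat because the vanishing of $\Tor_1^R(F^n,-)$ forces the sequence to be pure. In particular, $G = \bigoplus_{n \in \Z} D^n(R) \in \Cpx\ProjR$ is a generator of $\Cpx\FlatR$: just choose surjections $R^{(J_n)} \twoheadrightarrow F^n$ of sufficient rank in each degree and assemble them into a chain map $G^{(I)} \to F$. Combining the brutal truncation filtration by spheres with Kaplansky's decomposition of projective modules into countably generated projectives expresses $\Cpx\ProjR = \Filt\clS$ for the set $\clS$ of all $S^n(P)$ with $n \in \Z$ and $P$ countably generated projective; hence $\Cpx\ProjR$ is deconstructible in $\Cpx\FlatR$, closed under retracts, and contains a generator. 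Corollary~\ref{cor:lhs-cotorsion} therefore delivers a functorially complete cotorsion pair $(\Cpx\ProjR, \Cpx\ProjR^\perp)$.

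To identify $\Cpx\ProjR^\perp$ with $\Cac\FlatR$, I would invoke the key theorem of Neeman on the homotopy category of flat modules: the inclusion $\Htp{\ProjR} \hookrightarrow \Htp{\FlatR}$ is the left half of a semi-orthogonal decomposition whose right half is the subcategory of pure-acyclic complexes. Because a conflation $0 \to F \to E \to P \to 0$ in $\Cpx\FlatR$ with $P \in \Cpx\ProjR$ splits componentwise, one has $\Ext^1_{\Cpx\FlatR}(P,F) \cong \Hom_{\Htp{\FlatR}}(P, F[1])$; combined with Neeman's semi-orthogonality and the routine fact that $\Cac\FlatR$ is closed under homotopy equivalence inside $\Cpx\FlatR$ (contractible complexes of flats lie in $\Cac\FlatR$, and $\Cac\FlatR$ is closed under direct summands), this yields $\Cpx\ProjR^\perp = \Cac\FlatR$. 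Lemma~\ref{lem:hered-cot} ensures the pair is hereditary.

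For the second cotorsion pair, the same semi-orthogonality applied to $A \in \Cpx\ProjR \cap \Cac\FlatR$ (which lies in both summands of the decomposition) yields $\Hom_{\Htp{\FlatR}}(A, A) = 0$, so $A$ is contractible. Consequently $\Cpx\ProjR \cap \Cac\FlatR$ is exactly the retract closure of $\Filt\{D^n(R) : n \in \Z\}$, since the discs $D^n(R)$ are projective objects of the abelian category $\Cpx\ModR$ and hence all extensions by them split in $\Cpx\FlatR$. This class is deconstructible, closed under retracts, and contains the generator $G$; Corollary~\ref{cor:lhs-cotorsion} again yields a functorially complete cotorsion pair, whose right class is automatically all of $\Cpx\FlatR$ because contractibility kills $\Ext^1$ into any target, and the pair is trivially hereditary. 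Proposition~\ref{prop:model-constr} then assembles both pairs into the claimed hereditary model structure, which is cofibrantly generated by Remark~\ref{rem:cofibrantly-generated}; Lemma~\ref{lem:2-out-of-3} identifies the weak equivalences with quasi-isomorphisms in the pure exact structure, giving $\Ho{\Cpx\FlatR} = \Der\FlatR$. The only deep input is Neeman's semi-orthogonal decomposition of $\Htp{\FlatR}$; everything else is a formal application of the machinery of the earlier sections.
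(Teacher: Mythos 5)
Your overall strategy—establish completeness of $(\Cpx\ProjR,\Cpx\ProjR^\perp)$ via a deconstructibility argument and Corollary~\ref{cor:lhs-cotorsion}, then use Neeman's semi-orthogonal decomposition of $\Htp\FlatR$ to identify the right-hand class with $\Cac\FlatR$—is a sensible reorganization of the proof, and several of the ingredients you supply (the identification $\Ext^1_{\Cpx\FlatR}(P,F)\cong\Hom_{\Htp\FlatR}(P,F[1])$ via degreewise splitting, the identification of $\Cpx\ProjR\cap\Cac\FlatR$ with the contractible complexes of projectives, the treatment of the second cotorsion pair) are correct. The paper instead cites Neeman's Theorems 8.6 and 5.9 wholesale for both the completeness of the cotorsion pair and the identification of the intersection, so you are replacing part of that black box with the semi-orthogonal decomposition. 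That part is fine.

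However, there is a genuine error in the crucial deconstructibility step. The claim that $\Cpx\ProjR = \Filt\clS$ for $\clS = \{S^n(P)\mid n\in\Z,\ P$ countably generated projective$\}$ is \emph{false}. First, the ``brutal truncation filtration by spheres'' does not exist as an ascending filtration starting at $0$: the subcomplex brutal truncations $\sigma^{\ge m}X$ only yield sphere quotients as $m$ \emph{decreases}, so you would have to begin at $\sigma^{\ge +\infty}X$, which is $0$ only when $X$ is bounded above. More fatally, the conclusion itself fails: by the Eklof Lemma (Proposition~\ref{prop:eklof}), $\Filt\clS\subseteq{}^\perp(\clS^\perp)$, and one computes $\clS^\perp$ to be the class of all complexes in $\Cpx\FlatR$ that are acyclic as complexes of abelian groups (with no condition on the cycles). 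Now take $R=\Z/4\Z$ and $Y$ the unbounded complex with $Y^n=\Z/4$ and all differentials multiplication by $2$. Then $Y$ is a complex of projectives, $Y$ is acyclic, so $Y\in\clS^\perp$; but $\Ext^1_{\Cpx\FlatR}(Y,Y)\cong\Hom_{\Htp{\FlatR}}(Y,Y[1])\ne 0$ (the identity in each degree gives a non-nullhomotopic chain map $Y\to Y[1]$). If $Y$ were $\clS$-filtered, we would have $Y\in{}^\perp(\clS^\perp)\ni Y$, hence $\Ext^1(Y,Y)=0$, a contradiction. So $Y\in\Cpx\ProjR\setminus\Filt\clS$, and your identification collapses: Corollary~\ref{cor:lhs-cotorsion} applied to $\Filt\clS$ would produce a cotorsion pair whose left class is strictly smaller than $\Cpx\ProjR$.

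The deconstructibility of $\Cpx\ProjR$ in $\Cpx\FlatR$ (equivalently, that the cotorsion pair is generated by a \emph{set}) is in fact the deep content of Neeman's Theorem~8.6, which the paper cites precisely for this purpose; it cannot be obtained by a formal sphere-by-sphere filtration. A correct generating set would consist of sufficiently small \emph{complexes} of projectives rather than spheres, and establishing that every complex of projectives is filtered by such small subcomplexes is the hard part. So the claim at the end that ``the only deep input is Neeman's semi-orthogonal decomposition'' is not accurate: the set-generation/completeness of the cotorsion pair is an independent deep input, and your argument for it does not work.
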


\begin{proof}
In view of~\cite[\S3]{SaSt} Neeman proves in~\cite[Theorems 8.6 and 5.9]{Nee3} that $(\Cpx\ProjR,\We)$ is a complete cotorsion pair in $\Cpx\FlatR$ generated by a set $\clS$ as in Theorem~\ref{thm:cotorsion-complete} and that $\Cpx\ProjR \cap \We = \Cac\ProjR$. One also easily checks that $\Cac\ProjR$ are precisely the projective objects in $\Cpx\FlatR$. Thus we see that if $\A = \ProjR$ and $\B = \FlatR$, then $(\A,\B)$ is a complete hereditary cotorsion pair in $\FlatR$ and both $({^\perp\tilde\B},\tilde\B)$ and $(\tilde\A,\tilde\A^\perp)$ are complete in $\Cpx\FlatR$. It remains to apply Proposition~\ref{prop:model-constr} and Remark~\ref{rem:cofibrantly-generated}.
\end{proof}

Another main result of~\cite{Nee3} comes as an easy consequence.

\begin{cor} \label{cor:htp-proj}
Let $R$ be a ring. Then $\Der\FlatR$ is triangle equivalent to $\Htp\ProjR$, the homotopy category of complexes of projective modules.
\end{cor}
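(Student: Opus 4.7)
The plan is to apply the model category machinery of Section~\ref{sec:model} to the hereditary exact model structure on $\Cpx\FlatR$ supplied by Theorem~\ref{thm:neeman's} and verify that the resulting presentation of $\Der\FlatR$ agrees with $\Htp\ProjR$.

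First, I would invoke Theorem~\ref{thm:htp-of-model}(1) together with Theorem~\ref{thm:alg-tria}(1). Since in the model structure of Theorem~\ref{thm:neeman's} every object is fibrant and the cofibrant objects are exactly $\Cpx\ProjR$, the subcategory of bifibrant objects is $\Cpx\ProjR$ itself. Theorem~\ref{thm:alg-tria}(1) then gives a triangle equivalence
\[ \Der\FlatR \;=\; \Ho{\Cpx\FlatR} \;\simeq\; \Cpx\ProjR / \sim, \]
where $\sim$ denotes the model-theoretic homotopy relation and the target carries the algebraic triangulated structure coming from the Frobenius structure on $\Cpx\ProjR$ with $\omega = \Cpx\ProjR \cap \We$ as the class of projective-injective objects.

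Next, I would identify $\sim$ with ordinary chain homotopy. By Lemma~\ref{lem:exact-homotopies}(3), two morphisms $f,g\dd X \to Y$ in $\Cpx\ProjR$ are model-theoretically homotopic \iff $f-g$ factors through an object of $\Cof \cap \We \cap \Fib = \Cpx\ProjR \cap \Cac\FlatR$. The decisive input here, already recorded in the proof of Theorem~\ref{thm:neeman's} and ultimately due to Neeman, is the equality $\Cpx\ProjR \cap \Cac\FlatR = \Cac\ProjR$, i.e.\ contractible complexes of projectives. Granted this, a chain map between complexes of projectives is null-homotopic \iff it factors through a contractible complex of projectives: one direction follows by composing with a contracting homotopy, the other by factoring through the cone of $1_X$. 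Hence $\sim$ coincides with classical chain homotopy on $\Cpx\ProjR$, so that $\Cpx\ProjR/\sim$ is $\Htp\ProjR$ equipped with its standard triangulated structure.

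Combining the two steps yields the desired triangle equivalence $\Der\FlatR \simeq \Htp\ProjR$. The genuinely hard content of the corollary is the identification $\Cpx\ProjR \cap \Cac\FlatR = \Cac\ProjR$---that a pure acyclic complex of projectives is contractible---but this has already been absorbed into Theorem~\ref{thm:neeman's}. What remains here is essentially a bookkeeping exercise translating between the model-theoretic and chain-theoretic notions of null-homotopy, so I do not anticipate any serious obstacle at the level of the corollary itself.
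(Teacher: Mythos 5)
Your proof is correct, and it takes a route that is somewhat more direct and self-contained than the paper's. The paper first \emph{restricts} the model structure of Theorem~\ref{thm:neeman's} to $\Cpx\ProjR$, obtaining a hereditary model structure with triple $(\Cpx\ProjR,\Cac\ProjR,\Cpx\ProjR)$, and then invokes an external result (Gillespie, \cite[Corollary 5.4]{G5}) to argue that the inclusion $\Cpx\ProjR \hookrightarrow \Cpx\FlatR$ induces a triangle equivalence $\Ho\Cpx\ProjR \to \Ho\Cpx\FlatR$; finally it identifies $\Ho\Cpx\ProjR$ with $\Htp\ProjR$ via Lemma~\ref{lem:exact-homotopies}(3). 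You instead work entirely inside $\Cpx\FlatR$: you observe that $\C_{cf} = \Cof\cap\Fib = \Cpx\ProjR$, then appeal to Theorem~\ref{thm:htp-of-model}(1) for the equivalence $\C_{cf}/\!\sim\; \to \Ho\Cpx\FlatR$ and to Theorem~\ref{thm:alg-tria}(1) to see that the Frobenius exact structure on $\C_{cf}$ with projective-injectives $\omega = \Cpx\ProjR\cap\Cac\FlatR = \Cac\ProjR$ makes this a triangle equivalence. The remaining bookkeeping step --- that factoring through $\Cac\ProjR$ agrees with classical null-homotopy in $\Cpx\ProjR$, via Lemma~\ref{lem:exact-homotopies}(3) --- is the same in both arguments, and both rest on the same genuinely hard input, namely Neeman's $\Cpx\ProjR\cap\Cac\FlatR = \Cac\ProjR$ already absorbed into Theorem~\ref{thm:neeman's}. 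The net gain of your approach is that it replaces the appeal to \cite[Cor.~5.4]{G5} (a general statement about inclusions of exact model categories) by a direct application of Theorems~\ref{thm:htp-of-model} and~\ref{thm:alg-tria} available within the paper, at the small cost of having to note explicitly that the triangulated structure on $\Cpx\ProjR/\!\sim$ coming from the Frobenius structure coincides with the standard one on $\Htp\ProjR$ (which is standard and which you point out).
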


\begin{proof}
By restriction, the model structure on $\Cpx\FlatR$ induces a hereditary model structure on $\Cpx\ProjR$ given by
\[ (\Cof,\We,\Fib) = (\Cpx\ProjR,\Cac\ProjR,\Cpx\ProjR). \]
There is an obvious functor $\Ho\Cpx\ProjR \to \Ho\Cpx\FlatR$ which is a triangle equivalence by~\cite[Corollary 5.4]{G5} and Theorem~\ref{thm:alg-tria}. Moreover, $\Ho\Cpx\ProjR$ is none other than $\Htp\ProjR$ by Lemma~\ref{lem:exact-homotopies}(3).
\end{proof}

\section{Monoidal model categories from flat sheaves and vector bundles}
\label{sec:tensor}

In the final section we will consider exact model structures which are compatible with a symmetric monoidal product on the underlying category. As an illustration we will show, under appropriate assumptions, constructions of model structures compatible with the tensor product for the derived category of the category of quasi-coherent sheaves over a scheme.

\subsection{Tensor product for quasi-coherent modules}
\label{subsec:tensor-Qco}

Let us briefly recall what a closed symmetric monoidal category is, and refer to~\cite[Chapter VII]{McL2} for details. A category $\C$ is called \emph{symmetric monoidal} if it is equipped with a functor $\otimes\dd \C \times \C \to \C$, often called tensor product, and an object $\unit \in \C$ \st $\otimes$ is associative, commutative and $\unit$ is a unit \wrt $\otimes$. More precisely, there are natural isomorphisms $X \otimes (Y \otimes Z) \cong (X \otimes Y) \otimes Z$, $X \otimes Y \cong Y \otimes X$ and $\unit \otimes X \cong X$ satisfying certain coherence conditions. A symmetric monoidal category is called \emph{closed} if the functor $- \otimes Y$ has a right adjoint for each $Y \in \C$. If we for each $Y$ fix such an adjoint and denote it by $\HOM(Y,-)$, it is a standard fact that $\HOM$ is a functor $\C\op \times \C \to \C$ and we obtain an isomorphism
\[ \C(X \otimes Y, Z) \overset{\cong}\la \C(X, \HOM(Y, Z)), \]
which is natural in all three variables $X,Y,Z$.

If $R\dd I \to \mathrm{CommRings}$ is a flat representation of a poset in the category of commutative rings (Definitions~\ref{def:poset-ring} and~\ref{def:flat}), then $\QcoR$ is a closed symmetric monoidal category. Indeed, using the notation from Definition~\ref{def:poset-module} we define $L \otimes M$ by $(L \otimes M)(i) = L(i) \otimes_{R(i)} M(i)$ and $(L \otimes M)^i_j = L^i_j \otimes M^i_j$ for each $i\le j$ from $I$. The tensor unit $\unit$ is clearly $R$ itself viewed as an object of $\QcoR$. 

It is not difficult to see that $\QcoR$ is closed, see~\cite[Proposition 6.15]{Mur}. Indeed, for $M,N \in \QcoR$ we can define an $R$-module $\underline{\HOM}(M,N) \in \ModR$ by putting $\underline{\HOM}(M,N)(i) = \Hom_{R(i)}(M(i),N(i))$ for each $i \in I$, and use the quasi-coherence of $M$ and $N$ to define the maps inside $\underline{\HOM}(M,N)$ as $\underline{\HOM}(M,N)^i_j\dd f \mapsto f \otimes_{R(i)} R(j)$ for $i\le j$. Although we obtain the adjunction
\[ \Hom_R(L \otimes M, N) \overset{\cong}\la \Hom_R(L, \underline{\HOM}(M, N)), \]
the $R$-module $\underline{\HOM}(M,N)$ may not be quasi-coherent unless all the $M(i)$ are finitely presented over $R(i)$. We can fix the problem by defining $\HOM(M,N) = Q(\underline{\HOM}(M,N))$, where $Q$ is the coherator from Remark~\ref{rem:products}.

There is a standard way to extend this to a closed symmetric monoidal structure on $\Cpx\QcoR$. Given $X,Y \in \Cpx\QcoR$, we define components of the complex $X \otimes Y$ by
\[ (X \otimes Y)^n = \coprod_{i+j=n} X^i \otimes Y^j \qquad \textrm{for each n} \in \Z, \]
and the differentials by $\dif|_{X^i \otimes Y^j} = \dif^i_X \otimes Y^j + (-1)^i X^i \otimes \dif^j_Y$ for all $i,j \in \Z$. The unit $\unit$ is the complex $S^0(R)$ (Notation~\ref{nota:discs-and-spheres}). Similarly, for $Y,Z \in \Cpx\QcoR$, the complex $\HOM(Y,Z)$ is defined by
\[ \HOM(Y,Z)^n = \prod_{j-i=n} \HOM(Y^i, Z^j) \qquad \textrm{for each n} \in \Z, \]
where the product is taken in $\QcoR$ (see Remark~\ref{rem:products}), and by $\dif|_{\HOM(Y^i, Z^j)} = \HOM(Y^i,\dif^j_Z) - (-1)^{j-i} \HOM(\dif^i_Y,Z^j)$ for all $i,j \in \Z$.

This closed symmetric monoidal category $(\Cpx\QcoR, \otimes, \unit, \HOM)$ is the one to keep in mind in the discussion to come.

\subsection{Derived functors}
\label{subsec:derived}

In order to give a good sense to what a compatibility of a monoidal structure with a model structure means, we need to give a brief account on derived functors. We refer to~\cite[\S\S8.4 and 8.5]{Hir} for more details. After all, what we need to do is to construct the derived functors of $\otimes\dd \C \times \C \to \C$ and $\HOM\dd \C\op \times \C \to \C$.

Let $\C,\D$ be model categories and $F\dd \C \to \D$ be a functor. We would like to define a functor $\Ho F$ making the diagram
\[
\xymatrix{
\C \ar[r]^F \ar[d]_{Q_\C} & \D \ar[d]^{Q_\D} \\
\Ho\C \ar@{.>}[r]^{\Ho F} & \Ho\D
}
\]
commutative, but we usually cannot as $F$ typically does not send weak equivalences to weak equivalences. What we usually can do, however, is to find the ``best approximation'' from the left or from the right.

\begin{defn} \label{def:total-derived} \cite[8.4.1 and 8.4.7]{Hir}
Let $F\dd \C \to \D$ be a functor between model categories $\C$ and $\D$. A \emph{total left derived functor} of $F$ is a functor $\Lder F\dd \Ho\C \to \Ho\D$ together with a natural transformation $\ep\dd \Lder F \circ Q_\C \to Q_D \circ F$ satisfying the following universal property. If $G\dd \Ho\C \to \Ho\D$ is another functor with a natural transformation $\zeta\dd G \circ Q_\C \to Q_D \circ F$, then there is a unique natural transformation $\tht\dd G \to \Lder F$ \st $\zeta = \ep (\tht\circ Q_\C)$.

A \emph{total right derived functor} of $F$ is a functor $\Rder F\dd \Ho\C \to \Ho\D$ together with a natural transformation $\eta\dd Q_D \circ F \to \Rder F \circ Q_\C$ which satisfies the obvious dual universal property.
\end{defn}

That is, we are often in the situation of one of the following universal squares:
\[
\xymatrix{
\C \ar[r]^F \ar[d]_{Q_\C} & \D \ar[d]^{Q_\D} \\
\Ho\C \ar@{.>}[r]_{\Lder F} & \Ho\D \ultwocell\omit
}
\qquad\qquad\qquad
\xymatrix{
\C \ar[r]^F \ar[d]_{Q_\C} \drtwocell\omit & \D \ar[d]^{Q_\D} \\
\Ho\C \ar@{.>}[r]_{\Rder F} & \Ho\D
}
\]
The key point is that if $\Lder F$ (or $\Rder F$) exists, it is defined uniquely up to a unique isomorphism, and does not depend on the actual model structures on $\C$ and $\D$, but only on the classes of weak equivalences in $\C$ and $\D$.

We shall list standard criteria for existence of total derived functors.

\begin{prop} \label{prop:derived-exist}
Let $F\dd \C \to \D$ be a functor between model categories $\C$ and $\D$.
\begin{enumerate}
\item If $F$ takes trivial cofibrations between cofibrant objects in $\C$ to weak equivalences in $\D$ (see also Lemma~\ref{lem:2-out-of-3} for exact model categories), then the total left derived functor $\Lder F\dd \Ho\C \to \Ho\D$ exists.
\item If $F$ takes trivial fibrations between fibrant objects in $\C$ to weak equivalences in $\D$, then the total right derived functor $\Rder F\dd \Ho\C \to \Ho\D$ exists.
\end{enumerate}
\end{prop}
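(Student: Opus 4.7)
The plan is to construct $\Lder F$ explicitly by precomposing $F$ with a cofibrant replacement and then descending to the homotopy categories. Part (2) is completely dual, so I focus on (1).

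The first step, and really the technical heart of the proof, is a version of Ken Brown's lemma: under the hypothesis of (1), the functor $F$ sends \emph{every} weak equivalence $f\dd X \to Y$ between cofibrant objects to a weak equivalence in $\D$. To prove this, form the coproduct $X \amalg Y$ (which exists in our setting: in an exact model category coproducts are present by Lemma~\ref{lem:efficient-coprod}, and in general by the existence of pushouts of $\emptyset \to X$ along $\emptyset \to Y$), and factor the map $(f,1_Y)\dd X \amalg Y \to Y$ as a cofibration $j\dd X \amalg Y \to Z$ followed by a trivial fibration $q\dd Z \to Y$ via (MS2). Composing $j$ with the two coproduct injections gives cofibrations $j_X\dd X \to Z$ and $j_Y\dd Y \to Z$ between cofibrant objects. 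Since $qj_X = f$ and $qj_Y = 1_Y$ are weak equivalences, the 2-out-of-3 property forces $j_X$ and $j_Y$ to be trivial cofibrations. The hypothesis then gives that $F(j_X)$ and $F(j_Y)$ are weak equivalences, and hence so is $F(q)$, and therefore so is $F(f) = F(q)F(j_X)$, again by 2-out-of-3.

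Next I choose, for every $X \in \C$, a factorization $\emptyset \to CX \overset{p_X}\la X$ of the map from the initial object into a cofibration followed by a trivial fibration, arranging $CX = X$ and $p_X = 1_X$ whenever $X$ is already cofibrant. For a morphism $f\dd X \to Y$, the lifting property $\cof \lifts (\we\cap\fib)$ applied to the square with sides $\emptyset \to CX$, $CY \to Y$, $\emptyset \to CY$ and $fp_X\dd CX \to Y$ yields a morphism $Cf\dd CX \to CY$ with $p_Y \circ Cf = f \circ p_X$. Any two such lifts are left homotopic by a standard argument with cylinder objects (Definition~\ref{def:model-htp}), and hence agree after applying $Q_\D \circ F$ by Proposition~\ref{prop:model-htp} together with the Ken Brown statement of the previous paragraph applied to the cylinder data. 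This makes $X \mapsto Q_\D F(CX)$, $f \mapsto Q_\D F(Cf)$ a well-defined functor $\C \to \Ho\D$; since it sends weak equivalences to isomorphisms (weak equivalences between cofibrant replacements are weak equivalences between cofibrant objects by 2-out-of-3, hence inverted by $Q_\D \circ F$ via Ken Brown and Theorem~\ref{thm:htp-of-model}(3)), it factors uniquely through $Q_\C$ to define $\Lder F\dd \Ho\C \to \Ho\D$.

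Finally, the natural transformation $\ep\dd \Lder F \circ Q_\C \Rightarrow Q_\D \circ F$ has component at $X$ given by $Q_\D F(p_X)\dd Q_\D F(CX) \to Q_\D F(X)$. For the universal property, given $(G,\zeta)$ as in Definition~\ref{def:total-derived}, the component $\tht_{QX}\dd G(QX) \to \Lder F(QX) = Q_\D F(CX)$ is forced to be $\zeta_{CX} \circ G(Q_\C p_X)\inv$, which makes sense because $Q_\C p_X$ is an isomorphism in $\Ho\C$ and hence $G(Q_\C p_X)$ is invertible. Naturality and uniqueness of $\tht$ are then formal consequences of the naturality of $\zeta$ and $\ep$. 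The main obstacle throughout is concentrated in Ken Brown's lemma; once that is in place, the remaining steps are a standard bookkeeping exercise, the only subtlety being to check that the pushouts and coproducts used for the factorization of $(f,1_Y)$ actually exist in the weakened axiomatic framework of Definition~\ref{def:model}, which holds automatically in the exact model categories that are our main interest.
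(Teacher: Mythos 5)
Your proof is correct and follows essentially the same route as the paper: the paper simply cites Hirschhorn's Propositions 8.4.4 and 8.4.8, notes that the only wrinkle is the absence of functorial factorizations, and constructs $\Lder F$ by choosing a cofibrant replacement $CX$ for each object and a lift $f_u$ for each morphism, with well-definedness coming from left-homotopy considerations — exactly your steps two and three, and the Ken Brown lemma and universal-property verification you spell out are precisely the pieces the paper leaves implicit in the Hirschhorn citation. The one imprecision is the parenthetical claim that the coproduct $X \amalg Y$ exists ``in general by the existence of pushouts of $\emptyset \to X$ along $\emptyset \to Y$'': Definition~\ref{def:model} only guarantees pushouts of trivial cofibrations along trivial cofibrations, so this is not automatic in the weakened framework — but you correctly flag this subtlety yourself in your closing sentence, and it is harmless for the exact model categories that are the paper's actual concern.
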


\begin{proof}
This is just \cite[Proposition 8.4.8]{Hir}, but as we dropped the assumption of functoriality of the factorizations in Definition~\ref{def:model}(MS2) while~\cite{Hir} uses the functoriality, we indicate the necessary modification in the proof of~\cite[Proposition 8.4.4]{Hir}. Suppose we are in the situation of (1). Then we construct $\Lder F$ as follows. Given $X \in \C$, we fix a cofibrant replacement $f_X\dd CX \to X$ of $X$ (see Definition~\ref{def:replacements}) and put $\Lder F(X) = Q_DF(CX)$. Given a morphism $u\dd X \to Y$ in $\C$, we choose $f_u$ to make the square
\[
\begin{CD}
  CX @>{f_X}>> X     \\
@V{f_u}VV   @VV{u}V  \\
  CY @>{f_Y}>> Y     \\
\end{CD}
\]
commute and put $\Lder F(u) = Q_DF(f_u)$. This is well defined since if $f'_u\dd CX \to CY$ is another map which fits into the square, then $f_u$ and $f'_u$ are left homotopic (see also Lemma~\ref{lem:exact-homotopies} for exact model categories), hence $Q_DF(f_u) = Q_DF(f'_u)$. The rest of the proof is as in~\cite{Hir} and case (2) is dual.
\end{proof}

\begin{expl} \label{expl:derived}
As this formalism is still somewhat unusual in homological algebra, we shall illustrate it on the probably most usual situation with derived categories. Let $R,S$ be rings and $F\dd \ModR \to \Modr{S}$ be an additive functor. Then we have an obvious functor $F\dd \Cpx\ModR \to \Cpx{\Modr{S}}$, which we also denote by $F$. Now we equip $\Cpx\ModR$ and $\Cpx{\Modr{S}}$ with the two extremal hereditary model structures coming from Theorem~\ref{thm:inj-model-for-D(Groth)}: We use the cotorsion pair $(\ProjR,\ModR)$ for $\Cpx\ModR$ and the cotorsion pair $(\Modr{S},\Inj\textrm{-}S)$ for $\Cpx{\Modr{S}}$. It is an easy observation that $F$ satisfies Proposition~\ref{prop:derived-exist}(1). Thus $\Lder F\dd \Der\ModR \to \Der{\Modr{S}}$ exists.
\end{expl}

In connection with closed symmetric monoidal categories, we also need to know how derived functors of adjoint pairs behave. Typically if $(F,U)$ is an adjoint pair of functors and if $\Lder F$ and $\Rder G$ exist, they also form an adjoint pair. A very general setting where this holds has been described by Maltsiniotis in~\cite{Malt07}. For simplicity, we only state the result for classical Quillen pairs of functors.

\begin{prop} \label{prop:derived-adj}
Let $\C,\D$ be model categories and $(F,U)$, where $F\dd \C \to \D$ and $U\dd \D \to \C$, be an adjoint pair of functors. Then the following are equivalent:
\begin{enumerate}
\item The left adjoint $F$ preserves both cofibrations and trivial cofibrations.
\item The right adjoint $U$ preserves both fibrations and trivial fibrations.
\end{enumerate}
In such a case, both $\Lder F$ and $\Rder U$ exist and $(\Lder F,\Rder U)$ is an adjoint pair.
\end{prop}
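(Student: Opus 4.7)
The plan has three steps: I first establish (1)$\Leftrightarrow$(2) via the lifting property, then invoke Proposition~\ref{prop:derived-exist} to produce $\Lder F$ and $\Rder U$, and finally check adjointness at the level of homotopy categories. For the equivalence I would use that the adjunction $F \dashv U$ identifies commutative squares with $Ff$ on the left and $g$ on the right in $\D$ with squares having $f$ on the left and $Ug$ on the right in $\C$, and that under this identification diagonal fillers correspond bijectively; so $Ff \lifts g$ if and only if $f \lifts Ug$. By (MS2), each of $\Cof$, $\We\cap\Fib$, $\Cof\cap\We$, $\Fib$ is characterized by $\lifts$-orthogonality (Lemma~\ref{lem:wfs-ortho}), so ``$F$ preserves cofibrations'' amounts to $Ff \lifts g$ for every cofibration $f$ and trivial fibration $g$, which via the adjunction is equivalent to ``$Ug$ is a trivial fibration whenever $g$ is''. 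The analogous translation for trivial cofibrations versus fibrations completes the equivalence.

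Existence of the derived functors is then immediate: by (1), $F$ sends every trivial cofibration (in particular every one between cofibrant objects) to a trivial cofibration, hence to a weak equivalence, so Proposition~\ref{prop:derived-exist}(1) produces $\Lder F$; dually (2) yields $\Rder U$.

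For the adjunction I would fix $X \in \C$ and $Y \in \D$, choose a cofibrant replacement $CX \to X$ and a fibrant replacement $Y \to FY$, so that $\Lder F(X) \cong Q_\D F(CX)$ and $\Rder U(Y) \cong Q_\C U(FY)$. By Theorem~\ref{thm:htp-of-model}(2),
\[
\Ho\D(\Lder F(X),Y) \cong \D(F(CX),FY)/\!\sim \qquad \text{and} \qquad \Ho\C(X,\Rder U(Y)) \cong \C(CX,U(FY))/\!\sim,
\]
and the set-level adjunction already bijects the numerators. It remains to check that this bijection descends to $\sim$. Since $F$ preserves coproducts (as a left adjoint) and, by (1) together with the standard Ken Brown style argument, sends weak equivalences between cofibrant objects to weak equivalences, it carries any cylinder object for the cofibrant $CX$ to a cylinder object for $F(CX)$; consequently a left homotopy between two maps $CX \to U(FY)$ transposes under the adjunction to a left homotopy between the corresponding maps $F(CX) \to FY$. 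Dually, (2) ensures that $U$ sends path objects for the fibrant $FY$ to path objects for $U(FY)$, so a right homotopy transports in the opposite direction. Since $CX$ is cofibrant and $FY$ is fibrant, Proposition~\ref{prop:model-htp}(1) identifies left, right and two-sided homotopy, whence the bijection descends to $\sim$-classes; naturality in $X$ and $Y$ is then formal. The main obstacle will be precisely this last descent to homotopy classes, as it is where both halves of the Quillen pair hypothesis must cooperate simultaneously, through the compatibility of $F$ (resp.\ $U$) with cylinders (resp.\ path objects) on cofibrant (resp.\ fibrant) objects.
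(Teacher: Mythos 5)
Your argument is correct. The paper does not give a proof at all here; it simply cites~\cite[Proposition 8.5.3 and Theorem 8.5.18]{Hir}, so there is no ``paper's approach'' to compare against in detail. What you have written out is essentially the standard argument that those references contain: the equivalence (1)$\Leftrightarrow$(2) via the lifting-square transposition under the adjunction together with the characterization of the \wfs classes by orthogonality (Lemma~\ref{lem:wfs-ortho}), existence of $\Lder F$ and $\Rder U$ via Proposition~\ref{prop:derived-exist}, and the descent of the hom-set adjunction to homotopy classes by transporting cylinder objects along $F$ (for which the Ken Brown step, or equivalently 2-out-of-3 applied to $s\,i_0=1$, is exactly the right tool) and path objects along $U$, then collapsing left/right/two-sided homotopy via Proposition~\ref{prop:model-htp}(1). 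One small notational caution: you write $FY$ for the fibrant replacement of $Y$ while $F$ is also the name of the left Quillen functor; in a self-contained write-up you would want to rename one of the two to avoid the clash. The remark that ``naturality is formal'' is acceptable but does lean on Proposition~\ref{prop:model-htp}(3),(4) to see that the homotopy-class bijection respects composition; it would be worth a sentence making that explicit.
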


\begin{proof}
This is proved in \cite[Proposition 8.5.3 and Theorem 8.5.18]{Hir}.
\end{proof}

\begin{defn} \label{def:quillen-pair}
Adjoint pairs $(F,U)$ as in Proposition~\ref{prop:derived-adj} are called \emph{Quillen pairs}.
\end{defn}

\subsection{Monoidal model categories}
\label{subsec:symmetric-model}

Now we are left with a relatively easy task: given a closed symmetric monoidal category, determine which condition a model structure on $\C$ must satisfy so that we can use it to construct the derived functors of $\otimes\dd \C \times \C \to \C$ and $\HOM\dd \C\op \times \C \to \C$. Note that if $\C$ and $\D$ carry model structures, then $\C \times \D$ admits a so-called product model structure, where a morphism is a cofibration, a weak equivalence, or a fibration, respectively, if both the components in $\C$ and $\D$ are such (cf.\ \cite[Example 1.1.6]{H2}). Similarly, if $\C$ is a model category, $\C\op$ is naturally a model category too, by swapping the roles of fibrations and cofibrations (see \cite[Remark 1.1.7]{H2}).

\begin{rem} \label{rem:derive-tensor}
An important point is that for computing the derived functor of $\otimes\dd \C \times \C \to \C$, we need \emph{not} equip all three copies of $\C$ with the same model structure. Indeed, if $R$ is a commutative ring and we are to compute $X \Lotimes_R Y$ in $\C = \Der\ModR$, it is enough to find a suitable flat resolution for $X$ only. By doing so, we actually consider the first copy of $\C$ with the model structure given by Theorem~\ref{thm:inj-model-for-D(Groth)} for the cotorsion pair $(\FlatR,\FlatR^\perp)$ (see also Section~\ref{subsec:flat-cpx}), while the other two copies of $\C$ are considered with the injective model structure given by the trivial cotorsion pair $(\ModR,\InjR)$.
\end{rem}

For theoretical reasons, however, it is desirable to be able to equip $\C$ with a single model structure which can be used for computing both the total left derived functor $\Lotimes$ of $\otimes$ and the total right derived functor $\RHOM$ of $\HOM$. A condition which the model structure on $\C$ should satisfy is stated in the following definition.

\begin{defn} \label{def:quillen-bifunctor}
Let $\C,\D,\E$ be model categories. A functor $\otimes\dd \C \times \D \to \E$ is called a \emph{Quillen bifunctor} if the following condition is satisfied. If $f\dd U \to V$ is a cofibration in $\C$ and $g\dd X \to Y$ is a cofibration in $\D$, we require that the map $f \boxtimes g$ defined as the dotted arrow in the following pushout diagram (which is assumed to exist)
\[
\xymatrix{
U \otimes X \ar[r] \ar[d] & V \otimes X \ar[d] \ar@/^/[ddr] \\
U \otimes Y \ar[r] \ar@/_/[drr] & P \ar@{.>}[dr]|(.4){f \boxtimes g}  \\
&& V \otimes Y
}
\]
be a cofibration in $\E$. Moreover, if one of $f$ and $g$ is trivial, we require $f \boxtimes g$ to be trivial.
\end{defn}

The following lemma is standard.

\begin{lem} \label{lem:adj-2-var} \cite[4.2.2]{H2}
Let $\C,\D,\E$ be model categories and let $\otimes\dd \C \times \D \to \E$ be a Quillen bifunctor. If $\HOM\dd \D\op \times \E \to \C$ is a functor \st we have a natural isomorphism
\[ \E(X \otimes Y, Z) \overset{\cong}\la \C(X, \HOM(Y, Z)), \]
in all three variables, then $\HOM$ is a Quillen bifunctor when we view it as a functor $\D \times \E\op \to \C\op$ of the opposite categories.
\end{lem}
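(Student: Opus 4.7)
The plan is to unwind the Quillen-bifunctor condition of Definition~\ref{def:quillen-bifunctor} applied to $\HOM\dd \D \times \E\op \to \C\op$, express it as the standard ``pullback-hom'' condition on the corner map, and then use the three-variable adjunction to translate each lifting problem against this corner map into an equivalent lifting problem against the pushout-product $f \boxtimes g$, at which point the hypothesis on $\otimes$ finishes the job.

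First I would identify the relevant corner map. Given a cofibration $g\dd X \to Y$ in $\D$ and a fibration $p\dd Z \to W$ in $\E$ (the latter being a cofibration in $\E\op$), the pushout-product of $g$ and $p$ computed in $\D \times \E\op$ with values in $\C\op$ unwinds in $\C$ to the canonical corner map
\[
\pi(g,p)\dd \HOM(Y,Z) \la \HOM(X,Z) \times_{\HOM(X,W)} \HOM(Y,W).
\]
The pullback exists in $\C$ because $\otimes$ being a Quillen bifunctor already presupposes that the dual pushouts in $\E$ exist, and the adjunction transports one into the other. It therefore suffices to prove that $\pi(g,p)$ is a fibration in $\C$, and that it is trivial as soon as either $g$ or $p$ is.

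Second, using (MS2) from Definition~\ref{def:model} together with (FS1) from Definition~\ref{def:wfs}, $\pi(g,p)$ is a fibration in $\C$ iff it has the right lifting property with respect to every trivial cofibration $f\dd U \to V$ in $\C$, and a trivial fibration iff it lifts against every cofibration. I would then set up the key bijection: a commutative square
\[
\xymatrix@R=1em{U \ar[r] \ar[d]_f & \HOM(Y,Z) \ar[d] \\ V \ar[r] & \HOM(X,Z) \times_{\HOM(X,W)} \HOM(Y,W)}
\]
corresponds, via the natural isomorphism $\E(A \otimes B, C) \cong \C(A, \HOM(B,C))$ applied in the three source-target pairs appearing in the diagram, to a compatible triple of maps $U \otimes Y \to Z$, $V \otimes X \to Z$, $V \otimes Y \to W$; commutativity of the original square translates precisely to commutativity of the square
\[
\xymatrix@R=1em{(V \otimes X) \cup_{U \otimes X}(U \otimes Y) \ar[r] \ar[d]_{f \boxtimes g} & Z \ar[d]^p \\ V \otimes Y \ar[r] & W}
\]
in $\E$. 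Under the same adjunction, diagonal fillers $V \to \HOM(Y,Z)$ in the first square correspond bijectively to diagonal fillers $V \otimes Y \to Z$ in the second.

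Finally I would conclude by invoking the hypothesis on $\otimes$: if $f$ is a trivial cofibration and $g$ a cofibration, then $f \boxtimes g$ is a trivial cofibration in $\E$, hence lifts against the fibration $p$, so $\pi(g,p)$ admits a filler against $f$; as $f$ was arbitrary, $\pi(g,p)$ is a fibration. If additionally $g$ is trivial, or if $p$ is a trivial fibration to start with, the same bijection for an arbitrary cofibration $f$ in $\C$ combined with the trivial-cofibration/fibration lifting shows $\pi(g,p)$ is in fact a trivial fibration. The only real obstacle is the careful three-fold adjunction bookkeeping identifying the two compatibility conditions of the $\C$-side square with the data of a morphism out of $(V \otimes X) \cup_{U \otimes X}(U \otimes Y)$ together with a compatible morphism to $W$; this is formal but must be spelled out, as it is where the $\boxtimes$-corner on one side really is exchanged for the $\HOM$-corner on the other.
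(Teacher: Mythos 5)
The paper does not give its own proof here; it simply cites Hovey, and your argument is indeed the standard one from that reference: express the dual pushout-product for $\HOM\dd\D\times\E\op\to\C\op$ as the corner map $\pi(g,p)\dd\HOM(Y,Z)\to\HOM(X,Z)\times_{\HOM(X,W)}\HOM(Y,W)$, transport the lifting problem against $\pi(g,p)$ through the three-variable adjunction into a lifting problem of $f\boxtimes g$ against $p$, and invoke the Quillen-bifunctor hypothesis on $\otimes$. The adjunction bookkeeping in your second step is exactly right and is the real content of the proof.

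One step, however, is not correct as stated. You assert that the pullback $\HOM(X,Z)\times_{\HOM(X,W)}\HOM(Y,W)$ exists ``because $\otimes$ being a Quillen bifunctor already presupposes that the dual pushouts in $\E$ exist, and the adjunction transports one into the other.'' A two-variable adjunction gives natural bijections of hom-sets; it does not transport existence of a colimit in $\E$ to existence of an (entirely different) limit in $\C$. In Hovey's setting this is a non-issue because his model categories are bicomplete, but Definition~\ref{def:model} in this paper deliberately drops (co)completeness and only guarantees the pullbacks needed for Definition~\ref{def:model} itself, not the ones appearing in Definition~\ref{def:quillen-bifunctor}. The honest position, as the paper itself does for the pushouts in Remark~\ref{rem:monoidal-exact}, is to treat the existence of the relevant pullback as part of what is being asserted (or provided by extra hypotheses such as exactness), not as something the adjunction yields. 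A smaller point: to pass from ``$\pi(g,p)$ has the right lifting property against all trivial cofibrations'' to ``$\pi(g,p)$ is a fibration'' you need the converse direction, namely Lemma~\ref{lem:wfs-ortho}, not just (FS1) of Definition~\ref{def:wfs}; (FS1) by itself only gives the forward implication.
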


Now we can present a slightly simplified version of~\cite[Definition 4.2.6]{H2} along with the expected theorem.

\begin{defn} \label{def:monoidal-model}
A \emph{monoidal model category} is a model category $\C$ (Definition~\ref{def:model}) which carries a closed symmetric monoidal structure $(\C,\otimes,\unit,\HOM)$ \st
\begin{enumerate}
\item[(MS3)] The tensor unit $\unit$ is cofibrant.
\item[(MS4)] The product $\otimes\dd \C \times \C \to \C$ is a Quillen bifunctor.
\end{enumerate}

An \emph{exact monoidal model category} and \emph{hereditary monoidal model category} are the corresponding variants where $\C$ is an exact model category (Definition~\ref{def:exact-models}) or hereditary model category (Definition~\ref{def:hered-model}), respectively.
\end{defn}

\begin{thm} \label{thm:monoidal-model}
Let $\C$ be a monoidal model category. Then the total derived functors
\[
\Lotimes\dd \Ho\C \times \Ho\C \la \Ho\C
\qquad \textrm{and} \qquad
\RHOM\dd \Ho\C\op \times \Ho\C \la \Ho\C
\]
exist and $(\Ho\C,\Lotimes,\unit,\RHOM)$ is a closed symmetric monoidal category.
\end{thm}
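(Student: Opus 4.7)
The strategy is to extend Proposition~\ref{prop:derived-exist} and Proposition~\ref{prop:derived-adj} to the bivariate setting, using the Quillen bifunctor axiom (MS4), and then check that the closed symmetric monoidal structure on $\C$ descends to the homotopy category via the cofibrant/fibrant replacement mechanism. A convenient conceptual packaging is provided by Hovey~\cite[\S4.3]{H2}, but the main ideas can be outlined as follows.

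First I would establish the existence of $\Lotimes$. Consider $\otimes\dd \C\times\C \to \C$ with the product model structure on the domain. The crucial bivariate Ken Brown lemma asserts: if $f\dd X\to X'$ and $g\dd Y\to Y'$ are weak equivalences between cofibrant objects, then $f\otimes g$ is a weak equivalence. This follows directly from (MS4): factoring each of $f$ and $g$ as a trivial cofibration followed by a trivial fibration between cofibrant objects, the Quillen bifunctor axiom applied to the pushout-product construction forces tensoring with a cofibrant object to preserve trivial cofibrations, and an easy $2$-out-of-$3$ argument closes the loop. Applying a bivariate version of Proposition~\ref{prop:derived-exist} yields $\Lotimes\dd \Ho\C\times\Ho\C\to\Ho\C$, computed on objects by $X\Lotimes Y := CX\otimes CY$ for any choice of cofibrant replacements.

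Next, by Lemma~\ref{lem:adj-2-var}, $\HOM$ is a Quillen bifunctor $\C\times\C\op\to\C\op$. The dual form of the above argument shows that $\HOM(CX, FY)$ is homotopy invariant in both variables, yielding $\RHOM\dd \Ho\C\op\times\Ho\C\to\Ho\C$. To obtain the derived adjunction, fix $Y\in\C$ cofibrant: then $(-\otimes Y,\HOM(Y,-))$ is an ordinary Quillen pair in the sense of Proposition~\ref{prop:derived-adj}, and we get a natural bijection $\Ho\C(X\Lotimes Y, Z)\cong\Ho\C(X,\RHOM(Y,Z))$. Naturality in $Y$ is then promoted from cofibrant $Y$ to arbitrary $Y$ via cofibrant replacement and the universal property in Definition~\ref{def:total-derived}.

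Finally, the symmetric monoidal structure. The pivotal observation is that the tensor product preserves cofibrant objects: if $X,Y$ are cofibrant, then $\emptyset\to X$ and $\emptyset\to Y$ are cofibrations, and since $-\otimes Z$ is a left adjoint it preserves the initial object, so the pushout-product $(\emptyset\to X)\boxtimes(\emptyset\to Y)$ is simply $\emptyset\to X\otimes Y$, which must then be a cofibration by (MS4). Combined with (MS3), this means that for cofibrant $X,Y,Z$ the strict associator and unitor of $\C$ give bona fide isomorphisms $(X\Lotimes Y)\Lotimes Z\cong X\Lotimes (Y\Lotimes Z)$ and $\unit\Lotimes X\cong X$ in $\Ho\C$, and all coherence diagrams (pentagon, hexagon, triangle) descend directly from those in $\C$. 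The main obstacle is purely book-keeping: checking that the associator thus defined on the cofibrant-fibrant subcategory extends to a natural isomorphism between the two iterated derived tensor products on all of $\Ho\C$. This follows from Theorem~\ref{thm:htp-of-model}(1), together with the fact that, by the cofibrancy-preservation property above, the two computations $Q(QX\otimes QY)\otimes QZ$ and $QX\otimes Q(QY\otimes QZ)$ differ from $(QX\otimes QY)\otimes QZ$ and $QX\otimes (QY\otimes QZ)$ only by weak equivalences between cofibrant objects, hence by isomorphisms in $\Ho\C$.
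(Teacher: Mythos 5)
Your proposal follows the same overall strategy as the paper's proof (direct application of Proposition~\ref{prop:derived-exist} and Proposition~\ref{prop:derived-adj}, then transport of the monoidal structure via cofibrancy preservation), but two of your steps need repair.

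First, the reduction of the existence of $\Lotimes$ to a ``bivariate Ken Brown lemma'' is circular as written: you propose to factor each weak equivalence $f$ between cofibrant objects as a trivial cofibration followed by a trivial fibration, but (MS4) gives control only over pushout-products of cofibrations, so the trivial fibration factor is not handled and no 2-out-of-3 step ``closes the loop'' from that factorization. (Ken Brown's lemma proper uses the mapping cylinder factorization of the fold map $X \amalg X' \to X'$, which produces only cofibrations out of cofibrant objects.) In any case the detour is unnecessary: Proposition~\ref{prop:derived-exist}(1) requires only that $\otimes$ send trivial cofibrations between cofibrant objects of $\C\times\C$ to weak equivalences, and for such a pair $(f,g)\dd (U,X)\to(V,Y)$ one factors $f\otimes g = (f\otimes Y)\circ(U\otimes g)$ where $U\otimes g = (\emptyset\to U)\boxtimes g$ and $f\otimes Y = f\boxtimes(\emptyset\to Y)$ are trivial cofibrations by (MS4). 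Ken Brown's lemma is already built into Proposition~\ref{prop:derived-exist} and should not be re-proved.

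Second, the coherence step is not justified by Theorem~\ref{thm:htp-of-model}(1) alone, because the associator you describe is defined on cofibrant objects, whereas that theorem concerns the cofibrant-fibrant subcategory. The clean fix (and what the paper does) is to observe that, since tensor products of cofibrant objects are cofibrant, the explicit cofibrant-replacement construction in the proof of Proposition~\ref{prop:derived-exist} exhibits both $(-\Lotimes -)\Lotimes -$ and $-\Lotimes(-\Lotimes -)$ as the total left derived functor of the corresponding underived triple tensor; the universal property of Definition~\ref{def:total-derived} then transports the associator, unitor, symmetry and all coherence diagrams from $\C$ to $\Ho\C$ automatically. Your hands-on computation on cofibrant objects is the right intuition, but the universal property is what actually delivers naturality and coherence without further book-keeping.
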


\begin{proof}
This is just a special case of~\cite[Theorem 4.3.2]{H2}, but at this point we have all the tools to understand the proof. Since $\otimes$ is a Quillen bifunctor, $f\otimes g\dd U \otimes X \to V \otimes Y$ is a cofibration whenever $f\dd U \to V$ and $g\dd X \to Y$ are cofibrations, and $f \otimes g$ is a trivial cofibration if one of $f,g$ is such. Similarly we deduce from Lemma~\ref{lem:adj-2-var} that $\HOM(g,p)$ is a fibration whenever $g$ is a cofibration and $p$ is a fibration, and it is a trivial fibration if one of $g,p$ is trivial. In particular, the total left derived functor $\Lotimes$ of $\otimes\dd \C \times \C \to \C$ and the total right derived functor $\RHOM$ of $\HOM\dd \C\op \times \C \to \C$ exist by Proposition~\ref{prop:derived-exist}.

It also follows that if $X$ is cofibrant, then $(- \otimes X,\HOM(X,-))$ is a Quillen pair. Thus $(- \Lotimes X,\RHOM(X,-))$ is a pair of adjoint functors by Proposition~\ref{prop:derived-adj}.

Using the construction of $\Lotimes$ in the proof of Proposition~\ref{prop:derived-exist}, one also sees that $- \Lotimes (- \Lotimes -)$ coincides with the total left derived functor of $- \otimes (- \otimes -)$, and $(- \Lotimes -) \Lotimes -$ is the left derived functor of $(- \otimes -) \otimes -$. Hence the associativity transformation for $\otimes$ lifts to associativity for $\Lotimes$, using the universal property of the derived functors. The commutativity transformation for $\Lotimes$ and the isomorphism $\unit \Lotimes - \cong 1_{\Ho\C}$ are obtained similarly. Finally, the transformations for $\Lotimes$ satisfy the coherence conditions since they were constructed from the transformations for $\otimes$ using the universal property.
\end{proof}

Naturally we are interested in what conditions a triple of classes $(\Cof,\We,\Fib)$ as in Theorem~\ref{thm:exact-models} must satisfy in order to define an exact or hereditary monoidal model structure. Now we can give a relatively easy answer, which was first obtained in~\cite[Theorem 7.2]{H1}.

\begin{thm} \label{thm:monoidal-exact} \cite[7.2]{H1}
Let $\E$ be an exact model category given in the sense of Theorem~\ref{thm:exact-models} by the triple $(\Cof,\We,\Fib)$ of cofibrant, trivial, and fibrant objects, respectively. Suppose further that $(\E,\otimes,\unit,\HOM)$ is also a closed symmetric monoidal category. Then $\E$ with all the structure forms an exact monoidal model category provided that
\begin{enumerate}
\item The tensor unit $\unit$ is cofibrant.
\item If $X,Y \in \C$, then $X \otimes Y \in \C$. If, moreover, $X$ or $Y$ is in $\Cof\cap\We$, then $X \otimes Y \in \Cof\cap\We$.
\item If $f\dd X \to Y$ is a cofibration and $U \in \E$ is arbitrary, then $U \otimes f\dd U \otimes X \to U \otimes Y$ is an inflation in $\E$.
\end{enumerate}
\end{thm}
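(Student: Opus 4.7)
The plan is to verify the two extra axioms (MS3) and (MS4) of Definition~\ref{def:monoidal-model}. Axiom (MS3) is exactly hypothesis~(1), so nothing needs to be done there. All real work is in (MS4): given cofibrations $f\dd U \to V$ and $g\dd X \to Y$ in $\E$, I must show that the pushout-product $f \boxtimes g$ defined in Definition~\ref{def:quillen-bifunctor} is a cofibration, and is moreover a trivial cofibration whenever one of $f,g$ is trivial.

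The first step is a key preliminary observation. For any $W \in \E$ and any inflation $h\dd A \to B$ with cokernel $C_h$, condition~(3) guarantees that $W \otimes h$ is an inflation in $\E$. Since $W \otimes -$ is a left adjoint (its right adjoint being $\HOM(W,-)$), it preserves cokernels, and hence the conflation $0 \to A \to B \to C_h \to 0$ tensored with $W$ is again a conflation
\[ 0 \la W \otimes A \la W \otimes B \la W \otimes C_h \la 0. \]
Applied to the conflations $0 \to U \to V \to C_f \to 0$ and $0 \to X \to Y \to C_g \to 0$ (whose cokernels $C_f, C_g$ lie in $\Cof$ by Theorem~\ref{thm:exact-models}), this produces a $3 \times 3$ array whose nine terms are the tensor products $A \otimes B$ for $A \in \{U,V,C_f\}$ and $B \in \{X,Y,C_g\}$, and whose rows and columns are all conflations.

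The second step identifies $f \boxtimes g$. Using that pushouts of inflations along arbitrary morphisms are again inflations with the same cokernel, pushing the conflation $0 \to U \otimes X \to U \otimes Y \to U \otimes C_g \to 0$ out along $f \otimes X\dd U \otimes X \to V \otimes X$ produces a conflation
\[ 0 \la V \otimes X \la P \la U \otimes C_g \la 0, \]
with $P$ the pushout object from Definition~\ref{def:quillen-bifunctor}. Comparing this with $0 \to V \otimes X \to V \otimes Y \to V \otimes C_g \to 0$ (the middle column of the array), the rightmost vertical map of the comparison is $f \otimes C_g$, which by the preliminary observation is itself an inflation with cokernel $C_f \otimes C_g$. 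The $3 \times 3$ lemma \cite[Corollary 3.6]{Bu} then shows that the induced middle vertical map $f \boxtimes g\dd P \to V \otimes Y$ is an inflation with cokernel $C_f \otimes C_g$. Condition~(2) yields $C_f \otimes C_g \in \Cof$, so $f \boxtimes g$ is a cofibration; and the ``moreover'' clause of (2) handles the case when one of $f,g$ is trivial (so one of $C_f,C_g$ lies in $\Cof\cap\We$), forcing $f \boxtimes g$ to be a trivial cofibration.

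I expect the main obstacle to be the cokernel identification in the second step. In an abelian category it would essentially be the snake lemma applied to the double complex obtained by tensoring the two conflations, but in a general exact category one must carefully thread together the inflation-hood of various tensor maps (which hinges entirely on condition~(3)), pushout-compatibility of conflations, and the $3 \times 3$ lemma, in order to arrive cleanly at the cokernel $C_f \otimes C_g$.
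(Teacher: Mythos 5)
Your proof is correct and follows essentially the same route as the paper's: both identify the pushout-product $f \boxtimes g$ as the middle map in a morphism of conflations obtained by tensoring the two given conflations, then read off its cokernel as $C_f \otimes C_g$ via Bühler's exact-category diagram lemmas, and conclude from hypothesis~(2). One small wording slip worth fixing: your ``preliminary observation'' invokes hypothesis~(3) for an \emph{arbitrary inflation} $h$, but~(3) applies only to cofibrations; since every instance you actually use is $f$ or $g$ (both cofibrations), the argument is unaffected, but the observation should be stated for cofibrations rather than inflations.
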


\begin{rem} \label{rem:monoidal-exact}
Note a subtle point here: In principle the definition of a monoidal model category requires some extra pushouts to exist so that the maps $f \boxtimes g$ from Definition~\ref{def:quillen-bifunctor} are defined for any pair of cofibrations $f,g$. Condition (3) from the latter theorem ensures, however, that \emph{no} extra pushouts except for those guaranteed by the axioms of an exact category are required.
\end{rem}

\begin{proof}[Proof of Theorem~\ref{thm:monoidal-exact}]
We only need to show that (2) and (3) imply that $\otimes$ is a Quillen bifunctor (see Definition~\ref{def:quillen-bifunctor}). To this end, let $f\dd U \to V$ and $g\dd X \to Y$ be cofibrations with cokernels $C$ and $D$, respectively. Using (3) and~\cite[Proposition 3.1]{Bu}, we construct the following commutative diagram with conflations in rows:
\[
\begin{CD}
0   @>>>   U \otimes X   @>{f \otimes X}>>   V \otimes X   @>>>   C \otimes X   @>>>   0  \\
@.      @V{U \otimes g}VV                        @VVV                  @|                 \\
0   @>>>   U \otimes Y   @>>>                      P       @>>>   C \otimes X   @>>>   0  \\
@.              @|                        @V{f \boxtimes g}VV  @V{C \otimes g}VV          \\
0   @>>>   U \otimes Y   @>{f \otimes Y}>>   V \otimes Y   @>>>   C \otimes Y   @>>>   0  \\
\end{CD}
\]
Thanks to~\cite[Proposition 2.12]{Bu}, we have a conflation
\[
\begin{CD}
0 @>>> P @>{f \boxtimes g}>> V \otimes Y @>>> C \otimes D @>>> 0.
\end{CD}
\]
Now the conclusion follows from assumption (2).
\end{proof}

\subsection{Complexes of flat quasi-coherent modules}
\label{subsec:flat-cpx}

The last task is to instantiate Theorem~\ref{thm:monoidal-exact} for the closed symmetric monoidal structure on $\Cpx\QcoR$ discussed in \S\ref{subsec:tensor-Qco}. The hardest part is to fulfil Theorem~\ref{thm:monoidal-exact}(3), as this typically does not hold for the injective model structure on $\Cpx\QcoR$ from Theorem~\ref{thm:inj-model-for-D(E)}. Eventually we will need to impose more assumptions on the flat representation $R\dd I \to \mathrm{CommRings}$, but let us first discuss the aspects which work in full generality.

Recall from \S\ref{subsec:Qco-mod} that a left module $M$ over a ring $T$ is called \emph{flat} if the functor $- \otimes_T M\dd \Modr{T} \to \Ab$ is exact. By Lazard's theorem~\cite[Corollary 1.2.16]{GT}, $M$ is flat \iff it is a direct limit of finitely generated projective modules.

Analogously we call $M \in \QcoR$ \emph{flat} if the functor $-\otimes M\dd  \QcoR \to \QcoR$ is exact. This notion of flatness corresponds to the classical one for quasi-coherent sheaves if $R$ comes from a scheme in the sense of \S\ref{subsec:Qco-sheaves}. It is easy to check that $M$ is flat \iff $M(i)$ is a flat module over $R(i)$ for each $i \in I$. We shall denote the class of all flat quasi-coherent modules over $R$ by $\FlatR$. Our aim is to use Corollary~\ref{cor:lhs-cotorsion} to show that there is often a complete hereditary cotorsion pair $(\FlatR,\FlatR^\perp)$. We denote the class $\FlatR^\perp$ by $\CotR$ and, as customary, we call the quasi-coherent modules inside it \emph{cotorsion}.

First of all, the class of left flat modules over a ring $T$ is always deconstructible by the proof of~\cite[Theorem 3.2.9]{GT}, and it is a left hand side class of a complete hereditary cotorsion pair by~\cite[Theorem 4.1.1]{GT}. Regarding the deconstructibility of $\FlatR$ for $R\dd I \to \mathrm{CommRings}$, we can use the following technical result, whose proof involves the Hill Lemma (see Proposition~\ref{prop:hill-lemma}).

\begin{prop} \label{prop:flat-qco-R}
Let $R\dd I \to \mathrm{Rings}$ be a flat representation of $I$ in the category of rings, and suppose that we are given for each $i \in I$ a deconstructible class $\C_i \subseteq \Modr{R(i)}$. Then the class $\C \subseteq \QcoR$ defined as
\[ \C = \{ M \in \QcoR \mid M(i) \in \C_i \textrm{ for each } i \in I \} \]
is deconstructible in~$\QcoR$.
\end{prop}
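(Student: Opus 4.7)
The plan is to exhibit a set $\clS \subseteq \C$ with $\C = \Filt\clS$, using the generalized Hill Lemma (Proposition~\ref{prop:hill-lemma}) twice---once globally in $\QcoR$, once componentwise. For each $i \in I$, fix a set $\clS_i$ with $\C_i = \Filt\clS_i$. Choose a regular cardinal $\kappa > |I|$ such that every $S \in \clS_i$ is $<\kappa$-presented over $R(i)$, and such that $\kappa$ satisfies the hypotheses of the Hill Lemma both for $\QcoR$ with respect to a fixed generator and for each pair $(\Modr{R(i)}, \clS_i)$. Let $\clS$ be a set of isomorphism representatives of those $M \in \C$ with $|M(i)| < \kappa$ for every $i$; this is legitimately a set because $|I| < \kappa$.

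The inclusion $\Filt\clS \subseteq \C$ is straightforward: colimits in $\QcoR$ are computed componentwise (Theorem~\ref{thm:Qco-Grothendieck}), so evaluating an $\clS$-filtration of $M$ at $i$ yields a filtration of $M(i)$ by objects of $\Filt\clS_i = \C_i$, whence $M(i) \in \C_i$ by Lemma~\ref{lem:deconstr-basic}.

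For the converse, fix $M \in \C$ and construct an $\clS$-filtration by transfinite induction. Applying the Hill Lemma globally to a filtration of $M$ in $\QcoR$ obtained from its generator (cf.\ the proof of Proposition~\ref{prop:exact-Grothendieck}) yields a lattice $\clL$ of quasi-coherent subobjects of $M$ satisfying (H1)--(H4); applying it componentwise for each $i$ to a chosen $\clS_i$-filtration of $M(i)$ yields lattices $\clL_i \subseteq \Subobj{M(i)}$. I maintain the inductive invariant $X_\alpha \in \clL$ and $X_\alpha(i) \in \clL_i$ for every $i$. For the successor step, given such $X$ with $X \ne M$, pick $x \in M \setminus X$ and perform a zig-zag: by (H4) globally, find $N_0 \in \clL$ of size $<\kappa$ with $x \in N_0$; for each $i$, by (H4) in $\clL_i$, find $L_i^0 \in \clL_i$ of size $<\kappa$ with $N_0(i) \subseteq L_i^0$; apply (H4) globally again to find $N_1 \in \clL$ of size $<\kappa$ containing the $R$-submodule of $M$ generated by all the $L_i^0$; iterate. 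Since $|I| < \kappa$ and $\kappa$ is regular, the $\omega$-union $N = \bigcup_n N_n$ satisfies $N \in \clL$ (by (H2)), $N(i) = \bigcup_n L_i^n \in \clL_i$ for each $i$, and $|N| < \kappa$. Setting $X_{\alpha+1} = X + N$ preserves the invariant; by (H3) applied componentwise together with the flatness of $R$ ensuring that $X_{\alpha+1}/X$ is again quasi-coherent, the successive quotient $X_{\alpha+1}/X$ lies in $\C$ with $|X_{\alpha+1}(i)/X(i)| < \kappa$ for all $i$, so is represented in $\clS$. Limit steps are handled by unions, which preserves both parts of the invariant by (H2).

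The main obstacle is the tension between the two families of Hill lattices: an element of $\clL$ need not have components in the $\clL_i$, and a compatible family in the $\clL_i$ need not yield a quasi-coherent subobject lying in $\clL$. The zig-zag overcomes this by alternately closing up in each direction, exploiting $|I| < \kappa$ together with the closure of every Hill lattice under arbitrary unions (property (H2)) to keep the step sizes under control and to guarantee that the $\omega$-union lives simultaneously in $\clL$ and in each $\clL_i$.
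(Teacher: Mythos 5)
The paper gives no proof of its own for this proposition, only a citation to [EGPT, Theorem 3.13 and Corollary 3.14]; your argument---a double application of the Hill Lemma (once in $\QcoR$, once in each $\Modr{R(i)}$) followed by a zig-zag closure to produce small quasi-coherent subobjects lying simultaneously in the global and all the componentwise Hill lattices---is exactly the standard filtration technique underlying that reference, and it is sound. Two points worth tightening in a full write-up: the cardinality bookkeeping requires $\kappa$ regular and strictly larger than $|I|$, each $|R(i)|$, and the presentation rank of the chosen generator $G$ of $\QcoR$, so that the $<\kappa$-generation hypothesis of (H4) genuinely propagates through every stage of the zig-zag; and the invariant $X_\alpha \in \clL$ is maintained but never actually exploited, since flatness of $R$ already guarantees that quasi-coherent subobjects are closed under finite sums and quotients, leaving the global Hill lattice only the auxiliary role of producing small quasi-coherent subobjects.
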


\begin{proof}
The same proof as for~\cite[Theorem 3.13 and Corollary 3.14]{EGPT} applies.
\end{proof}

\begin{cor} \label{cor:flat-qco-R}
If $R\dd I \to \mathrm{CommRings}$ is a flat representation, then the class $\FlatR$ is deconstructible in $\QcoR$ and it is closed under retracts and kernels of epimorphisms.
\end{cor}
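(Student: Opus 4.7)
The plan is to reduce everything to componentwise statements about ordinary flat modules over rings, using Proposition~\ref{prop:flat-qco-R} to transfer deconstructibility and Theorem~\ref{thm:Qco-Grothendieck} to transfer exactness.

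First, for deconstructibility, I would apply Proposition~\ref{prop:flat-qco-R} with $\C_i$ taken to be the class of all flat right $R(i)$-modules for each $i \in I$. Two ingredients are needed: (a) each $\C_i$ is deconstructible in $\Modr{R(i)}$, which is the content of the proof of~\cite[Theorem 3.2.9]{GT} (as cited in the paragraph just before the corollary); and (b) the characterization recalled immediately before Proposition~\ref{prop:flat-qco-R}, namely that $M \in \QcoR$ lies in $\FlatR$ if and only if $M(i) \in \C_i$ for every $i \in I$. With these, Proposition~\ref{prop:flat-qco-R} gives that $\FlatR$ coincides with a deconstructible class of $\QcoR$.

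For closure under retracts, note that flatness of $M \in \QcoR$ is a componentwise condition, and a retract $N$ of $M$ in $\QcoR$ is automatically a retract of $M$ in $\ModR$ as well (the inclusion $\QcoR \to \ModR$ is fully faithful). Since limits and colimits in $\ModR$ are computed componentwise by Proposition~\ref{prop:poset-modules}, the idempotent defining $N$ splits componentwise, so $N(i)$ is a retract of $M(i)$. Retracts of flat modules over an ordinary ring are flat, hence $N(i) \in \C_i$ for all $i$ and therefore $N \in \FlatR$.

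For closure under kernels of epimorphisms, suppose $0 \to K \to M \to N \to 0$ is a conflation in $\QcoR$ (i.e.\ a short exact sequence in the abelian category $\QcoR$) with $M, N \in \FlatR$. By Theorem~\ref{thm:Qco-Grothendieck}, kernels and cokernels in $\QcoR$ are computed componentwise, so for each $i \in I$ the sequence $0 \to K(i) \to M(i) \to N(i) \to 0$ is exact in $\Modr{R(i)}$. Since $M(i)$ and $N(i)$ are flat and the class of flat modules over any ring is closed under kernels of epimorphisms (standard: tensor the sequence with an arbitrary module and use the long exact sequence of $\Tor$), we conclude $K(i)$ is flat, hence $K \in \FlatR$. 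No step presents a real obstacle here; the entire argument is a bookkeeping exercise once Proposition~\ref{prop:flat-qco-R} and the componentwise computation of finite limits and colimits in $\QcoR$ are in hand.
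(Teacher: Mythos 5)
Your proof is correct and is precisely the argument the paper leaves implicit: the preamble before Proposition~\ref{prop:flat-qco-R} records both the componentwise characterization of flatness in $\QcoR$ and the deconstructibility of flat modules over a single ring, so deconstructibility of $\FlatR$ is immediate from that proposition, while closure under retracts and under kernels of epimorphisms follows from the componentwise computation of finite (co)limits in $\QcoR$ (Theorem~\ref{thm:Qco-Grothendieck}) together with the corresponding classical facts for flat modules over a ring. There is nothing to add; the paper simply does not spell this out.
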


Thus, in view of Corollary~\ref{cor:lhs-cotorsion} and Lemma~\ref{lem:hered-cot}, the following is sufficient and necessary in order to obtain a complete hereditary cotorsion pair.

\begin{cor} \label{lem:flat-generator}
If $R\dd I \to \mathrm{CommRings}$ is a flat representation, then there exists a (functorially) complete hereditary cotorsion pair $(\FlatR,\CotR)$ in $\QcoR$ \iff $\QcoR$ admits a flat generator.
\end{cor}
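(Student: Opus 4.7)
The plan is to assemble the two implications using the machinery already developed: Corollary~\ref{cor:lhs-cotorsion} together with Lemma~\ref{lem:hered-cot} for the ``if'' direction, and a direct generator argument for the ``only if'' direction. All the substantive work has been done in Corollary~\ref{cor:flat-qco-R}, so the proof should be a short assembly rather than a deep new argument.

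For the ``only if'' direction, suppose $(\FlatR, \CotR)$ is a complete cotorsion pair in $\QcoR$. Since $\QcoR$ is a Grothendieck category by Theorem~\ref{thm:Qco-Grothendieck}, it admits a generator $G$. By completeness of the cotorsion pair, we have an approximation sequence
\[ 0 \la B^G \la F \la G \la 0 \]
in $\QcoR$ with $F \in \FlatR$ and $B^G \in \CotR$. In particular, the middle term $F$ is a flat quasi-coherent module admitting an epimorphism onto $G$; composing with the epimorphisms $G^{(J)} \to X \to 0$ provided by the generator property of $G$, we see that $F$ is itself a generator for $\QcoR$.

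For the ``if'' direction, assume that $\QcoR$ has a flat generator. Corollary~\ref{cor:flat-qco-R} tells us that $\FlatR$ is deconstructible in $\QcoR$ and closed under retracts, and by hypothesis $\FlatR$ contains a generator. Since $\QcoR$ is a Grothendieck category (hence efficient by Proposition~\ref{prop:exact-Grothendieck} and Lemma~\ref{lem:efficient-coprod}, etc.), Corollary~\ref{cor:lhs-cotorsion} applies and yields a functorially complete cotorsion pair $(\FlatR, \FlatR^\perp) = (\FlatR, \CotR)$ in $\QcoR$.

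It remains to verify that this cotorsion pair is hereditary. Since $\QcoR$ is equipped with the abelian exact structure, deflations coincide with epimorphisms. By Corollary~\ref{cor:flat-qco-R}, $\FlatR$ is closed under kernels of epimorphisms, which is precisely condition~(3) of Lemma~\ref{lem:hered-cot}. The hypotheses that $\FlatR$ is generating and $\CotR$ is cogenerating are ensured by the completeness we just established, so Lemma~\ref{lem:hered-cot} applies and the cotorsion pair is hereditary. The only conceptual hurdle in the argument is recognizing that no additional work is needed beyond checking that $\FlatR$ is closed under kernels of epimorphisms, which was already recorded in Corollary~\ref{cor:flat-qco-R}.
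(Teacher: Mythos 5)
Your proof is correct and fills in exactly the argument the paper sketches (the paper has no explicit proof here but says the corollary holds ``in view of Corollary~\ref{cor:lhs-cotorsion} and Lemma~\ref{lem:hered-cot}''): the ``if'' direction via deconstructibility plus closure under kernels of epimorphisms between flats, and the ``only if'' direction by taking a flat approximation of a generator. Your phrasing ``closed under kernels of epimorphisms'' should be read, as in Corollary~\ref{cor:flat-qco-R} and Lemma~\ref{lem:hered-cot}(3), as kernels of epimorphisms whose domain and codomain are both flat, but with that understanding the argument is complete.
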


Constructing a flat generator is indeed the most tricky part. It exists in $\Qco{X}$ for any quasi-compact and separated scheme $X$, as has been implicitly proved in~\cite[Proposition 1.1]{AJL97} and discussed explicitly and in detail by Murfet in~\cite[\S3.2]{Mur}.

To formulate this in our formalism from~\S\ref{subsec:Qco-mod}, recall that a poset $(I,\le)$ is an \emph{upper semilattice} if every pair $x,y \in I$ has a least upper bound $x \vee y$. We will call a representation $R\dd I \to \mathrm{CommRings}$ of an upper semilattice $I$ \emph{continuous} if it preserves pushouts. In other words, if $x,y,z \in I$ \st $x \le y$ and $x \le z$, then the ring homomorphism
\[ R^y_{y\vee z}\otimes_{R(x)} R^z_{y\vee z}\dd R(y) \otimes_{R(x)} R(z) \to R(y\vee z) \]
is required to be bijective. If $X$ is a quasi-compact separated scheme, then the results from \S\ref{subsec:Qco-sheaves} can be used to construct an equivalence $\QcoR \overset{\cong}\la \Qco{X}$ for a continuous flat representation $R$ of a finite upper semilattice $I$. This explains the motivation for the following statement which (at least formally) slightly generalizes the above mentioned result from~\cite{AJL97,Mur}.

\begin{prop} \label{prop:enough-flats}
Let $I$ be a finite upper semilattice and $R\dd I \to \mathrm{CommRings}$ be a continuous flat representation. Then $\QcoR$ admits a flat generator. In particular, there is a functorially complete hereditary cotorsion pair $(\FlatR,\CotR)$ in $\QcoR$.
\end{prop}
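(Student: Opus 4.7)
The plan is to apply Corollary~\ref{lem:flat-generator}, which reduces the proposition to exhibiting a flat generator of $\QcoR$; once such a generator exists, the cotorsion pair $(\FlatR,\CotR)$ is functorially complete by that corollary, and hereditary by Lemma~\ref{lem:hered-cot}(3), since $\FlatR$ is closed under kernels of epimorphisms (Corollary~\ref{cor:flat-qco-R}).

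The bulk of the work is therefore to construct a flat quasi-coherent $R$-module that generates $\QcoR$. The approach I would take is to adapt the scheme-theoretic construction of \cite[Proposition~1.1]{AJL97}, developed in detail in \cite[\S3.2]{Mur}, to the present combinatorial setting. The idea is to view $(I,R)$ as the ``affine skeleton'' of a quasi-compact separated scheme $X$: by continuity, the intersections of the affine charts $\Spec R(i) \cap \Spec R(j)$ are correctly identified with $\Spec R(i \vee j)$, while the finiteness of $I$ corresponds to quasi-compactness. For each $i \in I$ I would then construct a flat quasi-coherent $R$-module $F_i$ modeled on the shriek extension $(j_i)_! \mathcal{O}_{U_i}$ of the structure sheaf of $U_i = \Spec R(i)$ to $X$, so that $\bigoplus_{i \in I} F_i$ serves as the desired flat generator. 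Component-wise flatness of each $F_i$ will come directly from the flatness hypothesis on the representation $R$, together with the fact that the relevant base changes $R^k_{i\vee k}$ are flat.

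The principal obstacle is that the na\"ive extension by zero fails to be quasi-coherent, and the left adjoint $j_!$ to the restriction does not straightforwardly exist at the level of quasi-coherent sheaves. Instead, one has to use a more delicate \v{C}ech-type construction involving the combinatorics of the semilattice $I$ together with the continuity and flatness hypotheses on $R$, which together serve as the combinatorial counterpart to the geometric assumptions of quasi-compactness and separatedness present in the scheme-theoretic arguments. In particular, the generation property cannot typically be verified by a direct $\Hom$-computation, but rather by a covering argument showing that every non-zero section $m \in M(i)$ of a quasi-coherent $M \in \QcoR$ can be detected, after a suitable base change, by a morphism from some module in the constructed flat family. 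Once this is carried out, the conclusion follows immediately from Corollary~\ref{lem:flat-generator}.
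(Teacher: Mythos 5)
Your reduction to Corollary~\ref{lem:flat-generator} and the remark that heredity follows from Lemma~\ref{lem:hered-cot} and Corollary~\ref{cor:flat-qco-R} are correct and match the paper. However, the substantive part of the argument---producing a flat generator---is not actually carried out, and your initial plan is not the right one.

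Your idea of modelling $F_i$ on a shriek extension $(j_i)_!\mathcal{O}_{U_i}$ fails, as you yourself note: extension by zero does not produce a quasi-coherent object, and the functor $j_!$ has no good analogue here. The paper uses the \emph{opposite-handed} adjoint. Because $R$ is continuous, the restriction functor $F_x^*\dd \QcoR \to \Modr{R(x)}$, $M \mapsto M(x)$, has a \emph{right} adjoint ${F_x}_*$ given by $({F_x}_*(X))(y) = X \otimes_{R(x)} R(x\vee y)$---this is the analogue of the (affine, hence exact) direct image $j_*$, not of $j_!$. Continuity is exactly what makes ${F_x}_*$ land in $\QcoR$, and flatness of $R$ makes ${F_x}_*$ preserve flat modules. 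This is the first ingredient you are missing.

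After you correctly abandon the shriek idea, your fallback (``a more delicate \v{C}ech-type construction'') names the right direction but does not supply the mechanism, and your suggestion of detecting nonzero sections by a $\Hom$-computation is not how the argument goes. What actually happens: fix finitely many $x_0,\dots,x_n$ that $\vee$-generate $I$, form for each $M$ the \v{C}ech complex $0 \to M \to \C^0 M \to \cdots \to \C^n M \to 0$ with $\C^p M = \bigoplus_{i_0<\cdots<i_p} {F_x}_* F_x^*(M)$, $x = x_{i_0}\vee\cdots\vee x_{i_p}$, and show it is exact because each evaluation at $z\in I$ is a contractible complex of $R(z)$-modules (the standard \v{C}ech null-homotopy). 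Then take a \emph{proper flat resolution} of each $M(z)$ over $R(z)$ (using the known flat cotorsion theory over a ring), push it forward along ${F_z}_*$ to get proper flat resolutions of each $\C^pM$, and combine these by the totalization argument of~\cite[Proposition~3.19]{Mur} into a proper flat resolution of $M$. The flat epimorphism onto a chosen generator of $\QcoR$ then furnishes a flat generator. Without the ${F_x}_*$ adjoint, the contractibility of the \v{C}ech complex, and Murfet's combination step, the proof is not complete.
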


\begin{proof}
We shall closely follow the structure of the proof from~\cite{Mur}. Let us fix a finite number of elements $x_0, x_1, \dots, x_n \in I$ \st every $x \in I$ is of the form $x = x_{i_0} \vee \dots \vee x_{i_p}$ for some $0 \le i_0 < \cdots < i_p \le n$. Since $R$ is a continuous representation, the functor
\[ F_x^*\dd \QcoR \la \Modr{R(x)}, \quad M \longmapsto M(x) \]
for any given $x \in I$ has a right adjoint ${F_x}_*\dd \Modr{R(x)} \to \QcoR$ given by
\[ ({F_x}_*(X))(y) = X \otimes_{R(x)} R(x \vee y) \qquad \textrm{for every $y \in I$.} \]
These are the analogs of the inverse and the direct image functors in geometry; see~\cite[(7.8)]{GW}. Clearly, ${F_x}_*$ sends flat $R(x)$-modules to flat quasi-coherent $R$-modules. For any $(p+1)$-tuple $i_0 < \cdots < i_p$ of integers between $0$ and $n$ we denote $M_{i_0,\dots,i_p} = {F_x}_* F_x^*(M)$ where $x = x_{i_0} \vee \dots \vee x_{i_p}$. In particular we have the equality $M_{i_0,\dots,i_p}(z) = M(z\vee x_{i_0} \vee \dots \vee x_{i_p})$ for each $z \in I$.

Next we construct for each $M \in \QcoR$ a so-called \emph{\v{C}ech resolution}; see~\cite[\S III.4, p. 218]{Hart2}. This is a sequence of morphisms
\[
0 \la M \overset{\dif}\la \C^0M \overset{\dif}\la \C^1M \overset{\dif}\la \cdots \overset{\dif}\la \C^nM \la 0,
\eqno{(\Delta)}
\]
where $\C^pM = \bigoplus_{i_0<i_1<\cdots<i_p} M_{i_0,i_1,\dots i_p}$ for every $0 \le p \le n$. In order to describe the maps $\dif$, we need some more notation. Given $i_0<i_1<\cdots<i_p$, we shall denote by $\alpha_{i_0,i_1,\cdots,i_p}$ an element of $M_{i_0,i_1,\dots,i_p}$. Then $\dif\dd \C^pM \to \C^{p+1}M$ will be given by the rule
\[
\dif(z)\dd
(\alpha_{i_0,i_1,\cdots,i_p})_{i_0<i_1<\cdots<i_p}
\longmapsto
\Big(\sum_{k=0}^{p+1} (-1)^k \alpha_{j_0,\dots,\hat j_k,\dots,j_{p+1}}\Big)_{j_0<j_1<\cdots<j_{p+1}}
\]
for all $z \in I$, where $\hat j_k$ means ``omit $j_k$.'' Strictly speaking we should have written $M^x_y(\alpha_{j_0,\dots,\hat j_k,\dots,j_{p+1}})$ for suitable $x,y \in I$ on the right hand side, but we omit $M^x_y$ for the sake of clarity of the formula as there is no risk of confusion. It is rather easy to check that $(\Delta)$ is a complex in $\QcoR$.

More is true, however---we claim that $(\Delta)$ is an exact sequence. This is proved similarly to~\cite[Lemma III.4.2]{Hart2}. To this end we shall use the same convention as in~\cite[Remark 4.0.1]{Hart2}: if $i_1,\dots,i_p \in \{0,1,\dots,n\}$ is an arbitrary $(p+1)$-tuple, then $\alpha_{i_0,i_1,\cdots,i_p}$ will stand either for $0$ if there is a repeated index, or for $\sgn(\sigma) \cdot \alpha_{\sigma i_0,\sigma i_1,\cdots, \sigma i_p}$, where $\sigma$ is the permutation \st $\sigma i_0 < \sigma i_1 < \cdots < \sigma i_p$. Using this notation, observe that, for any fixed $z \in I$, the complex of $R(z)$-modules
\[
0 \la M(z) \la \C^0M(z) \la \C^1M(z) \la \cdots \la \C^nM(z) \la 0
\]
is contractible, where the null-homotopy $s\dd \C^pM(z) \to \C^{p-1}M(z)$ is given by the rule
\[
s\dd
(\alpha_{i_0,i_1,\cdots,i_p})_{i_0<i_1<\cdots<i_p}
\longmapsto
(\alpha_{z,j_0,j_1,\cdots,j_p})_{j_0<j_1<\cdots<j_{p-1}}
\]
This proves the claim.

From this point we continue exactly as in~\cite[Proposition 3.19]{Mur}. The cotorsion pair $(\FlatR(z),\CotR(z))$ is complete for each $z \in I$ by the above discussion. Using the approximation sequences, we construct a so-called proper flat resolution
\[ \cdots \la P_2^z \la P_1^z \la P_0^z \la M(z) \la 0 \]
of $M(z)$ in $\Modr{R(z)}$. This by definition means that all $P_i^z$ are flat, the sequence is exact, and it remains exact when we apply the functor $\Hom_{R(z)}(P',-)$ for any $P' \in \FlatR(z)$. Using the adjunction $(F_z^*,{F_z}_*)$, it is easy to see that
\[ \cdots \la {F_z}_*(P_2^z) \la {F_z}_*(P_1^z) \la {F_z}_*(P_0^z) \la {F_z}_* F_z^*(M) \la 0 \]
is a proper flat resolution in $\QcoR$. Summing over all $z = x_{i_0} \vee \dots \vee x_{i_p}$ where $i_0<i_1<\cdots<i_p$, we obtain a proper flat resolution for each $\C^pM$ with $0 \le p \le n$. Finally, the argument described in detail in the proof of~\cite[Proposition 3.19]{Mur} allows us to combine these to a proper flat resolution of $M$.
\end{proof}

For more special flat representations $R\dd I \to \mathrm{CommRings}$ we may obtain left hand side classes of complete hereditary cotorsion pairs which are even closer to projective objects. A variety of such candidates and their appropriateness in geometry has been studied in~\cite{EGT,EGPT}. Here we mention only the simplest of them.

\begin{defn} \label{def:bundle}
Let $R\dd I \to \mathrm{CommRings}$ be a flat representation. A module $M \in \QcoR$ is called \emph{locally projective} (or \emph{vector bundle} in~\cite[\S2]{Dr}) if $M(i)$ is a projective $R(i)$-module for every $i \in I$. We will denote the class of locally projective quasi-coherent sheaves by~$\VectR$.
\end{defn}

\begin{prop} \label{prop:generating-bundle}
Let $R\dd I \to \mathrm{CommRings}$ be \st $\QcoR \overset{\cong}\la \Qco{X}$ for a quasi-projective scheme $X$ over an affine scheme (see e.g. Examples~\ref{expl:P1k} and~\ref{expl:P2k}). Then $\QcoR$ has a locally projective generator. In particular, we obtain a functorially complete hereditary cotorsion pair $(\VectR,\VectR^\perp)$.
\end{prop}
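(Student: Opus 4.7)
The proof will combine two ingredients: the general cotorsion pair machinery from Corollary~\ref{cor:lhs-cotorsion} together with the hereditary upgrade from Lemma~\ref{lem:hered-cot}, and the geometric input coming from the existence of an ample line bundle on $X$. To apply the former I need $\VectR$ to be deconstructible in $\QcoR$, closed under retracts, closed under kernels of epimorphisms, and to contain a generator. The first three requirements are largely formal; the fourth is the heart of the matter.

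Deconstructibility of $\VectR$ follows from Proposition~\ref{prop:flat-qco-R} applied with $\C_i$ the class of projective right $R(i)$-modules: by Kaplansky's theorem, every projective module over any ring is a direct sum, and in particular a transfinite extension, of countably generated projectives, so each $\C_i$ is deconstructible in $\Modr{R(i)}$. Closure of $\VectR$ under retracts is immediate from the component-wise description together with the corresponding property for projectives. Since conflations in $\QcoR$ are precisely the component-wise short exact sequences (Theorem~\ref{thm:Qco-Grothendieck}), closure of $\VectR$ under kernels of epimorphisms reduces to the standard long-exact-sequence argument in each $\Modr{R(i)}$.

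The geometric step is to exhibit a locally projective generator. Since $X$ is quasi-projective over an affine base, it admits a very ample line bundle $\OO_X(1)$. I propose to take $G = \bigoplus_{n \in \Z} \OO_X(n)$, viewed as an object of $\QcoR$ via the equivalence of Theorem~\ref{thm:modules-vs-sheaves}. Each $\OO_X(n)$ is invertible, hence locally free of rank one, so its evaluation at any $i \in I$ is a rank-one projective $R(i)$-module; thus $G \in \VectR$. To show $G$ is a generator one combines two standard facts of algebraic geometry: first, because $X$ is quasi-compact and quasi-separated, every $\F \in \Qco{X}$ is the directed union of its coherent subsheaves; second, ampleness of $\OO_X(1)$ guarantees that every coherent $\F' \subseteq \F$ admits an epimorphism $\OO_X(-n)^k \twoheadrightarrow \F'$ for some $n, k \ge 0$. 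Taking the colimit over all coherent subsheaves and summing over the relevant $n$ produces an epimorphism onto $\F$ from a coproduct of copies of the sheaves $\OO_X(-n)$, and hence from a coproduct of copies of $G$.

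With the four conditions verified, Corollary~\ref{cor:lhs-cotorsion} delivers the functorially complete cotorsion pair $(\VectR, \VectR^\perp)$, and Lemma~\ref{lem:hered-cot}(3), via the closure of $\VectR$ under kernels of epimorphisms established above, promotes it to a hereditary one. I expect the main obstacle to be the verification that $G$ is a generator, as this step genuinely invokes the ample-line-bundle technology of projective geometry rather than any of the framework developed earlier in the paper; the remaining checks are reductions to componentwise statements or direct applications of Proposition~\ref{prop:flat-qco-R}.
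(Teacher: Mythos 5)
Your proof is correct and follows essentially the same route as the paper: the paper obtains deconstructibility of $\VectR$ from Proposition~\ref{prop:flat-qco-R} exactly as you do, cites \cite[Lemma~2.1.3]{TT} for the locally projective generator (which is proved there precisely via the ample-line-bundle argument you spell out), and then says ``the rest is similar as for flat modules,'' i.e.\ apply Corollary~\ref{cor:lhs-cotorsion} and Lemma~\ref{lem:hered-cot} as you did. One small terminological slip: since the affine base is not assumed noetherian, you should say ``quasi-coherent subsheaves of finite type'' rather than ``coherent subsheaves'' when invoking the directed-union fact from \cite[B.3]{TT}; the argument is otherwise unaffected, as ampleness gives the epimorphisms $\OO_X(-n)^k \twoheadrightarrow \F'$ already for finite-type $\F'$.
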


\begin{proof}
The first part follows from~\cite[Lemma 2.1.3]{TT}. The deconstructibility of $\VectR$ is obtained by Proposition~\ref{prop:flat-qco-R} and the rest is similar as for flat modules.
\end{proof}

\begin{rem} \label{rem:vect-perp}
Interestingly enough, nothing seems to be known about the class $\VectR^\perp$, it even seems to lack a name so far.
\end{rem}

\subsection{Monoidal models for derived categories of quasi-coherent sheaves}
\label{subsec:monoidal-models-derived}

We conclude by summarizing our findings regarding the existence of monoidal model structures for $\Der\QcoR$.

\begin{thm} \label{thm:monoidal-D(Qco)}
Let $R\dd I \to \mathrm{CommRings}$ be a continuous flat representation of a finite upper semilattice $I$ (in particular for any quasi-compact separated scheme $X$ there is such an $R$ with $\QcoR \overset{\cong}\la \Qco{X}$). Then there is a hereditary monoidal model structure on $(\Cpx\QcoR,\otimes,S^0(R),\HOM)$ (see Definitions~\ref{def:hered-model} and~\ref{def:monoidal-model}) \st
\begin{enumerate}
\item An object is cofibrant \iff it is a retract of an $\{S^n(F) \mid F \in \FlatR \textrm{ and } n \in \Z \}$-filtered object.
\item The class of trivial objects equals $\We = \Cac\QcoR$.
\item The class of fibrant objects equals $\tilde\A^\perp$, where we use Notation~\ref{nota:a-tilde} for $\A =\FlatR$.
\end{enumerate}

If, moreover, $\QcoR \overset{\cong}\la \Qco{X}$ for a quasi-projective scheme $X$ over an affine scheme, then there is another hereditary monoidal model structure on the category $\Cpx\QcoR$ \st
\begin{enumerate}
\item[(1')] An object is cofibrant \iff it is a retract of an $\{S^n(V) \mid V \in \VectR \textrm{ and } n \in \Z \}$-filtered object.
\item[(2')] The class of trivial objects equals $\We = \Cac\QcoR$.
\item[(3')] The class of fibrant objects equals $(\tilde\A')^\perp$, where we use Notation~\ref{nota:a-tilde} for $\A' =\VectR$.
\end{enumerate}

In both cases, $\Ho\Cpx\QcoR$ is equal to $\Der\QcoR$ and the model structures are cofibrantly generated.
\end{thm}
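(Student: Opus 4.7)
The plan is in two stages: first, exhibit the appropriate complete hereditary cotorsion pair in $\QcoR$; then lift it to $\Cpx\QcoR$ via Theorem~\ref{thm:inj-model-for-D(Groth)} and verify the monoidal compatibility axioms of Theorem~\ref{thm:monoidal-exact}. In the first setting, Proposition~\ref{prop:enough-flats} supplies $(\A,\B) = (\FlatR,\CotR)$, which is functorially complete and hereditary; in the second, Proposition~\ref{prop:generating-bundle} supplies $(\A,\B) = (\VectR,\VectR^\perp)$ with the same properties. In either case $\A$ is deconstructible in $\QcoR$ by Corollary~\ref{cor:flat-qco-R} (respectively Proposition~\ref{prop:generating-bundle}), so Theorem~\ref{thm:inj-model-for-D(Groth)} together with Remark~\ref{rem:dg-notation} yields a cofibrantly generated hereditary model structure with $\Cof = {}^\perp\tilde\B$, $\We = \Cac\QcoR$, $\Fib = \tilde\A^\perp$, and $\Ho{\Cpx\QcoR} = \Der\QcoR$. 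Remark~\ref{rem:dg-notation} further describes the cofibrant objects as retracts of $\{S^n(X) \mid X \in \clS, n \in \Z\}$-filtered complexes for any set $\clS$ with $\Filt\clS = \A$; choosing $\clS$ to consist of $<\kappa$-presented representatives in $\A$ for sufficiently large $\kappa$ yields the descriptions in (1) and (1').

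The unit $S^0(R)$ is cofibrant because $R \in \A$: each $R(i)$ is a free, hence projective and flat, $R(i)$-module. For axiom~(3) of Theorem~\ref{thm:monoidal-exact}, observe that if $f\dd X \to Y$ is a cofibration, then its cokernel $C$ lies in $\Cof$; unwinding the $\{S^n(F)\}$-filtration (each successor step affects only one component) together with closure of $\A$ under retracts shows that $C^n \in \A \subseteq \FlatR$ for every $n$. Hence $0 \to U^i \otimes X^j \to U^i \otimes Y^j \to U^i \otimes C^j \to 0$ is exact for every $i,j$ and every $U \in \Cpx\QcoR$, so $U \otimes f$ is componentwise monic, i.e.\ an inflation.

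The main obstacle is axiom~(2): closure of $\Cof$ and $\Cof \cap \We$ under $\otimes$. First, $\A$ is closed under tensor products (flat $\otimes$ flat is flat; projective $\otimes$ projective is a direct summand of free $\otimes$ free, hence projective, over any commutative ring). The standard bifiltration of a tensor of complexes sends an $\{S^n(F_\alpha)\}$-filtration and an $\{S^m(G_\beta)\}$-filtration to a filtration whose successor quotients are the shifted spheres $S^{n_\alpha + m_\beta}(F_\alpha \otimes G_\beta) \in \Cof$, so transfinite induction gives $\Cof \otimes \Cof \subseteq \Cof$ after closing under retracts. For trivial cofibrations, Theorem~\ref{thm:exact-models} combined with Proposition~\ref{prop:lifting-to-cpx} identifies $\Cof \cap \We$ with $\tilde\A = \Cac\A$, the acyclic complexes with components in $\A$, which by hereditariness also have cycle objects in $\A$. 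For $X \in \tilde\A$ and $Y$ an $\{S^n(F_\alpha)\}$-filtered object, a second transfinite induction shows that $X \otimes Y$ is acyclic: each successor step has cokernel $(X \otimes F_\alpha)$ placed in a shift, which is acyclic because $F_\alpha \in \A$ is flat and so preserves exactness of the cycle conflations of $X$; the limit steps and closure under retracts follow from exactness of filtered colimits in $\QcoR$. Combining this with the previous paragraph, Theorem~\ref{thm:monoidal-exact} then produces the required hereditary monoidal model structures.
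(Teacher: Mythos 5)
Your proposal is correct and follows essentially the same route as the paper: plug the complete hereditary cotorsion pairs $(\FlatR,\CotR)$ and $(\VectR,\VectR^\perp)$ into Theorem~\ref{thm:inj-model-for-D(Groth)}, invoke Remark~\ref{rem:dg-notation} for cofibrant generation and the description of cofibrants, and verify the hypotheses of Theorem~\ref{thm:monoidal-exact} using that cofibrant complexes have components in $\A$ (hence flat), so tensoring preserves componentwise exactness. You are somewhat more explicit than the paper in checking conditions~(1) and~(2) of Theorem~\ref{thm:monoidal-exact} — the paper only spells out condition~(3) — and your bifiltration argument for $\Cof\otimes\Cof\subseteq\Cof$ together with the identification $\Cof\cap\We=\tilde\A$ is a correct way to supply the details the paper leaves implicit.
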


\begin{proof}
In order to construct the hereditary model structures on $\Cpx\QcoR$, we simply input the two complete hereditary cotorsion pairs $(\FlatR,\CotR)$ and $(\VectR,\VectR^\perp)$ to Theorem~\ref{thm:inj-model-for-D(Groth)}. The fact that the model structures are cofibrantly generated follows, in view of Remark~\ref{rem:dg-notation}, from the results in \S\ref{subsec:flat-cpx}. To prove that the model structures are monoidal, it remains to notice that assumption (3) from Theorem~\ref{thm:monoidal-exact} is satisfied whenever every cofibrant complex has all components flat. This follows by analyzing the monoidal structure on $\Cpx\QcoR$ as described in~\S\ref{subsec:tensor-Qco} and boils down to the following well known fact: If a short exact sequence $\ep\dd 0 \to X \to Y \to Z \to 0$ of right modules over a ring $T$ has the last term $Z$ flat, then $\ep \otimes_T U$ is exact for any left $T$-module $U$.
\end{proof}

\bibliographystyle{abbrv}
\bibliography{references}

\end{document}